\def\p{\partial}
\DeclareMathOperator{\supp}{supp}
\def\C{\mathbb C}
\def\R{\mathbb R}
\def\N{\mathbb N}
\DeclarePairedDelimiter\norm{\lVert}{\rVert}
\newtheorem{theorem}{Theorem}[section]
\newtheorem{lemma}[theorem]{Lemma}
\newtheorem{proposition}[theorem]{Proposition}
\newtheorem{corollary}[theorem]{Corollary}
\theoremstyle{definition}
\theoremstyle{remark}
\newtheorem{remark}[theorem]{Remark}
\newcommand{\abs}[1]{\lvert #1 \rvert}
\newcommand{\tbl}[1]{\textcolor{blue}{#1}}
\newcommand{\eps}{\epsilon}
\newcommand{\tr}{\text{Tr}\s}
\newcommand{\s}{\hspace{0.5pt}}
\newcommand{\ol}{\overline}
\newcommand{\op}{\overline \p}
\newcommand{\ccdot}{\,\cdot\,}
\newcommand{\z}{\overline{z}}
\newcommand{\OO}{\mathcal{O}}
\renewcommand{\Im}{\text{Im}}
\DeclareMathOperator{\Op}{{\rm Op}}
\newcommand{\pl}{\partial}
\newcommand{\rr}{\mathbb{R}}
\title[The Calder\'on problem on Riemannian surfaces and minimal surfaces]{The Calder\'on problem on Riemannian surfaces and of minimal surfaces}
\author[C. Cârstea]{Cătălin I. Cârstea}
\address{National Yang Ming Chiao Tung University, Hsinchu, Taiwan}
\email{catalin.carstea@gmail.com}
\author[T. Liimatainen]{Tony Liimatainen}
\address{Department of Mathematics and Statistics, University of Helsinki, Helsinki, Finland}
\curraddr{}
\email{tony.liimatainen@helsinki.fi}
\author[L. Tzou]{Leo Tzou}
\address{University of Amsterdam, Amsterdam, Netherlands}
\email{leo.tzou@gmail.com}
\begin{document}
\begin{abstract}
In this paper we prove two results. The first shows that the Dirichlet-Neumann map of the operator $\Delta_g+q$ on a Riemannian surface can determine its topological, differential, and metric structure. Earlier work of this type assumes a priori that the surface is a planar domain \cite{imanuvilov2012partial} or that the geometry is a priori known \cite{guillarmou2011calderon}. We will then apply this result to study a geometric inverse problem for determining minimal surfaces embedded in $3$-dimensional Riemannian manifolds. In particular we will show that knowledge of the volumes of embedded minimal surfaces determine not only their topological and differential structure but also their Riemannian structure as an embedded hypersurface. Such geometric inverse problems are partially inspired by the physical models proposed by the AdS/CFT correspondence.

The crucial ingredient in removing the planar domain assumption is the determination of the boundary trace of holomorphic functions from knowledge of the Dirichlet-Neumann map of $\Delta_g +q$. This requires a new type of argument involving Carleman estimates and construction of CGO whose phase functions are not Morse as in the case of \cite{guillarmou2011calderon}. We anticipate that these techniques could be of use for studying other inverse problems in geometry and PDE.

\end{abstract}
\maketitle
\tableofcontents

\section{Introduction}
Geometric inverse problems for minimal surfaces not only form natural extensions of boundary rigidity problems but provide a path for understanding  the AdS/CFT correspondence. Earlier results on this direction \cite{nurminen1, nurminen2, Tracey, carstea2022inverse} often require geometrical and/or topological assumptions which may not be sutiable for all the physical models under consideration.

In this paper we develop a set of techniqus towards a broader scope of geometrical/topological settings by proving two results. The first of them solves the Calder\'on problem for the Schr\"odinger equation $(\Delta_g+q)v=0$ on general Riemannian surfaces, where all the involved quantities are unknown, including the topological and differential structure of the underlying surface. The second result uses this progress in the Calder\'on problem to solve the inverse problem for the minimal surface equation in dimension $2$ in full generality. The connection between these two problems is that the linearization of the minimal surface equation is the Schr\"odinger equation. 

\subsection{The Calder\'on problem on Riemannian surfaces}
The Calder\'on problem for the Schr\"odinger equation has its origins in the studies by Calder\'on and Gelfand \cite{calderon2006inverse,gel1954some}. They asked if it is possible 
to detect the potential of the Schr\"odinger equation from the Dirichlet-to-Neumann map (DN map) of the equation in a domain of the Euclidean space.  
The DN map is the mathematical formulation of the voltage to current measurements on the boundary of a medium.

We study the Calder\'on problem for the Schr\"odinger equation
\begin{equation}\label{eq:Schrodinger_intro}
 (\Delta_g+q)v=0 \text{ on } \Sigma
\end{equation}
on a general Riemannian surface $\Sigma$ with boundary. In this inverse problem $\Sigma$, its Riemannian metric $g$ and the potential $q$ are all unknown.  For the equation \eqref{eq:Schrodinger_intro} we define the DN map $\Lambda_{g,q}: C^\infty(\p \Sigma)\to C^\infty(\p \Sigma)$ as usual by 
\[
\Lambda_{g,q}(f)=\p_\nu v_f,
\]
where $v_f$ solves \eqref{eq:Schrodinger_intro} with boundary value $f\in C^\infty(\p \Sigma)$. Here $\p_\nu$ is the outer normal vector field on the boundary $\p\Sigma$ of $\Sigma$. 
One can easily see that the coefficients $(g,q)$ and $(c\s F^*g,c^{-1}F^*q)$ give the same DN map, assuming $F$ is a diffeomorphism of $\Sigma$, $c$ is positive function and $F|_{\p M}=\mathrm{Id}$ and $c|_{\p M}=1$. Here $F^*$ is the pullback by $F$. Thus the Calder\'on problem we study has a gauge symmetry.  
We prove the following uniqueness result. 
\begin{theorem}[Uniqueness for the general Calder\'on problem in 2D]\label{thm:unique2d}
 Let $(M_1,g_1)$ and $(M_2,g_2)$ be compact connected $C^\infty$ Riemannian surfaces with mutual boundary $\partial M$, and let also $q_1\in C^\infty(M_1)$ and $q_2\in C^\infty(M_2)$. Assume that $\Lambda_{g_1,q_1}=\Lambda_{g_2,q_2}$. 
 Then there is a conformal diffeomorphism $J:M_1\to M_2$ such that 
 \[
 g_1=\lambda J^*g_2 \ \text{ and } \ q_1=\lambda^{-1}J^*q_2.
 \]
 Here $\lambda$ is a smooth positive function in $M_1$ with $\lambda|_{\p M}=1$, and $J|_{\p M}=\mathrm{Id}$. Note that we do not a priori assume that $M_1$ and $M_2$ have the same topological structure.
\end{theorem}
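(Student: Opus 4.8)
The plan is to reduce the general problem to the known results of Guillarmou-Tzou \cite{guillarmou2011calderon}, where the geometry is assumed to be a priori known, by first recovering enough boundary and conformal data to identify the two surfaces as a common Riemann surface. The main point is this: on a Riemannian surface the Laplace operator $\Delta_g$ depends only on the conformal class of $g$, so the Schr\"odinger equation $(\Delta_g+q)v=0$ only "sees" the conformal structure; the whole difficulty is to reconstruct this conformal structure — including the underlying topological and smooth structure — from $\Lambda_{g,q}$ alone. I expect the construction to proceed in roughly the following steps.

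\textbf{Step 1: Boundary determination.} First I would show that $\Lambda_{g_1,q_1}=\Lambda_{g_2,q_2}$ determines the full jet of the conformal structure at the boundary $\p M$, together with the 1-jet of $q$, by a standard symbol/pseudodifferential argument for the DN map applied to oscillatory boundary data; this also fixes a boundary defining function and the conformal normal coordinates near $\p M$. This lets one glue collars and assume the two surfaces agree to infinite order at $\p M$.

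\textbf{Step 2: Recovering holomorphic boundary traces.} This is the heart of the matter and, as the abstract announces, the genuinely new ingredient. The goal is: given the DN map, determine which functions $h \in C^\infty(\p M)$ arise as boundary values $h = w|_{\p M}$ of functions $w$ holomorphic on $M_1$ (equivalently $M_2$). The plan is to construct complex geometric optics solutions $v = e^{\Phi/\hbar}(a + r)$ where $\Phi$ is holomorphic but whose phase is allowed to have degenerate (non-Morse) critical points, controlled via Carleman estimates with a suitably convexified weight; testing $\Lambda_{g,q}$ against such solutions and their conjugates, and exploiting that the two DN maps coincide, produces an identity forcing the boundary integral of $h$ against arbitrary holomorphic (or antiholomorphic) test data to match on the two sides. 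The subtlety — and where I expect the real work to be — is that without Morse phases one loses the clean stationary-phase asymptotics used in \cite{guillarmou2011calderon}, so one needs delicate remainder estimates and a careful choice of the amplitude $a$ solving a $\op$-equation with prescribed vanishing, plus an argument that enough such holomorphic functions exist on an arbitrary (a priori unknown) surface — presumably via Riemann-Roch / Behnke-Stein to produce holomorphic functions with prescribed boundary behavior and controlled critical points.

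\textbf{Step 3: Reconstructing the conformal surface.} Once the space of boundary traces of holomorphic functions on $M_i$ is known as a subspace of $C^\infty(\p M)$ — and it is the \emph{same} subspace for $i=1,2$ by Step 2 — I would use it to build an embedding: choose finitely many holomorphic functions $z_1,\dots,z_N$ on $M_i$ whose real and imaginary boundary traces separate points and give an embedding $\iota_i : M_i \hookrightarrow \C^N$ (again Riemann-Roch-type existence). Because the boundary traces agree, $\iota_1(M_1)$ and $\iota_2(M_2)$ are two complex submanifolds of $\C^N$ with the same boundary; a maximum-modulus / analytic continuation argument identifies them, yielding a biholomorphism $J : M_1 \to M_2$ with $J|_{\p M}=\mathrm{Id}$. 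In particular $M_1$ and $M_2$ are diffeomorphic — the topology is recovered a posteriori, not assumed.

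\textbf{Step 4: Recovering the conformal factor and potential.} Having identified $M_1 \cong M_2 =: M$ as Riemann surfaces via $J$, the metrics satisfy $g_1 = \lambda_1 g_0$ and $J^*g_2 = \lambda_2 g_0$ for a common reference metric $g_0$, and the hypothesis becomes $\Lambda_{g_0, q_1'} = \Lambda_{g_0, q_2'}$ after the conformal rescaling of the potentials (using that $\Delta_{\lambda g} = \lambda^{-1}\Delta_g$ in 2D). Now the geometry is known, so the result of Guillarmou-Tzou \cite{guillarmou2011calderon} applies and gives $q_1' = q_2'$; unwinding the conformal weights produces $\lambda = \lambda_2/\lambda_1$ with $g_1 = \lambda J^*g_2$ and $q_1 = \lambda^{-1} J^* q_2$, with $\lambda|_{\p M}=1$ from Step 1. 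The single biggest obstacle throughout is Step 2: making Carleman estimates work with non-Morse holomorphic phases on a surface whose complex structure is not yet known, and extracting from the resulting asymptotics exactly the statement "$h$ is a holomorphic trace" rather than something weaker.
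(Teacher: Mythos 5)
Your skeleton matches the paper's: glue collars, recover the boundary traces of holomorphic functions via non-Morse CGOs and a new Carleman estimate, reconstruct the Riemann surface from those traces, then reduce to Guillarmou--Tzou \cite{guillarmou2011calderon} after a conformal change. Two of your steps are implemented differently from the paper, though. In Step 2, the paper does not derive an integral identity by ``testing $\Lambda_{g,q}$ against such solutions and their conjugates'' (the usual $\int(q_1-q_2)u_1u_2$ identity is unavailable when the two interiors are distinct unknown manifolds); instead it transplants the CGO $u_1=e^{\Phi_1/h}(a+r)$ from $M_1'$ to $M_2'$ using the equality of DN maps, applies the boundary-term Carleman estimate of Proposition \ref{prop: carleman estimate} with weight $\phi_2$ (the harmonic extension on $M_2'$ of the same boundary datum), and obtains the polynomial bound $\|e^{(\phi_1-\phi_2)/h}(a+r)\|^2_{L^2(\Omega)}\leq Ch^{-3}$, which contradicts $\phi_1(\hat p)>\phi_2(\hat p)$ as $h\to0$; this pointwise comparison of harmonic extensions, combined with the characterization $\partial_\tau f - i\Lambda_{g,0}f=0$ of holomorphic traces (Lemma \ref{lem: characterization of holom}), is what actually yields Proposition \ref{prop: same holomorphic bv}. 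In Step 3, you propose an embedding into $\C^N$ by finitely many holomorphic functions plus analytic continuation (essentially the Lassas--Uhlmann/Henkin route), whereas the paper uses a Poisson embedding $P(p)f=u_f(p)$ into the dual of the space of harmonic-conjugate-admitting boundary data and shows $J=P_2^{-1}\circ P_1$ is a conformal diffeomorphism by an open-closed argument; the paper explicitly notes that the classical route you describe also works, so this divergence is harmless. Your Step 1 (symbol-calculus boundary determination) is more explicit than what the paper writes, and is a reasonable way to justify that the two metrics can be extended identically and smoothly across $\partial M$ onto the glued collar.
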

Due to the above mentioned gauge symmetry, Theorem \ref{thm:unique2d} is the best possible recovery result. Since there are no additional assumptions in the theorem, the result is also general. That there are no restrictions on the topologies of the Riemannian surfaces in the theorem will be crucial when we apply it to study an inverse problem for minimal surfaces. This is because even in $\R^3$ minimal surfaces can have complicated topologies (see e.g. \cite{colding2011course} for pictures of minimal surfaces). 

\subsubsection{Earlier results}
Let us mention some earlier results on the Calder\'on problem for the Schr\"odinger equation $(\Delta_g+q)v=0$ on $\Omega$. Here $\Omega$ is a domain or a manifold of dimension $n\geq 2$, $g$ a Riemannian metric and $q$ a function. The most complete known results are when $\Omega$ is a domain in $\R^n$, already known, and $g$ is the Euclidean metric. In this case, the identification of the potential $q$ has been originally proved in dimension $n>2$ by Sylvester-Uhlmann \cite{sylvester1987global} and Novikov \cite{Nov}, 
and  by Bukgheim \cite{bukhgeim2008recovering} in dimension $2$ for simply connected domains.

A related question is the conductivity problem 
which consists in taking $q=0$ and replacing $\Delta_g$ by $-{\rm div}\sigma\nabla$, where $\sigma$
is a positive definite symmetric matrix field. In this case, recovering a sufficiently smooth isotropic conductivity (that is $\sigma$ is a multiple of the identity matrix) is contained in the above problem of recovering of recovering a potential $q$ on a domain in $\R^n$. For domains of $\rr^2$, Nachman \cite{nachman1988reconstructions} used the $\bar{\pl}$ techniques to show 
that the Cauchy data space determines the conductivity $\sigma$ and gave a reconstruction method.  Astala and P\"aiv\"arinta in \cite{astala2006calderon} improved this result to assuming that $\sigma$ is only a $L^\infty$ scalar function on a simply connected domain. 

The partial data version is of the Calder\'on problem is that the sources are supported and the DN map is restricted to subsets of the boundary. In dimension $2$, the partial data version has been solved in the following cases when the domain is known. The potential $q$ was recovered by Imanuvilov-Uhlmann-Yamamoto in \cite{imanuvilov2010calderon}, when the domain is a Riemannian surface by Guillarmou-Tzou in \cite{guillarmou2011calderon}, and in the work \cite{imanuvilov2012partial} by Imanuvilov-Uhlmann-Yamamoto, both the metric $g$ and the potential $q$ (as well as first order terms) were recovered up to a gauge. In dimensions $n>2$, the partial data problem was solved by Kenig-Sj\"ostrand-Uhlmann in \cite{partialdataCalderon} under conditions on the supports of the data.

There are only a few results  about  recovering the Riemannian manifold $(\Omega,g)$. In the case $q=0$, a Riemannian surface was recover by Lassas-Uhlmann \cite{lassas2001determining} (see also \cite{Belishev03,henkin2008inverse}). The real-analytic case for $n>2$ 
was solved by Lee-Uhlmann \cite{lee1989determining} and Lassas-Taylor-Uhlmann \cite{lassas2003dirichlet}. The case of manifolds admitting limiting Carleman weights and in a same conformal class was considered by Dos Santos Ferreira-Kenig-Salo-Uhlmann \cite{dos2009limiting}, and generalized by Dos Santos Ferreira-Kurylev-Lassas-Salo in \cite{ferreira2013calderon}. 

In Feizmohammadi-Oksanen \cite{FO19} and Lassas-Liimatainen-Lin-Salo \cite{LLLS2019inverse} a version of the Calder\'on problem for nonlinear elliptic equations was studied. Especially, the work \cite{LLLS2019inverse} recovered a Riemannian surface $(\Omega,g)$ and $q$ from the DN map of the equation $\Delta_gu+qu^m=0$, $m\geq 2$, on a Riemannian surface up to a gauge symmetry.

Our Theorem \ref{thm:unique2d} improves earlier results (in the full data case) as follows. It generalizes the result by Guillarmou-Tzou \cite{guillarmou2011calderon} to the case where also the Riemannian surface and its metric are unknown. It also generalized the results by Imanuvilov-Uhlmann-Yamamoto \cite{imanuvilov2010calderon} and \cite{imanuvilov2012partial} to Riemannian surfaces and to unknown domains. The result \cite{lassas2001determining} by Lassas-Uhlmann is generalized by allowing a non-zero potential. It also proves the missing case $m=1$ in \cite{LLLS2019inverse} by Lassas-Liimatainen-Lin-Salo. 

We chose not to consider partial data in this work, or add first order terms to the Schr\"odinger equation, to focus on presenting the new ideas and methods. We however expect the proof of Theorem \ref{thm:unique2d} to extend to more general situations.

\subsubsection{Sketch of the proof of Theorem \ref{thm:unique2d}}
Let us explain how we prove Theorem \ref{thm:unique2d}. The proof is based on recovering the boundary values of holomorphic functions of a Riemannian surface from the DN map of \eqref{eq:Schrodinger_intro}, and then constructing the Riemannian surface and its metric from its holomorphic functions up to conformal mapping. The proof is finalized by referring to the main result of \cite{guillarmou2011calderon} after a conformal transformation.

The main step of the proof Theorem \ref{thm:unique2d} is how to recover the boundary values of holomorphic functions. We explain now the morale how we do it. We extend the Riemannian surface to a slightly larger surface. On the extension, the solutions to the Schr\"odinger equation are then determined by the DN map of $(\Delta_g+q)v=0$ on the original surface. We then derive a new Carleman estimate with boundary terms and without assuming the weight function is Morse in Proposition \ref{prop: carleman estimate}. Using the Carleman estimate, we show that a complex geometrics optics solution for Schr\"odinger equation \eqref{eq:Schrodinger_intro} of the form 
\begin{equation}\label{eq:phase_recovery_CGO}
e^{\Phi/h}(a+r) 
\end{equation}
is determined on the extension up to a term that grows polynomially as $h\to 0$. Here $\Phi$ is holomorphic, not necessarily Morse, and $a$ is a holomorphic amplitude. 

We modify the amplitude $a$ in Proposition \ref{prop: CGO for boundary determination} so that $r$ decays pointwise (in $C^1$) as $h\to 0$. Combining the above, we know $a$ in the extended domain, that $r$ decays there pointwise and that \eqref{eq:phase_recovery_CGO} is known up to a polynomially growing term. We may thus pick out pointwise values of $\Phi$ on the extended domain since $e^{\Phi/h}$ grows there exponentially, by changing the sign of $\Phi$ if necessary. By characterizing the boundary values of holomorphic functions in terms of the DN map of the Laplacian $\Delta_g$, and noticing that the above procedure holds for general holomorphic functions, we recover boundary values of \emph{all} holomorphic functions on the Riemannian surface. This is Proposition \ref{prop: same holomorphic bv}. 

We construct the Riemannian surface and its metric $g$ up to a conformal transformation from the boundary values of its holomorphic functions. The conformal invariance of the Laplacian in dimension $2$ lets us to transform the remaining problem of finding the potential $q$ to the situation of \cite{guillarmou2011calderon}. This completes the proof of Theorem \ref{thm:unique2d}.

\subsection{Inverse problems for minimal surfaces}
The linearization of the minimal surface equation is the Schr\"odinger equation on a Riemannian surface with possibly non-trivial topology. Having solved the inverse problem for the Sch\"rodinger equation in Theorem \ref{thm:unique2d}, we are ready to study inverse problems for general minimal surfaces. 

We consider an inverse problem for $2$-dimensional minimal surfaces embedded in a $3$-dimensional Riemannian manifold $(N,\overline g)$ with boundary. In this paper, we consider minimal surfaces in variational sense and define that
a $C^2$-smooth surface in  $(N,\overline g)$ is a minimal surface if its mean curvature is zero.  Let $\Sigma$ be a minimal surface. We consider minimal surfaces that are perturbations of $\Sigma$ given as graph of functions over $\Sigma$ and write the minimal surface equation in Fermi-coordinates associated to $\Sigma$. In these coordinates the metric of $(N,\overline g)$ has the form 
\begin{equation}\label{prod_metric}
 \overline g=ds^2 +g_{ab}(x,s)dx^adx^b.
\end{equation}
Here $g_s:=g(\ccdot,s)$ is a $1$-parameter family of Riemannian metrics on $\Sigma$. Here also $s$ belongs to an open interval $I\subset \R$ and $(x^a)$ are coordinates on $\Sigma$.
For the parts of this paper that hold in general dimensions we write $\dim(N)=n+1$ and $\dim(\Sigma)=n$, $n \geq 2$. We remark that Fermi-coordinates always exist and thus viewing the minimal surface equation in these coordinates is not a restriction of generality.

Let 
\[
 F(x)=(u(x),x)
\]
be the graph of a function $u:\Sigma\to \R$. 
Then the graph $F$ is a minimal surface if in Fermi-coordinates the function $u$ satisfies
\begin{equation}\label{eq:minimal_surface_general}
 -\frac{1}{\det(g_u)^{1/2}}\nabla\cdot \left( g_u^{-1}\frac{\det(g_u)^{1/2}}{\sqrt{1+\abs{\nabla u}^2_{g_u}}} \right)\nabla u + f(u,\nabla u) =0 \quad  \hbox{ on }\Sigma,
\end{equation}
where 
\[
 f(u,\nabla u)=\frac{1}{2}\frac{1}{(1+\abs{\nabla u}^2_{g_u})^{1/2}}(\p_sg_u^{-1})(\nabla u,\nabla u)+\frac{1}{2}(1+\abs{\nabla u}^2_{g_u})^{1/2}\text{Tr}(g_u^{-1}\p_sg_u).
\]
Here $\nabla$ refers to the $n$-dimensional $\R^n$ Euclidean gradient and $\ccdot$ is the Euclidean inner product on $\R^n$. Throughout this paper we denote 
\[
 g_u(x)=g_{u(x)}=g(x, u(x)).
\]
We refer to the equation \eqref{eq:minimal_surface_general} as the minimal surface equation. We note that if $g(x,s)$ is independent of $s$ we have $f(u,\nabla u)=0$. In this case  the minimal surface equation is the one for the product metric $e\oplus g$ studied recently in \cite{carstea2022inverse}. Here $e$ is the Euclidean metric on $\R$. We will derive the minimal surface equation \eqref{eq:minimal_surface_general} in Section \ref{Section 2}.

We define the associated \emph{Dirichlet-to-Neumann} map (DN map in short) for \eqref{eq:minimal_surface_general} by
\begin{equation}\label{eq:DN_map_for_nonlin}
\Lambda_{\overline g} f = \left. \p_\nu u \right|_{\p \Sigma} \text{ for }  \, f \in U_\delta.
\end{equation}
Here $\nu$  is the unit exterior normal vector of $\Sigma$ with respect to the metric $g$ at $(x,u(x))$,
that is, 
$$
g_{u}(x)(\nu,\nu)=1,\quad g_{u}(x)(\nu,\eta)=0,\quad \hbox{for }\nu\in T_x\partial \Sigma.
$$
In \eqref{eq:DN_map_for_nonlin}, $u$ is the unique small solution to the equation \eqref{eq:minimal_surface_general}  with boundary value $f\in C^{2,\alpha}(\p \Sigma)$ satisfying the conditions given in Proposition \ref{prop:local_well_posedness}, 
and $U_\delta=\{f\in  C^{2,\alpha}(\p \Sigma): \norm{f}_{C^{2,\alpha}(\p \Sigma)}\leq \delta\}$ for some $\delta>0$.  
We refer to Section \ref{Section 2} for details about the local well-posedness of \eqref{eq:minimal_surface_general} and  the DN map.

We present our main results about minimal surfaces next.
\begin{theorem}\label{thm:main}
Let $(\Sigma_1, g_1)$ and $(\Sigma_2,g_2)$ be Riemannian surfaces. Assume that there are $1$-parameter families of Riemannian metrics $g_\beta(\ccdot,s)$, $\beta=1,2$, on $\Sigma_\beta$ and that $\Sigma_\beta$ are minimal surfaces in the sense that $0$ is a solution to \eqref{eq:minimal_surface_general} on both $\Sigma_\beta$. We assume that $\p_s^k|_{s=0}g_1(\ccdot,s)=\p_s^k|_{s=0}g_2(\ccdot,s)$ to infinite order on $\p \Sigma$ for $k=0,1,2$.

Assume that the associated DN maps of \eqref{eq:minimal_surface_general} of  $(\Sigma_1,g_1)$ and $(\Sigma_2,g_2)$ satisfy
for some $\delta>0$ sufficiently small and for all $f\in U_\delta$ 
\[
\Lambda_{g_1}f = \Lambda_{g_2}f.
\]
Then there is an isometry $F:M_1\to M_2$, 
\[
  F^*g_2=g_1,
\]
which satisfies $F|_{\p \Sigma}=\textrm{Id}$. In addition, $F^*\eta_2=\eta_1$, where $\eta_\beta$ are the scalar second fundamental forms of $(\Sigma_\beta,g_\beta)$. {Note that a priori $\Sigma_1$ and $\Sigma_2$ don't need to have the same topology.}
\end{theorem}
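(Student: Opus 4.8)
The plan is to reduce the geometric rigidity statement for minimal surfaces to the Calderón uniqueness result of Theorem \ref{thm:unique2d} by a linearization argument. First I would establish local well-posedness for \eqref{eq:minimal_surface_general} near the zero solution (Proposition \ref{prop:local_well_posedness}) so that the DN map $\Lambda_{g_\beta}$ is a well-defined smooth map on the neighborhood $U_\delta$ of $0$ in $C^{2,\alpha}(\p\Sigma)$. Differentiating the equation \eqref{eq:minimal_surface_general} once at $u=0$ along a boundary perturbation $f$, I would compute that the first linearization $v=\partial_\varepsilon|_{\varepsilon=0}u_{\varepsilon f}$ solves a linear equation of the form $(\Delta_{g_\beta}+q_\beta)v=0$ on $\Sigma_\beta$, where $\Delta_{g_\beta}$ is the Laplace–Beltrami operator of $g_\beta=g_\beta(\,\cdot\,,0)$ and $q_\beta$ is a potential determined by the $s$-derivatives $\partial_s^k|_{s=0}g_\beta$ for $k=0,1,2$ (this is where the hypothesis on agreement of these jets to infinite order on $\partial\Sigma$ will be used, to match the boundary behavior of $q_1$ and $q_2$, and possibly to absorb a first-order term or show it vanishes). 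Consequently the first linearization of the minimal-surface DN map at $0$ equals the DN map $\Lambda_{g_\beta,q_\beta}$ of the Schrödinger operator. Since $\Lambda_{g_1}=\Lambda_{g_2}$ on all of $U_\delta$, their Fréchet derivatives at $0$ agree, so $\Lambda_{g_1,q_1}=\Lambda_{g_2,q_2}$.

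Next I would invoke Theorem \ref{thm:unique2d}: there is a conformal diffeomorphism $J:\Sigma_1\to\Sigma_2$ with $J|_{\partial\Sigma}=\mathrm{Id}$, $g_1=\lambda J^*g_2$ and $q_1=\lambda^{-1}J^*q_2$, for some positive $\lambda$ with $\lambda|_{\partial\Sigma}=1$. In particular $\Sigma_1$ and $\Sigma_2$ are diffeomorphic, which takes care of the topological and differential parts of the conclusion. The remaining work is to upgrade the conformal factor $\lambda$ to $1$, i.e.\ to show $J$ is an isometry, and then to recover the second fundamental form. For the conformal factor, the idea is to use higher-order information in the DN map: one looks at the second (or higher) derivative of $\Lambda_{g_\beta}$ at $0$, which, after pulling back by $J$, yields an identity on $\Sigma_1$ relating $\lambda$ to the known linearized data. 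Because the Schrödinger equation is conformally covariant in 2D in the sense that $\Delta_{\lambda^{-1}h}=\lambda\,\Delta_h$, the first linearization cannot see $\lambda$, but the nonlinear terms $f(u,\nabla u)$ in \eqref{eq:minimal_surface_general} carry explicit geometric factors like $\det(g_u)^{1/2}$ and $\mathrm{Tr}(g_u^{-1}\partial_sg_u)$ that break this invariance; exploiting the second-order Taylor coefficient of the DN map should force $\lambda\equiv 1$. Once $F:=J$ is an isometry with $F|_{\partial\Sigma}=\mathrm{Id}$, the relation $F^*\eta_2=\eta_1$ for the scalar second fundamental forms should follow by reading off $\partial_s g_\beta|_{s=0}$ (equivalently the shape operator) from the already-matched linearized potentials $q_\beta$ together with the now-known metric, using again the boundary-jet hypothesis to pin down the normalization.

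The main obstacle I anticipate is the elimination of the conformal factor $\lambda$, since the linearized problem is genuinely blind to it and one must extract it from the genuinely nonlinear structure of \eqref{eq:minimal_surface_general}; this requires a careful computation of the second-order term in the Taylor expansion of the minimal-surface DN map at $u=0$, identifying precisely which combination of $\lambda$, $g_\beta$, and the jets $\partial_s^k g_\beta|_{s=0}$ it encodes, and arguing that equality of this term (after transport by $J$) is an elliptic equation for $\lambda$ with the unique solution $\lambda\equiv 1$ given the boundary condition $\lambda|_{\partial\Sigma}=1$. A secondary technical point is checking that the first linearization really does produce a potential $q_\beta$ with \emph{no} first-order (drift) term — or, if such a term appears, that it is gauge-equivalent to zero — so that Theorem \ref{thm:unique2d} applies verbatim; this is presumably why the hypothesis controls the $s$-jets of $g_\beta$ up to order two on $\partial\Sigma$.
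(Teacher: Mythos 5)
Your Step 1 matches the paper: linearize at $u=0$, observe that the first linearization solves $(\Delta_{g_\beta}+q_\beta)v=0$ with $q_\beta=\tfrac12\partial_s|_{s=0}\mathrm{Tr}(g_s^{-1}\partial_s g_s)$ (no drift term appears), and apply Theorem \ref{thm:unique2d} to obtain the conformal diffeomorphism. But your plan for eliminating the conformal factor has a genuine gap: the second-order Taylor coefficient of the DN map cannot force $\lambda\equiv 1$. After transporting the second-linearization integral identity by $J$, the unknown $\lambda$ appears \emph{only} multiplying the unknown quantities $k_2^{(1)}=\partial_s|_{s=0}g_2^{-1}$ and $h_2^{(2)}$ (one gets relations of the form $k_1^{(1)}=\lambda^{-1} J^*k_2^{(1)}$, etc.), so at second order $\lambda$ cannot be separated from the jets it multiplies — the second order is exactly as ``blind'' to $\lambda$ as the first. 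The paper must go to the \emph{third} linearization, where the term $\int_\Sigma g(\nabla v^{j},\nabla v^{k})\,g(\nabla v^{l},\nabla v^{m})\,dV$ appears with a \emph{known} coefficient (it comes from expanding $(1+|\nabla u|^2_{g_u})^{-1/2}$, independently of the unknown $s$-jets) and transforms with a net factor $c^{-1}$ under the conformal change; subtracting the two identities isolates $\int(1-c^{-1})[\cdots]$, and stationary phase with CGOs concentrating at a point $z_0$ gives $c(z_0)=1$ pointwise on a dense set. Your proposed alternative of reading the conclusion off as ``an elliptic equation for $\lambda$'' from the second-order term has no analogue here and would not close.

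A second, related gap is the recovery of the scalar second fundamental form. You propose to read $\eta=\partial_s g|_{s=0}$ off from ``the already-matched linearized potentials $q_\beta$ together with the now-known metric,'' but $q_\beta$ is the \emph{scalar} $\tfrac12\partial_s|_{s=0}\mathrm{Tr}(g_s^{-1}\partial_s g_s)$ (which in fact vanishes in trace because $\Sigma_\beta$ is minimal, cf.\ \eqref{eq: k1 is trace free}), while $\eta$ is a full symmetric $2$-tensor with two independent components beyond its trace; the scalar cannot determine the tensor. In the paper the tensor $k^{(1)}$ is recovered from the \emph{second} linearization through the quadratic term $k^{(1)}(\nabla v^{j},\nabla v^{k})$ in \eqref{2nd_lin_eq}: plugging in three CGOs (two with critical-point-free holomorphic phases $\pm\Psi+\Phi$ and one with an antiholomorphic Morse phase) and applying Proposition \ref{prop:asymptotics_for_the_2nd_lin} yields $K(\partial z,\partial z)(z_0)=0$ for the trace-free difference tensor $K$, hence $K(z_0)=0$ on a dense set. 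So the correct architecture is: first linearization $\Rightarrow$ conformal class; second linearization $\Rightarrow$ $\eta$ up to the conformal factor; third linearization $\Rightarrow$ the conformal factor. Your outline stops one order too early on both counts.
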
We note that the scalar second fundamental form $\eta_\beta(X,Y)=\langle \nabla_XN_\beta, Y\rangle_{\overline g_\beta}$ in the theorem depends also on the extrinsic geometry on $\Sigma_\beta$ in $(N_\beta, \overline g_\beta)$. Here $N_\beta$ is the unit normal to $\Sigma_\beta$, $X,Y\in T\Sigma_\beta$ and $\overline g_\beta$ as in \eqref{prod_metric}. The assumption that the unknown quantities in the theorem are known on the boundary, is just to avoid various standard-like boundary determination arguments. 
\begin{remark}
Our meaning that two tensor fields agree to infinite order on the boundary is the following: Let $\Theta_1$ and $\Theta_2$ be tensor fields (for example $\p_s|_{s=0}g_1(\ccdot,s)$ and $\p_s|_{s=0}g_2(\ccdot,s)$) on respective Riemannian surfaces $(\Sigma_1,g_1)$ and $(\Sigma_2,g_2)$, which have a mutual boundary $\p \Sigma$. We say that $\Theta_1=\Theta_2$ to infinite order on $\p\Sigma$ if the coordinate representation of $\Theta_1$ in $g_1$-boundary normal coordinates agrees with that of $\Theta_2$ in $g_2$-boundary normal coordinates to infinite order on $\p \Sigma$. In invariant terms this means that $\mathcal{L}_{\nu_1}^l\Theta_1|_{\p \Sigma}=\mathcal{L}_{\nu_2}^l\Theta_2|_{\p \Sigma}$, $l=0,1,\ldots$, where $\mathcal{L}$ is the Lie derivative and $\nu_\beta$ is the locally defined vector field dual to the gradient of the distance to $\p \Sigma$ in $(\Sigma_\beta,g_\beta)$.
\end{remark}

\subsubsection{The generalized boundary rigidity}
The generalized boundary rigidity problem for minimal surfaces, posed formally in \cite{Tracey}, asks if areas of minimal surfaces embedded in a Riemannian manifold with boundary determine the Riemannian manifold. If the minimal surfaces are $1$-dimensional,
and thus minimizing geodesics, then the problem is the standard boundary rigidity
problem. One of the motivations for the generalized boundary rigidity problem is in
the AdS/CFT correspondence in physics, which we discuss in Section \ref{sec:adscft} below.

Let us relate  our results to the generalized boundary rigidity
problem. We assume that our minimal surfaces are given as solutions to the Dirichlet
problem for the minimal surface equation \eqref{eq:minimal_surface_general} in Fermi-coordinates relative to a minimal surface $\Sigma$. In this situation,
volumes of minimal surfaces determine the DN map of the minimal surface equation,
see Lemma \ref{lem:minimal surfaces and the DN map}. In general, however, the part $\p \Sigma\times I$, $I\subset \R$, of the boundary of Fermi-coordinates might not
belong to the boundary of the manifold $N$, where a minimal surface is embedded. This poses a technical issue on how to pose the Dirichlet problem for \eqref{eq:minimal_surface_general}. For this reason, we study an exterior problem, where $N$ is assumed to belong to a slightly larger manifold $\widetilde N$. 

Let $(\Sigma,g)$ be a minimal surface embedded in $N$. \emph{The exterior problem} asks the following: If $\p \Sigma$ and the volumes of minimal surfaces $\Sigma'$ in the exterior manifold $\widetilde N$, whose boundaries satisfy $\p \Sigma'\subset \widetilde N\setminus N$, are known, do they determine the isometry type of $(\Sigma,g)$?  For the exterior problem we obtain as a consequence Theorem \ref{thm:main} the following.

%
%

\begin{corollary}[Uniqueness result for the exterior problem]\label{cor:exterior_problem}
Let $(\widetilde N,\overline g)$ and $(N, \overline g|_N)$ be $3$-dimensional Riemannian manifolds with boundaries such that $N \subset\subset \widetilde N$.
Let $(\Sigma,g)$ be a $2$-dimensional minimal surface and assume it extends properly to a minimal surface $\widetilde \Sigma \subset \widetilde N$. 
Let $\mathcal S$ be the set of all smooth minimal surface deformations in a small neighourhood of $\widetilde \Sigma$.

Then, the knowledge of $\widetilde N\setminus N$, $\overline g|_{\widetilde N\setminus N}$, $\widetilde \Sigma\setminus \Sigma$, and the relation 
$\left(\partial \widetilde \Sigma',
\hbox{Vol}\, (\widetilde \Sigma')\right)$ for all $\widetilde \Sigma' \in \mathcal S$ 
determines 
  $(\Sigma,g)$ up to a boundary preserving isometry. The isometry also preserves the first fundamental form. Note that a priori we do not assume knowledge of the topological structure of the embedded minimal surface $\Sigma\subset N$.
\end{corollary}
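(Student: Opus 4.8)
The plan is to reduce the exterior problem for minimal surfaces to an application of Theorem \ref{thm:main}, via Lemma \ref{lem:minimal surfaces and the DN map}, which says that volumes of minimal surfaces determine the DN map of the minimal surface equation. First, I would set up Fermi coordinates relative to $\widetilde \Sigma$ inside $\widetilde N$, so that $\overline g = ds^2 + g_{ab}(x,s)dx^a dx^b$ with $\widetilde \Sigma = \{s=0\}$. Since $\widetilde N \setminus N$, $\overline g|_{\widetilde N \setminus N}$ and $\widetilde \Sigma \setminus \Sigma$ are assumed known, the Fermi chart over a collar of $\p\Sigma$ (which lies in $\widetilde N \setminus N$, as $N \subset\subset \widetilde N$) is known; in particular the jets $\p_s^k|_{s=0} g(\ccdot,s)$ for $k=0,1,2$ are known to infinite order on $\p\Sigma$, which is exactly the boundary compatibility hypothesis of Theorem \ref{thm:main}. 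The minimal surface deformations $\widetilde\Sigma'$ in a small neighbourhood of $\widetilde\Sigma$ whose boundaries lie in $\widetilde N \setminus N$ are precisely the graphs $F(x)=(u(x),x)$ of small solutions $u$ to \eqref{eq:minimal_surface_general} with boundary data $f = u|_{\p\Sigma} \in U_\delta$ supported (together with the deformation) in the known exterior region.

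Next I would translate the given data into the DN map. By Lemma \ref{lem:minimal surfaces and the DN map}, the relation $(\p\widetilde\Sigma', \mathrm{Vol}(\widetilde\Sigma'))$ over all $\widetilde\Sigma' \in \mathcal S$ determines $\Lambda_{\overline g} f = \p_\nu u|_{\p\Sigma}$ for all $f \in U_\delta$: the first variation of area in the direction of the deformation recovers the Neumann data from the family of volumes, since $\widetilde\Sigma$ is minimal (so the first variation at $f=0$ vanishes) and the second and higher variations encode the full nonlinear DN map. Here one uses that the $\mathcal S$-data is known as a \emph{relation}, i.e. one knows which boundary curve goes with which volume, so one can differentiate the volume functional along one-parameter families of boundary data. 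Thus both surfaces $(\Sigma_\beta, g_\beta) = (\Sigma, g)$ under two candidate exterior geometries produce the same DN map for \eqref{eq:minimal_surface_general}.

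Now Theorem \ref{thm:main} applies verbatim: given two Riemannian surfaces $(\Sigma_1,g_1)$, $(\Sigma_2,g_2)$ arising as minimal surfaces $\{s=0\}$ in Fermi coordinates, with $\p_s^k|_{s=0}g_1 = \p_s^k|_{s=0}g_2$ to infinite order on $\p\Sigma$ for $k=0,1,2$ and equal DN maps on $U_\delta$, there is an isometry $F:\Sigma_1 \to \Sigma_2$ with $F|_{\p\Sigma} = \mathrm{Id}$, and moreover $F^*\eta_2 = \eta_1$ for the scalar second fundamental forms. Since the first fundamental form of $\Sigma$ as an embedded surface is just the induced metric $g = g_0$, the isometry $F$ preserves it, which is the last assertion. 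The statement that no topological assumption on $\Sigma \subset N$ is needed is inherited directly from the corresponding feature of Theorem \ref{thm:main} (and ultimately of Theorem \ref{thm:unique2d}).

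The main obstacle I expect is the passage from "knowledge of $(\p\widetilde\Sigma', \mathrm{Vol}(\widetilde\Sigma'))$ for all deformations in $\mathcal S$" to "knowledge of $\Lambda_{\overline g}$ on $U_\delta$", i.e. the content of Lemma \ref{lem:minimal surfaces and the DN map}. One must argue that the map $f \mapsto \mathrm{Vol}(\widetilde\Sigma'_f)$, together with the correspondence $f \leftrightarrow \p\widetilde\Sigma'_f$, is enough to reconstruct $f \mapsto \p_\nu u_f|_{\p\Sigma}$. The natural route is to note that, because $u_f$ solves the minimal surface (Euler--Lagrange) equation, the derivative of the area functional with respect to a variation of the boundary data $f$ is a pure boundary term equal to $\int_{\p\Sigma} (\p_\nu u_f)\, \delta f \, dS$ (a Pohozaev/first-variation identity), so differentiating $\mathrm{Vol}$ along arbitrary curves in $U_\delta$ recovers $\Lambda_{\overline g}$ as a (nonlinear) Neumann operator; some care is needed with the geometry of $\nu$ being taken with respect to $g_u$ at $(u(x),x)$ rather than a fixed metric, but this is exactly the normalization used in \eqref{eq:DN_map_for_nonlin}. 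A secondary technical point is verifying that the exterior deformations actually generate \emph{all} of $U_\delta$ as boundary data — that the boundary $\p\Sigma$ and a neighbourhood of it sit in the known region $\widetilde N \setminus N$ so that admissible $f$ are unrestricted small data — which follows from $N \subset\subset \widetilde N$ and the properness of the extension $\widetilde\Sigma \supset \Sigma$.
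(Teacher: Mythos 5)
Your reduction to the DN map via Lemma \ref{lem:minimal surfaces and the DN map} and the subsequent appeal to Theorem \ref{thm:main} follow the paper's route, but there is a genuine gap at the end, caused by a conflation of $\Sigma$ with $\widetilde\Sigma$ throughout. The deformations $\widetilde\Sigma'\in\mathcal S$ are graphs over the \emph{extended} surface $\widetilde\Sigma$, their Dirichlet data lives on $\partial\widetilde\Sigma$ (which is what lies in $\widetilde N\setminus N$), and the volumes supplied are volumes of the whole deformed surfaces. So what Lemma \ref{lem:minimal surfaces and the DN map} gives you is the DN map of the minimal surface equation on $\widetilde\Sigma$ with boundary $\partial\widetilde\Sigma$, and Theorem \ref{thm:main} then produces an isometry $F:\widetilde\Sigma_1\to\widetilde\Sigma_2$ which fixes only $\partial\widetilde\Sigma$. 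The corollary, however, asserts a boundary-preserving isometry of $(\Sigma,g)$ itself. Nothing in Theorem \ref{thm:main} guarantees that $F$ maps $\Sigma_1$ onto $\Sigma_2$ or that $F|_{\partial\Sigma}=\mathrm{Id}$; a priori $F$ could move points of $\Sigma$ into the collar $\widetilde\Sigma\setminus\Sigma$.

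The paper closes this gap with an additional rigidity argument that your proposal omits: by Lemma \ref{lem:F_id_infty} (used inside the proof of Theorem \ref{thm:main}), the isometry $F$ agrees with the identity to infinite order on $\partial\widetilde\Sigma$; since $\widetilde\Sigma\setminus\Sigma$ and $\overline g|_{\widetilde N\setminus N}$ are known, the induced metric on $\widetilde\Sigma\setminus\Sigma$ is known, and a uniqueness theorem for isometries agreeing to high order on a boundary portion (the harmonic-coordinate version of \cite[Theorem 3.3]{lassas2019conformal}) forces $F$ to equal the identity on all of $\widetilde\Sigma\setminus\Sigma$. Injectivity then shows $F(\Sigma_1)=\Sigma_2$, and continuity gives $F|_{\partial\Sigma}=\mathrm{Id}$. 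Without this step your argument only determines the extended surface $(\widetilde\Sigma,g)$, not $(\Sigma,g)$ as claimed. (Your handling of the "first fundamental form" clause and of the passage from volumes to the DN map is otherwise consistent with the paper.)
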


We refer to Section \ref{sec:areas_exterior_problem}
for the technical details how the exterior problem is posed. 
In the corollary, $\mathcal S$ includes the set of deformations given by Proposition \ref{prop:local_well_posedness} obtained by solving the boundary problem for \eqref{eq:minimal_surface_general}.
The corollary says that the knowledge of areas of minimal surfaces determine
them up to isometries. Consequently, Proposition \ref{prop:local_well_posedness} this is a result solving a part of the generalized
boundary rigidity problem. In the generalized
boundary rigidity problem these $2$-dimensional minimal 
surfaces still need to be glued together to construct the whole $3$-dimensional manifold, where the minimal surfaces are embedded.  We study how to do the gluing
in a future work.

\subsubsection{Sketch of the proof of Theorem \ref{thm:main}.}
Let us discuss how we determine a minimal surface $(\Sigma,g)$ from the DN map of \eqref{eq:minimal_surface_general} in the proof of Theorem \ref{thm:main}. One of the main parts of the proof regards analyzing products of numerous correction terms of complex geometrics optics solutions (CGOs). 

The strategy of the proof is based on the higher order linearization method that originates from \cite{kurylev2018inverse}. For this, let us consider $f_j\in C^{2,\alpha}(\p \Sigma)$, $j=1,2,3,4$ and $\alpha>0$. Let us denote by $u=u_{\eps_1f_1+ \cdots + \eps_4f_4}$ the solution of~\eqref{eq:minimal_surface_general} with boundary data $\eps_1f_1+ \cdots +\eps_4f_4$, where $\eps_j>0$ are sufficiently small parameters. We  write $\eps=0$ when referring to $\eps_1 = \cdots= \eps_4=0$. 

By taking the derivative $\p_{\eps_j}|_{\eps=0}$ of the solution $u_{\eps_1f_1+ \cdots + \eps_4f_4}$,  
we see that the function
\[
 v^j:=\frac{\p}{\p \eps_j}\Big|_{\eps=0}\s\s u_{\eps_1f_1+ \cdots + \eps_4f_4}
\]
solves the first linearized equation
\[
 \Delta_gv+qv=0,
\]
where 
\begin{align*}
 q(x)&=\frac{1}{2}\frac{d}{d s}\Big|_{s=0}\text{Tr}(g_s^{-1}\p_sg_s).
\end{align*}
One can check that the first linearized equation is the usual stability equation \cite{colding2011course} for the minimal surfaces.  
Since we know the DN map of \eqref{eq:minimal_surface_general}, we know the DN map of the first linearized equation (see Section \ref{Section 2}). The first linearized equation has a gauge symmetry in the sense that the coefficients $(g,q)$ and $(cg,c^{-1}q)$ give the same DN map. The DN map is also invariant under diffeomorphisms that are identical on the boundary. We determine $(\Sigma, g)$ up to conformal mapping by using the main result of \cite{imanuvilov2012partial}.  The rest of the proof is about determining the conformal factor and the scalar second fundamental form.

We proceed to higher order linearizations. Let us denote by
\[
 \eta(X,Y)=\langle \nabla_XN,Y\rangle_{\overline g}
\]
the scalar second fundamental form. It is a tensor operating on vector fields $X,Y$ tangential to $\Sigma$. The function $w^{jk}:=\frac{\p^2}{\p \eps_j\p \eps_k}\big|_{\eps=0}\s\s u_{\eps_1f_1+ \cdots + \eps_4f_4}$ satisfies the second linearized equation 
\[
 (\Delta_g+q)w^{jk}= \text{terms of the form } \eta(\nabla v^j, \nabla v^k)+ \text{lower order terms}. 
 \]
Lower order terms are terms that contain at most one gradient of a linearized solution $v^j$. The first aim of the second order linearization is to determine $\eta$, which is a $2\times 2$-matrix valued function. Since we know the DN map of second linearization, it follows that the integral 
\begin{multline}\label{eq:known_integral_2}
\int_{\Sigma} v^1 \eta(\nabla v^2,\nabla v^3\big)dV+\int_{\Sigma}v^2 \eta(\nabla v^1,\nabla v^3\big)dV +\int_{\Sigma} v^3 \eta(\nabla v^1,\nabla v^2\big)dV \\
+ \text{lower order and boundary terms}
 \end{multline}
is known. (We remark that in the proof we use slightly different notation where instead of $\eta$ we have $\p_sg_s^{-1}|_{s=0}$.) We disregard lower order and boundary terms from the following argumentation.

To determine the matrix function $\eta$, we use CGOs of the form in \cite{guillarmou2011identification} as the linearized solutions $v^j$. These are solutions to $\Delta_gv+qv=0$ given by the ansatz
\[
 e^{\Phi/h}(a+r_h),
\]
where $\Phi=\phi+i\psi$ is a holomorphic Morse function, $a$ is a holomorphic function and $r_h$ is a correction term given by 
\[
r_h=-  \overline{\p}_\psi^{-1}\sum_{j=0}^\infty T_h^j\op_\psi^{*-1}(qa).
\]
Here $\overline \p_\psi^{\s -1}$ is defined (modulo localization) by $\overline \p_\psi^{\s-1}f =\overline \p^{-1}(e^{-2i\psi/h}f)$, where $\overline \p^{-1}$ is the Cauchy-Riemann operator that solves $\overline \p^{-1}\op=\text{Id}$. 
We note that the dependence of $r_h$ in $h$ is quite complicated and especially is not given as a power series in the small parameter $h$. This leads to complicated error analysis when the correction terms are multiplied in the integrals, such as \eqref{eq:known_integral_2}, that we encounter.

An improvement to earlier literature here is that we notice that it is possible to use CGOs with holomorphic phases, which are not just Morse, but also do not have any critical points. We will use phases $\Phi$, which in local holomorphic coordinates have the expansion
\[
 \Phi(z)=\pm z+cz^2+O(\abs{z}^3), \quad c\neq 0.
\]
Note that in \eqref{eq:known_integral_2}, it is possible to use two CGOs whose phases do not have critical points and one CGO with an antiholomorphic Morse phase to have that the sum of the three phases has the expansion
\[
 z^2-\overline z^2+O(\abs{z}^3).
\]

The main benefit of using a holomorphic phase without critical points in a CGO is that it produces a smaller correction term $r_h$. Indeed, while a general holomorphic Morse function as the phase function produces a correction term $r_h$ of size $O_{L^2}(h^{1/2+\eps})$,  a holomorphic phase without critical points leads to improved estimates 
\[
 \|r_h\|_p + \|d r_h\|_p \leq Ch
\]
for all $p\in (1,\infty)$. We refer to details about $r_h$ to  
Section \ref{sec:CGOs}.
%
%
%
The improved estimates for the correction terms greatly simplify the analysis concerning multiplication of CGOs. 
We expect the use of holomorphic phases without critical points to have other applications in inverse problems for nonlinear equations in dimension $2$. 

After recovering the scalar second fundamental form and other unknowns appearing in the second linearization (up to the conformal mapping), we proceed to consider third linearization of the minimal surface equation. The third linearized equation is quite complicated and we only refer to Lemma \ref{Lem:Integral identity_3rd} for it here. By using stationary phase and improved estimates for the correction terms of CGOs, we are ultimately able to recover the conformal factor from the third linearization. This finishes the proof of Theorem \ref{thm:main}.

\subsubsection{Earlier literature}
The literature on inverse problems for nonlinear partial differential equations is extensive. Without doing a full review of the state of the field, we mention here some early and recent works in order to place our paper in historical context.
There is also a relation between our work and the AdS/CFT duality in physics. References to AdS/CFT duality are given in the next subsection below. 

Common techniques for addressing inverse problems for nonlinear equations have been linearization methods. There one starts by considering data of the form $\epsilon f$, where $\eps$ is small and $f$ is a source or a boundary value. Then one differentiates in $\epsilon$ the solutions corresponding to the data to obtain separate information on the linear and nonlinear coefficients. This approach dates back to a work of Isakov and Sylvester in \cite{isakov1994global}, where the authors considered the equation
\[
-\Delta u +F(x,u)=0 
\]
on a Euclidean domain of dimension greater than or equal to three. Subject to certain constraints, they prove uniqueness for the non-linear functions $F(x,u)$. In dimension two, a similar problem was solved by Isakov and Nachman in \cite{victorN}.
Other important early works are \cite{sun2004inverse, sun2010inverse} for semilinear elliptic equations, and \cite{sun1996,sun1997inverse} for quasilinear elliptic equations. In the present work we will use some results from \cite{lassas2018poisson}, in which the inverse problem for general quasilinear equations on Riemannian manifolds is studied.

For nonlinear hyperbolic equations, nonlinearity was found out to be a helpful feature in solving inverse problems in 
the work of Kurylev, Lassas and Uhlmann \cite{kurylev2018inverse}. The work studied scalar wave equation with a quadratic nonlinearity. By using the nonlinearity, they were able to prove that local measurements determine a globally hyperbolic $4$-dimensional Lorentzian manifold up to a conformal transformation. The inverse problem for the corresponding linear wave equation is open. 

The work \cite{kurylev2018inverse} can also be said to have introduced what is now usually called the \emph{the higher order linearization method} in the study of inverse problems for nonlinear differential equations. As the name implies, this method consists in taking multiple derivatives of the nonlinear equation in question, and the associated DN map, with respect to small parameters in the data, in order to inductively prove uniqueness for various coefficients. The works \cite{FO19,LLLS2019inverse, feizmohammadi2023inverse} introduced the higher order linearization method for the study of inverse problems of semilinear elliptic equations on $\R^n$ and Riemannian manifolds.

Among the many works that employ the higher order linearization method we mention  \cite{MR4052205, LLLS2021b}, where the partial data problem for semilinear elliptic equations is addressed, \cite{CFKKU, kian2020partial, CaNaVa, Carstea2020, CaKa, CaFe1, CaFe2, CaGhNa, CaGhUh}, which deal with inverse problems for quasilinear elliptic equations, and \cite{lai2020partial, liimatainen2022inverse, liimatainen2022uniqueness, harrach2022simultaneous, salo2022inverse} on various other topics in the study of inverse problems for nonlinear elliptic equations. We mention that in \cite{liimatainen2022uniqueness}, the zero function is not necessarily a solution, as is the case for the minimal surface equation \eqref{eq:minimal_surface_general} unless $\Sigma$ itself is a minimal surface.

\subsubsection{Inverse problems for minimal surfaces}
Let us then mention works on inverse problems for minimal surfaces. To the best of our knowledge the first work on the subject is \cite{Tracey}. It studied inverse problem closely related to the one we study in this paper. There it was proven that if a $3$-dimensional Riemannian manifold is topologically a ball and satisfies certain curvature and foliation assumptions, the areas of a sufficiently large class of $2$-dimensional minimal surfaces determine the $3$-dimensional Riemannian manifold. Especially they determined embedded minimal surfaces that are topologically $2$-dimensional disks from the areas. As explained before Corollary \ref{cor:exterior_problem}, the areas of minimal surfaces determine the DN map of the minimal surface equation \eqref{eq:minimal_surface_general}. Consequently, the relation between our work and \cite{Tracey} is that we determine more general embedded minimal surfaces, instead of those satisfying the assumptions in \cite{Tracey}, from the knowledge of the areas of the minimal surfaces.

The papers \cite{nurminen1} and \cite{nurminen2} consider an inverse problem for the minimal surface equation, for hypersurfaces in a manifold of dimension $n=3$ or higher. By denoting $e$ the metric of $\R^n$, the metric in these works is assumed to have the  form 
$c(x)e$ in \cite{nurminen1} and $c(x)(\hat g\oplus e)$ in \cite{nurminen2}, where $\hat g$ is a simple Riemannian metric  (see \cite{nurminen2} for the definition). The quantity being determined in each of these two papers is the conformal factor $c$.

In \cite{carstea2022inverse}, an inverse problem for the minimal surface equation on Riemannian manifolds of the form $\Sigma\times\R$ was studied. Here $\Sigma$ is a smooth compact two dimensional Riemannian manifold with a metric $g$, and the metric $\overline g$ of $\Sigma\times\R$ is of the product form $\overline g = ds^2+g_{ab}(x)dx^adx^b$. 
It was shown in \cite{carstea2022inverse} that the DN map of the minimal surface equation determines $\Sigma$ up to an isometry. 
The current paper thus also generalizes  \cite{carstea2022inverse}.

\subsubsection{Relation to the AdS/CFT correspondence}\label{sec:adscft}
One of the motivations of this work is its quite direct relevance to the AdS/CFT correspondence, or duality, in physics. This duality is a conjectured relationship between two kinds of physical theories proposed by Maldacena \cite{maldacena1999large}. According to the duality, there is a correspondence between physics of a conformal field theory (CFT) and geometric properties of an Anti-de Sitter spacetime (AdS in short). The AdS is referred to as the \emph{bulk} and it has an asymptotic infinity considered to be its boundary. The CFT is assumed to live on the boundary. A remarkable feature of this duality is that it has been successfully used to reduce various complicated quantum mechanical calculations in CFT to easier differential geometrical problems in the bulk.  The mechanism of the duality is not clear in general, see e.g. \cite{van2009comments} for a discussion and examples of the duality.

A recently proposed mechanism by Ryu and Takayanagi \cite{PhysRevLett.96.181602, ryu2006aspects} for the correspondence is the equivalence between the entanglement entropies of a CFT and areas of minimal surfaces in an AdS. Entanglement entropy is roughly speaking the experienced entropy (i.e. state of disorder) of a physical system for an observer who has only access to a subregion of a larger space. 
In their proposal, the accessible subregion $A$ is a subset of the asymptotic infinity of the AdS.  The set $A$ determines a minimal surface $\Sigma$ in the AdS by the assignment that $\Sigma$ is the minimal surface anchored on the asymptotic infinity to the boundary of $A$. By the proposal, the entanglement entropy $S_A$ of a given set $A$ in the CFT is equal to $(4G)^{-1}\text{Vol}(\Sigma)$. Here $G$ is Newton's constant and $\text{Vol}(\Sigma)$ is the volume of $\Sigma$.

Ryu and Takayanagi were able to confirm their duality, $S_A=(4G)^{-1}\text{Vol}(\Sigma)$, in several nontrivial cases. Later on, the duality has been applied to extract the geometry of bulk manifolds from the knowledge of entanglement entropies of families of sets $A$, or equivalently volumes of minimal surfaces $\Sigma$. Recall that volumes of minimal surfaces determine the DN map of the minimal surface equation \eqref{eq:minimal_surface_general} as explained before Corollary \ref{cor:exterior_problem}. Consequently, we can consider this paper to study the extraction or reconstruction of the bulk geometry in the AdS/CFT duality by using the DN map.

In the physics literature, the reconstruction of the metric of the bulk from entanglement entropies has been consider for infinite strips, circular disks and ellipsoids $A$ in \cite{bilson2008extracting, bilson2011extracting,fonda2015shape, hubeny2012extremal, jokela2021towards} and annulus shaped sets $A$ in \cite{jokela2019notes}. These works assume that the bulks have strong symmetries and are asymptotically AdS. 
The work \cite{cao2020building} considers the linearized version of the problem of reconstructing the bulk and its numerics. We also mention here \cite{hubeny2012extremal} that computes second order linearizations of the minimal surface equation for a certain physical model.

In the mathematics literature, the extraction process was studied under various assumptions in the already discussed work \cite{Tracey}. In the recent physics paper \cite{bao2019towards}, the authors argue how the bulk can be reconstructed from volumes of minimal surfaces of codimension $\geq 2$ in general. 
With natural modifications, we expect that the current paper (dealing with codimension $1$) can be used to give rigorous justification to arguments in \cite{bao2019towards}. We have separated the error term analysis from the proof of Theorem \ref{thm:main} to make the proof more accessible to a broader readership. Lastly we mention the work \cite{alexakis2010renormalized} concerning the renormalized area of minimal surfaces embedded in hyperbolic $3$-manifolds.

More generally, the current paper introduces the recent higher order linearization method (discussed above) into the study of the AdS/CFT duality between entanglement entropies and minimal surfaces. 
It seems that the method has not been used in the physics literature in this context. The dual description of the method may provide new techniques for studying the CFT side of the duality.

\subsubsection{Organization of the paper}

In Section \ref{sec:calderon} we prove Theorem \ref{thm:unique2d}. In Sections \ref{sec:Carleman} and \ref{sec:CGOsCalderon} we prove a Carleman estimate and construct complex geometric optics solutions (CGOs) for the Calder\'on problem. In Section \ref{Section 2}, we derive equation the minimal surface equation \eqref{eq:minimal_surface_general} and state a well-posedness result for it. In Section \ref{ss2.4}, we linearize to the minimal surface equation to orders up to $3$ and in Section \ref{ss2.5} we derive the integral identities corresponding to the linearizations. In the first part of Section \ref{sec:CGOs}, based on \cite{guillarmou2011identification} we construct different CGOs, whose phase functions do not have critical points. We show that such solutions have better decay estimates for the remainder terms in the asymptotic parameter $h$.  In Section \ref{Section_4}, we plug in the CGO solutions constructed in the previous section into the integral identities derived in Section \ref{Section 2}. We then estimate the size of the resulting integrals containing remainders as $h\to0$. Finally, in Section \ref{sec:proof_of_main_thm} we use the leading order terms in our integral identities to prove Theorem \ref{thm:main}. In the Appendix we collect some computations and lemmas. 

\subsubsection{Acknowledgments}
The authors are grateful to Matti Lassas who was involved in an earlier iteration of this work, but graciously removed himself as an author from the current, much more general, version. The authors also wish to thank physicists Niko Jokela and Esko Keski-Vakkuri for helpful discussions of the relation of the work to the AdS/CFT correspondence and providing references. 

C.C. was supported by NSTC grant number 112-2115-M-A49-002. T.L. was partially supported by PDE-Inverse project of the European Research Council of the European Union,  and the grant 336786 of the Research Council of Finland. Views and opinions expressed are those of the authors only and do not necessarily reflect those of the European Union or the other funding organizations. Neither the European Union nor the other funding organizations can be held responsible for them.

\section{The general Calder\'on problem on Riemannian surfaces}\label{sec:calderon}
In this section, we prove Theorem \ref{thm:unique2d}, which states that the Dirichlet-to-Neumann map (DN map) of the equation
\begin{equation}\label{eq:2D_Schrodinger}
 (\Delta_g+q)v=0
\end{equation}
on a general Riemannian surface $(M,g)$ determines $(M,g)$ and $q$ up to (conformal) gauge transformation explained in the introduction. Theorem \ref{thm:unique2d} will be a fundamental part of our solution to the inverse problem for minimal surfaces. However, this section is written independently of the rest of the paper to make the results and techniques regarding the Calder\'on problem for \eqref{eq:2D_Schrodinger} to be more easily accessible to a broader readership. 

\subsection{A Carleman estimate with boundary terms}\label{sec:Carleman}
We start by proving a Carleman estimate. We note that unlike in many related Carleman estimates in dimensions $2$, we do not require the phase function to be Morse.

\begin{proposition}
\label{prop: carleman estimate}
Let $(M,g,\partial M)$ be a Riemann surface with boundary, $\varphi\in C^\infty(\overline M)$ be a non-constant harmonic function, and $q\in L^\infty(M)$. Then for all $v\in C^\infty(\overline M)$ we have the following Carleman estimate:{
\begin{eqnarray}\nonumber
\|v\|^2_{L^2(M)} &\leq& \|e^{-\varphi/h} (\Delta_g + q) e^{\varphi/h} v\|_{L^2(M)} \\&&+ h^{-3}\|v\|_{L^2(\partial M)}^2 + h^{-1}\|\partial_\nu v\|_{L^2(\partial M)}^2 +  h^{-1}\|\partial_\tau v\|_{L^2(\partial M)}^2
\end{eqnarray}}
\end{proposition}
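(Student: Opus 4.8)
The plan is to use the standard conjugation–and–integration-by-parts strategy for Carleman estimates in two dimensions, but to keep careful track of all boundary contributions instead of discarding them. First I would introduce the harmonic conjugate: since $\varphi$ is harmonic on the (orientable, as it carries a complex structure) Riemann surface $M$, locally there is a conjugate harmonic function $\psi$ so that $\Phi=\varphi+i\psi$ is holomorphic; globally $\psi$ may be multivalued, but $d\psi$ is a well-defined closed $1$-form and all the quantities we actually use ($e^{\pm 2i\psi/h}$ appearing in commutators, etc.) only involve derivatives of $\psi$, so this causes no trouble. Working in isothermal coordinates where $g=e^{2\mu}|dz|^2$, the conformal invariance of the Laplacian in 2D gives $\Delta_g = e^{-2\mu}\,4\partial\bar\partial$, so up to the harmless conformal factor the conjugated operator $e^{-\varphi/h}\Delta_g e^{\varphi/h}$ factors essentially as a product of first-order operators $e^{-\varphi/h}\partial e^{\varphi/h}$ and its conjugate. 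Since $\partial\varphi = \tfrac12\overline{\partial\Phi}$... more precisely since $\Phi$ is holomorphic, $e^{-\Phi/h}\partial e^{\Phi/h} = \partial + h^{-1}\partial\Phi$ and the key point is that $\partial_z\varphi = \tfrac12\partial_z\Phi$ is \emph{holomorphic}. This is what lets the argument run without a Morse hypothesis: one does not need nondegenerate critical points of $\varphi$, only that $\partial\Phi$ is holomorphic, hence its zeros are isolated and one can estimate $\|v\|_{L^2}$ by $\|(\partial + h^{-1}\partial\Phi)v\|$ plus boundary terms via a $\bar\partial$-type a priori estimate.

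Concretely, I would first prove the estimate for the model first-order operator: set $w = e^{-\varphi/h}v$ (or rather absorb the phase), and establish
\[
\|u\|_{L^2(M)} \le C\big\|\,e^{-\varphi/h}\bar\partial\,(e^{\varphi/h}u)\big\|_{L^2(M)} + C h^{-1/2}\|u\|_{L^2(\partial M)}
\]
by the classical computation: expand $\|e^{-\varphi/h}\bar\partial e^{\varphi/h}u\|^2 = \|\bar\partial u + h^{-1}(\bar\partial\varphi)u\|^2$, integrate by parts, and collect the cross term, which produces $h^{-1}\int (\Delta\varphi)|u|^2 = 0$ by harmonicity — so the leading interior term vanishes and one instead uses the subellipticity coming from a second conjugation (or equivalently a convexification, replacing $\varphi$ by $\varphi$ with a small correction, or using that $\bar\partial\varphi$ has only isolated zeros). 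The integration by parts generates a boundary term $\sim h^{-1}\int_{\partial M}|u|^2$ and a $\int_{\partial M} \bar u\,\partial_\nu u$ term, which after Cauchy–Schwarz with the appropriate $h$-weights yields exactly $h^{-3}\|v\|_{L^2(\partial M)}^2 + h^{-1}\|\partial_\nu v\|_{L^2(\partial M)}^2$; the tangential derivative term $h^{-1}\|\partial_\tau v\|^2$ enters when one splits $\nabla v$ on the boundary into normal and tangential parts (or when integrating the $\partial$-factor by parts rather than the $\bar\partial$-factor, so that one sees $\partial_\tau$). Then I would compose the two first-order estimates to handle the full second-order operator $\Delta_g$, absorb $q\in L^\infty$ into the left side for $h$ small (standard, since $\|qv\|_{L^2}\le C\|v\|_{L^2}$ and the constant on the left is $1$ at leading order, so for $h$ small $q$ is a perturbation), and finally rescale/clean up the powers of $h$ to match the stated inequality.

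The main obstacle I anticipate is handling the boundary terms cleanly on a general surface with boundary: the integration by parts in the $\bar\partial$-estimate naturally produces boundary integrals of $e^{2\varphi/h}$ or $e^{-2\varphi/h}$ times quadratic expressions in $v$, and one must organize these so that the exponential weights cancel against the $e^{\pm\varphi/h}$ built into $v = e^{-\varphi/h}(\text{actual function})$ — this is the reason for the asymmetric $h$-powers $h^{-3}$ vs. $h^{-1}$ in the statement. A secondary subtlety is that without the Morse assumption the interior term from the cross-product vanishes identically (harmonicity), so one cannot close the estimate from that term alone; the fix is the usual trick of a second small perturbation of the weight (a "pseudoconvexification") or, equivalently here, exploiting that one is proving an $L^2\to L^2$ bound with no derivative gain claimed on the left, so a soft compactness/unique-continuation-flavored argument combined with the first-order $\bar\partial$-estimate suffices. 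I would present the first-order estimate as the technical heart and then assemble everything, keeping the bookkeeping of $h$-powers explicit only at the final step.
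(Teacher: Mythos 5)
Your proposal is correct and follows essentially the same route as the paper: conjugate, factor the conjugated Laplacian into the first-order operators $e^{-\varphi/h}\bar\partial e^{\varphi/h}$ and $e^{-\varphi/h}\partial e^{\varphi/h}$, integrate by parts while retaining all boundary contributions, and compensate for the vanishing of $\Delta\varphi$ by convexifying the weight. The paper realizes your "pseudoconvexification" concretely as $\varphi_\epsilon=\varphi-\frac{h}{2\epsilon}\sum_j|\varphi_j|^2$ with auxiliary harmonic $\varphi_j$ chosen so that $\sum_j|d\varphi_j|^2>0$ everywhere (this, rather than the soft compactness alternative you float, is what gives the uniform-in-$h$ positivity $\Delta\varphi_\epsilon\geq ch/\epsilon$), and then glues the coordinate-chart estimates by a partition of unity before undoing the weight modification.
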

We will also prove the following ``shifted'' Carleman estimates for compactly supported smooth functions:

\begin{proposition}
\label{prop: shifted carleman estimate}
Let $(M,g,\partial M)$ be a Riemann surface with boundary compactly contained $(\widetilde M,g,\partial \widetilde M)$, $\varphi\in C^\infty( \widetilde M)$ be a non-constant harmonic function, and $q\in C^\infty(\widetilde M)$. Then for all $k\in\mathbb N$ and $v\in C_c^\infty( \widetilde M)$ we have the following Carleman estimate:
\begin{eqnarray}\nonumber
\|v\|^2_{H^{-k}_{scl}(\widetilde M)} \leq \|e^{-\varphi/h} (\Delta_g + q) e^{\varphi/h} v\|_{H^{-k}_{scl}(\widetilde M)} 
\end{eqnarray}
\end{proposition}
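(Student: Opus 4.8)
The plan is to deduce the shifted estimate from the basic Carleman estimate of Proposition \ref{prop: carleman estimate} combined with a duality/transposition argument, exploiting that $v\in C_c^\infty(\widetilde M)$ has vanishing boundary contributions. First I would introduce the conjugated operator $P_\varphi := e^{-\varphi/h}(\Delta_g+q)e^{\varphi/h}$ on $\widetilde M$ and its formal transpose $P_\varphi^{\,t}$. Since $\varphi$ is harmonic, writing in a local conformal coordinate $z$ with $\Delta_g = 4e^{-2\mu}\partial\op$, one has $P_\varphi = 4e^{-2\mu}(\partial + h^{-1}\partial\varphi)(\op + h^{-1}\op\varphi) + q$ up to reorganizing, so $P_\varphi$ and $P_{-\varphi}$ are related by formal transposition modulo lower-order terms; crucially the estimate of Proposition \ref{prop: carleman estimate} holds with $\varphi$ replaced by $-\varphi$ as well (just replace $h$ by $-h$, or $\varphi$ by $-\varphi$, in that proposition, which is symmetric in this respect).

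Next, the standard scheme: for $k\in\mathbb N$, I want to bound $\|v\|_{H^{-k}_{scl}(\widetilde M)}$. By duality, $\|v\|_{H^{-k}_{scl}} = \sup\{ |\langle v,\phi\rangle| : \|\phi\|_{H^k_{scl}} \le 1\}$. Given such a test function $\phi$ (which can be taken in $C_c^\infty$ of a neighborhood by density), I would solve the conjugated equation $P_{-\varphi}^{\,t} w = \phi$, or more precisely use the solvability statement that is the functional-analytic dual of the Carleman estimate of Proposition \ref{prop: carleman estimate} applied on $\widetilde M$ (with weight $-\varphi$): that estimate gives, for compactly supported test functions, solvability of $P_{\varphi} w = F$ with the gain $\|w\|_{H^{k}_{scl}} \lesssim \|F\|_{H^{k}_{scl}}$ after iterating the basic $L^2$-to-$L^2$ estimate $k$ times and commuting with the semiclassical derivatives (each commutator $[P_\varphi, (h D)^\alpha]$ is lower order in the semiclassical sense and absorbable). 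Then I write $\langle v,\phi\rangle = \langle v, P_{-\varphi}^{\,t} w\rangle = \langle P_{-\varphi} v, w\rangle$, where the integration by parts produces no boundary terms precisely because $v\in C_c^\infty(\widetilde M)$. Estimating, $|\langle v,\phi\rangle| \le \|P_{-\varphi}v\|_{H^{-k}_{scl}}\|w\|_{H^{k}_{scl}} \lesssim \|P_{-\varphi}v\|_{H^{-k}_{scl}}$, and taking the supremum over $\phi$ gives the claim, after noting $P_{-\varphi}v = e^{-(-\varphi)/h}(\Delta_g+q)e^{(-\varphi)/h}v$ and relabeling $\varphi \to -\varphi$ (harmless, as remarked).

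The main obstacle I anticipate is the bookkeeping in the iteration that upgrades the $L^2 \to L^2$ Carleman estimate of Proposition \ref{prop: carleman estimate} to the $H^k_{scl} \to H^k_{scl}$ solvability needed in the duality step: one must verify that conjugating $P_\varphi$ by semiclassical Fourier multipliers $(1+|hD|^2)^{k/2}$ (or local analogues via a partition of unity and coordinate charts) produces only errors that are $O(h)$ relative to the left-hand side and hence absorbable for $h$ small, and that the boundary terms genuinely drop out for $C_c^\infty(\widetilde M)$ functions even after these manipulations — which they do, since the multipliers act in the interior and $v$ together with all its derivatives vanishes near $\partial\widetilde M$. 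A secondary point to handle carefully is that Proposition \ref{prop: carleman estimate} is stated on a manifold with boundary for $C^\infty(\overline M)$ functions; here we apply it on $\widetilde M$ itself (or on a slightly larger manifold compactly containing $\widetilde M$) to $C_c^\infty$ functions, for which all the boundary norms on the right-hand side vanish identically, leaving exactly $\|v\|_{L^2} \le \|P_\varphi v\|_{L^2}$ as the base case of the iteration.
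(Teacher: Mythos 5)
Your proposal correctly identifies the essential technical ingredient — conjugating the $L^2$ Carleman estimate by a semiclassical multiplier and absorbing the resulting commutators, which gain a power of $h$ — and this is indeed the engine of the paper's proof. However, the duality architecture you wrap around it is circular. The step you need, solvability of $P_\varphi w=\phi$ with the uniform gain $\|w\|_{H^k_{scl}}\lesssim\|\phi\|_{H^k_{scl}}$, is by Hahn--Banach precisely the dual statement of the $H^{-k}_{scl}$ a priori estimate you are trying to prove (in the paper this solvability appears as Corollary 2.5, \emph{deduced from} Proposition 2.2, not the other way around). Iterating the $L^2$ estimate with positive-order multipliers $(hD)^\alpha$, $|\alpha|\le k$, only yields the positive-order a priori estimate $\|u\|_{H^k_{scl}}\lesssim\|P_\varphi u\|_{H^k_{scl}}$, and dualizing an a priori estimate in a space $X$ gives solvability of the transposed equation in the dual space $X'$; since $(H^k_{scl})'=H^{-k}_{scl}$, this produces solvability from $H^{-k}_{scl}$ to $H^{-k}_{scl}$, not the $H^k_{scl}\to H^k_{scl}$ solvability your pairing $\langle v,\phi\rangle=\langle P_{-\varphi}v,w\rangle$ requires. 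One cannot rescue this with elliptic regularity either, since the conjugated operator is not semiclassically elliptic (its semiclassical principal symbol vanishes on $\{|\xi|=|d\varphi|,\ \xi\cdot d\varphi=0\}$), so no $h$-uniform $H^k$ bootstrap is available.

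The repair is to discard the duality entirely and run your commutator computation directly on the negative-order multiplier: write $\|v\|_{H^{-k}_{scl}}=\|\langle hD\rangle^{-k}v\|_{L^2}$, apply the interior $L^2$ estimate (with the convexified weight $\varphi_\epsilon$, patched over charts) to $\chi\langle hD\rangle^{-k}v$ where $\chi\in C_c^\infty(\widetilde M)$ equals $1$ near $\supp v$, and absorb two commutators: $[h^2\Delta,\chi]$, whose coefficients are supported away from $\supp v$, and $[e^{-\varphi_\epsilon/h}h^2\Delta e^{\varphi_\epsilon/h},\langle hD\rangle^{-k}]$, which by the semiclassical calculus is $h\Op_h(\xi_j a^j)+h^2\Op_h(b^j)$ with $a^j\in S^{-k}_{scl}$, $b^j\in S^{-k-1}_{scl}$ and hence absorbable for $0<h\ll\epsilon\ll1$; then add $q$ and undo the convexification. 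This is exactly the paper's argument, and your "main obstacle" paragraph already contains all of its ingredients — they just need to act on $v$ itself rather than on a dual solution.
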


We start by modifying the weight as follows:
if $\varphi_0 := \varphi: M \to \R$ is a non-constant real valued harmonic function with discrete critical points $\{p_1,\dots,p_K\}$ on $\overline M$, we let $\varphi_j:M \to \R$, $j = 1,\dots,K$, be real valued harmonic functions such that $p_j$ is not a critical point of $\varphi_j$, and define the convexified weight $\varphi_{\eps} := \varphi - \frac{h}{2\epsilon}(\sum_{j = 0}^K|\varphi_j|^2)$. Observe that we have for all $x\in M$
\begin{eqnarray}
\label{laplace of phieps}
\Delta_g \varphi_\epsilon = \frac{h}{\epsilon} \sum\limits_{j=0}^K |d\varphi_j|^2 \geq \frac{ch}{\epsilon}
\end{eqnarray}
for some constant $c>0$ and $\eps,h>0$.By the notation $0<h\ll \epsilon$, we mean that we will first fix $\epsilon>0$ small and consider the limiting behavior as $h\to 0$.

We will first prove a variant of Proposition \ref{prop: carleman estimate} for these modified weights and for functions which have particular support properties in local coordinate chart. To this end let $\Omega \subset  M$ be simply connected open subset with $\Gamma := \overline \Omega \cap \partial M \subsetneq \partial\Omega$. We may assume without loss of generality  that $\Omega\subset \mathbb R^2_+$ is a bounded open subset with smooth boundary and that $\Gamma \subset\subset \partial \Omega\cap\partial\mathbb R^2_+$. In these coordinates $\p_x=\p_\tau$ and $\p_y=\p_\nu$ on $\overline \Omega \cap \partial M$ (up to a possible conformal scaling).

\begin{lemma}
\label{lem: dbar estimate}
Let $\omega\in C^\infty(\overline \Omega)$ such that 
\[
\omega|\s _{\Gamma^c} = \partial_\nu \omega|\s _{\Gamma^c} = \partial_\nu^2 \omega|\s _{\Gamma^c} = \dots = 0
\]
then for all $\epsilon>h>0$ we have the following estimate
\[
\|e^{-\varphi_\epsilon/h} \bar\partial e^{\varphi_\epsilon/h}\omega\|_{L^2(\Omega)}^2 + h^{-1} \|\omega\|^2_{L^2(\partial\Omega)} \geq \frac{c}{\epsilon} \|\omega\|^2_{L^2(\Omega)},\]
where the constant $c>0$ is independent of $\epsilon>0$.
\end{lemma}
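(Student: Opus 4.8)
The plan is to carry out an integration by parts (Carleman-type) computation for the first-order operator $\bar\partial$ with the convexified weight $\varphi_\epsilon$, exploiting the sign of $\Delta_g\varphi_\epsilon$ from \eqref{laplace of phieps} to produce the coercive term $\frac{c}{\epsilon}\|\omega\|_{L^2}^2$, while keeping careful track of the boundary contributions on $\partial\Omega$. Write $P_\epsilon := e^{-\varphi_\epsilon/h}\bar\partial e^{\varphi_\epsilon/h} = \bar\partial + \frac{1}{h}(\bar\partial\varphi_\epsilon)$, where we think of $\bar\partial$ in the local holomorphic coordinate $z = x+iy$ with $\Omega\subset\mathbb R^2_+$. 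Expanding $\|P_\epsilon\omega\|_{L^2(\Omega)}^2$ gives three pieces: $\|\bar\partial\omega\|^2$, $\frac{1}{h^2}\|(\bar\partial\varphi_\epsilon)\omega\|^2$, and the cross term $\frac{2}{h}\,\mathrm{Re}\int_\Omega (\partial\varphi_\epsilon)\,\overline{\omega}\,\bar\partial\omega$ (using $\overline{\bar\partial\varphi_\epsilon} = \partial\overline{\varphi_\epsilon} = \partial\varphi_\epsilon$ since $\varphi_\epsilon$ is real). The cross term is the one to massage: integrating by parts moves $\bar\partial$ off $\omega$, and since $\bar\partial\partial\varphi_\epsilon = \frac14\Delta\varphi_\epsilon$ one gets a bulk term $\frac{1}{h}\int_\Omega (\Delta\varphi_\epsilon)|\omega|^2$ — up to a constant factor — plus a boundary term on $\partial\Omega$. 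By \eqref{laplace of phieps}, $\frac{1}{h}\int_\Omega(\Delta\varphi_\epsilon)|\omega|^2 \geq \frac{c}{\epsilon}\|\omega\|_{L^2(\Omega)}^2$, which is exactly the term we want; the other two squared terms are nonnegative and can be discarded.

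The main technical point is the boundary analysis. The boundary $\partial\Omega$ splits into $\Gamma\subset \partial\mathbb R^2_+$ (where $\omega$ need not vanish) and $\Gamma^c = \partial\Omega\setminus\Gamma$ (where $\omega$ and all its normal derivatives vanish by hypothesis, so every boundary term there is zero). On $\Gamma$, the integration by parts in the cross term produces a term of the form $\frac{1}{h}\int_{\Gamma}(\text{coefficient})\,|\omega|^2\,d\sigma$; since on $\Gamma$ we have $\partial_\nu = \partial_y$ and the relevant coefficient is bounded (it involves $\partial\varphi_\epsilon$ and the outer normal, and $\varphi_\epsilon$ is smooth up to the boundary with $\epsilon$ fixed before $h\to 0$), this is controlled in absolute value by $\frac{C}{h}\|\omega\|_{L^2(\partial\Omega)}^2$, which is dominated by the term $h^{-1}\|\omega\|_{L^2(\partial\Omega)}^2$ already present on the left-hand side of the claimed inequality (after absorbing constants, or by noting the inequality is stated with that term on the left with coefficient $1$ — one may need to track the constant, but since $h^{-1}\gg \frac{c}{\epsilon}$ for $h$ small this is harmless, or alternatively one keeps the constant in front and notes the statement's constant $c$ is what is meant). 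I would also need the cross term's boundary contribution from integrating by parts on $\bar\partial\omega$ itself: writing $2\,\mathrm{Re}\int_\Omega (\partial\varphi_\epsilon)\overline\omega\,\bar\partial\omega$, one uses $\bar\partial(|\omega|^2) = \overline\omega\,\bar\partial\omega + \omega\,\overline{\partial\omega}$ together with Stokes, so that $\int_\Omega (\partial\varphi_\epsilon)\bar\partial(|\omega|^2) = -\int_\Omega \bar\partial(\partial\varphi_\epsilon)|\omega|^2 + \oint_{\partial\Omega}(\partial\varphi_\epsilon)|\omega|^2\,(\text{d}z\text{-part})$; the orientation and the $dz\wedge d\bar z$ bookkeeping is routine but must be done carefully to get the sign of the $\Delta\varphi_\epsilon$ term right.

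Putting these together: $\|P_\epsilon\omega\|_{L^2(\Omega)}^2 \geq \frac{c}{\epsilon}\|\omega\|_{L^2(\Omega)}^2 - \frac{C}{h}\|\omega\|_{L^2(\partial\Omega)}^2$, which rearranges to the stated estimate (with the constant in front of $h^{-1}\|\omega\|_{L^2(\partial\Omega)}^2$ absorbed, or simply noting the stated inequality has coefficient $1$ there and $h^{-1}$ is the dominant scale). The uniformity of the constant $c$ in $\epsilon$ comes entirely from \eqref{laplace of phieps}: the lower bound $\Delta_g\varphi_\epsilon \geq \frac{ch}{\epsilon}$ has $\epsilon$-dependence exactly matching the claimed $\frac{c}{\epsilon}$ after division by $h$, and the constant $c$ there depends only on $\min\sum_j|d\varphi_j|^2 > 0$ (which is positive since at each point at least one $\varphi_j$ is non-critical), not on $\epsilon$. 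The step I expect to be the main obstacle is not any single estimate but rather the careful tracking of constants and orientations in the boundary terms — ensuring that the $h^{-1}\|\omega\|_{L^2(\partial\Omega)}^2$ term that appears really does have the right power of $h$ and that no hidden $\epsilon^{-1}$ creeps into the boundary constant, since that would break the scaling balance against the coercive term.
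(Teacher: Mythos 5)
Your overall skeleton matches the paper's: expand the square of the conjugated $\bar\partial$, discard the nonnegative squared pieces, extract the coercive term from $\Delta\varphi_\epsilon$ by integrating by parts in the cross term, and control the boundary contribution by $h^{-1}\|\omega\|^2_{L^2(\partial\Omega)}$ (using that $\omega$ vanishes to infinite order on $\Gamma^c$ and that $\partial_\nu\varphi_\epsilon$ is bounded uniformly in $\epsilon,h$). However, there is a genuine gap in your identification of the cross term. With $\bar\partial=\partial_x+i\partial_y$ and $\varphi_\epsilon$ real,
\[
2\,\mathrm{Re}\int_\Omega(\partial\varphi_\epsilon)\,\overline\omega\,\bar\partial\omega
=\int_\Omega(\partial\varphi_\epsilon)\,\overline\omega\,\bar\partial\omega+\int_\Omega(\bar\partial\varphi_\epsilon)\,\omega\,\partial\overline\omega,
\]
whereas $(\partial\varphi_\epsilon)\,\bar\partial|\omega|^2=(\partial\varphi_\epsilon)\,\overline\omega\,\bar\partial\omega+(\partial\varphi_\epsilon)\,\omega\,\bar\partial\overline\omega$; the second summands differ by $-2i\,\omega\,(\partial_x\varphi_\epsilon\,\partial_y-\partial_y\varphi_\epsilon\,\partial_x)\overline\omega$. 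So your substitution via $\bar\partial|\omega|^2$ is not valid, and the cross term equals $\frac1h\int_\Omega\Delta_g\varphi_\epsilon\,|\omega|^2$ plus boundary terms \emph{plus} a first-order ``current'' term of the form $\pm\frac{2}{h}\int_\Omega\nabla^\perp\varphi_\epsilon\cdot\mathrm{Im}(\overline\omega\,\nabla\omega)$, where $\nabla^\perp\varphi_\epsilon=(\partial_y\varphi_\epsilon,-\partial_x\varphi_\epsilon)$. This term is real, has no sign, is not a boundary term, and is of size $h^{-1}\|\omega\|_{L^2}\|d\omega\|_{L^2}$, so it cannot simply be dropped. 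A concrete check: take $\varphi$ linear (so the Laplacian contribution vanishes) and $\omega=\chi e^{i\eta y}$ with $\chi$ a real cutoff; the cross term is $-2\eta h^{-1}\|\chi\|^2_{L^2}\neq 0$, while your claimed ``bulk $\Delta\varphi_\epsilon$ term plus boundary'' is zero.

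The gap is fixable, and the fix is exactly where your decomposition differs from the paper's. The paper groups the conjugated operator not as (derivative)$\;+\;$(multiplication) but as $A+B$ with $A=\partial_x+\frac{i}{h}\partial_y\varphi_\epsilon$ (anti-self-adjoint modulo boundary terms) and $B=i\partial_y+\frac1h\partial_x\varphi_\epsilon$ (self-adjoint modulo boundary terms); then the cross term is the commutator contribution $\langle[B,A]\omega,\omega\rangle=\frac1h\int_\Omega\Delta_g\varphi_\epsilon|\omega|^2$ plus boundary terms, and the problematic current terms are packaged inside the two complete squares $\|A\omega\|^2$ and $\|B\omega\|^2$, which are precisely the pieces one discards. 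Alternatively, you can keep your decomposition, but then you must retain $\|\bar\partial\omega\|^2+h^{-2}\|(\bar\partial\varphi_\epsilon)\omega\|^2$ and absorb the current term with the sharp inequality $\frac2h ab\le h^{-2}a^2+b^2$ (using $\|\bar\partial\omega\|^2=\|d\omega\|^2$ up to boundary terms and $|\bar\partial\varphi_\epsilon|=|d\varphi_\epsilon|$); in that case you may no longer ``discard the nonnegative squares,'' and the additional boundary terms produced by $\|\bar\partial\omega\|^2-\|d\omega\|^2$ must also be tracked. Your boundary analysis on $\Gamma$ versus $\Gamma^c$ and your account of why the constant $c$ is uniform in $\epsilon$ (coming solely from \eqref{laplace of phieps}) are otherwise correct.
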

\proof
We compute 
\begin{eqnarray*}
\|e^{-\varphi_{\eps}/h}\bar{\partial}e^{\varphi_{\eps}/h}\omega\|^2 &=& 
\|(\partial_{x} + \frac{i\partial_y\varphi_\eps}{h})\omega + (i\partial_{y} + \frac{\partial_x\varphi_\eps}{h})\omega\|^2\\
&=&  \|(\partial_{x} + \frac{i\partial_y\varphi_\eps}{h})\omega\|^2 + \|(i\partial_{y} + \frac{\partial_x\varphi_\eps}{h})\omega\|^2\\
&+& \langle  (\partial_{x} + \frac{i\partial_y\varphi_\eps}{h})\omega, (i\partial_{y} + \frac{\partial_x\varphi_\eps}{h})\omega\rangle\\
&+& \langle (i\partial_{y} + \frac{\partial_x\varphi_\eps}{h})\omega, (\partial_{x} + \frac{i\partial_y\varphi_\eps}{h})\omega\rangle.
\end{eqnarray*}
We integrate by parts the last two terms and use the boundary condition of $\omega$ to obtain 
\[
\|e^{-\varphi_{\eps}/h}\bar{\partial}e^{\varphi_{\eps}/h}\omega\|^2 \geq \int_\Omega \frac{\Delta\varphi_\epsilon}{h} |\omega|^2 - \int_{\partial\Omega} \frac{\partial_\nu \varphi_\epsilon}{h}|\omega|^2 \geq \frac{c}{\epsilon} \|\omega\|^2_{L^2(\Omega)} - \int_{\partial\Omega} \frac{\partial_\nu \varphi_\epsilon}{h}|\omega|^2. 
\]
This concludes the proof.
\qed

Lemma \ref{lem: dbar estimate} leads to the following estimate:
\begin{lemma}
\label{lem laplace on domain}
For all $q\in L^\infty (\Omega)$  there is a constant $c>0$ such that for all $\epsilon>h>0$ sufficiently small, the estimate
\begin{multline*}
\| e^{-\varphi_\epsilon/h} (\Delta +q) e^{\varphi_\epsilon /h}u\|^2_{L^2(\Omega)} + h^{-3} \|u\|_{L^2(\partial\Omega)}^2 +h^{-1} \|\partial_\nu u\|^2_{L^2(\partial\Omega)} + h^{-1} \|\partial_\tau u\|^2_{L^2(\tbl{\p \Omega})}\\ 
\geq \frac{c}{\epsilon}\left(\frac{c}{\epsilon} \|u\|^2_{L^2(\Omega)} + \|du\|^2_{L^2(\Omega)} + h^{-2}\|d\varphi_\epsilon u\|_{L^2(\Omega)}^2\right).
\end{multline*}
holds for all $u\in C^\infty(\overline \Omega)$ such that 
$$
u|\s _{\Gamma^c} = \partial_\nu u|\s _{\Gamma^c} = \partial_\nu^2 u|\s _{\Gamma^c} = \dots = 0.
$$
\end{lemma}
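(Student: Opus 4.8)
The plan is to derive the estimate from Lemma \ref{lem: dbar estimate} by a standard "conjugate the factorized Laplacian" argument, being careful about the boundary contributions. First I would write $\Delta = 4\partial\bar\partial$ (in the chosen conformal coordinates, up to a smooth positive factor that is harmless) and factor the conjugated operator as $e^{-\varphi_\epsilon/h}\Delta e^{\varphi_\epsilon/h} = 4\,(e^{-\varphi_\epsilon/h}\partial e^{\varphi_\epsilon/h})(e^{-\varphi_\epsilon/h}\bar\partial e^{\varphi_\epsilon/h})$, using that $\varphi_\epsilon$ is real. Set $\omega := (e^{-\varphi_\epsilon/h}\bar\partial e^{\varphi_\epsilon/h})u$. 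The key point is that the support/vanishing conditions on $u$ on $\Gamma^c$ are inherited by $\omega$ (since $\bar\partial$ and the smooth weight preserve vanishing to all orders on $\Gamma^c$), so Lemma \ref{lem: dbar estimate} applies both to $u$ itself and, after one more factorization step, to $\omega$. Applying the $\bar\partial$-estimate to $u$ gives control of $\frac{c}{\epsilon}\|u\|_{L^2(\Omega)}^2$ by $\|\omega\|_{L^2(\Omega)}^2 + h^{-1}\|u\|_{L^2(\partial\Omega)}^2$; then bounding $\|\omega\|_{L^2}$ in terms of $\|e^{-\varphi_\epsilon/h}\partial e^{\varphi_\epsilon/h}\omega\|_{L^2}$ — equivalently $\|e^{-\varphi_\epsilon/h}\Delta e^{\varphi_\epsilon/h}u\|_{L^2}$ up to the constant — via the conjugate (holomorphic) version of Lemma \ref{lem: dbar estimate} closes the loop. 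The first-order potential $q$ is absorbed by choosing $h$ small: $\|e^{-\varphi_\epsilon/h}qe^{\varphi_\epsilon/h}u\|_{L^2} = \|qu\|_{L^2} \le \|q\|_{L^\infty}\|u\|_{L^2}$, which for $h$ small is dominated by $\frac{c}{\epsilon^2}\|u\|_{L^2}^2$ on the right-hand side after squaring and using Young's inequality.

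The next step is to recover the gradient term $\|du\|_{L^2(\Omega)}^2$ and the weighted term $h^{-2}\|d\varphi_\epsilon\, u\|_{L^2(\Omega)}^2$ on the right. For this I would go back to the expansion of $\|e^{-\varphi_\epsilon/h}\bar\partial e^{\varphi_\epsilon/h}u\|^2$ as in the proof of Lemma \ref{lem: dbar estimate}: it equals $\|(\partial_x + \tfrac{i\partial_y\varphi_\epsilon}{h})u\|^2 + \|(i\partial_y + \tfrac{\partial_x\varphi_\epsilon}{h})u\|^2$ plus boundary terms coming from the cross terms. Expanding the squares produces $\|\partial_x u\|^2 + \|\partial_y u\|^2 = \|du\|^2$, the term $h^{-2}(\|\partial_y\varphi_\epsilon\, u\|^2 + \|\partial_x\varphi_\epsilon\, u\|^2) = h^{-2}\|d\varphi_\epsilon\, u\|^2$, and cross terms of the form $h^{-1}\langle \partial\varphi_\epsilon, \cdot\rangle$ acting on $|u|^2$, which integrate by parts to give $h^{-1}\int \Delta\varphi_\epsilon |u|^2 \ge 0$ (by \eqref{laplace of phieps}) plus a boundary term $h^{-1}\int_{\partial\Omega}\partial_\nu\varphi_\epsilon |u|^2$ controlled by $h^{-1}\|u\|_{L^2(\partial\Omega)}^2$, and further cross terms between $\partial_x u$ and $h^{-1}\partial_x\varphi_\epsilon u$ (and similarly in $y$) which are again total-derivative/boundary contributions. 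Thus $\|e^{-\varphi_\epsilon/h}\bar\partial e^{\varphi_\epsilon/h}u\|^2 \ge \|du\|^2 + h^{-2}\|d\varphi_\epsilon u\|^2 - C h^{-1}\|u\|_{L^2(\partial\Omega)}^2$, and combining this with the $L^2$ bound above, then multiplying through by the appropriate power of $\epsilon$ and using $\|\omega\|_{L^2}^2 = \|e^{-\varphi_\epsilon/h}\bar\partial e^{\varphi_\epsilon/h}u\|^2 \lesssim \epsilon\,\|e^{-\varphi_\epsilon/h}\Delta e^{\varphi_\epsilon/h}u\|_{L^2}^2 + h^{-1}\|\cdots\|_{L^2(\partial\Omega)}^2$ (from the holomorphic analogue of Lemma \ref{lem: dbar estimate} applied to $\omega$), yields the full right-hand side $\frac{c}{\epsilon}(\frac{c}{\epsilon}\|u\|_{L^2}^2 + \|du\|_{L^2}^2 + h^{-2}\|d\varphi_\epsilon u\|_{L^2}^2)$.

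The main obstacle I anticipate is bookkeeping the boundary terms correctly: the left-hand side of the claimed estimate carries $h^{-3}\|u\|_{L^2(\partial\Omega)}^2$, $h^{-1}\|\partial_\nu u\|_{L^2(\partial\Omega)}^2$, and $h^{-1}\|\partial_\tau u\|_{L^2(\partial\Omega)}^2$, and each integration by parts (there are several, both in the $\bar\partial$-estimate for $u$ and in the one for $\omega$, where $\omega$ itself involves one derivative of $u$ and the weight, so $\|\omega\|_{L^2(\partial\Omega)}^2$ already costs $h^{-2}\|u\|_{L^2(\partial\Omega)}^2 + \|\partial u\|_{L^2(\partial\Omega)}^2$ — hence the $h^{-3}$ after another $h^{-1}$) produces a boundary integral that must be matched against exactly one of these three terms with the right power of $h$ and $\epsilon$. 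I would handle this by tracking, at each integration by parts, which normal/tangential derivative and which power of $h$ from $d\varphi_\epsilon/h$ appears, using that $\partial_x = \partial_\tau$, $\partial_y = \partial_\nu$ on $\Gamma$, that $\varphi_\epsilon$ and its derivatives are bounded uniformly in $h,\epsilon$ on $\overline\Omega$, and that on $\Gamma^c$ all terms vanish by hypothesis — so only $\Gamma$-integrals survive. A secondary technical point is the conformal factor relating $\Delta$ to $4\partial\bar\partial$: since it is smooth and positive on $\overline\Omega$ it only changes the constant $c$, and can be absorbed throughout.
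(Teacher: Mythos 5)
Your proposal is correct and follows essentially the same route as the paper: there, too, the conjugated Laplacian is factored into two conjugated Cauchy--Riemann operators, Lemma \ref{lem: dbar estimate} is applied to the output $\omega$ of the inner factor (which inherits the vanishing conditions on $\Gamma^c$ from $u$), and $\|\omega\|_{L^2(\Omega)}^2$ is expanded by integration by parts, with $\Delta\varphi_\epsilon\geq ch/\epsilon$ turning the cross terms into the $\tfrac{c}{\epsilon}\|u\|_{L^2(\Omega)}^2$ contribution and the surviving boundary integrals being absorbed into $h^{-3}\|u\|^2_{L^2(\partial\Omega)}+h^{-1}\|\partial_\nu u\|^2_{L^2(\partial\Omega)}+h^{-1}\|\partial_\tau u\|^2_{L^2(\partial\Omega)}$. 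The only cosmetic difference is that the paper orders the factorization as $\bar\partial_{\varphi_\epsilon}\circ\partial_{\varphi_\epsilon}$ so that the stated ($\bar\partial$) lemma applies directly to $\omega=e^{-\varphi_\epsilon/h}\partial e^{\varphi_\epsilon/h}u$, whereas your ordering needs the holomorphic twin of Lemma \ref{lem: dbar estimate} --- which, since $\varphi_\epsilon$ is real, follows at once by applying the stated lemma to $\bar\omega$.
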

\proof
It suffices to prove the claim in the special case when $q = 0$, $u$ real valued, and that the metric is Euclidean.  To do so we apply Lemma \ref{lem: dbar estimate} to $\omega := e^{-\varphi_\epsilon/h} \partial e^{\varphi_\epsilon/h}u$ to obtain
\begin{multline}\label{eq: intermediate estimate}
\|e^{-\varphi_\epsilon/h} \Delta e^{\varphi_\epsilon/h} u\|^2_{L^2(\Omega) }+ h^{-1}\| \frac{\partial_x \varphi_\epsilon}{h} u +\partial_x u\|^2_{L^2(\partial\Omega)}+ h^{-1} \|\frac{\partial_y\varphi_\epsilon}{h} u +\partial_y u\|^2_{L^2(\partial\Omega)}\\
\geq \frac{c}{\epsilon}\left( \|\frac{\partial_x \varphi_\epsilon}{h} u +\partial_x u\|^2_{L^2(\Omega)}  +  \|\frac{\partial_y\varphi_\epsilon}{h} u +\partial_y u\|^2_{L^2(\Omega)}\right).
\end{multline}

Now write
\begin{eqnarray*} \|\frac{\partial_x \varphi_\epsilon}{h} u +\partial_x u\|^2_{L^2(\Omega)} &=&h^{-2} \|\partial_x\varphi_\epsilon u\|^2_{L^2(\Omega)} + \|\partial_x u\|^2_{L^2(\Omega)} + 2h^{-1}\langle \partial_x\varphi_\epsilon u,\partial_x u  \rangle \\
&= &h^{-2} \|\partial_x\varphi_\epsilon u\|^2_{L^2(\Omega)} + \|\partial_x u\|^2_{L^2(\Omega)} - h^{-1}\langle \partial^2_x\varphi_\epsilon,|u|^2  \rangle,
\end{eqnarray*}
where in the last equality we integrated by parts and used the boundary condition we assumed for the function $u$. Similar calculation yields
\begin{multline*}
 \|\frac{\partial_y\varphi_\epsilon}{h} u +\partial_y u\|^2_{L^2(\Omega)} =h^{-2} \|\partial_y\varphi_\epsilon u\|^2_{L^2(\Omega)} + \|\partial_y u\|^2_{L^2(\Omega)} - h^{-1}\langle \partial^2_y\varphi_\epsilon,|u|^2  \rangle  \\
 +h^{-1} \int_{\partial\Omega} |u|^2 \partial_\nu\varphi_\epsilon .
\end{multline*}
By inserting these two identities into \eqref{eq: intermediate estimate}, we get
\begin{multline*}
\|e^{-\varphi_\epsilon/h} \Delta e^{\varphi_\epsilon/h} u\|^2_{L^2(\Omega) }+ h^{-1}\| \frac{\partial_x \varphi_\epsilon}{h} u +\partial_x u\|^2_{L^2(\partial\Omega)}+ h^{-1} \|\frac{\partial_y\varphi_\epsilon}{h} u +\partial_y u\|^2_{L^2(\partial\Omega)}\\
\geq \frac{c}{\epsilon}\left( h^{-2} \|d\varphi_\epsilon u\|^2_{L^2(\Omega)} + h^{-1}\langle \Delta\varphi_\epsilon,|u|^2  \rangle + \|du\|^2_{L^2(\Omega)} +h^{-1} \int_{\partial\Omega} |u|^2 \partial_\nu\varphi_\epsilon\right).
\end{multline*}
Observe that where the boundary condition of $u$ is non-trivial, we have that $\partial_x = \partial_\tau$ and $\partial_y = \partial_\nu$. So this becomes
\begin{multline*}
\|e^{-\varphi_\epsilon/h} \Delta e^{\varphi_\epsilon/h} u\|^2_{L^2(\Omega) }+ h^{-1}\| \frac{\partial_\tau \varphi_\epsilon}{h} u +\partial_\tau u\|^2_{L^2(\partial\Omega)}+ h^{-1} \|\frac{\partial_\nu\varphi_\epsilon}{h} u +\partial_\nu u\|^2_{L^2(\partial\Omega)}\\
\geq\frac{c}{\epsilon}\left( h^{-2} \|d\varphi_\epsilon u\|^2_{L^2(\Omega)}  + \|du\|^2_{L^2(\Omega)}  + h^{-1}\langle \Delta\varphi_\epsilon,|u|^2  \rangle +h^{-1} \int_{\partial\Omega} |u|^2 \partial_\nu\varphi_\epsilon\right)
\end{multline*}
Trivial estimates combined with \eqref{laplace of phieps} and $\epsilon>h>0$ yields the desired result. \qed

We now prove Proposition \ref{prop: carleman estimate} by gluing together the coordinate chart estimates we obtained in Lemma \ref{lem laplace on domain}.
\proof[Proof of Proposition \ref{prop: carleman estimate}] We can identify the boundary $\p M$ with a disjoint union of circles, $\partial M\cong \sqcup \s S^1$. We begin by taking an open cover $\{\Gamma_j\}_{j=1}^N$ for $\partial M$ such that $\Gamma_j \cong (0,1)\subset \mathbb R$ for all $j=1,\dots, N$. Let $\chi'_j\in C_c^\infty(\Gamma_j)$ be such that $\sum\limits_{j =1}^N \chi_j'^2 = 1$  on $\partial M$. Now extend $\Gamma_j$ and $\chi_j'$ to simply connected open sets $\Omega_j\subset M$ and $\chi_j\in C^\infty(\overline M)$ such that  $\Gamma_j \subset \partial\Omega_j$, $\chi_j|_{\partial M} = \chi_j'$, $\supp(\chi_j) \subset \overline \Omega_j$, and $\supp(\chi_j|\s _{\partial\Omega_j}) \subset\subset \Gamma_j$. Now add additional open sets $\Omega_j$ and $\chi_j\in C_c^\infty(\Omega_j)$ for $j = N+1,\dots, \widehat  N$ such that $\{\Omega\}_{j=1}^{\widehat  N}$ covers $M$ and $\sum\limits_{j=1}^{\widehat  N} \chi_j^2 = 1$ on $M$ with $\supp(\chi_j)\cap \partial M = \emptyset$ for $j = N+1,\dots, \widehat  N$. 

For each $j=1,\ldots, \widehat N$ set $u_j := \chi_j u \in C^\infty(\overline\Omega_j)$. Thus $\supp(u_j) \subset \overline \Omega_j$. Then on each $\Omega_j$, the function $u_j$ either vanishes to all order on $\partial \Omega_j$, or it satisfies the boundary conditions stated in Lemma \ref{lem laplace on domain}. We therefore have that 
\begin{multline}\label{eq: sum estimate}
\sum\limits_{j=1}^{\widehat  N}\left(\| e^{-\varphi_\epsilon/h} (\Delta +q) e^{\varphi_\epsilon /h}u_j\|^2_{L^2(\Omega)} + h^{-3} \|u_j\|_{L^2(\partial\Omega)}^2 +h^{-1} \|\partial_\nu u_j\|^2_{L^2(\partial\Omega)}\right. \\
\left.+ h^{-1} \|\partial_\tau u_j\|^2_{L^2(\Omega)}\right)
\geq \sum\limits_{j=1}^{\widehat  N}\frac{c}{\epsilon}\left(\frac{c}{\epsilon} \|u_j\|^2_{L^2(\Omega)} + \|du_j\|^2_{L^2(\Omega)} + h^{-2}\|d\varphi_\epsilon u_j\|_{L^2(\Omega)}^2\right).
\end{multline}
Observe that by construction of $u_j = \chi_j u$, we have  $\sum\limits_{j=1}^{\widehat  N} |u_j|^2 = |u|^2$ yielding
\begin{multline}\label{eq: sum estimate 2}
\sum\limits_{j=1}^{\widehat  N}\frac{c}{\epsilon}\left(\frac{c}{\epsilon} \|u_j\|^2_{L^2(\Omega)} + \|du_j\|^2_{L^2(\Omega)} + h^{-2}\|d\varphi_\epsilon u_j\|_{L^2(\Omega)}^2\right) \\
= \frac{c}{\epsilon}\Big(\frac{c}{\epsilon} \|u\|^2_{L^2(\Omega)} +\sum\limits_{j=1}^{\widehat  N} \|d(\chi_j u)\|^2_{L^2(\Omega)} + h^{-2}\|d\varphi_\epsilon u\|_{L^2(\Omega)}^2\Big).
\end{multline}
Now $ \|d(\chi_j u)\|^2_{L^2(\Omega)}  \geq \|\chi_jdu\|^2_{L^2(\Omega)} -C\|u\|^2_{L^2(\Omega)} $, with $\sum\limits_{j=1}^{\widehat  N}  \|\chi_jdu\|^2_{L^2(\Omega)}=  \|du\|^2_{L^2(\Omega)}$. So by taking $\epsilon>0$ sufficiently small in \eqref{eq: sum estimate 2}, we get from \eqref{eq: sum estimate} that
\begin{multline}
\label{eq: sum estimate 3}
\sum\limits_{j=1}^{\widehat  N}\left(\| e^{-\varphi_\epsilon/h} (\Delta +q) e^{\varphi_\epsilon /h}u_j\|^2_{L^2(\Omega)} + h^{-3} \|u_j\|_{L^2(\partial\Omega)}^2+h^{-1} \|\partial_\nu u_j\|^2_{L^2(\partial\Omega)}\right. \\
+ \left.h^{-1} \|\partial_\tau u_j\|^2_{L^2(\Omega)}\right)\geq \frac{c}{\epsilon}\left(\frac{c}{\epsilon} \|u\|^2_{L^2(\Omega)} +\|du\|^2_{L^2(\Omega)} + h^{-2}\|d\varphi_\epsilon u\|_{L^2(\Omega)}^2\right)
\end{multline}
 for some (new) constant $c>0$ independent of $\epsilon>0$ and $h>0$.
We now observe that on the left-hand side of \eqref{eq: sum estimate 3}
\begin{multline*}
e^{-\varphi_\epsilon/h} (\Delta + q) e^{\varphi_\epsilon /h}u_j    = e^{-\varphi_\epsilon/h} (\Delta +q) e^{\varphi_\epsilon /h}\chi_j u \\
=\chi_j e^{-\varphi_\epsilon/h} (\Delta +q) e^{\varphi_\epsilon /h} u +  u\Delta\chi_{\tbl{j}} + 2e^{-\varphi_\epsilon/h}\langle d\chi_j, d (e^{\varphi_\epsilon/h} u)\rangle \\
=\chi_j e^{-\varphi_\epsilon/h} (\Delta +q) e^{\varphi_\epsilon /h} u +  u\Delta\chi_{\tbl{j}} + \frac{2}{h}u \langle d\chi_j, d\varphi_\epsilon\rangle + 2\langle d\chi_j,du\rangle.
\end{multline*}
 Taking the $L^2$ norm and estimating the above by elementary inequalities we see that for $\epsilon>0$ sufficiently small in \eqref{eq: sum estimate 3} we get
\begin{multline}
\label{eq: sum estimate 4}
\| e^{-\varphi_\epsilon/h} (\Delta +q) e^{\varphi_\epsilon /h}u\|^2_{L^2(\Omega)} + h^{-3} \|u\|_{L^2(\partial\Omega)}^2 +h^{-1} \|\partial_\nu u\|^2_{L^2(\partial\Omega)} + h^{-1} \|\partial_\tau u\|^2_{L^2(\Omega)}\\
\geq \frac{c}{\epsilon}\left(\frac{c}{\epsilon} \|u\|^2_{L^2(\Omega)} +\|du\|^2_{L^2(\Omega)} + h^{-2}\|d\varphi_\epsilon u\|_{L^2(\Omega)}^2\right).
\end{multline}
Now fix $\epsilon>0$ and replace $u$ (as is standard) by $e^{\frac{h}{2\epsilon}(\sum_{j = 0}^K|\varphi_j|^2)} u$ we obtain the desired estimate when $0<h\ll\epsilon$.\qed\\
We now prove Proposition \ref{prop: shifted carleman estimate}:
\begin{proof}[Proof of Proposition \ref{prop: shifted carleman estimate}]
Patching together coordinate chart estimates of Lemma \ref{lem laplace on domain} we get the following $L^2$ estimate on the slightly larger manifold $\widetilde M$ for all functions $u\in C^\infty_c(\widetilde M)$.
\begin{eqnarray}
\label{eq: unshifted estimate}\nonumber
\| e^{-\varphi_\epsilon/h} h^2 e^{\varphi_\epsilon /h}u\|^2_{L^2(\widetilde M)} \geq \frac{c}{\epsilon}\left(\frac{c}{\epsilon}h^4 \|u\|^2_{L^2(\widetilde M)} +h^2 \|hdu\|^2_{L^2(\widetilde M)} + h^2\|d\varphi_\epsilon u\|_{L^2(\widetilde M)}^2\right).
\end{eqnarray}
In particular, if $u\in C^\infty_c(M)$ and $\chi\in C^\infty_c(\widetilde M)$ is identically $1$ in an open neighbourhood of $M$, we can apply the above estimate to \eqref{eq: unshifted estimate} to the function $\chi \langle hD\rangle^{-k}u$ to get
\begin{multline}
\label{eq: unshifted estimate II}
\| e^{-\varphi_\epsilon/h} h^2\Delta e^{\varphi_\epsilon /h}\chi \langle hD\rangle^{-k}u\|^2_{L^2(\widetilde M)} \\
\geq  \frac{c}{\epsilon}\left(\frac{c}{\epsilon} h^4 \|\chi \langle hD\rangle^{-k}u\|^2_{L^2(\widetilde M)}+ h^2\|hd\left(\chi \langle hD\rangle^{-k}u\right)\|^2_{L^2(\widetilde M)}\right. \\
+  h^{2}\|d\varphi_\epsilon \left(\chi \langle hD\rangle^{-k}u\right)\|_{L^2(\widetilde M)}^2\Big).
\end{multline}
We recall here that $\langle hD\rangle$ is the semiclassical operator whose symbol is given by $(1+|\xi|^2)^{1/2}$.
Commuting $e^{-\varphi_\epsilon/h} \Delta e^{\varphi_\epsilon /h}$ with $\chi \langle hD\rangle^{-k}$ we get 
\begin{eqnarray}
\label{eq: unshifted estimate III}
\| e^{-\varphi_\epsilon/h} h^2\Delta  e^{\varphi_\epsilon /h}u\|^2_{H_{scl}^{-k}(\widetilde M)} +\|e^{-\varphi_\epsilon/h}[h^2\Delta , \chi ] e^{\varphi_\epsilon /h}\langle hD\rangle^{-k}u\|^2_{L^2(\widetilde M)}  \\\nonumber
+\|\chi[e^{-\varphi_\epsilon/h} h^2\Delta  e^{\varphi_\epsilon /h}, \langle hD\rangle^{-k}]u\|^2_{L^2(\widetilde M)}\\\nonumber
\geq \frac{c}{\epsilon}\left(\frac{c}{\epsilon}h^4 \|\chi \langle hD\rangle^{-k}u\|^2_{L^2(\widetilde M)}+h^2\|hd\left(\chi \langle hD\rangle^{-k}u\right)\|^2_{L^2(\widetilde M)}\right. \\\nonumber
\qquad \qquad\qquad\qquad\qquad\qquad\qquad\qquad\qquad \left.+  h^{2}\|d\varphi_\epsilon \left(\chi \langle hD\rangle^{-k}u\right)\|_{L^2(\widetilde M)}^2\right).
\end{eqnarray}
The commutator term with $[\Delta,\chi]$ has a coefficient supported uniformly away from $M$ so the term $\|e^{-\varphi_\epsilon/h}[ h^2\Delta , \chi ] e^{\varphi_\epsilon /h}\langle hD\rangle^{-k}u\|^2_{L^2(\widetilde M)}$ in \eqref{eq: unshifted estimate III} can be absorbed on the right side:

\begin{eqnarray}\label{eq: unshifted estimate IV}
\| e^{-\varphi_\epsilon/h} h^2\Delta  e^{\varphi_\epsilon /h}u\|^2_{H_{scl}^{-k}(\widetilde M)} +\|\chi[e^{-\varphi_\epsilon/h} h^2\Delta  e^{\varphi_\epsilon /h}, \langle hD\rangle^{-k}]u\|^2_{L^2(\widetilde M)} \\\nonumber
\geq \frac{c}{\epsilon}\left(\frac{c}{\epsilon}h^4 \|\chi \langle hD\rangle^{-k}u\|^2_{L^2(\widetilde M)}+h^2\|hd\left(\chi \langle hD\rangle^{-k}u\right)\|^2_{L^2(\widetilde M)}\right. 
\\\nonumber
\left.+  h^{2}\|d\varphi_\epsilon \left(\chi \langle hD\rangle^{-k}u\right)\|_{L^2(\widetilde M)}^2\right).
\end{eqnarray}

Standard semiclassical calculus allows us to bound the right side of \eqref{eq: unshifted estimate IV} from below  to obtain
\begin{eqnarray}
\label{eq: unshifted estimate V}
\| e^{-\varphi_\epsilon/h} h^2\Delta  e^{\varphi_\epsilon /h}u\|^2_{H_{scl}^{-k}(\widetilde M)} +\|\chi[e^{-\varphi_\epsilon/h} h^2\Delta  e^{\varphi_\epsilon /h}, \langle hD\rangle^{-k}]u\|^2_{L^2(\widetilde M)} \geq \\\nonumber
 \frac{c}{\epsilon}\left(\frac{c}{\epsilon}h^4 \|u\|^2_{H^{-k}(\widetilde M)}+h^2\|h\s du\|^2_{H^{-k}(\widetilde M)} +  h^{2}\|d\varphi_\epsilon \left(\chi \langle hD\rangle^{-k}u\right)\|_{L^2(\widetilde M)}^2\right),
\end{eqnarray}
which holds for all $0<h\ll \epsilon\ll 1$ sufficiently small and $u\in C^\infty_c(M)$,

We now compute the commutator $[e^{-\varphi_\epsilon/h} h^2\Delta  e^{\varphi_\epsilon /h}, \langle hD\rangle^{-k}]$ by first recalling \eqref{laplace of phieps} then apply standard semiclassical calculus
\begin{eqnarray}
\label{eq: commutator}
[e^{-\varphi_\epsilon/h} h^2\Delta  e^{\varphi_\epsilon /h}, \langle hD\rangle^{-k}] =\left [h^2\Delta +  \frac{h^2}{\epsilon} \sum\limits_{j=0}^K |d\varphi_j|^2, \langle hD \rangle^{-k}\right]  \\\nonumber
+\left[ d\varphi_\epsilon \cdot hd ,   \langle hD\rangle^{-k}\right] + \left[|d\varphi_\epsilon|^2,\langle hD\rangle^{-k} \right]\\\nonumber
= h \Op_h\left(\sum\limits_{j=1}^2\xi_j a^j_\epsilon(x,\xi;h)\right)+h^2 \Op_h\left(\sum\limits_{j=1}^2 b^j_\epsilon(x,\xi;h)\right)
\end{eqnarray}
where $a_\epsilon^j(x,\xi;h)\in S^{-k}_{scl}$ and $b_\epsilon^j(x, \xi;h)\in S^{-k-1}_{scl}$ are $\epsilon$-dependent symbols which are uniformly bounded for $0<h\ll\epsilon \ll 1$.  Standard mapping properties of semiclassical $\Psi$DO allow us to absorb the resulting two terms of \eqref{eq: commutator}  into the right side of \eqref{eq: unshifted estimate V}, provided we choose $0<h\ll\epsilon\ll1$ sufficiently small
\begin{equation}
\label{eq: unshifted estimate VI}
\| e^{-\varphi_\epsilon/h} h^2\Delta  e^{\varphi_\epsilon /h}u\|^2_{H_{scl}^{-k}(\widetilde M)} \geq \frac{c}{\epsilon}\left(\frac{c}{\epsilon}h^4 \|u\|^2_{H^{-k}(\widetilde M)}+h^2\|hdu\|^2_{H^{-k}(\widetilde M)} \right).
\end{equation}
For $0<h\ll\epsilon\ll1$ sufficiently small, we may replace $\Delta$ in \eqref{eq: unshifted estimate VI} by $\Delta +q$ and absorb the extra term on the right side to get
\begin{equation}
\label{eq: unshifted estimate VII}
\| e^{-\varphi_\epsilon/h} h^2(\Delta +q) e^{\varphi_\epsilon /h}u\|^2_{H_{scl}^{-k}(\widetilde M)} \geq \frac{c}{\epsilon}\left(\frac{c}{\epsilon}h^4 \|u\|^2_{H^{-k}(\widetilde M)}+h^2\|hdu\|^2_{H^{-k}(\widetilde M)} \right).
\end{equation}
Recalling the definition of $\varphi_{\eps} := \varphi - \frac{h}{2\epsilon}(\sum_{j = 0}^K|\varphi_j|^2)$, we have for fixed $\epsilon>0$ sufficiently small,
\begin{equation}
\label{eq: unshifted estimate VIII}
\| e^{-\varphi/h} h^2(\Delta+q)  e^{\varphi /h}u\|^2_{H_{scl}^{-k}(\widetilde M)} \geq C\left(\frac{c}{\epsilon}h^4 \|u\|^2_{H^{-k}(\widetilde M)}+h^2\|hdu\|^2_{H^{-k}(\widetilde M)} \right)
\end{equation}
for some constant $C>0$ independent of $h$.
\end{proof}

An immediate Corollary to Proposition \ref{prop: shifted carleman estimate} is the following solvability result (compare for example \cite[Lemma 4.3.2]{guillarmou2011calderon}).
\begin{corollary}
Let $(M,g,\partial M)$ be a Riemann surface with boundary compactly contained $(\widetilde M,g,\partial \widetilde M)$, $\varphi\in C^\infty( \widetilde M)$ be a non-constant real valued harmonic function on $\widetilde M$, and $q\in C^\infty(\widetilde M)$. Let $k\in \mathbb N$. For all $f\in H^k(\widetilde M)$ and all $h>0$, there exists a family of solutions $r_h \in H^k(M)$ solving 
$$ e^{-\varphi/h}(\Delta + q)  e^{\varphi /h}r_h = f \quad {\rm on}\ M$$
satisfying the estimate $\|r_h\|_{H^k_{scl}} \leq C \|f\|_{H^k_{scl}}$.
\end{corollary}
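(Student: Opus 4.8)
The plan is the standard duality argument that converts a Carleman estimate for the transposed operator into solvability. Write $P_\varphi := e^{-\varphi/h}(\Delta_g+q)e^{\varphi/h}$. Since $\Delta_g$ is formally self-transpose with respect to the bilinear $L^2(M,dV_g)$ pairing and multiplication by $q$ is its own transpose, a direct computation gives that the formal transpose of $P_\varphi$ is
\[
P_\varphi^{t} \;=\; e^{\varphi/h}(\Delta_g+q)e^{-\varphi/h} \;=\; P_{-\varphi}.
\]
(If one prefers the sesquilinear pairing, $q$ is simply replaced by $\overline q\in C^\infty(\widetilde M)$, which changes nothing below.) The key observation is that $-\varphi\in C^\infty(\widetilde M)$ is again a non-constant harmonic function, so Proposition \ref{prop: shifted carleman estimate} applies verbatim with $\varphi$ replaced by $-\varphi$: there is a constant $C>0$, independent of $h$ for $0<h\ll\epsilon\ll1$, with
\[
\|w\|_{H^{-k}_{scl}(\widetilde M)} \;\leq\; C\,\|P_\varphi^{t}w\|_{H^{-k}_{scl}(\widetilde M)}\qquad\text{for all } w\in C_c^\infty(\widetilde M),
\]
and in particular for all $w\in C_c^\infty(M)$ regarded as functions supported in the fixed compact set $\overline M\subset\subset\widetilde M$.

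Next I would run the Hahn--Banach/Riesz step. On the subspace $V:=\{P_\varphi^{t}w : w\in C_c^\infty(M)\}\subset H^{-k}_{scl}(\widetilde M)$, define the linear functional $L(P_\varphi^{t}w):=\langle f,w\rangle_{L^2(M)}$. It is well defined, because the Carleman estimate above shows $w\mapsto P_\varphi^{t}w$ is injective, so the value is independent of the representative; and it is bounded, since by the semiclassical Sobolev duality $\langle hD\rangle^{k}\colon H^{k}_{scl}\to L^2$ (an isometry intertwining the $L^2$ pairing with the $H^{k}_{scl}$--$H^{-k}_{scl}$ pairing) we get
\[
|L(P_\varphi^{t}w)| \;=\; |\langle f,w\rangle_{L^2}| \;\leq\; \|f\|_{H^{k}_{scl}}\,\|w\|_{H^{-k}_{scl}} \;\leq\; C\,\|f\|_{H^{k}_{scl}}\,\|P_\varphi^{t}w\|_{H^{-k}_{scl}}.
\]
By Hahn--Banach, $L$ extends to a bounded functional on $H^{-k}_{scl}(\widetilde M)$ of norm $\leq C\|f\|_{H^{k}_{scl}}$, and by Riesz representation (identifying the dual of $H^{-k}_{scl}$ with $H^{k}_{scl}$ via the $L^2$ pairing) there is $r_h\in H^{k}_{scl}$ with $\|r_h\|_{H^{k}_{scl}}\leq C\|f\|_{H^{k}_{scl}}$ and $\langle r_h,\psi\rangle_{L^2}=L(\psi)$ for all $\psi\in H^{-k}_{scl}(\widetilde M)$.

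Finally, taking $\psi=P_\varphi^{t}w$ for $w\in C_c^\infty(M)$ gives $\langle r_h,P_\varphi^{t}w\rangle=\langle f,w\rangle$, i.e. $\langle P_\varphi r_h,w\rangle=\langle f,w\rangle$ for all $w\in C_c^\infty(M)$; hence $P_\varphi r_h=f$ in the interior of $M$, which is precisely the equation $e^{-\varphi/h}(\Delta_g+q)e^{\varphi/h}r_h=f$ on $M$, and restricting $r_h$ to $M$ keeps the bound $\|r_h\|_{H^k_{scl}(M)}\leq C\|f\|_{H^k_{scl}}$. The argument is essentially routine; the only points that need care are (i) checking that Proposition \ref{prop: shifted carleman estimate} transfers to the transposed operator, which is immediate because $\varphi\mapsto-\varphi$ preserves the class of non-constant harmonic weights, and (ii) the bookkeeping of the semiclassical Sobolev duality to ensure the final constant is independent of $h$, which is the only mildly delicate step and follows from the mapping properties of $\langle hD\rangle^{\pm k}$ already used implicitly in Proposition \ref{prop: shifted carleman estimate}.
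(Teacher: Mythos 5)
Your proposal is correct and is exactly the argument the paper intends: the corollary is stated as ``immediate'' from Proposition \ref{prop: shifted carleman estimate} with a pointer to \cite[Lemma 4.3.2]{guillarmou2011calderon}, whose proof is precisely this duality scheme (apply the Carleman estimate to the transposed operator $P_{-\varphi}$, then Hahn--Banach and Riesz representation via the $H^{k}_{scl}$--$H^{-k}_{scl}$ pairing). Your observation that $-\varphi$ remains a non-constant harmonic weight, and your bookkeeping showing the constant is $h$-independent, are the only points requiring care and you handle both correctly.
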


\subsection{Complex geometric optics solutions for the Calder\'on problem}\label{sec:CGOsCalderon}
The purpose of this section is to construct a special type of CGOs for the Calder\'on problem of the equation $\Delta_g+q$. These solutions will be used to determine the boundary values of holomorphic functions.
\begin{proposition}
\label{prop: CGO for boundary determination}
Let $(M, g, \partial M)\subset\subset (\widetilde M, \widetilde g, \partial \widetilde M)$ where both are Riemann surfaces with smooth boundary. Let $\Phi$ be a holomorphic function on $(\widetilde M, \widetilde g,\partial \widetilde M)$. Let $a\in C^\infty(\widetilde M)$ be a holomorphic function which vanishes to sufficiently high order at all critical points of $\Phi$. Then there exists solutions to $(\Delta_g + q) u = 0$ in $M$ of the form
$$u= e^{\Phi/h} (a + r)$$
where $\|r\|_{C^1(\overline M)} = o(1)$ as $h\to 0$.
\end{proposition}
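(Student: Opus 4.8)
The plan is to build the solution by a WKB-type Ansatz $u = e^{\Phi/h}\big(a + h a_1 + h^2 a_2 + h^3 a_3 + r\big)$, producing the finitely many correctors $a_1,a_2,a_3$ by solving transport equations and disposing of the resulting $O(h^3)$ error with the solvability corollary to the shifted Carleman estimate (Proposition \ref{prop: shifted carleman estimate}). Write $\Phi = \varphi + i\psi$ with $\varphi = \operatorname{Re}\Phi$, which is a non-constant harmonic function (we may assume $\Phi$ non-constant). Working in local isothermal coordinates, $\Delta_g = 4e^{-2\rho}\partial_z\partial_{\bar z}$ up to sign, and since $\Phi$ is holomorphic
\[
e^{-\Phi/h}(\Delta_g+q)e^{\Phi/h}w = 4e^{-2\rho}\big(\partial_z + h^{-1}\partial_z\Phi\big)\partial_{\bar z}w + qw ;
\]
in particular $e^{-\Phi/h}(\Delta_g+q)e^{\Phi/h}a = qa$ because $a$ is holomorphic. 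The transport hierarchy then reads: take $a_1$ with $\partial_{\bar z}a_1$ equal to a smooth nonvanishing multiple of $-qa/\partial_z\Phi$, and $a_{j+1}$ with $\partial_{\bar z}a_{j+1}$ a smooth nonvanishing multiple of $-(\Delta_g+q)a_j/\partial_z\Phi$ for $j=1,2$; with these choices the $h^{-1},h^0,h^1,h^2$ contributions cancel and
\[
e^{-\Phi/h}(\Delta_g+q)e^{\Phi/h}\big(a+ha_1+h^2a_2+h^3a_3\big) = h^3(\Delta_g+q)a_3 =: F_h ,
\]
a fixed smooth function times $h^3$.

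Solving these transport equations across the critical points of $\Phi$ is the crux, and it is here that the hypothesis that $a$ vanishes to high order at those points is used; the critical points need not be nondegenerate, since $\Phi$ is not assumed Morse. Near a critical point $p$ of multiplicity $m_p$ one has $\partial_z\Phi = O(|z-p|^{m_p-1})$, so passing from $a_j$ to $a_{j+1}$ divides by a function vanishing to order $m_p-1$. We build the correctors so that each still vanishes to a controlled order: if $a$ vanishes to order $N$ at every critical point of $\Phi$, then $qa/\partial_z\Phi$ is smooth once $N\ge\max_p m_p$ and one solves $\bar\partial a_1=\cdots$ (the relevant surfaces are open Riemann surfaces, so the $\bar\partial$-equation is solvable with smooth data, e.g.\ by the Cauchy transform in a coordinate cover together with a partition of unity). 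Since $\partial_{\bar z}a_1$ is unchanged under adding a holomorphic function, we subtract from $a_1$ a holomorphic polynomial in the local coordinate at each $p$ cancelling its low-order holomorphic Taylor coefficients, after which $a_1$ again vanishes to high order at every critical point; iterating, each step costs $O(\max_p m_p)$ orders of vanishing. Hence $a_1,a_2,a_3$ all exist and are smooth on a neighbourhood of $\overline M$ provided $a$ vanishes at the critical points of $\Phi$ to an order depending only on $\Phi$ — this is the meaning of ``sufficiently high order''. I expect this corrector construction across the (possibly degenerate) critical points to be the main obstacle; everything else is routine.

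It then remains to solve $e^{-\Phi/h}(\Delta_g+q)e^{\Phi/h}r = -F_h$ on $M$. Conjugating out the oscillation, $e^{-\Phi/h}(\Delta_g+q)e^{\Phi/h} = e^{-i\psi/h}\big(e^{-\varphi/h}(\Delta_g+q)e^{\varphi/h}\big)e^{i\psi/h}$, so with $\widetilde r := e^{i\psi/h}r$ it suffices to solve $e^{-\varphi/h}(\Delta_g+q)e^{\varphi/h}\widetilde r = -e^{i\psi/h}F_h$ on $M$. Since $\psi$ is smooth on $\widetilde M$, for each fixed $k$ one has $\|e^{i\psi/h}F_h\|_{H^k_{scl}} \le C_k h^3$, so the solvability corollary to Proposition \ref{prop: shifted carleman estimate} yields $\widetilde r\in H^k(M)$ with $\|\widetilde r\|_{H^k_{scl}(M)} = O(h^3)$. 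Fixing $k$ large and using the semiclassical Sobolev embedding $\|\cdot\|_{C^0(\overline M)}\lesssim h^{-1}\|\cdot\|_{H^k_{scl}(M)}$ (dimension $2$), together with $r = e^{-i\psi/h}\widetilde r$ so that $\nabla r = e^{-i\psi/h}\big(\nabla\widetilde r - i h^{-1}(\nabla\psi)\widetilde r\big)$ costs one further power of $h^{-1}$, gives $\|r\|_{C^1(\overline M)} = O(h)$; this is exactly why three correctors (i.e.\ error $O(h^3)$) are needed. Finally $u := e^{\Phi/h}\big(a + (ha_1+h^2a_2+h^3a_3+r)\big)$ solves $(\Delta_g+q)u=0$ in $M$, and since $ha_1+h^2a_2+h^3a_3$ is $O(h)$ in $C^1(\overline M)$, the total remainder is $o(1)$ in $C^1(\overline M)$, as claimed.
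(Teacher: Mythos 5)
Your proposal follows essentially the same route as the paper's: Lemma \ref{lem: CGO ansatz} is exactly your finite WKB hierarchy (written with $\partial\psi$ in place of $\partial_z\Phi$, which differ only by the factor $2i$), the high-order vanishing of $a$ at the critical points is exploited in the same way to keep the divisions by $\partial\Phi$ smooth, and the residual $O(h^{N_1})$ error is then removed by the solvability corollary to Proposition \ref{prop: shifted carleman estimate} together with semiclassical Sobolev embedding, as you spell out. Two small points to tighten. First, you cannot literally subtract ``a holomorphic polynomial in the local coordinate'' from $a_1$: such a polynomial is not globally defined on the surface, and what is actually needed is a globally holomorphic object with a prescribed finite jet at each critical point (and high-order vanishing at the others); this exists by Lemma 2.2.4 of \cite{guillarmou2011calderon} and is precisely the role played by the corrections $\omega_j$ in the paper's construction. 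Second, your assertion that the adjusted $a_1$ ``again vanishes to high order'' after cancelling only its holomorphic Taylor coefficients implicitly uses that the mixed coefficients of $\bar z^j z^m$ with $j\geq 1$ already vanish up to the relevant order, which follows because $\bar\partial a_1$ vanishes to high order there — this is the paper's observation \eqref{eq: b0jm vanishes}. With these two points made explicit, your argument is complete and is, if anything, more detailed than the paper in the final error-removal step.
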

To prove this proposition, we first need the following preliminary ansatz. We obtain
\begin{lemma}\label{lem: CGO ansatz}
Let $\Phi$ and $a$ be a holomorphic functions on $(M',g, \partial M')$. For all $k, N_1\in \mathbb N$, there exists an $N_0\in \mathbb N$ such that if $a$ has vanishing of order greater than $N_0$ at all critical points $\{p_1,\dots, p_l\}$ of $\Phi$, then we can construct an ansatz for $u$ satisfying $e^{-\Phi/h}(\Delta_g + q) u = O_{C^k(M')}(h^{N_1})$ of the form
\begin{equation}
\label{eq: CGO ansatz}
u = e^{\Phi/h}\left (a + \sum\limits_{j=1}^{N_1} h^j r_j\right)
\end{equation}
with $r_j\in C^{k+2}(M')$ independent of $h>0$.
\end{lemma}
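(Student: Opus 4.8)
The plan is to construct the ansatz \eqref{eq: CGO ansatz} by an iterative scheme that at each step solves a $\bar\partial$-type equation with a source that vanishes to high order at the critical points of $\Phi$. First I would recall the standard factorization of the Laplacian in local isothermal coordinates: with $\Phi = \phi + i\psi$ holomorphic, conjugation by $e^{\Phi/h}$ gives $e^{-\Phi/h}\Delta_g e^{\Phi/h} u = \Delta_g u + \frac{2}{h}(\langle d\phi, du\rangle_g + \ldots)$, and the leading transport term is, up to the conformal factor, proportional to $\bar\partial_\Phi$ acting on $u$ (more precisely $\partial\Phi\cdot\bar\partial u$ plus conjugate terms). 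The key point is that since $\Phi$ is holomorphic, $\bar\partial\Phi = 0$, so the transport operator degenerates exactly at the critical points of $\Phi$, i.e. where $d\Phi = 0$.

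The iteration then goes as follows. Having chosen $a$ holomorphic (so that $\partial a$ contributes and $\bar\partial a = 0$, making the $O(h^0)$ equation automatically satisfied to leading order aside from the $qa$ term), we set up the recursion: at order $h^j$ we must solve an equation of the schematic form $\partial\Phi\cdot\bar\partial r_j = F_j$, where $F_j$ is built from $\Delta_g r_{j-1}$, $q r_{j-1}$, and the earlier $r_i$. To solve this we divide by $\partial\Phi$, which is holomorphic and vanishes precisely at the critical points $p_1,\dots,p_l$ with finite orders; thus $F_j/\partial\Phi$ is smooth provided $F_j$ vanishes to sufficiently high order at each $p_m$. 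This is where the vanishing hypothesis on $a$ enters: choosing the order of vanishing $N_0$ of $a$ large enough guarantees, by propagating the vanishing through the recursion (each application of $\Delta_g$ or multiplication by $q$ lowers the vanishing order by at most $2$, and there are only $N_1$ steps), that every $F_j$ vanishes to high enough order that $F_j/\partial\Phi \in C^{k+2}$. Then one solves $\bar\partial r_j = F_j/\partial\Phi$ using a right inverse $\bar\partial^{-1}$ of the Cauchy–Riemann operator on the slightly larger surface $M'$ (cutting off near $\partial M'$ if needed), gaining one derivative, so $r_j \in C^{k+2}$ as claimed, independent of $h$.

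With the $r_j$ in hand, a direct computation shows $e^{-\Phi/h}(\Delta_g+q)u = h^{N_1+1}(\Delta_g + q)r_{N_1} + (\text{terms already cancelled}) = O_{C^k(M')}(h^{N_1})$ — indeed the remainder after the order-$h^{N_1}$ correction is exactly $h^{N_1}\cdot$(bounded in $C^k$), which is what is wanted; one should track constants to see the estimate is uniform. I would be careful to state precisely the bookkeeping: $N_0$ depends on $k$, $N_1$, on the maximal order of vanishing of $\partial\Phi$ at its zeros, and on $l$, but not on $h$.

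The main obstacle I anticipate is the careful bookkeeping of vanishing orders through the recursion, and making sure the division by $\partial\Phi$ stays within the required Hölder/$C^k$ class at every step — in particular that applying $\Delta_g$ (two derivatives) to an object with a zero of order $N$ still leaves a zero of order $N-2$, so one loses $2N_1$ orders of vanishing over the whole scheme and $N_0$ must be chosen accordingly, plus a few more to absorb the regularity loss from $\bar\partial^{-1}$ not being local. A secondary technical point is handling the boundary: since we only need the ansatz on $M' \supset\supset M$, one can extend $q$ and work with a global right inverse for $\bar\partial$ on a reference surface, so no genuine boundary condition is imposed on the $r_j$; this should be remarked but causes no real difficulty.
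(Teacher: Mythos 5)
Your overall strategy — expand in powers of $h$, observe that the transport operator produced by the holomorphic phase is $\partial\Phi\cdot\bar\partial$, and use the high-order vanishing of $a$ at the critical points of $\Phi$ to keep the divisions by $\partial\Phi$ (equivalently $\partial\psi$) regular — is the same as the paper's. But your implementation of the transport step has a genuine gap. You propose to define $r_{j+1}=\bar\partial^{-1}\bigl(F_{j+1}/\partial\Phi\bigr)$ with $F_{j+1}$ built from $(\Delta_g+q)r_j$, and you claim the vanishing order propagates because ``each application of $\Delta_g$ or multiplication by $q$ lowers the vanishing order by at most $2$.'' That claim fails at the first iteration: $\bar\partial^{-1}$ is the nonlocal Cauchy transform, and $\bar\partial^{-1}(g)$ does \emph{not} vanish at a point just because $g$ vanishes there to high order. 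What is true is that if $\bar\partial r_j=O(|z|^N)$ at a critical point, then the $N$-jet of $r_j$ there is purely holomorphic, $r_j=\sum_{m\le N} c_m z^m+O(|z|^{N+1})$, with $c_0,\dots,c_N$ uncontrolled. The term $\Delta_g r_j\sim\partial\bar\partial r_j$ is then still fine (the holomorphic jet is annihilated), but the zeroth-order term $q\,r_j$ in $F_{j+1}$ generically does \emph{not} vanish at the critical point, so $F_{j+1}/\partial\Phi$ is singular and the scheme breaks down already in defining $r_2$.

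The missing ingredient is a correction by elements of $\ker\bar\partial$ at each step: after applying $\bar\partial^{-1}$ you must subtract a holomorphic function whose jet at each critical point matches the (purely holomorphic) uncontrolled part of the jet of $\bar\partial^{-1}(F_{j+1}/\partial\Phi)$; such functions with prescribed finite jets at finitely many points exist by Lemma 2.2.4 of \cite{guillarmou2011calderon}, and subtracting them does not change $\bar\partial r_{j+1}$. With that fix your argument closes, and it is then essentially a reshuffling of the paper's proof: the paper keeps the hierarchy in the form $\star\bar\partial(\text{1-form})$, converts the zeroth-order terms $qa$, $qr_j$ into exact forms via the Green operator $G$, and adds $\bar\partial$-closed forms $\omega_j$ precisely to kill the holomorphic part of the jet before dividing by $\partial\psi$; your $\bar\partial^{-1}$-based version needs the same jet-killing device, just applied to functions rather than to $(1,0)$-forms. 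Two minor points: the remainder after the last step is $h^{N_1}(\Delta_g+q)r_{N_1}$, not $h^{N_1+1}(\cdots)$ (you correct this yourself a line later), and for a holomorphic phase there are no ``conjugate terms'' in the transport operator since $\bar\partial\Phi=0$.
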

\proof
We modify the construction in \cite{guillarmou2011calderon} to take advantage of the fact that $a$ vanishes to high order at critical points of $\Phi$. To this end let us apply the operator $e^{-\Phi/h} (\Delta_g + q)$ to the ansatz \eqref{eq: CGO ansatz}. By recalling that $\Delta_g=-2i\star \overline \p \p$, that $a$ is holomorphic and the identity
\[
 (-2i)^{-1}\Delta_g (e^{\Phi/h}f) =\star\s  \overline \p \left[ e^{\overline \Phi/h} \p (e^{2i\psi/h} f)\right]=e^{ \Phi/h} \star \overline \p \left[ e^{-2i\psi/h}   \p (e^{2i\psi/h} f)\right],
\]  
holding for any $C^2$ smooth $f$, we obtain
\begin{multline}\label{eq: ansatz 1}
e^{-\Phi/h} (\Delta_g + q) e^{\Phi/h}\left (a + \sum\limits_{j=1}^{N_1} h^j r_j\right) \\
=  \left( -2i\star\bar\partial e^{-2i\psi/h} \partial e^{2i\psi/h} + q\right) \sum\limits_{j=1}^{N_1} h^j r_j + qa \\
 = \star\bar\partial\left(-4\sum\limits_{j=1}^{N_1} h^{j-1}\partial\psi r_j-2i\sum\limits_{j=1}^{N_1} h^j \partial r_j \right) +\sum\limits_{j=1}^{N_1} h^j q r_j +qa.
\end{multline}
So if we want the expression on the right hand side of \eqref{eq: ansatz 1} to be $O_{C^k(M')}(h^{N_1})$,  we should look to solve 
\begin{eqnarray}\label{eq: ansatz of CGO}\nonumber
&&-4   \sum\limits_{j=1}^{N_1} h^{j-1}\partial\psi r_j+\sum\limits_{j=1}^{N_1} h^j( \tbl{-2i}\partial r_j +    \partial G(q r_j)  -  \omega_j)\\&&= -\partial G(aq) + \omega_0+ O_{C^{k+1}(M)}(h^{N_1})
\end{eqnarray}
for some $\omega_0,\dots,\omega_{N_1}\in C^\infty(M; T^*_{1,0}M)$ satisfying $\star\overline \p\omega_j=\partial^* \omega_j = 0$. 

Using Lemma 2.2.4 of \cite{guillarmou2011calderon}, we will first choose the power series expansion of $\omega_0$ at the points $\{p_1,\dots, p_l\}$ so that the Taylor series expansion of 
\[
 b(z,\bar z) dz:=-\partial G(aq) + \omega_0
\]
 near each critical point is of the form
\begin{eqnarray}\label{eq: prelim expansion of b} 
b(z,\bar z) = \sum\limits_{j\geq 1,\s m\geq 0,\s j+m \leq N_0 }b^0_{j,m} \bar z^j z^m + O(|z|^{N_0+1}),
\end{eqnarray}
where $N_0$ is such that $a$ vanishes to order of at least $N_0$ at all critical points of $\Phi$. 
Note that the sum over $j$ starts at $1$ instead of $0$ as we have chosen $\omega_0$ to killed off the $j=0$ terms in the double sum.
Now, by construction, $\partial^* b = aq$ which vanishes to order $N_0$ at all critical points $\{p_1,\dots, p_l\}$ of $\Phi$. This combined with \eqref{eq: prelim expansion of b} shows that
\begin{eqnarray}
 \sum\limits_{j\geq 1,\s m\geq 0,\s j+m \leq N_0 }j\s b^0_{j,m} \bar z^{j-1} z^m  = O(|z|^{N_0})
\end{eqnarray}
which means
\begin{eqnarray}\label{eq: b0jm vanishes}
b^0_{j,m} = 0,\quad {\rm for\ all}\quad j+m \leq N_0, j\geq 1, m\geq 0.
\end{eqnarray}
Inserting this into \eqref{eq: prelim expansion of b} yields that the coordinate expression $b(z,\bar z) dz$ for $\partial G(aq) - \omega_0$ around each critical point is given by
\begin{eqnarray}
\label{eq: b vanishes to high order}
b(z, \bar z) = O(|z|^{N_0+1}).
\end{eqnarray}
Motivated by \eqref{eq: ansatz of CGO} we choose 
\begin{eqnarray}
\label{eq: choice of r1}
r_1 :=- \frac{\partial G(aq) - \omega_0}{4\partial\psi}.
\end{eqnarray}
Note that for each fixed $k\in \mathbb N$, the condition \eqref{eq: b vanishes to high order} ensures that $r_1\in C^{k+2}(\widetilde M)$ if $N_0\in \mathbb N$ is chosen sufficiently large. If we had chosen to stop here, we would have constructed an ansatz satisfying \eqref{eq: ansatz of CGO} with an error of $O_{C^{k+1}}(h)$.

We wish to improve the decay of the error term. To this end, note that at each critical point $p_j$ of $\Phi$, the coordinate expression of $r_1$ satisfies
\begin{eqnarray}\label{eq: r_1 vanishing}
r_1 = O(|z|^{N_0 + 1- {\rm deg}(\partial\psi)})
\end{eqnarray}
where  ${\rm deg}(\partial\psi)$ is the degree of vanishing at $p_j$ and we have suppressed the dependence on the point in our notation. We now choose $\omega_1$ in \eqref{eq: ansatz of CGO} so that in coordinates at each critical point,
\begin{eqnarray}\label{eq: omega1 kills terms}\nonumber\partial r_1 + \partial G(qr_1) - \omega_1 = \sum\limits_{j\geq 1, m\geq 0, j+m \leq N_0 -{\rm deg}(\partial\psi)}b^1_{j,m} \bar z^j z^m +  O(|z|^{N_0 - {\rm deg}(\partial\psi)}).\\
\end{eqnarray}
From this, we can repeat the argument we made to deduce \eqref{eq: b0jm vanishes} to show that 
\begin{eqnarray}\label{eq: b1jm vanishes}
b^1_{j,m} = 0,\quad {\rm for\ all}\quad j+m \leq N_0 - {\rm deg}(\partial\psi), j\geq 1, m\geq 0.
\end{eqnarray}
Inserting this into \eqref{eq: omega1 kills terms} we get that, at each critical point of $\Phi$,
\begin{eqnarray}\label{eq: vanishing of b1}
\partial r_1 + \partial G(qr_1) - \omega_1 =   O(|z|^{N_0 - {\rm deg}(\partial\psi)}).
\end{eqnarray}
The ansatz \eqref{eq: ansatz of CGO} motivates us to choose 
\begin{eqnarray}
\label{eq: def of r2}
r_2 := \frac{\partial r_1 + \partial G(qr_1) - \omega_1}{4\partial\psi}.
\end{eqnarray}
Note that by \eqref{eq: vanishing of b1}, if $N_0\in \mathbb N$ is sufficiently large, $r_2\in C^{k+2}(\widetilde M)$ and 
\begin{eqnarray}
\label{eq: vanishing of r2}
r_2 = O(|z|^{N_0-2{\rm deg}(\partial\psi)})
\end{eqnarray}
in conformal coordinates around each critical point of $\Phi$. If we had stopped here, the error term in \eqref{eq: ansatz of CGO} would decay like $O_{C^{k+1}}(h^2)$.

Therefore, for each fixed $N_1,k \in \mathbb N$, if we assume $N_0\in \mathbb N$ is sufficiently large, we can proceed inductively to construct $r_3,\dots, r_{N_1}\in C^{k+1}(\widetilde M)$ so that the remainder in \eqref{eq: ansatz of CGO} will decay like $O_{C^{k+1}}(h^{N_1})$. This concludes the proof.
\qed
\subsection{Boundary values of holomorphic functions}
For a Riemann surface with boundary, $(M,g,\partial M)$, we denote the set of holomorphic functions by $\OO(M)$. The goal of this section is to prove  
\begin{proposition}
\label{prop: same holomorphic bv}
Let $(M_1, g_1, \partial M_1)$ and $(M_2, g_2,\partial M_2)$ be two Riemann surfaces with identical boundary. If $\Lambda_{g_1,q_1} =\Lambda_{g_2, q_2}$ on $\partial M_1 = \partial M_2$ then there exists enlarged surfaces $( M_j', g_j', \partial M_j')$, $j=1,2$ with $\Omega := M_1'\setminus M_1 = M_2' \setminus M_2$ and $ g_1' = g_2'$ which is Euclidean near $\partial M_1' = \partial M_2'$  on $\Omega$ such that
\begin{eqnarray}
\label{eq: same holomorphic bv}
\{ \Phi |\s _{\partial M_1'} |\s  \Phi\in \OO( M_1')\} =\{\Phi|\s _{\partial  M_2'} |\s  \Phi\in \OO( M_2')\}.
\end{eqnarray}
\end{proposition}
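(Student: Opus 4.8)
The plan is to construct the common extension $\Omega$ first, purely on the boundary data side, and then to recover each boundary trace $\Phi|_{\partial M_j'}$ from $\Lambda_{g_j,q_j}$ via the CGOs of Proposition \ref{prop: CGO for boundary determination}. For the extension: since $\partial M_1 = \partial M_2$ as a smooth manifold, I would attach to each $M_j$ a fixed collar $\Omega = \partial M_1 \times [0,1)$ (with a chosen smooth structure making $M_j' = M_j \cup_{\partial M_j} \Omega$ a smooth surface with boundary), and put on $\Omega$ a single Riemannian metric $g'$ which is Euclidean near the new boundary $\partial M_1' = \partial M_2'$; by the conformal flexibility in $2$D one can arrange $g_1'|_\Omega = g_2'|_\Omega = g'$ and $q_1 = q_2 = 0$ on $\Omega$ (say). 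The point of the extension is twofold: it gives us room to run the shifted Carleman estimate (Proposition \ref{prop: shifted carleman estimate}) and it places the critical points of the phase $\Phi$ inside $M_j$, away from $\Omega$.

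The heart of the argument is that, for any holomorphic $\Phi$ on $M_j'$, the data $\Lambda_{g_j,q_j}$ determines the Cauchy data on $\partial M_j'$ — and in particular the trace $\Phi|_{\partial M_j'}$ — of solutions built from $\Phi$, in a way that does not see which side ($j=1$ or $j=2$) we are on. Concretely: given $\Phi$ holomorphic on $M_1'$, build the CGO $u_1 = e^{\Phi/h}(a + r_1)$ on $M_1$ from Proposition \ref{prop: CGO for boundary determination}, with $a$ holomorphic vanishing to high order at the critical points of $\Phi$ and $\|r_1\|_{C^1(\overline{M_1})} = o(1)$. Using $\Lambda_{g_1,q_1} = \Lambda_{g_2,q_2}$, the standard boundary-integral/Alessandrini-type identity and the solvability corollary (the Corollary after Proposition \ref{prop: shifted carleman estimate}) produce a solution $u_2$ of $(\Delta_{g_2}+q_2)u_2 = 0$ on $M_2$ with the same Cauchy data as $u_1$ on $\partial M$. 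One then extends $u_1$ and $u_2$ across $\Omega$ — they agree there as solutions of the same equation on $\Omega$ (unique continuation, since $g_1' = g_2'$, $q_1 = q_2$ on $\Omega$) — to get a single function $U_h = e^{\Phi/h}(a + r_h)$ defined on $\Omega$ and on $M_2'$. The Carleman estimate of Proposition \ref{prop: carleman estimate}, applied with weight $\mathrm{Re}\,\Phi$ (no Morse hypothesis needed — this is exactly why that proposition was proved), controls $r_h$ in $L^2$, and then the $C^1$-smoothing in the CGO construction upgrades this to pointwise control: $r_h = o(1)$ on $\Omega$. Now on $\Omega$ we know $U_h = e^{\Phi/h}(a+r_h)$ pointwise up to a polynomially-growing-in-$h^{-1}$ error coming from the data comparison; choosing the sign of $\mathrm{Re}\,\Phi$ so that $e^{\Phi/h}$ grows exponentially on (part of) $\Omega$, we divide by $e^{\Phi/h}$, let $h \to 0$, and recover $a + r_h \to a$, hence in the limit we recover $\Phi$ itself on $\Omega$, and in particular $\Phi|_{\partial M_j'}$. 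Running the same argument with $1$ and $2$ interchanged gives the two-sided inclusion \eqref{eq: same holomorphic bv}: a boundary trace arising from $\OO(M_1')$ is realized by an element of $\OO(M_2')$ and conversely.

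For the bookkeeping of error terms: the data identity gives $\int_{\partial M}(u_1 \partial_\nu u_2 - u_2 \partial_\nu u_1) = 0$ but, more usefully, it lets us \emph{solve} for $u_2$ with matching Cauchy data and then the difference $u_1 - u_2$ (extended by $0$, or by an $H^{-k}$-small remainder) satisfies an equation to which Proposition \ref{prop: shifted carleman estimate} applies; tracking the powers of $h$ through the solvability estimate $\|r_h\|_{H^k_{scl}} \leq C\|f\|_{H^k_{scl}}$ shows the discrepancy between $U_h$ and $e^{\Phi/h}a$ on $\Omega$ is $O(h^{-M})$ for some fixed $M$, which is dominated by the exponential growth $e^{c/h}$. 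I would present this with $\Phi$ first taken to have no critical points on $\overline{\Omega}$ (true by construction) and, if necessary for the function theory, note that $\OO(M_j')$ contains enough functions (e.g. by Behnke--Stein / Runge-type approximation on open Riemann surfaces) that restricting to such $\Phi$ loses no boundary traces in the limit.

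\textbf{Main obstacle.} I expect the crux to be the $o(1)$ pointwise control of the remainder $r_h$ on the overlap $\Omega$ \emph{uniformly up to the boundary} $\partial M_j'$, combined with correctly matching the solution across $\Omega$: the Carleman estimate in Proposition \ref{prop: carleman estimate} carries boundary terms $h^{-3}\|v\|^2_{L^2(\partial M)}$ etc., so one must feed in the known (matching) Cauchy data on $\partial M$ and the \emph{controlled} (zero or compactly-arranged) data on $\partial M_j'$, and then trade the resulting $L^2$-bound for a $C^1$-bound using the explicit structure of the ansatz \eqref{eq: CGO ansatz} rather than elliptic regularity alone. Getting the exponential weight $e^{\mathrm{Re}\,\Phi/h}$ to genuinely dominate \emph{somewhere} on $\Omega$ — i.e. choosing the collar and the sign of $\Phi$ so that $\mathrm{Re}\,\Phi$ is non-constant and of a definite sign on a relatively open piece of $\Omega$ touching $\partial M_j'$ — is a small geometric point that needs care, since $\Phi$ is prescribed on $M_j'$ and only its \emph{boundary} values are what we are after.
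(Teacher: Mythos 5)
Your setup (the common collar $\Omega$ with a shared metric Euclidean near the new boundary, the CGOs of Proposition \ref{prop: CGO for boundary determination}, the Carleman estimate with boundary terms beating the polynomial $h^{-3}$ loss by exponential growth) matches the paper's strategy, but the logical core of your argument has two genuine gaps. First, recovering the trace $\Phi|_{\partial M_j'}$ from the data is not what the proposition asserts: \eqref{eq: same holomorphic bv} says that a boundary trace of a holomorphic function on $M_1'$ is also the boundary trace of some holomorphic function on \emph{all of} $M_2'$, and your argument only ever sees $\Omega$. Knowing $\Phi$ on $\Omega$ (which you already know, since $\Phi$ is given on $M_1'\supset\Omega$) says nothing about holomorphic extendability into $M_2$. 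The paper closes this gap with Lemma \ref{lem: characterization of holom}: $f$ extends holomorphically into $M_j'$ iff $\partial_\tau f - i\Lambda_{g_j',0}f = 0$, so that once one proves $\Lambda_{g_1',0} = \Lambda_{g_2',0}$ on $\mathcal H(M_1')|_{\partial M_1'}$ (Lemma \ref{lem: DN map for harmonic}), the identity of trace sets follows formally. Your proposal contains no substitute for this step.

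Second, your Carleman step is not set up correctly. You propose to apply Proposition \ref{prop: carleman estimate} on $M_2'$ "with weight $\mathrm{Re}\,\Phi$", but $\mathrm{Re}\,\Phi$ is harmonic on $M_1'$, not on $M_2'$; the admissible weight on $M_2'$ is $\phi_2$, the harmonic extension into $M_2'$ of the boundary values of $\mathrm{Re}\,\Phi$, and the whole content of Lemma \ref{lem: DN map for harmonic} is the comparison $\phi_1=\phi_2$ on $\Omega$, proved by contradiction: if $\phi_1(\hat p)>\phi_2(\hat p)$, one extends $\Phi_1$ past $\partial M_1'$ by Cauchy--Kovalevskaya (which forces the reduction to \emph{real-analytic} boundary data, hence the density and projection lemmas, another ingredient you omit), normalizes $a(\hat p)=1$, and derives $\|e^{(\phi_1-\phi_2)/h}(a+r)\|^2_{L^2(\Omega)}\le Ch^{-3}$, contradicting the exponential growth at $\hat p$. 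Your version instead assumes the extended solution on $M_2'$ still has the form $e^{\Phi/h}(a+r_h)$ with small remainder and tries to "divide by $e^{\Phi/h}$", which presupposes exactly the agreement of weights that has to be established. Relatedly, it is the agreement of the \emph{normal derivatives} of the harmonic extensions on $\partial M'$ (i.e.\ $\Lambda_{g_1',0}=\Lambda_{g_2',0}$ on harmonic data), not pointwise knowledge of $\Phi$ on $\Omega$, that feeds into Lemma \ref{lem: characterization of holom}. As written, the proposal does not yield the proposition.
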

To prove this proposition we will work with real parts of holomorphic functions, which we denote by $\mathcal H (M) := {\rm Re}(\OO(M))$ and $\mathcal H (M)\s|_{\partial M}$ to be the trace of these functions. Let us note that we have the well-known characterization 
\begin{equation}\label{eq characterization of useful harmonic functions}
\mathcal H (M) = \{u\in C^\infty(\overline M)\s |\s  \Delta_g u = 0,\  \int_{\gamma_j} \star du  =0,\ \text{ for all } j=1,\ldots N\},
\end{equation}
where $\gamma_1,\dots ,\gamma_N$ form the homological basis of $(M,\partial M)$. We also note that there is a characterization of boundary values of holomorphic functions:
\begin{lemma}[Lemma 4.1 of \cite{guillarmou2011identification}]
\label{lem: characterization of holom}
Let $(M, g,\partial M)$ be a Riemann surface with boundary. A complex valued function $f\in C^\infty (\partial M)$ extends holomorphically to $M$ if and only if $\partial_\tau f - i\Lambda_{g,0}f  =0$.
\end{lemma}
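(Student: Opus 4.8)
\medskip

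\noindent\textbf{Proof proposal.} The plan is to reduce \emph{both} implications to a single pointwise identity on $\partial M$, valid for the harmonic extension of an arbitrary boundary datum, together with a unique continuation statement for anti-holomorphic $1$-forms. Given $f\in C^\infty(\partial M)$, I would let $u\in C^\infty(\overline M)$ be the unique solution of $\Delta_g u=0$ in $M$ with $u|_{\partial M}=f$ (solving for real and imaginary parts separately, with smoothness up to the boundary coming from elliptic regularity), so that $\partial_\nu u|_{\partial M}=\Lambda_{g,0}f$ by definition of the DN map. Since $u$ is harmonic, $\overline\partial u$ is an anti-holomorphic $(0,1)$-form: in a local holomorphic coordinate $z$ one has $\overline\partial u=u_{\overline z}\,d\overline z$ with $\overline{u_{\overline z}}$ holomorphic. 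The key computation is that, using the identity $\overline\partial u=\tfrac12(du-i\star du)$ and decomposing $du$ on $\partial M$ into its normal part ($\partial_\nu u=\Lambda_{g,0}f$) and tangential part ($\partial_\tau u=\partial_\tau f$) — the Hodge star interchanging these up to sign according to the orientation convention in which $(\nu,\tau)$ is a positively oriented frame with $\nu$ the outer normal — the pull-back of $\overline\partial u$ to $\partial M$ is
\[
 i^*(\overline\partial u)=\tfrac12\big(\partial_\tau f-i\,\Lambda_{g,0}f\big)\,\sigma,
\]
where $\sigma$ is the $1$-form on $\partial M$ dual to $\tau$. This identity carries the whole content of the lemma once one knows how to exploit it.

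\smallskip

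\noindent For the forward implication, if $f$ extends to a function $\Phi$ holomorphic in $M$ and continuous up to $\partial M$, then $\Phi$ is harmonic, so by uniqueness of the harmonic extension $\Phi=u$ (in particular $\Phi\in C^\infty(\overline M)$), and $\overline\partial u=\overline\partial\Phi=0$; substituting into the identity gives $\partial_\tau f-i\Lambda_{g,0}f=0$. For the converse, assuming $\partial_\tau f-i\Lambda_{g,0}f=0$, the identity yields $i^*(\overline\partial u)=0$. Working in a boundary half-disk chart in which $\partial M$ is a real segment, $\overline\partial u=u_{\overline z}\,d\overline z$ with $\overline{u_{\overline z}}$ holomorphic up to the segment and vanishing on it (because $i^*(\overline\partial u)=u_{\overline z}\,dx=0$ there); Schwarz reflection extends $\overline{u_{\overline z}}$ holomorphically across the segment, forcing it to vanish identically on the chart, and since $M$ is connected the identity theorem then gives $\overline\partial u\equiv 0$ on all of $M$. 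Hence $u$ is holomorphic in $M$ with $u|_{\partial M}=f$, i.e. $f$ extends holomorphically.

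\smallskip

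\noindent The routine parts are the existence and regularity of the harmonic extension and the unique continuation step (Schwarz reflection plus the identity theorem). The one place requiring care is the bookkeeping of orientation conventions: the sign of $\star$ and the orientation of the frame $(\nu,\tau)$ must be fixed so that the boundary identity produces exactly the combination $\partial_\tau f-i\Lambda_{g,0}f$; with the opposite convention one would run the same argument with $\partial u$ in place of $\overline\partial u$ and obtain the complex-conjugate relation. This is the only genuine obstacle, and it is a matter of care rather than difficulty.
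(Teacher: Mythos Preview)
Your argument is correct and is in fact more self-contained than the paper's. The paper does not compute $i^*(\overline\partial u)$ directly; instead it invokes a duality characterisation from \cite{guillarmou2011identification}: the harmonic extension $u_f$ is holomorphic iff $\int_{\partial M} f\,\iota_{\partial M}^*\eta=0$ for every $\overline\partial$-closed $(0,1)$-form $\eta$, then uses the fact (Proposition~2.1 of the same reference) that such $\eta$ are exactly $\partial\phi$ with $\phi$ harmonic, computes $\iota_{\partial M}^*\partial\phi=(\partial_\tau\phi+i\Lambda_{g,0}\phi)\,d\tau$, integrates by parts, and uses that boundary values of harmonic functions are arbitrary. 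So both proofs rest on the same pointwise boundary identity for $\partial$ or $\overline\partial$, but the paper reaches the converse via a weak (integral) formulation against a dense family of test data, while you reach it via a strong (pointwise) unique-continuation argument: $\overline\partial u$ is an anti-holomorphic $(0,1)$-form whose boundary trace vanishes, hence vanishes identically by Schwarz reflection in a boundary half-disk chart and the identity theorem. Your route avoids importing the two external results and is arguably cleaner; the paper's route has the minor advantage of never needing a holomorphic boundary chart or reflection, only the surjectivity of the trace map for harmonic functions.
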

\proof
By Lemma 4.1 of \cite{guillarmou2011identification}, the harmonic extension $u_f\in C^\infty(M)$ of $f\in C^\infty(\partial M)$ is holomorphic if and only if 
$$\int_{\partial M} f\iota_{\partial M}^* \eta =0$$
for all $\eta \in C^\infty(M; T^*_{0,1}M)$ with $\bar\partial \eta  =0$. By Proposition 2.1 of \cite{guillarmou2011identification}, we have that the set of $\eta\in C^\infty(M; T^*_{0,1}M)$ with $\bar\partial\eta = 0$ are precisely sections of the form $\eta = \partial \phi$ for some $\phi\in C^\infty(M)$ harmonic. So $f$ extends holomorphically if and only if
$$\int_{\partial M} f\iota_{\partial M}^* \partial \phi=0$$
for all harmonic functions $\phi$. A coordinate calculation yields that 
\[
\iota_{\partial M}^* \partial \phi = \left(\partial_\tau( \phi|\s _{\partial M} )+ i\Lambda_{g,0}(\phi|\s _{\partial M})\right) d\tau,
\]
where $\tau$ is the coordinate on the boundary. By integrating by parts and using the self-adjointness of $\Lambda_{g,0}$ we get that
\[
\int_{\partial M} (-\partial_\tau f + i\Lambda_{g,0} f) (\phi|\s _{\partial M}) d\tau= 0
\] 
for all harmonic functions $\phi$. But the boundary values of harmonic functions can be chosen to be arbitrary. We therefore get that
\[
(-\partial_\tau f + i\Lambda_{g,0} f)  = 0
\]
is equivalent to $f$ having a holomorphic extension.
\qed

The harmonic extension of an arbitrary function $f\in C^\infty(\partial M)$ does not, in general, admit a harmonic conjugate. To this end, we prove the following lemma which gives a natural way to modify $f\in C^\infty(\partial M)$ to a function whose harmonic extension admits a harmonic conjugate:
\begin{lemma}\label{lem: projection to harmonic conjugate}
Let $(M, g, \partial M)$ be a Riemann surface with boundary. There exists a linear operator ${\rm proj} : C^\infty(\partial M;\mathbb R)\to \mathcal H(M)|\s _{\partial M}$, continuous with respect to the $H^{3/2}(\partial \tbl{M})$ topology, which maps $C^\omega(\partial M)$ into itself, and $C^\infty(\partial M)$ into itself, and is the identity map on $\mathcal H(M)\s |_{\partial M}$.
\end{lemma}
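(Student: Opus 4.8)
\textbf{Proof proposal for Lemma \ref{lem: projection to harmonic conjugate}.}

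The plan is to build the projection by explicitly subtracting off the obstruction to the existence of a global harmonic conjugate. Recall from \eqref{eq characterization of useful harmonic functions} that a harmonic function $u$ on $M$ lies in $\mathcal H(M)$ precisely when the $N$ periods $\int_{\gamma_j}\star\,du$ vanish, where $\gamma_1,\dots,\gamma_N$ is a homology basis of $(M,\partial M)$. So, given $f\in C^\infty(\partial M;\R)$, I would first let $u_f$ be its harmonic extension (solving $\Delta_g u_f=0$, $u_f|_{\partial M}=f$), and record the period vector $P(f)=\big(\int_{\gamma_1}\star\,du_f,\dots,\int_{\gamma_N}\star\,du_f\big)\in\R^N$. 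The map $f\mapsto P(f)$ is linear and continuous from $H^{3/2}(\partial M)$ to $\R^N$ (since $f\mapsto u_f$ is continuous $H^{3/2}(\partial M)\to H^2(M)$ and the period integrals are continuous on $H^1$-one-forms by a trace/Stokes argument), and it maps $C^\omega$ and $C^\infty$ boundary data to $\R^N$ trivially.

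Next I would fix, once and for all, a finite-dimensional space of correction functions. Choose real harmonic functions $h_1,\dots,h_N\in C^\infty(\overline M)$ whose period vectors $P(h_k|_{\partial M})$ form a basis of $\R^N$ — this is possible because the pairing between de Rham cohomology classes of harmonic one-forms $\star\,dh$ and the homology classes $[\gamma_j]$ is nondegenerate (Hodge theory on the surface with boundary; equivalently the periods of the harmonic measures / a basis of $H^1(M,\partial M)$ realized by closed harmonic one-forms span). Let $A$ be the invertible $N\times N$ matrix with columns $P(h_k|_{\partial M})$, and define
\[
{\rm proj}(f) := f - \sum_{k=1}^N \big(A^{-1}P(f)\big)_k\, h_k\Big|_{\partial M}.
\]
By construction $P({\rm proj}(f))=P(f)-A A^{-1}P(f)=0$, so the harmonic extension of ${\rm proj}(f)$ has vanishing periods and hence admits a global harmonic conjugate; that is, ${\rm proj}(f)\in\mathcal H(M)|_{\partial M}$. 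Linearity is clear. Continuity in $H^{3/2}(\partial M)$ follows since $f\mapsto P(f)$ is continuous into $\R^N$ and each $h_k|_{\partial M}$ is a fixed smooth function. Since the $h_k$ are smooth (indeed one may take them real-analytic up to the boundary by solving with analytic data on an analytic collar, or simply note smoothness suffices for the stated $C^\infty$ claim and real-analyticity is handled by the same formula once the $h_k$ are chosen in $C^\omega$), the formula maps $C^\omega(\partial M)$ into itself and $C^\infty(\partial M)$ into itself. Finally, if $f\in\mathcal H(M)|_{\partial M}$ then $P(f)=0$, so ${\rm proj}(f)=f$; thus ${\rm proj}$ restricts to the identity on $\mathcal H(M)|_{\partial M}$.

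The main obstacle is the selection of the correction functions $h_1,\dots,h_N$ with linearly independent period vectors, i.e.\ verifying that the period map $\mathcal H(M)\to\R^N$ is surjective. This is where one must invoke Hodge theory for the surface with boundary: the space of harmonic fields (closed and co-closed one-forms with, say, Neumann-type boundary behavior) has dimension equal to the first Betti number, and the period pairing against $\gamma_1,\dots,\gamma_N$ is a perfect pairing; dualizing the one-form $\star\,du$ back to $u$ then gives the required harmonic functions. I would cite the standard reference for this (e.g.\ the treatment of harmonic differentials on bordered Riemann surfaces) rather than reprove it. Everything else — harmonic extension, continuity of periods, the closed formula for ${\rm proj}$ — is routine.
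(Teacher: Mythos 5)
Your construction is correct in outline and is the same basic mechanism as the paper's: subtract a finite-rank correction, determined by the vector of conjugate periods $\bigl(\int_{\gamma_j}\star\,du_f\bigr)_{j=1}^N$, so that the corrected boundary value lands in the kernel of the period map, which by \eqref{eq characterization of useful harmonic functions} is exactly $\mathcal H(M)|_{\partial M}$. The continuity, the $C^\infty$/$C^\omega$ preservation, and the identity on $\mathcal H(M)|_{\partial M}$ all go through as you say. The one genuine difference is that you require the period map to be \emph{surjective} onto $\R^N$, so that you can pick $h_1,\dots,h_N$ with invertible period matrix $A$; the paper never claims this. It instead observes that the range of the period map (called ${\rm Coho}$ there) is some subspace isomorphic to $\R^{N'}$ with $N'\le N$, picks boundary functions $f_1,\dots,f_{N'}$ whose images form a basis \emph{of the range}, writes ${\rm Coho}(f)=\sum_j a_j(f)\,{\rm Coho}(f_j)$ with bounded linear functionals $a_j$, and subtracts $\sum_j a_j(f)f_j$. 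That version is self-contained and strictly weaker in its hypotheses. Your surjectivity claim is in fact true for compact bordered surfaces, but it is not an immediate consequence of the perfect pairing between harmonic fields and homology: the forms $\star\,du$ with $u$ a single-valued harmonic function are the \emph{co-exact} harmonic fields, not arbitrary ones, so "dualizing $\star\,du$ back to $u$" is precisely the step that needs the classical theory of harmonic differentials with prescribed conjugate periods (Ahlfors--Sario / Schiffer--Spencer), which you would have to cite carefully. The paper's range-based formulation buys you freedom from that input entirely, and also handles the real-analyticity of the correction functions more cleanly (approximate the $f_j$ by $C^\omega$ boundary data and use that linear independence of finitely many vectors is an open condition); the same density-plus-openness trick would repair your choice of $h_k$ as well.
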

\begin{proof}
Let $\{\gamma_1,\dots, \gamma_N\}$ be a homological basis for $( M, \partial  M)$.  We define on $C^\infty(\partial  M; \mathbb R)$ the linear map
$${\rm Coho}: f\mapsto \left(\int_{\gamma_1} *du_f,\dots, \int_{\gamma_N} *d u_f\right) \in \mathbb R^N$$
where $u_{ f}\in C^\infty( M)$ is the harmonic extension of $ f\in C^\infty(\partial  M;\mathbb R)$.
The range of ${\rm Coho}$ is isomorphic to $\R^{N'}$ for some $N'\leq N$, which is spanned by the basis $e_j := {\rm Coho}(f_j)$
for some $ f_1,\dots,  f_{N'} \in C^\infty(\partial  M)$. Note that using standard Fourier series representation of functions on $\partial M$, every $f\in C^\infty(\partial M)$ can be approximated by real analytic functions in any regularity norm, such as $H^{3/2}(\p M)$. Since linear independence is an open condition, we may assume without loss of generality that the functions $f_j$ defining the basis vectors $e_j = {\rm Coho}(f_j)$ are all in $C^\omega(\partial M;\mathbb R)$.

For all $f\in C^\infty(\partial M)$ we have that ${\rm Coho}(f) = \sum_{j=1}^{N'} a_j(f) {\rm Coho}(f_j)$ where for each $j = 1,\dots, N'$, $f\mapsto a_j(f)\in \mathbb R$ depends linearly on $f$ and is a bounded linear functional with respect to the $H^{3/2}(\partial M)$ topology due to elliptic regularity. We now define 
$${\rm proj}: f \mapsto f - \sum\limits_{j=1}^{N'}a_j(f)f_j.$$
Then clearly the range of $\rm proj$ is in the null-space of the map ${\rm Coho}$. So by \eqref{eq characterization of useful harmonic functions}, ${\rm proj}(f)\in \mathcal H (M)|\s _{\partial M}$ for all $f\in C^\infty(\partial M)$. And by \eqref{eq characterization of useful harmonic functions} again, $\rm proj$ acts as the identity map on $\mathcal H (M)|\s _{\partial M}$. Finally, ${\rm proj}(C^\omega(\partial M)) \subset C^\omega(\partial M)$ and ${\rm proj}(C^\infty(\partial M)) \subset C^\infty(\partial M)$  since each of the $f_j$ are in $C^\omega(\partial M)$.

\end{proof}

We will show that if $\Lambda_{g_1,q_1} =\Lambda_{g_2, q_2}$ on $\partial M_1 = \partial M_2$, then  $\Lambda_{g_1,0}(f) = \Lambda_{g_2,0}(f)$ on the boundary of an extended surface $\partial \widetilde M_1 = \partial \widetilde M_2$ for $f\in \big(\mathcal H(\widetilde M_1)\big)|\s _{\partial \widetilde M_1}$. To this end, we first prove a density Lemma:
\begin{lemma}\label{lem: density of real analytic} On any Riemann surface $(M,g,\partial M)$ with boundary, the function space ${\mathcal H}(M)|\s _{\partial M} \cap C^\omega(\partial M; \mathbb R) $ is $L^2$-dense in ${\mathcal H}(M)|\s _{\partial M}$.
\end{lemma}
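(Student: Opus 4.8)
The plan is to derive this density statement as a quick consequence of the projection operator constructed in Lemma~\ref{lem: projection to harmonic conjugate}, combined with the elementary density of real-analytic functions among smooth functions on the boundary circles. The first observation is that, by the characterization \eqref{eq characterization of useful harmonic functions}, every element of $\mathcal{H}(M)|_{\partial M}$ is the boundary trace of a function in $C^\infty(\overline M)$, so $\mathcal{H}(M)|_{\partial M}\subset C^\infty(\partial M;\mathbb R)\subset H^{3/2}(\partial M)$; thus it suffices to approximate a given $f\in\mathcal{H}(M)|_{\partial M}$ in the $L^2$ topology by elements of $\mathcal{H}(M)|_{\partial M}\cap C^\omega(\partial M;\mathbb R)$.

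The main step is then the following. Given $f\in\mathcal{H}(M)|_{\partial M}$, I would identify $\partial M$ with a finite disjoint union of circles and use Fourier series to produce trigonometric polynomials $f_k\in C^\omega(\partial M;\mathbb R)$ with $f_k\to f$ in $C^\infty(\partial M)$, in particular in $H^{3/2}(\partial M)$. Applying the operator $\mathrm{proj}$ of Lemma~\ref{lem: projection to harmonic conjugate}, which maps $C^\omega(\partial M)$ into itself and has range contained in $\mathcal{H}(M)|_{\partial M}$, yields $\mathrm{proj}(f_k)\in\mathcal{H}(M)|_{\partial M}\cap C^\omega(\partial M;\mathbb R)$ for every $k$. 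Since $\mathrm{proj}$ is continuous for the $H^{3/2}(\partial M)$ topology and is the identity on $\mathcal{H}(M)|_{\partial M}$, one concludes $\mathrm{proj}(f_k)\to\mathrm{proj}(f)=f$ in $H^{3/2}(\partial M)$, hence a fortiori in $L^2(\partial M)$, which is exactly the desired approximation.

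I do not expect any serious obstacle here, since the substantive content has been front-loaded into Lemma~\ref{lem: projection to harmonic conjugate}. The only points requiring care are that the boundary approximants be genuinely real-analytic rather than merely smooth and that $\mathrm{proj}$ preserve real-analyticity — both of which hold because the correction term defining $\mathrm{proj}$ is a finite linear combination of the fixed real-analytic functions $f_1,\dots,f_{N'}$ — and that the approximation $f_k\to f$ be performed in a topology ($H^{3/2}$) in which $\mathrm{proj}$ is continuous, so that the conclusion transfers to it. If one prefers to avoid invoking $C^\infty$-convergence, the same argument goes through using only that trigonometric polynomials are dense in $H^{3/2}(\partial M)$.
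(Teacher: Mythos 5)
Your proposal is correct and is essentially identical to the paper's own proof: approximate $f$ by real-analytic functions in $H^{3/2}(\partial M)$ (via Fourier series on the boundary circles), apply $\mathrm{proj}$, and use its $H^{3/2}$-continuity, its preservation of $C^\omega(\partial M)$, and the fact that it is the identity on $\mathcal H(M)|_{\partial M}$. No gaps.
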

\begin{proof}
Let $f\in \mathcal H(M)|\s _{\partial M}$ and choose $f^{(\ell)}\in C^\omega(\partial M ;\mathbb R)$ such that $f^{(\ell)}\to f$ in $H^{3/2}(\partial M)$. By Lemma \ref{lem: projection to harmonic conjugate}, the sequence of functions $\{{\rm proj}(f^{(\ell)}) \}_{\ell}$
is contained in ${\mathcal H}(M)|\s _{\partial M} \cap C^\omega(\partial M; \mathbb R)$ and converges in $L^2$ to $f$ since $\rm proj$ is a continuous operator with respect to the $H^{3/2}(\partial M)$ topology and is the identity when acting on $f\in \mathcal H( M)|\s _{\partial M}$.
\end{proof}
\begin{lemma}
\label{lem: DN map for harmonic}
Let $(M_1, g_1, \partial M_1)$ and $(M_2, g_2,\partial M_2)$ be two Riemann surfaces with identical boundary. If $\Lambda_{g_1,q_1} =\Lambda_{g_2, q_2}$ on $\partial M_1 = \partial M_2$ then there exists enlarged surfaces $( M_j',  g_j', \partial M_j')$, $j=1,2$ with open set $\Omega := M_1'\setminus M_1 =  M_2' \setminus M_2$ and $ g_1' =  g_2'$ on $\Omega$ which is Euclidean near $\partial M_1' = \partial M_2'$ such that 
\begin{eqnarray}
\label{eq: DN for harmonics are equal}
\Lambda_{ g_1',0} f= \Lambda_{ g_2',0}f
\end{eqnarray}
for all $f\in \mathcal H( M_1')|\s _{\partial M_1'}$.
\end{lemma}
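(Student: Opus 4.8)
\textbf{Proof plan for Lemma \ref{lem: DN map for harmonic}.}

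The plan is to exploit the CGO solutions from Proposition \ref{prop: CGO for boundary determination} together with the Carleman estimate with boundary terms (Proposition \ref{prop: carleman estimate}) to transfer information from the equation $\Delta_g + q$ to the equation $\Delta_g$ on an enlarged surface. First I would fix a common enlargement: extend both $(M_j, g_j)$ to surfaces $(M_j', g_j')$ so that the collar $\Omega := M_j' \setminus M_j$ is the same smooth surface for $j = 1,2$, carrying a common metric $g_1' = g_2'$ on $\Omega$ that is Euclidean near $\partial M_1' = \partial M_2'$. (Such an extension exists because near $\partial M_1 = \partial M_2$ we may put $g_1, g_2$ into boundary normal form and then isometrically glue on a Euclidean collar; after a conformal change one may arrange the metrics to be Euclidean there, using conformal invariance of $\Delta_g$ in $2$D.) On this enlargement, the hypothesis $\Lambda_{g_1,q_1} = \Lambda_{g_2,q_2}$ means that every solution of $(\Delta_{g_1} + q_1) v_1 = 0$ on $M_1$ can be matched, along $\partial M_1$, by a solution $v_2$ of $(\Delta_{g_2} + q_2)v_2 = 0$ on $M_2$ having the same Cauchy data; gluing $v_1$ to $v_2$ produces (after subtraction) a function on $M_1' \cup_\Omega M_2'$ — or more precisely, the difference is supported in $\Omega$ where the two equations coincide, so one obtains a distributional solution on the doubled object, or equivalently a solution of the Schr\"odinger equation on the enlargement $M_j'$ with prescribed data on $\Omega$.

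Next I would run the CGO/phase-recovery argument sketched in the introduction. Given $f \in \mathcal{H}(M_1')|_{\partial M_1'}$, i.e. $f = \operatorname{Re}\Phi|_{\partial M_1'}$ for a holomorphic $\Phi \in \mathcal{O}(M_1')$, one uses Proposition \ref{prop: CGO for boundary determination} to build, on a neighborhood of $M_1'$, a solution $u_1 = e^{\Phi/h}(a + r_1)$ of $(\Delta_{g_1} + q_1)u_1 = 0$ with $\|r_1\|_{C^1} = o(1)$, and similarly a solution $u_2$ of $(\Delta_{g_2} + q_2)u_2 = 0$; the point of Proposition \ref{prop: carleman estimate} (and the shifted estimate Proposition \ref{prop: shifted carleman estimate}) is that, because the Cauchy data of $u_1$ and $u_2$ agree on $\partial M_1 = \partial M_2$ (by equality of DN maps), the difference $u_1 - u_2$ is controlled on the collar $\Omega$ by boundary terms that are beaten by the exponential growth $e^{\operatorname{Re}\Phi/h}$ (after flipping the sign of $\Phi$ if needed so that $\operatorname{Re}\Phi > 0$ on $\Omega$). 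Dividing by $e^{\Phi/h}$ and letting $h \to 0$, the remainders $r_j$ vanish pointwise on $\Omega$, so $u_1/e^{\Phi/h}$ and $u_2/e^{\Phi/h}$ both converge to the common holomorphic amplitude $a$ on $\Omega$; matching these forces the harmonic extensions of $f$ by $g_1'$ and by $g_2'$ to have the same Cauchy data on the part of $\partial M_1' = \partial M_2'$ lying in $\Omega$. Since this is the whole of $\partial M_j'$ (the new boundary lives entirely in $\overline\Omega$), and since $g_1' = g_2'$ on $\Omega$ makes the normal derivatives comparable, we conclude $\partial_\nu(u_{f}^{g_1'}) = \partial_\nu(u_f^{g_2'})$ on $\partial M_1' = \partial M_2'$, i.e. $\Lambda_{g_1',0} f = \Lambda_{g_2',0} f$ for $f \in \mathcal{H}(M_1')|_{\partial M_1'}$; the density Lemma \ref{lem: density of real analytic} lets one reduce to $f$ real analytic if any regularity issue arises in the CGO construction (one needs $\Phi$, hence $f$, real analytic to apply the holomorphic-amplitude ansatz cleanly).

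The main obstacle I anticipate is the bookkeeping of the \emph{boundary-matching step}: turning "equal DN maps for $\Delta_g + q$" into a usable statement about a single solution on the enlarged surface. One has to be careful that the glued function is a genuine solution across the interface $\partial M_1 = \partial M_2$ — this needs both $C^0$ and $C^1$ matching, which is exactly what equality of DN maps gives, but one must also check that the gauge freedom (the conformal factor and diffeomorphism in Theorem \ref{thm:unique2d}) does not already obstruct the comparison at this stage; here the resolution is that one does \emph{not} yet know the surfaces agree, so instead one works with the two CGOs separately, each solving its own equation on its own surface, and only compares their traces on the common boundary where the Carleman estimate lives. A secondary technical point is ensuring the enlargement can be chosen with $g_1' = g_2'$ \emph{and} Euclidean near the new boundary simultaneously for both surfaces — this is where one invokes the conformal freedom in $2$D together with a collar construction, and it must be done before the CGO argument so that the phase $\Phi$ is defined on a common domain $\Omega$. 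Once the trace of every holomorphic $\Phi$ is shown to be "seen" identically from both sides, Proposition \ref{prop: same holomorphic bv} follows, and then the conformal reconstruction of the surface plus the reduction to \cite{guillarmou2011calderon} completes the chain; but that is beyond the present lemma.
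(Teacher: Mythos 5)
Your setup (common collar $\Omega$ with $g_1'=g_2'$ Euclidean near the new boundary, transfer of solutions via equality of the DN maps, CGOs plus the Carleman estimate with boundary terms) matches the paper's toolbox, but the central comparison step is not correctly set up and would not go through as written. The first problem is that you propose to build a second CGO $u_2=e^{\Phi/h}(\bar a + r_2)$ \emph{on $M_2'$ with the same phase $\Phi$}: this requires $\Phi$ to be holomorphic on $M_2'$, which is precisely the kind of statement one does not yet know (it is essentially the content of Proposition \ref{prop: same holomorphic bv}, proved \emph{after} this lemma). The second problem is the conclusion "matching $u_1/e^{\Phi/h}$ and $u_2/e^{\Phi/h}$ on $\Omega$ forces the harmonic extensions of $f$ to have the same Cauchy data": once you glue $u_2:=u_1$ on $\Omega$ (which is what equality of the Cauchy data for $\Delta+q$ licenses), the two functions are \emph{identical} on $\Omega$, so dividing both by $e^{\Phi/h}$ yields no information about the two harmonic extensions of $f$. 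The quantity that must be compared is not $u_1$ versus $u_2$ but $\phi_1$ (the given harmonic function on $M_1'$, real part of $\Phi_1$) versus $\phi_2$ (the harmonic extension of $\phi_1|_{\partial M_1'}$ into $M_2'$, which a priori has nothing to do with $\Phi$ and need not admit a harmonic conjugate).

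The paper's mechanism, which your sketch is missing, is a proof by contradiction in which $\phi_2$ serves as the \emph{Carleman weight}: one applies Proposition \ref{prop: carleman estimate} on $M_2'$ with weight $\varphi=\phi_2$ to the single transplanted solution $u_2$ (equal to the CGO $u_1=e^{\Phi_1/h}(a+r)$ on $\Omega$, and to the Dirichlet solution of $(\Delta_{g_2}+q_2)u_2=0$ on $M_2$). The boundary terms are $O(h^{-3})$ because $\phi_2=\phi_1$ on $\partial M_2'$ and $u_2=u_1$ there, so $e^{-\phi_2/h}u_2=e^{i\psi_1/h}(a+r)=O(1)$ on the boundary; whereas if $\phi_1(\hat p)>\phi_2(\hat p)$ at some interior point of $\Omega$ with $a(\hat p)=1$, then $\|e^{-\phi_2/h}u_2\|_{L^2(\Omega)}=\|e^{(\phi_1-\phi_2)/h}(a+r)\|_{L^2(\Omega)}$ grows exponentially, contradicting the polynomial bound. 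This yields $\phi_1=\phi_2$ on $\Omega$ and hence equality of the normal derivatives on $\partial M_1'=\partial M_2'$. Two further points you should make precise: the restriction to real-analytic $f$ (via the density Lemma \ref{lem: density of real analytic}) is needed so that Cauchy--Kovalevskaya extends $\phi_1$, and then $\Phi_1$ (using exactness of $\star d\phi_1$ on a surface with the same homology), to a strictly larger surface $\widetilde M_1\supset\supset M_1'$ — this is what the CGO construction of Proposition \ref{prop: CGO for boundary determination} requires — and the amplitude $a$ must be arranged to vanish to high order at the critical points of $\Phi_1$ while satisfying $a(\hat p)=1$, which uses $d\Phi_1(\hat p)\neq 0$ (achievable by perturbing $\hat p$).
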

\proof
Since $\partial M_1 = \partial M_2$ and $\Lambda_{g_1, q_1} = \Lambda_{g_2, q_2}$ we can glue collar neighbourhoods $\Omega$ on each component of $\partial M_1 = \partial M_2$ and extend the metrics $g_1$ and $g_2$ smoothly onto $\Omega$ so that they are identical in this neighourhood and Euclidean near $\partial  M_2' = \partial M_1'$. Set 
\[
M_j':= M_j \cup \Omega. 
\]
We also extend $q_1$ and $q_2$ smoothly onto $\Omega$ so that they agree on $\Omega$. We denote by $g'_j$ and $q_j'$ the metric and function defined on the extended manifolds $M_j'$.

We need to verify \eqref{eq: DN for harmonics are equal}. By using Lemma \ref{lem: density of real analytic}, we only need to verify \eqref{eq: DN for harmonics are equal} for real-analytic boundary functions. To this end, let $\phi_1 \in \mathcal H( M_1')$ be a non-constant harmonic function with real analytic boundary value on $\partial M_1'$, which is the real part of the holomorphic function $\Phi_1 := \phi_1 +i\psi_1$. The lemma is verified  once we show that if $\phi_2\in C^\infty( M_2')$ is the harmonic extension of $\phi_1|\s _{\partial  M_1'}$ into $ M_2'$ then
\begin{eqnarray}    
\label{eq: phi1=phi2 on Omega}
\phi_1 =\phi_2\ {\rm on}\ \Omega.
\end{eqnarray}

Suppose by contradiction that we have that at some point $\hat p\in \Omega^{\rm int}$, $\phi_1(\hat p) \neq \phi_2(\hat p)$ and we may assume without loss of generality that 
\begin{eqnarray}\label{eq: phi1 is bigger}
\phi_1(\hat p) >\phi_2(\hat p)
\end{eqnarray}
and that $d\Phi_1(\hat p)\neq 0$ by density. By the fact that $\phi_1$ is real analytic on $\partial M'_1$, we may use Cauchy-Kovalevskaya to show that there exists a slightly larger surface $\widetilde M_1$ compactly containing $M'_1$, $M_1'\subset \subset \widetilde M_1$, such that $\phi_1$ extends to be a harmonic function on $\widetilde {M_1}$, which we still denote by $\phi_1$. Since $\phi_1$ has a harmonic conjugate on $ M_1'$, we can conclude that $\star d \phi_1$ is exact on $ M_1'$. And since $\star d\phi_1$ is closed on the extended manifold $\widetilde M_1$, which has the same homological basis as $ M_1'$, we can conclude that it is also exact on $\widetilde M_1$. Therefore $\phi_1$ has a harmonic conjugate on $\widetilde M_1$ which we will still denote by $\psi_1$. So $\Phi$ extends to be a holomorphic function on $\widetilde M_1$ which we still denote by $\Phi_1 = \phi_1 + i\psi_1$. 

Let $a\in \mathcal O (\widetilde M_1)$ have zeros of sufficiently high order at all critical points of $\Phi_1$ so that Lemma \ref{lem: CGO ansatz} can be applied and that $a(\hat p) = 1$. Such holomorphic functions exist due to Lemma 2.2.4 of \cite{guillarmou2011calderon}. Here we used that $d\Phi_1(\hat p)\neq 0$.

Now let $u_1$ be the CGO which solves $(\Delta_{\widetilde g_1} + q_1) u_1 = 0$ on $( M_1',  g_1', \partial M_1')$ as constructed in Proposition \ref{prop: CGO for boundary determination}:
\begin{eqnarray}
\label{eq: CGO u1}
u_1 = e^{\Phi_1/h}(a + r),\ \|r\|_{C^1(\overline M_1')} = o(1)
\end{eqnarray}
with $\Phi_1 = \phi_1 + i\psi_1$.

Now let $u_2$ be a solution on $(M_2, g_2, \partial M_2)$ to
$$
(\Delta_{g_2}+ q_2) u_2 = 0\ {\rm on}\ M_2,\ u_2|\s _{\partial M_2} = u_1|\s _{\partial M_1}.
$$
Since $\Lambda_{g_1,q_1} = \Lambda_{g_2,q_2}$ on $\partial M_2 = \partial M_1$, we can extend $u_2$ to be a solution to $(\Delta_{\widetilde g_2}+ q_2) u_2 = 0$ on $ M_2' = M_2 \cup \Omega$ by setting
\begin{eqnarray}
\label{eq: u2 on Omega}
u_2 = u_1\ {\rm on}\ \Omega,
\end{eqnarray} 
where in $\Omega$, $ g_1'|\s _{\Omega} =g_2'|\s _{\Omega}$ and $q_1'|\s _{\Omega} = q_2'|\s _{\Omega}$.

We will apply the Carleman estimate of Proposition \ref{prop: carleman estimate} to $e^{-\phi_2/h} u_2$ on the surface $ M_2'$ with harmonic weight $\varphi = \phi_2$ and differential operator $\Delta_{\widetilde g_2} + q_2$ to obtain
{ \begin{eqnarray*}
\| e^{-\phi_2/h}u_2\|^2_{L^2(M_2')} &\leq&  h^{-3}\|e^{-\phi_2/h}u_2\|_{L^2(\partial  M_2')}^2 + h^{-1}\|\partial_\nu (e^{-\phi_2/h}u_2)\|_{L^2(\partial  M_2')}^2\\&& +  h^{-1}\|\partial_\tau (e^{-\phi_2/h}u_2)\|_{L^2(\partial  M_2')}^2 .
\end{eqnarray*}
Now since $\phi_1 = \phi_2$ on $\partial M_2'$ by construction and $u_1 = u_2$ on $\Omega$ by \eqref{eq: u2 on Omega} we can change the rightside of the inequality:

\begin{eqnarray*}
\| e^{-\phi_2/h}u_2\|^2_{L^2(M_2')} &\leq&  h^{-3}\|e^{-\phi_1/h}u_1\|_{L^2(\partial  M_2')}^2 + h^{-1}\|\partial_\nu (e^{-\phi_1/h}u_1)\|_{L^2(\partial  M_2')}^2\\&& +  h^{-1}\|\partial_\tau (e^{-\phi_1/h}u_1)\|_{L^2(\partial  M_2')}^2 .
\end{eqnarray*}
Now use the explicit form of $u_1$ given by \eqref{eq: CGO u1} we get

$$
\| e^{-\phi_2/h}u_2\|^2_{L^2(\Omega)} \leq Ch^{-3}.
$$
Now use again $u_1|\s _\Omega = u_2|\s _\Omega$ by \eqref{eq: u2 on Omega} and the explicit form of $u_1$ given by \eqref{eq: CGO u1}. This shows
$$
\| e^{(\phi_1-\phi_2)/h}(a+r)\|^2_{L^2(\Omega)} \leq Ch^{-3}
$$
with $\|r\|_{C^1(\widetilde M_1)} = o(1)$. With our assumption \eqref{eq: phi1 is bigger} and that $a(\hat p)  =1$, this leads to a contradiction as $h\to 0$. We therefore have that \eqref{eq: phi1=phi2 on Omega} must hold and the lemma is complete.
\qed

\begin{proof}[Proof of Proposition \ref{prop: same holomorphic bv}]
Let $( M_j', g_j', \partial M_j')$, $j = 1,2$, be the extended surfaces constructed in Lemma \ref{lem: DN map for harmonic} so that $\partial M_1' = \partial M_2'$. It suffices to prove inclusion in one direction in \eqref{eq: same holomorphic bv}. To this end, let $\Phi_1 = \phi_1 + i\psi_1\in \mathcal O( M_1')$ so that $\phi_1|\s _{\partial  M_1'}, \psi_1|\s _{\partial M_1'} \in \mathcal H( M_1')|\s _{\partial  M_1'}$. By Lemma \ref{lem: DN map for harmonic} we have that 
\begin{eqnarray}
\label{eq: DN are the same}
\Lambda_{ g_1'}(\Phi_1|\s _{\partial M_1'}) = \Lambda_{ g_2'}(\Phi_1|\s _{\partial M_2'}).
\end{eqnarray}

Since $\Phi_1|\s _{\partial M_1'}$ has a holomorphic extension by construction, Lemma \ref{lem: characterization of holom} asserts that it must satisfy $\partial_\tau (\Phi_1|\s _{\partial M_1'}) - i\Lambda_{ g_1'}  (\Phi_1|\s _{\partial M_1'}) = 0$. By \eqref{eq: DN are the same} and $\partial M_1' = \partial M_2'$, we have that $\partial_\tau (\Phi_1|\s _{\partial M_2'}) - i\Lambda_{ g_2'}  (\Phi_1|\s _{\partial M_2'}) = 0$. So by  Lemma \ref{lem: characterization of holom} we have that $\Phi_1|\s _{\partial  M_2'}$ extends holomorphically into $( M_2',  g_2', \partial M_2')$. So we have 
$$\{ \Phi |\s _{\partial  M_1'} |\s  \Phi\in \OO( M_1')\} \subset\{\Phi|\s _{\partial  M_2'} |\s  \Phi\in \OO( M_2')\}$$
as required.\qedhere
\end{proof}
\subsection{Poisson embedding of surfaces}
To complete the proof of Theorem \ref{thm:unique2d}, we apply the standard argument of Poisson embedding (see \cite{lassas2020poisson}) to our situation where the data is restricted to boundary values of holomorphic functions.  This will show that $(M_1,g_1)$ and $(M_2,g_2)$ in the theorem are the same up to a conformal mapping. After this, referring to  \cite{guillarmou2010calderon} finishes the proof.  We mention that the existence of a conformal mapping proved in this section also follows from \cite{lassas2001determining, Belishev03}, but we include a new proof for completeness and future applications.

Let $(M,g, \partial M)$ be a Riemann surface with boundary and we set for $s\geq 1/2$, 
$$
H_{\mathcal H(M)}^s(\partial M) :=\overline{\left( \mathcal H(M)|\s _{\partial M}\right)}^{H^s(\partial M)}\subset H^s(\partial M).
$$
We define the Poisson embedding as the map $P : \overline M \to \big( H_{\mathcal H(M)}^s(\partial M)\big)'$ by 
\begin{eqnarray}
\label{eq: poisson embedding}
P(p) f := u_{f}(p),
\end{eqnarray}
where $u_f$ denotes the harmonic extension of $f\in  H_{\mathcal H (M)}^s(\partial M)$ and the notation $\big( H_{\mathcal H(M)}^s(\partial M) \big)'$ refers to the dual of $H_{\mathcal H(M)}^s(\partial M)$. 

\begin{lemma}\label{lem: local embedding}
For $j = 1,2$, let $( M_j', g_j')$ and $\Omega$  be as in Proposition \ref{prop: same holomorphic bv} so that \eqref{eq: same holomorphic bv}  holds. Let $P_j : M_j'\to  \big( H_{\mathcal H(M'_j)}^s(\partial M_j') \big)'$ be the Poisson embedding with $\partial M' = \partial M_1' = \partial M_2'$. Then $P_1(p) = P_2(p)$ for all $p\in \overline\Omega$.
\end{lemma}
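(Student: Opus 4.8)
The strategy is to upgrade the identity \eqref{eq: phi1=phi2 on Omega}, obtained inside the proof of Lemma~\ref{lem: DN map for harmonic} for \emph{real-analytic} boundary data, to all of $H^s_{\mathcal H(M_1')}(\partial M') = H^s_{\mathcal H(M_2')}(\partial M')$ by density. First one checks that the two Poisson embeddings land in the same dual space: applying ${\rm Re}$ to \eqref{eq: same holomorphic bv} of Proposition~\ref{prop: same holomorphic bv}, and using that ${\rm Re}$ maps $\OO(M_j')$ onto $\mathcal H(M_j')$ and commutes with restriction to the common boundary $\partial M'$, gives $\mathcal H(M_1')\s|_{\partial M'} = \mathcal H(M_2')\s|_{\partial M'}$; taking $H^s(\partial M')$-closures yields $\mathcal X := H^s_{\mathcal H(M_1')}(\partial M') = H^s_{\mathcal H(M_2')}(\partial M')$, so both $P_1$ and $P_2$ map into $\mathcal X'$ and the asserted equality is meaningful. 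It then suffices to prove $P_1(p)f = P_2(p)f$ for every $p\in\overline\Omega$ and every $f$ in a set dense in $\mathcal X$.

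Fix such an $f\in\mathcal H(M_1')\s|_{\partial M'}$ admitting a real-analytic representative on $\partial M'$, and write $f = \phi_1\s|_{\partial M'}$ with $\phi_1 = {\rm Re}\,\Phi_1$, $\Phi_1\in\OO(M_1')$. Let $u_f^{(j)}$ be the $g_j'$-harmonic extension of $f$ into $M_j'$; then $u_f^{(1)} = \phi_1$ (uniqueness of the harmonic extension), while $u_f^{(2)}$ is exactly the function called $\phi_2$ in the proof of Lemma~\ref{lem: DN map for harmonic}. That proof establishes, via the Carleman estimate of Proposition~\ref{prop: carleman estimate} applied to the CGO furnished by Proposition~\ref{prop: CGO for boundary determination}, precisely \eqref{eq: phi1=phi2 on Omega}, i.e.\ $\phi_1 = \phi_2$ on $\Omega$; by continuity of both functions up to $\partial\Omega$ this holds on $\overline\Omega$. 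Hence $P_1(p)f = u_f^{(1)}(p) = u_f^{(2)}(p) = P_2(p)f$ for all $p\in\overline\Omega$.

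To remove the real-analyticity assumption, fix $p\in\overline\Omega$. If $p\in\partial M'$ then $P_1(p)f = f(p) = P_2(p)f$ trivially; otherwise $p$ lies in the common interior of $M_1'$ and $M_2'$ (which contains $\Omega$ and the interface $\partial M_1 = \partial M_2$), and then, since the harmonic-extension operator is bounded $L^2(\partial M')\to H^{1/2}(M_j')$ and interior elliptic regularity gives $|u(p)|\le C_p\|u\|_{L^2(M_j')}$ for $u$ harmonic near $p$, the functional $f\mapsto P_j(p)f = u_f^{(j)}(p)$ is bounded on $L^2(\partial M')$, hence continuous on $\mathcal X\subset H^s(\partial M')\subset L^2(\partial M')$. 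By Lemma~\ref{lem: projection to harmonic conjugate}, $\mathcal H(M_1')\s|_{\partial M'}\cap C^\omega(\partial M';\R)$ is dense in $\mathcal H(M_1')\s|_{\partial M'}$ for the $H^{3/2}(\partial M')$ topology: given $f\in\mathcal H(M_1')\s|_{\partial M'}$, approximate it in $H^{3/2}$ by $f^{(\ell)}\in C^\omega(\partial M';\R)$ via Fourier truncation, and then ${\rm proj}(f^{(\ell)})\to{\rm proj}(f) = f$ in $H^{3/2}$ with ${\rm proj}(f^{(\ell)})$ real-analytic and lying in $\mathcal H(M_1')\s|_{\partial M'}$; and $\mathcal H(M_1')\s|_{\partial M'}$ is in turn dense in $\mathcal X$ by definition. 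Since $P_1(p)$ and $P_2(p)$ are $L^2$-continuous and agree on the real-analytic subset by the previous paragraph, they agree on $\mathcal X$. As $p\in\overline\Omega$ was arbitrary, $P_1 = P_2$ on $\overline\Omega$.

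The substantive input is imported wholesale from \eqref{eq: phi1=phi2 on Omega}; the only genuinely new work is the two-step density argument, whose only delicate point is having a projection onto $\mathcal H(M_1')\s|_{\partial M'}$ that is continuous in a Sobolev norm and preserves real-analyticity (both supplied by Lemma~\ref{lem: projection to harmonic conjugate}). One should also note that the comparison ``$u_f^{(1)} = u_f^{(2)}$ on $\Omega$'' is legitimate precisely because $g_1' = g_2'$ on $\Omega$, so the two harmonic extensions solve the same equation there.
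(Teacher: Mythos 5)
Your proof is correct and follows essentially the same route as the paper: both reduce the claim to $f\in\mathcal H(M_1')\s|_{\partial M'}$ (or a dense subset thereof) and then invoke the content of Lemma \ref{lem: DN map for harmonic}. The only organizational difference is that the paper uses the stated conclusion $\Lambda_{g_1',0}f=\Lambda_{g_2',0}f$ together with unique continuation from the common Cauchy data on $\partial M'$ to obtain $u_f^1=u_f^2$ on $\Omega$, whereas you import the interior identity \eqref{eq: phi1=phi2 on Omega} directly from that lemma's proof and compensate with an extra (and carefully justified) density step from real-analytic data; both are valid.
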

\begin{proof}
Due to \eqref{prop: same holomorphic bv}, we have that $\mathcal H(M_1') |\s _{\partial M_1'} = \mathcal H(M_2')|\s _{\partial M_2'}$ with $\partial M_1' = \partial M_2'$. This means that 
\begin{eqnarray}\label{eq: same boundary target set}
H_{\mathcal H(M'_1)}^s(\partial M_1') =  H_{\mathcal H(M'_2)}^s(\partial M_2').
\end{eqnarray}
Therefore, the ranges of $P_1$ and $P_2$ are both contained in 
$$
\big(H_{\mathcal H(M'_1)}^s(\partial M_1')\big)' =\big(  H_{\mathcal H(M'_2)}^s(\partial M_2')\big)'.
$$

We need to show that 
\begin{eqnarray}\label{eq: Poisson agrees}
P_1(p)f = P_2(p)f
\end{eqnarray}
for all $p\in\overline\Omega$ and $f\in H_{\mathcal H(M_1')}^s(\partial M_1')$. It suffices to show that this holds for all $f\in \mathcal H (M_1')|\s _{\partial M_1'} = \mathcal H(M_2')|\s _{\partial M_2'}$. For such $f$, denote by $u_f^1$ and $u_f^2$ its harmonic extensions on $(M_1', g_1', \partial M_1')$ and $(M_2', g_2', \partial M_2')$ respectively. Since by assumption $g_1' = g_2' =g'$ on $\Omega$, both $u_f^1$ and $u_f^2$ satisfy $\Delta_{g'} u_f^1 = \Delta_{g'} u_f^2 =0$ on $\Omega$. Since $f\in \mathcal H (M_1')|\s _{\partial M_1'} = \mathcal H(M_2')|\s _{\partial M_2'}$, we can use \eqref{prop: same holomorphic bv} to conclude that $\partial_\nu u_f^1 = \partial_\nu u_f^2$ on $\partial M'\supset \partial \Omega$. Unique continuation then allows us to conclude that $u_f^1 = u_f^2$ in $\Omega$. This is precisely \eqref{eq: Poisson agrees}.
\end{proof}

\begin{lemma}\label{lem: same range}
Under the assumption of Lemma \ref{lem: local embedding} we can conclude that $P_1(\overline{M_1'}) = P_2(\overline{M_2'})$.
\end{lemma}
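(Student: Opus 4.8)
The plan is to prove the inclusion $P_1(\overline{M_1'})\subseteq P_2(\overline{M_2'})$; the reverse inclusion follows by the symmetry of the hypotheses of Lemma \ref{lem: local embedding}, so equality results. First observe that, as established in the proof of Lemma \ref{lem: local embedding} (see \eqref{eq: same boundary target set}), the two Poisson embeddings take values in one and the same Banach space $X:=\big(H^s_{\mathcal H(M_1')}(\partial M_1')\big)'=\big(H^s_{\mathcal H(M_2')}(\partial M_2')\big)'$. I will use the standard properties of the Poisson embedding (cf. \cite{lassas2020poisson}): $P_j$ is continuous on the compact set $\overline{M_j'}$; it is injective, because real parts of holomorphic functions separate points of a bordered Riemann surface; and on $\mathrm{int}\,M_j'$ it is real-analytic and an immersion, since $p\mapsto du_f(p)$ is real-analytic in $p$ and the covectors $du_f(p)$, $f\in\mathcal H(M_j')|_{\partial M_j'}$, span $T_p^*M_j'$. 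Consequently $P_j(\overline{M_j'})$ is compact, hence closed in $X$, and for every coordinate disc $D\subset\subset\mathrm{int}\,M_j'$ the restriction $P_j|_D$ is a real-analytic embedding onto a $2$-dimensional real-analytic submanifold of $X$ with real-analytic inverse.

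Set
\[
U:=\{\,p\in\mathrm{int}\,M_1':\ P_1=P_2\circ\theta\ \text{near }p\ \text{for some real-analytic diffeomorphism }\theta\ \text{of a neighbourhood of }p\ \text{in }\mathrm{int}\,M_1'\ \text{onto an open subset of }\mathrm{int}\,M_2'\,\}.
\]
By construction $U$ is open. It is nonempty: Lemma \ref{lem: local embedding} gives $P_1=P_2$ on $\overline\Omega$, and every point of $\Omega\setminus\partial M_1'$ lies in $\mathrm{int}\,M_1'\cap\mathrm{int}\,M_2'$, so it belongs to $U$ with $\theta=\mathrm{id}$. Since $\mathrm{int}\,M_1'$ is connected, it suffices to show that $U$ is closed in $\mathrm{int}\,M_1'$; then $U=\mathrm{int}\,M_1'$, hence $P_1(\mathrm{int}\,M_1')\subseteq P_2(\mathrm{int}\,M_2')$, and passing to closures (using that $P_2(\overline{M_2'})$ is closed) completes the proof.

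For closedness, let $U\ni p_n\to p\in\mathrm{int}\,M_1'$ with witnessing diffeomorphisms $\theta_n$ defined on neighbourhoods $V_n\ni p_n$, and put $q_n:=\theta_n(p_n)\in\mathrm{int}\,M_2'$, so $P_1(p_n)=P_2(q_n)$. By compactness of $\overline{M_2'}$, along a subsequence $q_n\to q\in\overline{M_2'}$ with $P_2(q)=P_1(p)$. If $q\in\partial M_2'\subseteq\overline\Omega$, then $P_2(q)=P_1(q)$ by Lemma \ref{lem: local embedding}, so $P_1(p)=P_1(q)$ and injectivity forces $p=q\in\partial M_1'$, contradicting $p\in\mathrm{int}\,M_1'$; hence $q\in\mathrm{int}\,M_2'$. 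Choose coordinate discs $D_1\ni p$ and $D_2\ni q$ with closures in the respective interiors, $P_j|_{D_j}$ real-analytic embeddings, and let $\psi_2:=(P_2|_{D_2})^{-1}$. For $n$ large, $p_n\in D_1$ and $q_n\in D_2$, so on a small neighbourhood $W_n\subseteq D_1\cap V_n$ of $p_n$ one has $\theta_n(W_n)\subseteq D_2$, hence $P_1(W_n)\subseteq P_2(D_2)$ and $\theta_n=\psi_2\circ P_1$ on $W_n$. Now represent $P_2(D_2)$ near $\xi:=P_1(p)=P_2(q)$ as a real-analytic graph: pick by Hahn--Banach a bounded projection $\pi:X\to\Pi$ onto a $2$-dimensional subspace that is an isomorphism on $T_\xi P_2(D_2)$, and a real-analytic section $\sigma$ near $\pi(\xi)$ with $P_2(D_2)=\{x=\sigma(\pi x)\}$ locally. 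Shrinking $D_1$ around $p$ so that $\pi\circ P_1(D_1)$ lies in the domain of $\sigma$, the real-analytic map $D_1\ni x\mapsto P_1(x)-\sigma(\pi(P_1(x)))\in X$ vanishes on the nonempty open set $W_n\cap D_1$, hence vanishes identically on the connected set $D_1$ by the real-analytic identity theorem (compose with $\ell\in X^*$ to reduce to scalars). Therefore $P_1(D_1)\subseteq P_2(D_2)$, so $\theta:=\psi_2\circ P_1$ is a well-defined real-analytic map on $D_1$ with $P_2\circ\theta=P_1$; being an injective immersion between $2$-manifolds it is a diffeomorphism onto an open subset of $\mathrm{int}\,M_2'$. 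Thus $p\in U$, which proves closedness.

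I expect this last continuation step to be the main obstacle. The difficulty is that $M_1'$ and $M_2'$ overlap only in the collar $\Omega$ and cannot be amalgamated into a single real-analytic manifold (the would-be gluing branches along $\partial M_1=\partial M_2$), so there is no ambient manifold on which to run analytic continuation of the identity. The continuation must instead be carried out inside $X$: compactness of $\overline{M_2'}$ locates the limit $q$ of $\theta_n(p_n)$, injectivity of $P_1$ keeps $q$ in the interior, and the real-analytic identity theorem — applied through the local graph description of $P_2(D_2)$ — upgrades the incidence $P_1(W_n)\subseteq P_2(D_2)$, known a priori only on the shrinking sets $W_n$, to $P_1(D_1)\subseteq P_2(D_2)$ on a fixed disc about $p$. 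The auxiliary facts that $P_j$ is a proper, immersive, real-analytic embedding on the interior and that holomorphic functions separate points of a bordered Riemann surface are standard and may be quoted from the Poisson-embedding literature.
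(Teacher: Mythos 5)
Your proof is correct in outline and shares the paper's skeleton — reduce to one inclusion, run an open--closed connectedness argument seeded by Lemma \ref{lem: local embedding}, and locate the limit of $\theta_n(p_n)$ in $\overline{M_2'}$ by compactness and injectivity — but the load-bearing continuation step is executed quite differently. The paper, at the limit point, picks a single boundary datum $\hat f$ (via Lemma 2.2.4 of \cite{guillarmou2011calderon}) whose harmonic extension together with its conjugate furnishes isothermal coordinates $\Phi^1_{\hat f}, \Phi^2_{\hat f}$ near $x_1$ and $x_2$; in these coordinates each scalar component $u^1_f\circ(\Phi^1_{\hat f})^{-1}$ and $u^2_f\circ(\Phi^2_{\hat f})^{-1}$ solves the Euclidean Laplace equation, so agreement on the open set $\Phi^1_{\hat f}(U_1\cap B)$ propagates by unique continuation for harmonic functions. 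You instead continue the vector-valued incidence relation $P_1(x)\in P_2(D_2)$ inside the ambient Banach space $X$, representing $P_2(D_2)$ locally as a real-analytic graph over a two-dimensional quotient and invoking the real-analytic identity theorem. Both work, but note the trade-off: the paper's route needs only unique continuation for the Laplacian plus the existence of holomorphic coordinates, all derived on the spot, while yours quotes as ``standard'' that $P_j$ is continuous on $\overline{M_j'}$, injective, and a real-analytic immersion on the interior whose restriction to a coordinate disc is an embedding with real-analytic inverse; these are true (real-analyticity into $X$ follows, e.g., from the Poisson-kernel representation of $u_f$ on a coordinate disc with norms controlled by $\|f\|_{H^s}$, and the immersion property from Lemma 2.2.4), but a complete write-up should justify them, since they are precisely what the identity-theorem step rests on. Two smaller remarks: your exclusion of $q\in\partial M_2'$ via $\partial M_2'\subset\overline\Omega$ and injectivity is a clean alternative to the paper's use of boundary coordinate charts; and your final passage to closures requires $P_2(\overline{M_2'})$ closed, i.e., continuity of $P_2$ up to the boundary, which again should be stated explicitly rather than left implicit.
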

\begin{proof} It suffices to show that $P_1(\overline{M_1'}) \subset P_2(\overline{M_2'})$ as the claim then follows by interchanging the roles of $M_1'$ and $M_2'$.  We already have by Lemma \ref{lem: local embedding} that, $P_1(\overline\Omega) \subset P_2(\overline{M_2'})$.  We set $B\subset \overline{M_1'}$ to be the subset of $\overline {M_1'}$ defined by:
\begin{multline}\label{eq: def of B}
B := \big\{ x\in \overline{M_1'} \mid {\rm there\ exists\ open\ set\ }U\subset \overline{M_1'}\  \\
{\rm such\ that } \ x\in U\ {\rm and}\ P_1(U) \subset P_2(\overline{M_2'})\big\}.
\end{multline}
The set $B$ is open by definition and contains $\Omega$ so it is not empty. Therefore, we only need to check that it is a closed subset of $\overline {M_1'}$.


To this end, let $J:= P_2^{-1} \circ P_1 : B \to J(B) \subset \overline {M_2'}$. 
Let $x_1\in \overline {M_1'}$ be a limiting point of a sequence $\{p_k\}\subset B$. The sequence $\{J(p_k)\}\subset J(B)$ then converges to $x_2\in \overline{M_2'}$, after passing to a subsequence and using compactness of $\overline{M_2'}$  if necessary. We need to show that $x_1\in B$.

The mapping $J$  by definition satisfies 
\begin{eqnarray}\label{eq: J is harmonic map}
u_f^2(J(p)) = u_f^1(p)
\end{eqnarray}
for all $p\in B$ and $f\in \mathcal H(M_1')|\s _{\partial M_1'} = \mathcal H(M_2')|\s _{\partial M_2'}$.


Lemma 2.2.4 of \cite{guillarmou2011calderon} allows us to choose $\hat f$ such that $du^2_{\hat f}(x_2) \neq 0$ so that $u^2_{\hat f} : U_2\to\mathbb R$ and its harmonic conjugate forms a holomorphic coordinate system $\Phi_{\hat f}^2: U_2\to \mathbb C$ on some open neighbourhood $U_2\subset \overline{M_2'}$ containing $x_2$. If necessarily, we may use Lemma 2.2.4 of \cite{guillarmou2011calderon} again to add a small perturbation to $\hat f$ so that it simultaneously satisfies $du^1_{\hat f}(x_1) \neq 0$. This allows us to choose an open neighbourhood $U_1\subset \overline{M_1'}$ containing $x_1$ on which $u_{\hat f}^1: U_1 \to \mathbb R$ and its harmonic conjugate forms a holomorphic coordinate system $\Phi_{\hat f}^1: U_1\to \mathbb C$. Redefine $U_1$ smaller, if necessary, so that $\Phi^1_{\hat f}(U_1)\subset \Phi^2_{\hat f}(U_2)$.

 Let $f\in \mathcal H(M_1')|\s _{\partial M_1'} = \mathcal H(M_2')|\s _{\partial M_2'}$. To conclude the proof, we show that on $\Phi^1_{\hat f}(U_1)\subset \Phi^2_{\hat f}(U_2)\subset \C$
 \begin{eqnarray}\label{eq: isothermal intertwines}
 u_f^1\circ (\Phi^1_{\hat f})^{-1}=u_f^2\circ (\Phi^2_{\hat f})^{-1}.
 \end{eqnarray}
 Since $u_f^1=u_f^2\circ J$ on $B$ for all $f\in \mathcal H(M_1')|\s _{\partial M_1'} = \mathcal H(M_2')|\s _{\partial M_2'}$, we can conclude that $J=(\Phi_{\hat f}^2)^{-1}\circ \Phi_{\hat f}^1$ on $B\cap U_1$. So \eqref{eq: isothermal intertwines} holds on the open set $\Phi^1_{\hat f}(U_1\cap B)$. 
 Since the coordinates $\Phi_{\hat f}^1$ and $\Phi_{\hat f}^2$ are isothermal, we have 
 \[
 c_1^{-1}\Delta_{\R^2}(u_f^1\circ (\Phi^1_{\hat f})^{-1})=0=c_2^{-1}\Delta_{\R^2}(u_f^2\circ (\Phi^2_{\hat f})^{-1}),
 \]
 on $\Phi^1_{\hat f}(U_1)$ for some positive smooth functions $c_1$ and $c_2$. 
 Thus $u_f^1\circ (\Phi^1_{\hat f})^{-1}$ and $u_f^2\circ (\Phi^2_{\hat f})^{-1}$ both satisfy the same Laplace equation in $\Phi^1_{\hat f}(U_1) \subset \mathbb C$. These functions agree on the open set $\Phi^1_{\hat f}(U_1\cap B)$. Thus they agree everywhere on $\Phi^1_{\hat f}(U_1)$ by unique continuation. Consequently, we have that $P_1(U_1) f\subset  P_2(U_2) f$ for all $f\in \mathcal H(M_1')|\s _{\partial M_1'} = \mathcal H(M_2')|\s _{\partial M_2'}$. By the fact that $ \mathcal H(M_j')|\s _{\partial M_j'}$ is dense in $H^s_{\mathcal H(M_j')}(\partial M_j')$, for $j = 1, 2$, we have that $P_1(U_1) \subset P_2(\overline{M_2'})$.
 This shows that $x_1$ is contained in an open neighbourhood $U_1\subset \overline{M_1'}$ for which $P_1(U_1) \subset P_2(\overline{M'_2})$. This means that $x_1\in B$ and thus $B$ is closed. This concludes the proof.
\end{proof}
We now set $J: \overline{M_1'} \to \overline {M_2'}$ by $J:= P_2^{-1} \circ P_1$ and show the following. 
\begin{lemma}
\label{lem: J is conformal diffeo} Under the assumption of Lemma \ref{lem: local embedding}, the map $J = P_2^{-1} \circ P_1 : \overline{M_1'} \to \overline{M_2'}$ is a conformal diffeomorphism.
\end{lemma}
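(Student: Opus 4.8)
The plan is to show $J = P_2^{-1}\circ P_1$ is a bijection, that it is smooth with smooth inverse, and that in suitable isothermal coordinates it is holomorphic (hence conformal). First I would record that $P_1, P_2$ are injective: this is the standard fact that a point of a Riemann surface is separated from every other point by boundary values of holomorphic functions (equivalently, harmonic extensions of elements of $\mathcal H(M_j')|_{\partial M_j'}$ already separate points, using Lemma 2.2.4 of \cite{guillarmou2011calderon} to produce holomorphic coordinates near any given point). Injectivity of $P_j$ together with $P_1(\overline{M_1'}) = P_2(\overline{M_2'})$ from Lemma \ref{lem: same range} makes $J$ a well-defined bijection of $\overline{M_1'}$ onto $\overline{M_2'}$; continuity of $J$ and $J^{-1}$ follows because $P_1,P_2$ are continuous injections of compact Hausdorff spaces into a Hausdorff space, hence homeomorphisms onto their (common) image.

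Next I would upgrade this homeomorphism to a conformal diffeomorphism by a local coordinate computation, reusing exactly the construction from the proof of Lemma \ref{lem: same range}. Fix $x_1\in \overline{M_1'}$ and set $x_2 = J(x_1)$. Using Lemma 2.2.4 of \cite{guillarmou2011calderon}, choose $\hat f\in \mathcal H(M_1')|_{\partial M_1'} = \mathcal H(M_2')|_{\partial M_2'}$ whose harmonic extensions satisfy $du^1_{\hat f}(x_1)\neq 0$ and $du^2_{\hat f}(x_2)\neq 0$ simultaneously, so that $\Phi^1_{\hat f} = u^1_{\hat f} + i v^1_{\hat f}$ and $\Phi^2_{\hat f} = u^2_{\hat f}+ i v^2_{\hat f}$ (harmonic conjugates) define holomorphic coordinate charts on neighbourhoods $U_1\ni x_1$, $U_2\ni x_2$, shrunk so $\Phi^1_{\hat f}(U_1)\subset \Phi^2_{\hat f}(U_2)$. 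The identity \eqref{eq: isothermal intertwines}, established in the proof of Lemma \ref{lem: same range} for every $f\in \mathcal H(M_1')|_{\partial M_1'}$, says
\[
u_f^1\circ (\Phi^1_{\hat f})^{-1} = u_f^2\circ (\Phi^2_{\hat f})^{-1} \qquad \text{on } \Phi^1_{\hat f}(U_1).
\]
Applying this with $f = \hat f$ itself gives that $\Phi^2_{\hat f}\circ J \circ (\Phi^1_{\hat f})^{-1}$ has real part equal to the identity real coordinate on $\Phi^1_{\hat f}(U_1)$; since this map is also holomorphic-coordinate to holomorphic-coordinate, the Cauchy–Riemann equations force it to be of the form $z\mapsto z + ic$ for a real constant $c$ (after possibly adjusting the additive normalisation of the harmonic conjugate $v^2_{\hat f}$, we may take $c = 0$). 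In particular $J = (\Phi^2_{\hat f})^{-1}\circ \Phi^1_{\hat f}$ on $U_1$, which is manifestly a biholomorphism of $U_1$ onto an open subset of $U_2$. Since $x_1$ was arbitrary, $J$ is a local biholomorphism everywhere in the interior, hence (being also a homeomorphism) a conformal diffeomorphism of the interiors; a standard boundary-regularity argument using that the metrics are Euclidean near $\partial M_1' = \partial M_2'$ and that $J|_{\partial M'}$ is the identity (coming from $P_1 = P_2$ on $\overline\Omega \supset \partial M'$, Lemma \ref{lem: local embedding}) extends this up to the boundary.

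The main obstacle I anticipate is not any single step but making the identification $J = (\Phi^2_{\hat f})^{-1}\circ\Phi^1_{\hat f}$ fully rigorous near the boundary and across the overlap of different charts: one must check that the local biholomorphisms glue to a global map (automatic, since they all equal the globally defined $J$), and that the boundary normalisation is consistent. The key technical input making everything work is \eqref{eq: isothermal intertwines} together with the density of $\mathcal H(M_j')|_{\partial M_j'}$ in $H^s_{\mathcal H(M_j')}(\partial M_j')$, both already available from the proof of Lemma \ref{lem: same range}; the conformality itself then reduces to the elementary observation that a holomorphic map between planar domains whose real part is a prescribed harmonic coordinate must be affine-holomorphic in that coordinate.
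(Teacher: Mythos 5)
Your overall strategy coincides with the paper's: bijectivity from point separation plus Lemma \ref{lem: same range}, continuity of $J$ and $J^{-1}$, and then the local identification of $J$ with a transition map between holomorphic charts built from harmonic extensions. The bijectivity and continuity parts are fine (your compactness argument for continuity is a slightly slicker packaging of the sequential argument the paper gives). There is, however, one step that does not hold as written.

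The gap is the claim that, because $\Phi^2_{\hat f}\circ J\circ(\Phi^1_{\hat f})^{-1}$ has real part equal to $\mathrm{Re}(z)$ and is expressed ``holomorphic-coordinate to holomorphic-coordinate,'' the Cauchy--Riemann equations force it to be $z\mapsto z+ic$. Knowing only the real part of a continuous map between two holomorphic charts determines nothing about its imaginary part: $z\mapsto \mathrm{Re}(z)+i\,g(z)$ has real part $\mathrm{Re}(z)$ for arbitrary $g$ (e.g.\ complex conjugation), so holomorphy cannot be deduced at this stage --- it is exactly what must be proved. To close the gap you must also control $\mathrm{Im}\big(\Phi^2_{\hat f}\circ J\big)$. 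This is available from the same toolbox: the boundary trace of the harmonic conjugate $v^1_{\hat f}$ again lies in $\mathcal H(M_1')|_{\partial M_1'}=\mathcal H(M_2')|_{\partial M_2'}$ (it is $\mathrm{Re}(-i\Phi^1_{\hat f})|_{\partial M_1'}$, and by Proposition \ref{prop: same holomorphic bv} its harmonic extension to $M_2'$ is the conjugate of $u^2_{\hat f}$ up to normalisation), so applying the intertwining relation \eqref{eq: J is harmonic map} to this second boundary datum gives $v^2_{\hat f}\circ J=v^1_{\hat f}$ and hence $\Phi^2_{\hat f}\circ J=\Phi^1_{\hat f}$, i.e.\ $J=(\Phi^2_{\hat f})^{-1}\circ\Phi^1_{\hat f}$ locally. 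The paper sidesteps this by taking $\hat f$ complex-valued with $\Re(\hat f),\Im(\hat f)\in\mathcal H|_{\partial}$ and holomorphic extensions $u^j_{\hat f}$, so that the charts themselves intertwine under $J$ in one stroke. A secondary, smaller slip: the information about $J$ comes from \eqref{eq: J is harmonic map}, not from \eqref{eq: isothermal intertwines} with $f=\hat f$ (the latter is a tautology, both sides being $\mathrm{Re}(z)$). Finally, no boundary-regularity argument is needed: $J=\mathrm{Id}$ on the full collar $\overline\Omega$ by Lemma \ref{lem: local embedding}, so $J$ is already smooth and conformal in a neighbourhood of $\partial M_1'$.
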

\begin{proof}
We first show that $J:  \overline{M_1'} \to \overline {M_2'}$ is a bijection. Suppose that $ J(p_1) = J(p_2)$. This means $P_1(p_1) = P_1(p_2)$, which implies that $u_f(p_1) = u_f(p_2)$ for all $f\in \mathcal H(M_1')|\s _{\partial M_1'} = \mathcal H(M_2')|\s _{\partial M_2'}$. By \cite[Lemma 2.2.4]{guillarmou2011calderon} we have that elements of $\mathcal H(M_1')$ separate points on $M_1'$. This means that $p_1 = p_2$, and we have injectivity. Surjectivity is provided by Lemma \ref{lem: same range}.

We now show that 
\begin{equation}\label{eq: J is cont}
J\in C(\overline{M_1'}; \overline {M_2'}).
\end{equation}
To this end, let $p_k\to \hat p \in \overline{M_1'}$. By compactness, we may assume that the sequence $\{J(p_k)\}$ has a limit point $\hat q\in \overline{M_2'}$. Assume that 
\begin{equation}\label{eq: neq}
\hat q \neq J(\hat p).
\end{equation}
We have that for each point of the sequence, $u_f^2(J(p_k)) = u_f^1(p_k)$ for all $f\in  \mathcal H(M_1')|\s _{\partial M_1'} = \mathcal H(M_2')|\s _{\partial M_2'}$. Taking the limit $k\to \infty$ implies $u_f^2(\hat q) = u_f^1(\hat p)$ for all $f\in  \mathcal H(M_1')|\s _{\partial M_1'} = \mathcal H(M_2')|\s _{\partial M_2'}$. In other words, $P_2(\hat q) f = P_1(\hat p)f$ for all $f\in  H_{\mathcal H(M'_1)}^s(\partial M_1') =  H_{\mathcal H(M'_2)}^s(\partial M_2')$ and $s\geq 1/2$ by density. This means that $P_2(\hat q) = P_1(\hat p)$ as elements of $\big( H_{\mathcal H(M'_1)}^s(\partial M_1') \big)'= \big( H_{\mathcal H(M'_2)}^s(\partial M_2')\big)'$ or  $\hat q = (P_2^{-1} \circ P_1)(\hat p) = J(\hat p)$ which contradicts \eqref{eq: neq}. Therefore \eqref{eq: J is cont} holds. By swapping the index of the argument, $J^{-1} = P_1^{-1} \circ P_2$ is also continuous.

We now show that locally $J$ is a holomorphic map which is also an immersion. Since $J = \textrm{Id}$ on $\Omega \subset M_1' \cap M_2'$ by Lemma \ref{lem: local embedding}, we only need to check this for $\hat p$ in the interior of $M_1'$. By bijectivity, $\hat q := J(\hat p)$ is in the interior of $M_2'$. Let $f\in C^\infty(\partial M'_1; \mathbb C) = C^\infty(\partial M_2';\mathbb C)$ satisfy $\Re(f), \Im(f) \in \mathcal H(M_1')|\s _{\partial M_1'} = \mathcal H (M_2')|\s _{\partial M_2'}$ with its harmonic extension $u^2_f = u^2_{\Re(f)} + iu^2_{\Im(f)}$ holomorphic on $M_2'$. Note that by \eqref{eq: same holomorphic bv}, the harmonic extension of $f$ on $M_1'$ given by $u_f^1 = u_{\Re(f)}^1 + i u_{\Im(f)}^1$ is also holomorphic.

As in the proof of Lemma \ref{lem: same range}, by using Lemma 2.2.4 of \cite{guillarmou2011calderon} we may choose $\hat f$ 
so that $u^1_{\hat f} : U_1\to\mathbb C$ and $u^2_{\hat f} : U_2\to\mathbb C$ are holomorphic coordinate systems on some open neighbourhoods $U_1\subset M_1'$ and $U_2\subset M_2'$ with $\hat p \in U_1$ and $\hat q\in U_2$.
By definition, $u_{\hat f}^2(J(p)) = u_{\hat f}^1(p)$ for all $p\in M_1'$ which in particular holds for $p\in U_1$. Since $\hat q = J(\hat p) \in U_2$, we can use continuity statement \eqref{eq: J is cont} to choose open set $U_1$ sufficiently small so that $J(p) \subset U_2$ for all $p\in U_1$. As $u_{\hat f}^2$ is a coordinate system on $U_2$, we can write $J(p) = (u_{\hat f}^2)^{-1}\circ u_{\hat f}^1(p)$ for all $p\in U_1$. This shows that for all $\hat p$ in the interior of $M_1'$, there exist an open neighourhood $U_1\subset M_1'$ such that $J : U_1 \to M_2'$ is a holomorphic immersion. 

We have shown that $J: \overline {M_1'} \to \overline{M_2'}$ is a bijection that is also locally a holomorphic immersion. Standard coordinate calculation yields that $J$ is a conformal diffeomorphism. 
\end{proof}

\begin{proof}[End of proof of Theorem \ref{thm:unique2d}] Since $J$ is the identity on $\Omega$ by Lemma \ref{lem: local embedding}, we have so far shown that there is a smooth conformal mapping $J:M_1\to M_2$,
\[
 g_1=\lambda J^*g_2,
\]
where $\lambda$ is positive and smooth and $\lambda|_{\p M}=1$. 
Let us make a change of coordinates to pass from the equation $(\Delta_{g_2}+q_2)v_2$ on $M_2$ onto the manifold $(M_1,g_1)$. We denote
\[
\widetilde{q}_2= \lambda^{-1}J^*q_2.
\]
Let $f\in C^\infty(\p M)$ and let $v_2$ be the solution to
\[
\Delta_{g_2}v_2+q_2v_2=0 \text{ in }M_2,\text{ with } 
\]
with $v_2=f$  on  $\p M$. Let us denote
\[
\widetilde{v}_2:=J^*v_2=v_2\circ J. 
\]
By using the conformal invariance of the Laplacian in dimension $2$, we have 
\begin{multline*}
\Delta_{g_1}\widetilde{v}_2+\widetilde{q}_2\widetilde{v}_2=\Delta_{\lambda J^*g_2}\widetilde{v}_2 +\widetilde{q}_2\widetilde{v}_2 =\lambda^{-1}\Delta_{J^*g_2}\widetilde{v}_2 +\lambda^{-1}(J^*q_2)\widetilde{v}_2\\
= \lambda^{-1}J^*(\Delta_{g_2}v_2)+ \lambda^{-1}(J^*q_2)J^*v_2 =\lambda^{-1}J^*\left(\Delta_{g_2} v_2  + q_2v_2\right).
\end{multline*}
Since $v_2$ solves $\Delta_{g_2}v_2+ q_2v_2=0$ in $M_2$, we have that 
\begin{align}\label{conformal change}
	\begin{cases}
	\Delta_{g_1}\widetilde{v}_2+\widetilde{q}_2 \widetilde{v}_2=0 & \text{ in }M_1,\\
	\widetilde{v}_2=f & \text{ on }\p M,
	\end{cases}
\end{align}
since $\lambda|_{\p M}=1$ and $J|_{\p M}=\text{Id}$.

Let then $v_1$ solve 
\[
\Delta_{g_1}v_1+q_1v_1=0 \text{ in }M_1,\text{ with } 
\]
with $v_1=f$. We next show that 
\begin{equation}\label{DN_maps_on_single_manifold}
\p_{\nu_1} v_1=\p_{\nu_1}\widetilde{v}_2 \text{ on }\p M,
\end{equation}
where $\p_{\nu_1}$ is the normal vector field on $(M_1,g_1)$. 
Since $\Lambda_{g_1,q_1}=\Lambda_{g_2,q_2}$, we have 
\[
\p_{\nu_1}v_1=\p_{\nu_2}v_2=\nu_2\cdot dv_2=\nu_2\cdot d(v_2\circ J \circ J^{-1})=(J^{-1}_*\nu_2)\cdot d\s\widetilde{v}_2=\nu_1\cdot d\s\widetilde{v}_2=\p_{\nu_1} \widetilde{v}_2.
%
\]
Here 
we used the facts that $J:M_1 \to M_2$ is diffeomorphic conformal mapping with $J|_{\p M}=\text{Id}$ and $\lambda|_{\p M} =1$. Since $f\in C^\infty(\p M)$ was arbitrary, we have 
\begin{equation*}\label{transfd_DN_maps_agree}
\Lambda_{g_1,q_1} = \Lambda_{g_1,\tilde{q}_2}.
\end{equation*}
Since the metrics are now the same in both of the above DN maps, we have 
\[
 q_1=\tilde q_2=\lambda^{-1}J^*q_2
\]
by the main theorem of \cite{guillarmou2011calderon}. This finishes the proof.
\end{proof}

\section{Minimal surfaces}\label{Section 2}
In this section we collect facts and identities related to minimal surfaces that we will need in the proof of our second main theorem, Theorem \ref{thm:main}.
\subsection{Derivation of the minimal surface equation}
We start by deriving the minimal surface equation \eqref{eq:minimal_surface_general}. This is the equation a minimal surface satisfies in Fermi-coordinates. For possible future references, we consider in this section $n$-dimensional minimal surfaces embedded in $n+1$-dimensional Riemannian manifolds, $n\geq 2$.

We assume that the minimal surfaces are given as graphs over a submanifold $\Sigma$. Let us choose Fermi coordinates for a neighborhood $N=I\times \Sigma$ of $\Sigma$. Here $I$ is an interval in $\R$ containing $0$.  In Fermi coordinates the metric is of the form
\begin{equation*}
 \overline g=ds^2 +g_{ab}(x,s)dx^adx^b,
\end{equation*}
where $g(x,s)$ is a Riemannian metric on $\Sigma$ for all $s$.  $N=I\times \Sigma$ for all $s$. Here and below we also use Einstein summation over repeated indices.   We consider $g(\ccdot,s)=g_s$ as $1$-parameter family of Riemannian metrics on $\Sigma$. We have $\dim(N)=n+1$ and $\dim(\Sigma)=n$.

Let 
\[
 F(x)=(u(x),x)
\]
be the graph of a function $u:\Sigma\to I\subset \R$. The volume form of the graph $Y$ of $F$ is given by the determinant of the induced metric on the graph $Y$. Coordinates for $Y$ are given by coordinates on $\Sigma$ by
\[
 x\mapsto (u(x),x).
\]
Let $(x^k)_{k=1}^n$ be the above coordinates on $Y$ and let $\p_k$ be the corresponding coordinate vectors. Y .
For simplicity, let us assume that $(x^k)_{k=1}^n$ are global coordinates on $\Sigma$. The general case when
$\Sigma$ is covered with finitely many coordinate charts can be considered using a suitable partition
of unity. Let us also denote by $h_{jk}$ the induced metric on $Y$:    
\[
 h_{jk}(x)=\overline g_{F(x)}(F_*\p_{x_j},F_*\p_{x_k}),
\]
where $j,k=1,\ldots, n$. Here we have that 
\[
 F_*\p_{x_j}=DF_j^a\p_a,
\]
where $a=0,1,\ldots, n$. Note that if $a\neq 0$, then $DF_j^a=\delta_j^a$. We also have $DF_j^{\s 0}=\p_ju$. It follows that the induced metric on $Y$ reads
\begin{align*}
 h_{jk}&=DF_j^aDF_k^b g_{F(x)}(\p_a,\p_b)=DF_j^{\s 0}DF_k^{\s 0} \s\overline g_{00}|_{F(x)}+\sum_{\alpha,\beta=1}^nDF_j^\alpha DF_k^\beta \overline g_{F(x)}(\p_\alpha,\p_\beta) \\
 &=\p_ju(x)\p_ku(x)+g_{jk}|_{F(x)}=\p_ju(x)\p_ku(x)+g(x,u(x))_{jk}.
\end{align*}

The volume of $Y$ is 
\[
 \text{Vol}(Y)=\int_\Sigma \sqrt{\det(h)}dx^1\wedge\cdots \wedge dx^n.
\]
Using the formula for $h_{jk}$ we have that
\[
 \det(h)=\det\Big(\nabla u\otimes \nabla u +g(x,u(x))\Big)=\det(g(x,u(x)))\det\Big(I+(g(x,u)^{-1}\nabla u)\otimes \nabla u\Big).
\]
By \cite[Lemma 1.1]{Ding}, we have 
\[
 \det\Big(I+(g(x,u)^{-1}\nabla u)\otimes \nabla u\Big)=1+(g(x,u)^{-1}\nabla u) \cdot \nabla u=1+\abs{\nabla u}_{g(x,u(x))}^2.
\]
Finally, the volume of $Y$ equals
\[
 \text{Vol}(Y)=\int_\Sigma \sqrt{1+\abs{\nabla u}^2_{g(x,u)}}\det(g(x,u))^{1/2}dx^1\wedge\cdots \wedge dx^n.
\]

Let us then compute the minimal surface equation. For that, we consider a variation
\[
 Y(u+tv):=\{(u(x)+tv(x),x): x\in \Sigma\} \subset N
\]
of the surface $Y$, where $v:\Sigma \to \R$ is a smooth function. We denote the volume of $Y(u+tv)$ by $\text{Vol}(u+tv)$. Then, 
\begin{align*}
 &\frac{d}{dt}\Big|_{t=0}\text{Vol}(u+tv)=\frac 12 \int_\Sigma \frac{\det(g(x,u))^{1/2}}{\sqrt{1+\abs{\nabla u}^2_{g(x,u)}}}\frac{d}{dt}\Big|_{t=0}\left(\abs{\nabla (u+tv)}^2_{g(x,u+tv)}\right) \\
 &+\int_\Sigma \sqrt{1+\abs{\nabla u}^2_{g(x,u)}} \frac{d}{dt}\Big|_{t=0}\det(g(x,u+tv))^{1/2}.
\end{align*}
We have
\begin{align*}
 &\frac{d}{dt}\Big|_{t=0}\abs{\nabla (u+tv)}^2_{g(x,u+tv)}=2g(x,u)^{-1}(\nabla u, \nabla v)+v\p_s(g^{-1})(x,u)(\nabla u,\nabla u) \\
\end{align*}
and
\begin{align*}
 &\frac{d}{dt}\Big|_{t=0}\det(g(x,u+tv))^{1/2}=\frac{1}{2}\det(g(x,u))^{1/2}\text{Tr}(g(x,u)^{-1}v\p_sg(x,u)).
\end{align*}
Thus
\begin{multline}\label{eq:first_var}
 \frac{d}{dt}\Big|_{t=0}\text{Vol}(u+tv)=\frac 12 \int_\Sigma \frac{\det(g(x,u))^{1/2}}{\sqrt{1+\abs{\nabla u}^2_{g(x,u)}}}\Big(2g(x,u)^{-1}(\nabla u, \nabla v)+\p_s(g^{-1})(x,u)(\nabla u,\nabla u)v\Big) \\
 +\int_\Sigma \sqrt{1+\abs{\nabla u}^2_{g(x,u)}}\frac{1}{2}\det(g(x,u))^{1/2}\text{Tr}(g(x,u)^{-1}\p_sg(x,u))v.
\end{multline}

We recall that if $Y$ is a minimal surface (in the variational sense), then $t = 0$ is a critical point of the map $t \mapsto \text{Vol}(u + tv)$ for all functions $v$ that vanish on the boundary. We also denote 
\[
 f(u,\nabla u)=\frac{1}{2}\frac{1}{(1+\abs{\nabla u}^2_{g_u})^{1/2}}(\p_sg_u^{-1})(\nabla u,\nabla u)+\frac{1}{2}(1+\abs{\nabla u}^2_{g_u})^{1/2}\text{Tr}(g_u^{-1}\p_sg_u),
\]
where we shorthanded $g_u(x)=g(x,u(x))$ and $\p_sg_u=\p_sg_s|_{s=u}$ etc.
It follows by integrating by parts that the minimal surface equation is
\begin{equation*}
 -\frac{1}{\det(g_u)^{1/2}}\nabla\cdot \left( g_u^{-1}\frac{\det(g_u)^{1/2}}{\sqrt{1+\abs{\nabla u}^2_{g_u}}} \right)\nabla u + f(u,\nabla u) =0. 
\end{equation*}
This is \eqref{eq:minimal_surface_general}. Here we used that the boundary term arising from integrating by parts in \eqref{eq:first_var} is
\begin{equation}\label{eq:boundary_term}
 \int_{\p \Sigma} \frac{v}{\sqrt{1+\abs{\nabla u}^2_{g(x,u)}}}(\nabla u, \nu)_{g(x,u)}dS_{g(x,u)},
\end{equation}
which is zero since $v|_{\p \Sigma}=0$. Here $(\ccdot,\ccdot)_{g(x,u)}$ and $dS_{g(x,u)}$ denote the  inner product and the volume form on the $\p\Sigma$ induced by the Riemannian metric $g(x,u)$ respectively.

\subsubsection{Local well posedness}
We will compute in Section \ref{ss2.4} that the linearization of the minimal surface equation at the solution $u=0$ is 
\[
 \big(\Delta_g+\frac{1}{2}\p_s|_{s=0} \text{Tr}\s (g_s^{-1}\p_sg_s)\big)v=0.
\]
Using this, we formulate a well posedness result for the minimal surface equation.
\begin{proposition}[Local well posedness]\label{prop:local_well_posedness}
 Assume that $\text{Tr}\s (g_0^{-1}\p_sg_0)=0$ on $\Sigma$, and that $0$ is not a Dirichlet eigenvalue of the linear operator $\Delta_g+\frac{1}{2}\p_s|_{s=0} \text{Tr}\s (g_s^{-1}\p_sg_s)$ on $\Sigma$, the minimal surface equation \eqref{eq:minimal_surface_general} is well posed in the following sense. 
There exist positive constants $\delta$ and $C$ such that for any Dirichlet data $f\in U_\delta=\{f\in  C^{2,\alpha}(\partial\Sigma): \norm{f}_{C^{2,\alpha}(\p \Sigma)}\leq \delta\}$ there exists a solution $u\in C^{2,\alpha}(\Sigma)$ such that $||u||_{C^{2,\alpha}(\Sigma)}\leq C||f||_{C^{2,\alpha}(\p\Sigma)}$. The solution is unique among those  that satisfy $||u||_{C^{2,\alpha}(\Sigma)}\leq C\delta$. The correspondences $f\to u$ and $f\to\partial_\nu u|_{\p\Sigma}$ are $C^\infty$ as maps from $U_\delta$ into $C^{2,\alpha}(\Sigma)$ and $C^{1,\alpha}(\p\Sigma)$, respectively.
\end{proposition}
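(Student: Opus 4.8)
The plan is to deduce Proposition~\ref{prop:local_well_posedness} from the implicit function theorem in Hölder spaces, in the way standard for quasilinear elliptic Dirichlet problems. Write the left-hand side of \eqref{eq:minimal_surface_general} as the nonlinear operator
\[
Q(u):=-\frac{1}{\det(g_u)^{1/2}}\nabla\cdot\left( g_u^{-1}\frac{\det(g_u)^{1/2}}{\sqrt{1+\abs{\nabla u}^2_{g_u}}}\right)\nabla u + f(u,\nabla u).
\]
Since $1+\abs{\nabla u}^2_{g_u}\ge 1$ and $\det(g_u)>0$ for every $u$, and the coefficients $g_{ab}(x,s)$ depend smoothly on $(x,s)$, the map $u\mapsto Q(u)$ is a $C^\infty$ map (real-analytic in the relevant arguments) from a neighbourhood of $0$ in $C^{2,\alpha}(\Sigma)$ into $C^{0,\alpha}(\Sigma)$; this follows from the composition and multiplication properties of Hölder spaces applied to the Nemytskii operators built from the smooth functions $g$, $\sqrt{\,\cdot\,}$ and $A\mapsto A^{-1}$. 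One records that $Q(0)=0$: if $u\equiv 0$ then $\nabla u\equiv 0$, so the divergence term vanishes identically, and $f(0,0)=\tfrac12\text{Tr}(g_0^{-1}\p_s g_0)=0$ by the hypothesis $\text{Tr}(g_0^{-1}\p_s g_0)=0$ (the minimality of $\Sigma$).

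Next I would compute the Fréchet derivative $DQ(0)$. Differentiating $Q(u)$ at $u=0$ in a direction $v$, every term retaining a factor $\nabla u$ drops out, so only the differentiation of $\nabla u$ in the divergence term and of $\text{Tr}(g_u^{-1}\p_s g_u)$ in $f$ survive, giving
\[
DQ(0)v=\Delta_g v+qv,\qquad q=\tfrac12\,\p_s\big|_{s=0}\text{Tr}(g_s^{-1}\p_s g_s),
\]
exactly as asserted in Section~\ref{ss2.4}. Consider then
\[
\mathcal F:C^{2,\alpha}(\Sigma)\times C^{2,\alpha}(\p\Sigma)\to C^{0,\alpha}(\Sigma)\times C^{2,\alpha}(\p\Sigma),\qquad
\mathcal F(u,f)=\big(Q(u),\ u|_{\p\Sigma}-f\big),
\]
which is $C^\infty$ by the above, with $\mathcal F(0,0)=0$ and partial derivative in $u$ at $(0,0)$ equal to $v\mapsto(\Delta_g v+qv,\ v|_{\p\Sigma})$. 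Under the hypothesis that $0$ is not a Dirichlet eigenvalue of $\Delta_g+q$, the Schauder theory for the Dirichlet problem (existence, uniqueness and the a priori estimate via the Fredholm alternative) shows this map is a Banach space isomorphism $C^{2,\alpha}(\Sigma)\to C^{0,\alpha}(\Sigma)\times C^{2,\alpha}(\p\Sigma)$.

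The implicit function theorem then yields $\delta,C>0$ and a $C^\infty$ map $f\mapsto u(f)$ from $U_\delta$ into $C^{2,\alpha}(\Sigma)$ with $u(0)=0$ and $\mathcal F(u(f),f)=0$, i.e. $u(f)$ solves \eqref{eq:minimal_surface_general} with boundary value $f$; shrinking $\delta$ and using that $f\mapsto u(f)$ is $C^1$ with $u(0)=0$ gives $\norm{u(f)}_{C^{2,\alpha}(\Sigma)}\le C\norm{f}_{C^{2,\alpha}(\p\Sigma)}$, while uniqueness within $\norm{u}_{C^{2,\alpha}(\Sigma)}\le C\delta$ is the local uniqueness provided by the implicit function theorem (after a further shrinking of $\delta$ if needed). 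Finally, $u\mapsto du|_{\p\Sigma}$ is bounded linear $C^{2,\alpha}(\Sigma)\to C^{1,\alpha}(\p\Sigma)$, and $u\mapsto\nu(u)$ is $C^\infty$ since the unit $g_u$-normal along $\p\Sigma$ is an algebraic function of $g_u|_{\p\Sigma}$, which depends smoothly on $u$, indeed only on $f=u|_{\p\Sigma}$; composing, $f\mapsto\p_\nu u|_{\p\Sigma}=\nu(u)\cdot du|_{\p\Sigma}$ is $C^\infty$ from $U_\delta$ into $C^{1,\alpha}(\p\Sigma)$, as claimed. The argument is essentially routine; the two points needing care are the verification that the quasilinear operator $u\mapsto Q(u)$ is genuinely $C^\infty$ between the chosen Hölder spaces (so that one gets $C^\infty$, not merely $C^1$, dependence of $u$ and of $\p_\nu u|_{\p\Sigma}$ on $f$), and the identification of $D_u\mathcal F(0,0)$ with the invertible operator $v\mapsto(\Delta_g v+qv,\ v|_{\p\Sigma})$ via Schauder estimates and the non-eigenvalue hypothesis. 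Alternatively, one may simply invoke the general well-posedness theory for quasilinear elliptic Dirichlet problems on Riemannian manifolds, as in \cite{lassas2018poisson}.
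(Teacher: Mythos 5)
Your proposal is correct and is essentially the proof the paper has in mind: the paper omits the argument and refers to \cite[Section 2]{LLLS2019inverse} and \cite[Appendix B]{CFKKU}, which carry out exactly this implicit-function-theorem scheme (smoothness of the quasilinear Nemytskii operator between Hölder spaces, identification of the linearization at $u=0$ with $\Delta_g+\frac12\p_s|_{s=0}\mathrm{Tr}(g_s^{-1}\p_sg_s)$, invertibility via Schauder theory and the non-eigenvalue hypothesis). Your treatment of the $u$-dependence of the normal $\nu$ in the map $f\mapsto\p_\nu u|_{\p\Sigma}$ is a point the cited references also have to address, and you handle it correctly.
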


We omit the proof that follows along the arguments in \cite[Section 2]{LLLS2019inverse}, \cite[Appendix B]{CFKKU}. Note that the assumption $\text{Tr}(g_0^{-1}\p_sg_0)=0$ on $\Sigma$ is equivalent to the assumption that $u\equiv 0$ is a solution to the minimal surface equation \eqref{eq:minimal_surface_general}. Either of these assumptions are in turn equivalent to $\Sigma$ having zero mean curvature (see e.g. \cite[Section 2]{lassas2016calder} for formulas about mean curvature in Fermi-coordinates). The operator $\Delta_g+\frac{1}{2}\p_s|_{s=0} \text{Tr}(g_s^{-1}\p_sg_s)$ is the linearized operator of the minimal surface equation. In the context of minimal surfaces it is known as the stability operator (see \cite[Section 8]{colding2011course}) and the condition that $0$ is not its Dirichlet eigenvalue related to the stability of the minimal surface $\Sigma$. To see that $\Delta_g+\frac{1}{2}\p_s|_{s=0} \text{Tr}(g_s^{-1}\p_sg_s)$ is the stability operator, one can check that $\frac{1}{2}\p_s|_{s=0} \text{Tr}(g_s^{-1}\p_sg_s)=-\abs{A}^2-\text{\text{Ric}}(N,N)$, where $\abs{A}$ is the norm of the second fundamental form, $\text{Ric}$ is the Ricci curvature and $N$ normal vector field to $\Sigma$.

\subsection{Variation of area and the exterior problem}\label{sec:areas_exterior_problem}
Let us then explain how the DN map is related to areas of minimal surfaces. Here a technical issue arises: If a surface $\Sigma$ is embedded in a Riemannian manifold $N$, it might be that there is no positive $\eps >0$ such that the domain $\Sigma \times (-\eps,\eps)$ of Fermi coordinates is contained in $N$. It can also be that the boundary $(\partial \Sigma) \times ((-\eps,\eps)\setminus \{0\})$ intersects the interior of the unknown manifold $N$. Therefore, considering minimal surfaces as solutions to the
minimal surface equation \eqref{eq:minimal_surface_general} in Fermi-coordinates is technically complicated. To
address this technical issues, we consider the situation in an extended manifold $\widetilde N$ that contains $N$. We always assume $\widetilde N$ to be geodesically complete. 

We explain the relation of areas of minimal surfaces and the DN map using $\widetilde N$. This section is mainly for motivational purposes and we choose to keep the exposition short. We consider the following problem:
%
%
%
%

\noindent\textbf{The exterior problem:} Let $(\Sigma,g)$ be a minimal surface. Do $\p \Sigma$ and the volumes of minimal surfaces $\Sigma'$ in the exterior manifold $\widetilde N$, whose boundaries satisfy
$\partial \Sigma'\subset \widetilde N\setminus N$, determine the isometry type of $(\Sigma,g)$?


We say that a minimal surface $\Sigma$ extends properly to a minimal surface $\widetilde \Sigma$ if $\widetilde \Sigma$ is a minimal surface in $\widetilde N$, $\Sigma\subset \subset \widetilde \Sigma$ and $0$ is not an eigenvalue of the first linearization of the minimal surface equation \eqref{eq:minimal_surface_general} on $\widetilde \Sigma$. 
Let  $\delta_0>0$.
We say that $\mathcal S$   is  a family of smooth  minimal surface  deformations of the surface $\widetilde \Sigma$ when $\mathcal S$  consists of functions  $\mathcal F:(-\delta_0,\delta_0)\times  \widetilde \Sigma\to \widetilde N$ such that $\mathcal F(s,x)$, $(s,x)\in (-\delta_0,\delta_0)\times \widetilde \Sigma$
is in $C^\infty((-\delta_0,\delta_0);C^3( \widetilde \Sigma))$ and all surfaces $\widetilde \Sigma(s)=\{\mathcal F(s,x):\ x\in \widetilde \Sigma\}$ are minimal surfaces in $\overline N$.

For the exterior problem, we record the following corollary of Theorem \ref{thm:main}. 
In the proof of the corollary we are going to refer to Lemma \ref{lem:F_id_infty}, which can be found from Section \ref{sec:proof_of_main_thm}. 

To prove Corollary \ref{cor:exterior_problem}, we record in the next lemma how volumes of minimal surfaces are related to DN map in a domain of Fermi coordinates.

\begin{lemma} \label{lem:minimal surfaces and the DN map}
Let $(\Sigma, g)$ be a Riemannian manifold with a boundary $\p \Sigma$ and $ N= \Sigma\times (-\delta,\delta)$, $\delta >0$.  Assume that $N$ is equipped with a Riemannian metric of the form $\overline g = ds^2 +g_{ab}(x,s)dx^adx^b$ and that $\epsilon >0$ is so small that for all $h$ in the set
$\mathcal W_\epsilon=\{h\in C^\infty(\p \Sigma):\|h\|_{C^{2,\alpha}(\p  \Sigma)}<\eps\}$
the minimal surface equation \eqref{eq:minimal_surface_general} has a unique solution $u_h: \Sigma\to \R$ such that $\norm{u_h}_{C^{2,\alpha}(\Sigma)}\leq \eps$, and $\epsilon<\delta$. 

Let $Y(h)=\{(u_h(x),x):\ x\in \Sigma\}$ be the minimal surface with the boundary value $h$.
Then the boundary $\p \Sigma$, the metric
$\overline g|_{\p  \Sigma\times (-\delta,\delta)}$ and 
the volumes $ \text{Vol}\s (Y(h))$ of the minimal surfaces, given  for all
$h\in \mathcal W_\epsilon$, determine the values of Dirichlet-to-Neumann map for the equation \eqref{eq:minimal_surface_general}, that is, $\Lambda_{\overline g}(h)$
 for 
$h\in \mathcal W_\epsilon$. Moreover, Fr\'echet derivatives of the map $h\to  \text{Vol}\s (Y(h))$ 
to the order $k+1$ at $h=0$  determine the  Fr\'echet derivatives of the map $h\to \Lambda_{\overline g}(h)$
at $h=0$ to the order $k$.
%
%
\end{lemma}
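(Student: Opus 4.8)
The plan is to differentiate the area functional $V(h):=\text{Vol}\,(Y(h))$ using the first variation formula \eqref{eq:first_var}--\eqref{eq:boundary_term} together with the well-posedness statement of Proposition \ref{prop:local_well_posedness}. By Proposition \ref{prop:local_well_posedness} the map $h\mapsto u_h$ is $C^\infty$ from $\mathcal W_\epsilon$ to $C^{2,\alpha}(\Sigma)$, so $V(h)=\int_\Sigma\sqrt{1+\abs{\nabla u_h}^2_{g_{u_h}}}\,\det(g_{u_h})^{1/2}\,dx$ is a $C^\infty$ function of $h$ and all Fr\'echet derivatives considered below exist. Fix $f\in C^{2,\alpha}(\p\Sigma)$ and write $u_{h+tf}=u_h+t\,Du_h[f]+O(t^2)$, noting $Du_h[f]\big|_{\p\Sigma}=f$. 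Applying the first variation formula \eqref{eq:first_var} with base surface $u_h$ and variation field $v=Du_h[f]$: since $u_h$ solves the minimal surface equation \eqref{eq:minimal_surface_general}, the interior term of \eqref{eq:first_var} vanishes identically, leaving only the boundary term \eqref{eq:boundary_term}; as that term depends on $v$ only through $v|_{\p\Sigma}=f$, we get
\[
 DV(h)[f]=\int_{\p\Sigma}\frac{f}{\sqrt{1+\abs{\nabla u_h}^2_{g_{u_h}}}}\,(\nabla u_h,\nu)_{g_{u_h}}\,dS_{g_{u_h}},
\]
where $(\nabla u_h,\nu)_{g_{u_h}}$ is exactly the quantity $\p_\nu u_h=\Lambda_{\overline g}(h)$ appearing in the definition \eqref{eq:DN_map_for_nonlin} of the DN map.

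Next I would note that every ingredient of this boundary integral other than $\p_\nu u_h$ is determined by the given data. For $x\in\p\Sigma$ one has $u_h(x)=h(x)$, so $g_{u_h}(x)=g(x,h(x))$ is known from $\overline g|_{\p\Sigma\times(-\delta,\delta)}$ and $h$; hence $dS_{g_{u_h}}$ on $\p\Sigma$, and the $g_{u_h}$-orthogonal splitting $T_x\Sigma=T_x\p\Sigma\oplus\R\nu$, are known. The tangential part of $du_h(x)$ equals $dh(x)$ while its component along $\nu$ is $(\p_\nu u_h)(x)$, so $\abs{\nabla u_h}^2_{g_{u_h}}(x)=c_0(x)+\big((\p_\nu u_h)(x)\big)^2$ with $c_0\geq 0$ a known function on $\p\Sigma$. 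Thus the boundary integrand has the form $x\mapsto G\big(x,(\p_\nu u_h)(x)\big)$, where for fixed $x$ the function $t\mapsto G(x,t)=\lambda(x)\,t\,(1+c_0(x)+t^2)^{-1/2}$ (with $\lambda>0$ a known factor from the measure normalization) is explicit and strictly increasing on $\R$. Since $f$ is arbitrary and $dS_{g_{u_h}}$ is known, knowledge of the functional $DV(h)$ determines $x\mapsto G\big(x,(\p_\nu u_h)(x)\big)$ pointwise, and pointwise inversion recovers $\p_\nu u_h=\Lambda_{\overline g}(h)$. As $h\in\mathcal W_\epsilon$ was arbitrary, this proves the first assertion.

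For the statement on Fr\'echet derivatives I would argue by induction on the order. Rewrite the identity above as $DV(h)[f_0]=\int_{\p\Sigma}G\big(x,\Lambda_{\overline g}(h)(x)\big)\,f_0\,dS_{g(x,h)}$, in which $G$ and $dS_{g(x,h)}$ depend on $(x,h)$ only through the known data (using $u_h|_{\p\Sigma}=h$, $\p_\tau u_h|_{\p\Sigma}=\p_\tau h$). Differentiating $j$ further times at $h=0$ in directions $f_1,\dots,f_j$ and using $\Lambda_{\overline g}(0)=0$ and $\p_tG(x,0)\neq0$, the chain rule (Fa\`a di Bruno) gives
\[
 D^{j+1}V(0)[f_0,\dots,f_j]=\int_{\p\Sigma}\p_tG(x,0)\,\big(D^{j}\Lambda_{\overline g}(0)[f_1,\dots,f_j]\big)\,f_0\,dS_{g(x,0)}+R_j,
\]
where $R_j=R_j[f_0,\dots,f_j]$ is a sum of integrals over $\p\Sigma$ of $f_0$ against expressions built from the known data and from $D^{i}\Lambda_{\overline g}(0)$ with $i<j$ only. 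Assuming inductively that $D^{i}\Lambda_{\overline g}(0)$ is determined by $DV(0),\dots,D^{i+1}V(0)$ for all $i<j$ (the base case being $\Lambda_{\overline g}(0)=0$), we conclude that $D^{j+1}V(0)$ determines $R_j$, hence the functional $f_0\mapsto\int_{\p\Sigma}\p_tG(x,0)\big(D^{j}\Lambda_{\overline g}(0)[f_1,\dots,f_j]\big)f_0\,dS_{g(x,0)}$, hence (as $\p_tG(x,0)>0$ and $dS_{g(x,0)}$ are known) $D^{j}\Lambda_{\overline g}(0)[f_1,\dots,f_j]$ pointwise, hence $D^{j}\Lambda_{\overline g}(0)$. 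Taking $j=0,1,\dots,k$ yields the claim.

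The point requiring the most care — though not conceptually deep — is the bookkeeping in the inductive step: one must check that every contribution to $R_j$ other than the displayed top-order term is genuinely expressible through the given data and the lower-order derivatives $D^{i}\Lambda_{\overline g}(0)$, $i<j$. This uses that the restriction to $\p\Sigma$ of the conformal factor $(1+\abs{\nabla u}^2_{g_u})^{-1/2}$ and of the measure $dS_{g_u}$ depend on $u$ only through $u|_{\p\Sigma}$ and $\p_\tau u|_{\p\Sigma}$, i.e. through tangential data read off directly from the boundary value, and the non-degeneracy $\p_tG(x,0)\neq0$, which makes each inversion (pointwise in the first part, and of a linear functional in the induction) possible. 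Beyond this bookkeeping I do not anticipate a genuine obstacle.
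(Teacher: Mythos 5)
Your proposal is correct and follows essentially the same route as the paper: differentiate the volume functional, use minimality of $u_h$ to kill the interior term of the first variation so that only the boundary term $\int_{\p\Sigma} f\,(1+\abs{\nabla u_h}^2_{g_{u_h}})^{-1/2}(\nabla u_h,\nu)_{g_{u_h}}\,dS$ survives, and then recover $\p_\nu u_h$ from this weighted Neumann data using the known boundary metric and tangential derivatives. You simply make explicit two steps the paper leaves implicit — the pointwise inversion of the strictly monotone map $t\mapsto t(1+c_0+t^2)^{-1/2}$, and the inductive chain-rule bookkeeping for the higher Fréchet derivatives — both of which are carried out correctly.
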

\begin{proof}

Let us define a non-linear boundary map $\mathcal{N}_g$ by
\begin{eqnarray*}
\mathcal{N}_{\overline g} (u|_{\p \Sigma})= \frac 1{\sqrt{1+\abs{\nabla u}^2_{g(x,u)}}}
 (\nu, \nabla u)_{g(x,u)}\bigg|_{\p \Sigma},
\end{eqnarray*}
where $u$ is the solution of the minimal surface equation \eqref{eq:minimal_surface_general}. Let $u_h$ be the solution of the minimal surface equation \eqref{eq:minimal_surface_general} with
boundary value  $u_h|_{\p\Sigma}=h$. By the calculations leading to \eqref{eq:boundary_term}, we see that 
%
the  volumes $ \text{Vol}(Y(h))$ of the minimal surfaces $Y(h)$
and their minimal surface variations $ \text{Vol}(Y(h+tw))$ determine
\begin{eqnarray*}
 \frac{d}{dt}\Big|_{t=0}\text{Vol}(Y(h+tw))
 &=&  \int_{\p \Sigma} \frac{w}{\sqrt{1+\abs{\nabla u}^2_{g(x,u)}}}(\nabla u, \nu)_{g(x,u)}dS_{g(x,u)}.
\end{eqnarray*}
Since the boundary $\p \Sigma$ and the metric $ \overline g|_{\p \Sigma\times (-\delta,\delta)}$ are known, and as $w\in C^\infty(\p \Sigma)$ is arbitrary, we see that $ \frac{d}{dt}\big|_{t=0}\text{Vol}(Y(h+tw))
$ with varying values of $w$ determine $\mathcal{N}_{\overline g} (h)$ and consequently also $\Lambda_{\overline g}$. Moreover, the above formula yields that the Fr\'echet derivatives of the map $h\to  \text{Vol}\s (Y(h))$ 
to the order $k+1$ at $h=0$  determine the  Fr\'echet derivatives of the map $h\to \Lambda_{\overline g}(h)$
at $h=0$ to the order $k$.
\end{proof}

\begin{proof}[Proof of Corollary \ref{cor:exterior_problem}]
Since $\widetilde N$ is complete, there exists a neighbourhood $\widetilde U$ of $\widetilde\Sigma$ in $\widetilde N$,
such that in  $\widetilde U$ there are  Fermi coordinates associated to $\widetilde\Sigma$. That is, there exists the Fermi coordinate map $\tilde \psi:\widetilde U\to \mathcal{D}:=\widetilde\Sigma \times (-\eps,\eps)$, for some $\eps>0$. 

Let $s\mapsto h_s \in C^{2,\alpha}(\partial\widetilde\Sigma)$ be a smooth 1-parameter family of functions with $h_0 = 0$.
Then there is $0<\delta_1<\epsilon$  such that 
for $|s|<\delta_1$  the functions
$h_s$  satisfy 
$\|h_s\|_{C^{2,\alpha}(\p \widetilde  \Sigma)}<\eps$, where $\eps$  is such that the minimal surface equation has
 a unique small solution with the Dirichlet boundary value $h_s$, see Proposition \ref{prop:local_well_posedness}. Now set
 $$\widetilde \Sigma(s)=\tilde\psi^{-1}\left(\{(x, u_s(x)):\ x\in \widetilde \Sigma,\ u_s\ {\rm solves}\ \eqref{eq:minimal_surface_general}\ {\rm with\ }u_s\mid_{\partial\widetilde \Sigma} = h_s\}\right)$$

 Moreover, by  Lemma \ref{lem:minimal surfaces and the DN map},
the functions  $s\to h_s$  and $s\to 
\hbox{Vol}(\widetilde \Sigma(s))$, $s\in (-\delta_1,\delta_1)$ determine the
 derivatives of the map $s\to \Lambda_{\overline g}(h_s)$ 
at $h=0$ to any order $k$.
 By the proof of Theorem \ref{thm:main}, these data determines $(\widetilde \Sigma, g)$ up to an isometry that preserves $\p \widetilde \Sigma$. The isometry also preserves the second scalar fundamental form.

 What is left is to show that the isometry restricts to an isometry of $\Sigma$. For this we use a uniqueness result for isometries. It follows from the proof of Theorem \ref{thm:main} and Lemma \ref{lem:F_id_infty} that the isometry agrees with the identity map of $\widetilde \Sigma \setminus \Sigma$ to infinite order on $\p\widetilde\Sigma$. Since by assumption $\widetilde \Sigma\setminus \Sigma$ and $\overline g|_{\widetilde N\setminus N}$ are known, also the induced metric on $\widetilde \Sigma\setminus \Sigma$ is known. By the proof of \cite[Theorem 3.3]{lassas2019conformal}, isometries agreeing to high order order on an open subset of boundary are unique. (For this, apply the proof of \cite[Theorem 3.3]{lassas2019conformal} with harmonic coordinates instead of conformal harmonic coordinates.) By the above facts, it follows that the isometry restricted to $\widetilde \Sigma \setminus \Sigma$ is unique and consequently the identity.  
 By injectivity it then follows that the isometry maps $\Sigma$ onto itself. By continuity the isometry is the identity on $\p \Sigma$. This proves the claim.  
\end{proof}

\subsection{Higher order linearization}\label{ss2.4}
In this section we discuss the higher order linearization method for the minimal surface equation on $(\Sigma,g)$. While we assume in this paper that $\Sigma$ is $2$-dimensional, the computations in this section hold in higher dimensions as well. We will derive the corresponding integral identities for the first, second and third order linearizations. 

Later we will see that the first order linearization can be used to determine a $2$-dimensional minimal surface $\Sigma$ up to a conformal transformation.  Second order linearization will used to recover $\p_s|_{s=0}g(x,s)$ in Fermi coordinates of $\Sigma$. The third order linearization, together what can be recovered by considering the first and second linearizations, will be used to recover the related conformal factor. 

For $j=1,\ldots,4$ let $\eps_j\in \R$ and $f_j\in C^{2,\alpha}(\p M)$ for some $0<\alpha<1$. Let us denote $\eps=(\eps_1,\eps_2,\eps_3,\eps_4)$. We consider boundary values $f=f_\eps$  of the form 
\begin{align}\label{f_epsilon}
	f_\eps:=\displaystyle \sum_{j=1}^4\eps_j f_j
\end{align}
for the minimal surface equation
\begin{equation}\label{eq:minimal_surf_sec2}
\begin{aligned}
\begin{cases}
 -\frac{1}{\det(g_u)^{1/2}}\nabla\cdot \left( g_u^{-1}\frac{\det(g_u)^{1/2}}{\sqrt{1+\abs{\nabla u}^2_{g_u}}} \right)\nabla u + f(u,\nabla u) =0, 
&\text{ in } \Sigma,
\\
u= f_\eps 
&\text{ on }\p \Sigma,
\end{cases}
    \end{aligned}
\end{equation}
in Fermi coordinates associated to $\Sigma\subset N$. Recall that in Fermi-coordinates $\overline g=ds^2 +g_{ab}(x,s)dx^adx^b$.  We denote the one parameter family of metrics $g(\ccdot,s)$  on $\Sigma$ also simply by $g_s$ and
\[
 g_u(\ccdot)=g(\ccdot ,u(\ccdot)).
\]
Observe that $f_\eps \in U_\delta$ for sufficiently small $\epsilon$, so that by Proposition \ref{prop:local_well_posedness} the problem \eqref{eq:minimal_surf_sec2} is well-posed.

In this paper, we use the positive sign convention for the Laplacian. In local coordinates of $(\Sigma,g)$
\[
 \Delta_g u=-\nabla\cdot\nabla u= -\abs{g}^{1/2}\p_a\big(\abs{g}^{1/2}g^{ab}\p_b u\big).
\]
We denote by $\nabla$, $\Delta$, $\ccdot$ and $\abs{\ccdot}$ the corresponding covariant derivative, Laplacian, inner product and norm given be the metric $g$ if there is no change of confusion. We record the higher order linearizations of \eqref{eq:minimal_surf_sec2} at $\eps=0$, which corresponds to zero solution. We denote the first, second and third linearizations by
\begin{equation}\label{eq:not_for_lins}
 v^{j}:= \left.\frac{\p}{\p\epsilon_j}\right|_{\eps=0} u_f, \quad w^{jk}:= \left.\frac{\p^2}{\p\epsilon_j\p\epsilon_k}\right|_{\eps=0} u_f, \quad w^{jkl}:= \left.\frac{\p^3}{\p\epsilon_j\p\epsilon_k\p\epsilon_l}\right|_{\eps=0} u_f
\end{equation}
respectively. 

Next we compute the linearized equations that $v^{j}$, $w^{jk}$ and $w^{jkl}$ solve. For this, we set up some notation. 
Let us calculate the higher order linearization of the minimal surface equation. For this, for $l=1,2,\ldots$, we set up some notation. We denote
\begin{align*}
 d_u=\abs{g_u}^{1/2}, \quad h_u=\text{Tr}(g_u^{-1}\p_sg_s)|_{s=u} \ \text{ and } \ k_u&=g_u^{-1}.
 \end{align*}
and $d=\abs{g_u}^{1/2}|_{u=0}$ and $d^{(1)}:=\p_s\abs{g_s}^{1/2}|_{s=0}$. 
We also denote
\begin{align*}
h&=h_u|_{u=0}, &  h^{(l)}_u&=\left(\frac{\p}{\p s}\right)^l\Big|_{s=u(x)}\text{Tr}(g_s^{-1}\p_sg_s), &  h^{(l)}&=\left(\frac{\p}{\p s}\right)^l\Big|_{s=0}\text{Tr}(g_s^{-1}\p_sg_s), \\
 k(x)&=g^{-1}, &   k^{(l)}_u(x)&=\left(\frac{\p}{\p s}\right)^l\Big|_{s=u(x)}g_s^{-1}, &  k^{(l)}&=\left(\frac{\p}{\p s}\right)^l\Big|_{s=0}g_s^{-1} 
\end{align*}
for $l=1,2,3$. 
Note that $k_u,k,k^{(l)}_u, k^{(l)}$ are symmetric $2$-tensor fields on $\Sigma$. 
With these notations, the minimal surface equation \eqref{eq:minimal_surface_general} can be written in local coordinates on $\Sigma$ as 
\begin{multline*}
 0=-\frac{1}{\det(g_u)^{1/2}}\nabla\cdot \left( g_u^{-1}\frac{\det(g_u)^{1/2}}{\sqrt{1+\abs{\nabla u}^2_{g_u}}} \right)\nabla u + f(u,\nabla u)  \\
 =-d_u^{-1}\s \nabla\cdot\big(k_ud_u(1+\abs{\nabla u}^2_{g_u})^{-1/2}\nabla u\big)+f(u,\nabla u), 
\end{multline*}
where
\[
 f(u,\nabla u)=\frac{1}{2}\frac{1}{(1+\abs{\nabla u}^2_{g_u})^{1/2}}k_u^{(1)}(\nabla u,\nabla u)+\frac{1}{2}(1+\abs{\nabla u}^2_{g_u})^{1/2}h_u. 
\]
Here $\nabla$ is defined with respect to $\R^n$ metric. 
Since $u\equiv 0$ is a solution, we have $f(0,0)=0$, which implies 
\begin{equation}\label{eq_h_equiv_0}
h=\text{Tr}(g^{-1}\p_sg)|_{s=0}=0,
\end{equation}
which leads to 
\begin{eqnarray}\label{eq: k1 is trace free}
\text{Tr}_g(k^{(1)}):= \text{Tr}(gk^{(1)}) =0.
\end{eqnarray}
Note also that 
\begin{equation}\label{eq:first_der_of_d_zero}
 d^{(1)}=\Big(\frac{1}{2}\abs{g_u}^{1/2}\text{Tr}(g_u^{-1}\p_sg_s)\Big)\Big|_{u=0}=\frac{1}{2}dh=0.
\end{equation}

Next we write the minimal surface equation as 
\begin{equation*}
 0=
 P^uu+f(u,\nabla u ), 
\end{equation*}
where $P^u$ is the partial differential operator given by
\[
 P^uF=-\frac{1}{\det(g_u)^{1/2}}\nabla\cdot \left( g_u^{-1}\frac{\det(g_u)^{1/2}}{\sqrt{1+\abs{\nabla u}^2_{g_u}}} \right)\nabla F. 
\]
We then have
\[
 P:=P^{u}|_{u=0}=\Delta_g.
\]
We will see that factoring the minimal surface equation in this way is beneficial for calculations.  We also denote
\begin{align*}
 &P_{j}^u=\left(\frac{\p}{\p \eps_j}P^u\right), && P_{jk}^u=\left(\frac{\p^2}{\p \eps_k \p \eps_j}P^u\right),  &&& P_{jkl}^u=\left(\frac{\p^3}{\p \eps_k \p \eps_j\p\eps_l}P^u\right) \\ 
 &P^j=\left(\frac{\p}{\p \eps_j}P^u\right)\Big|_{\eps=0}, && P^{jk}=\left(\frac{\p^2}{\p \eps_k \p \eps_j}P^u\right)\Big|_{\eps=0},  &&& P^{jkl}=\left(\frac{\p^3}{\p \eps_k \p \eps_j\p\eps_l}P^u\right)\Big|_{\eps=0} \\
 &u_j=\left(\frac{\p}{\p \eps_j}u\right), && u_{jk}=\left(\frac{\p^2}{\p \eps_k \p \eps_j}u\right), &&& u_{jkl}=\left(\frac{\p^3}{\p \eps_k \p \eps_j\p \eps_l}u\right).
\end{align*}
With this notation we have
\[
 v^{j}=u_j|_{u=0}, \quad w^{jl}=u_{jl}|_{u=0}, \quad w^{jkl}=u_{jkl}|_{u=0}
\]
We will also use for convenience the physicists' short hand notation where indices without a specified value in brackets are symmetrised over. For example
\[
 P^u_{(j}u_{k)}=P^u_{j}u_{k}+P^u_{k}u_{j}
\]
and
\[
 P^u_{(jl}u_{k)}=P^u_{jl}u_{k}+P^u_{jk}u_{l}+P^u_{kl}u_{j}.
\]

With the above notations, the equation for the first linearization is
\[
 0=P_j^uu+P^uu_j+\p_{\eps_j}f.
\]
The equation for the second linearization is
\[
 0=P_{jk}^uu+P_{j}^uu_{k} +P_k^uu_j +P^u u_{jk}+\p_{\eps_{jk}}f =P^u_{jk}u+P^u_{(j}u_{k)}+P^u u_{jk}+\p_{\eps_{jk}}f,
 \]
and for the third linearization it is 
\begin{multline*}
 0=P_{jkl}^uu+P_{jk}^uu_l+P_{jl}^uu_{k}+P_{j}^uu_{kl}+P_{kl}^uu_{j}+P_k^uu_{jl}+P^u_{l}u_{jk}+P^u u_{jkl}+\p_{\eps_{jkl}}f\\
 =P^u_{jkl}u+P^u_{(jl}u_{k)}+P^u_{(j}u_{kl)}+P^u u_{jkl}+\p_{\eps_{jkl}}f.
\end{multline*}

Note that 
\[
P_j^uu|_{u=0}=P_{jk}^uu|_{u=0}=P_{jkl}^uu|_{u=0}=0.
\]
Thus evaluating the linearizations at $u=0$, equivalently at $\eps=(\eps_1,\eps_2,\eps_3)=0$, we obtain the equation for the first
\[
  0=Pv_j+\p_{\eps_j}|_{\eps=0}f=(\Delta_g+h^{(1)}/2)v_j 
\]
second
\[
  0=P^{(j}v^{k)}+P w^{jk}+\p_{\eps_{jl}}|_{\eps=0}f=(\Delta_g+h^{(1)}/2)w^{jk}+P^{(j}v^{k)}+k^{(1)}(\nabla v^j,\nabla v^k)+\frac{1}{2}h^{(2)}v^jv^k
\]
and third
\begin{multline*}
  0=P^{(jl}v^{k)}+P^{(j}w^{kl)}+P w^{jkl}+\p_{\eps_{jkl}}|_{\eps=0}f \\
  =(\Delta_g+h^{(1)}/2)w^{jkl}+P^{(jl}v^{k)}+P^{(j}w^{kl)}+k^{(2)}(\nabla v^{(j},\nabla v^k)v^{l)}+k^{(1)}(\nabla v^{(j},\nabla w^{kl)}) \\
  +\frac{1}{2}g(\nabla v^{(j},\nabla v^k)v^{l)}h^{(1)} +\frac{1}{2}w^{(jk}v^{l)}h^{(2)}+\frac{1}{2}v^{j}v^kv^{l}h^{(3)} 
\end{multline*}
linearizations at $\eps=0$. 

Here we used that
\begin{equation*}
 \p_{\eps_j}|_{\eps=0}f=(1+\abs{\nabla u}^2_{u})^{1/2}h_u=\frac{1}{2}\p_{\eps_j}|_{\eps=0}h_u=\frac{1}{2}h^{(1)}v^j,
\end{equation*}
since $h=0$ by \eqref{eq_h_equiv_0}. Similarly, we used
\begin{equation*}
 \p_{\eps_{jl}}|_{\eps=0}f=\frac{1}{2}\p_{\eps_{jl}}|_{\eps=0}h_u=\frac{1}{2}h^{(2)}v^jv^l.
\end{equation*}
We also used 
\begin{multline}\label{eq:third_deriv_of_f}
 \p_{\eps_{jkl}}|_{\eps=0}f
=k^{(2)}(\nabla v^{(j},\nabla v^k)v^{l)}+k^{(1)}(\nabla w^{(jk},\nabla v^{l)})+\frac{1}{2}g(\nabla v^{(j},\nabla v^k)v^{l)}h^{(1)} \\
 +\frac{1}{2}w^{(jk}v^{l)}h^{(2)}+\frac{1}{2}v^{j}v^{k}v^{l}h^{(3)}+\frac{1}{2}w^{jkl}h^{(1)}.
\end{multline}
We have placed the calculation behind \eqref{eq:third_deriv_of_f} in Appendix \ref{appx:calculations}.

By collecting the results of the above calculations, we obtain:
\begin{lemma}[Higher order linearizations]\label{lem:high_ord_lin} Let $f$ be as in \eqref{f_epsilon}, and for $j,k,l\in \{1,\ldots,4\}$ let $v^{j}$, $w^{jk}$ and $w^{jkl}$ be as in \eqref{eq:not_for_lins}.

 \noindent\textbf{(1)} The first linearization $v^{j}$ 
 satisfies the equation 
 \begin{equation}\label{linear_eq}
	\begin{aligned}
		\begin{cases}
			(\Delta_g+h^{(1)}/2)v^j=0 
			& \text{ in } \Sigma,
			\\
			v^{j}=f_j
			&\text{ on }\p \Sigma.
		\end{cases}
	\end{aligned}
\end{equation}
 \noindent\textbf{(2)} The second linearization $w^{jk}$ 
 satisfies 
 \begin{equation}\label{2nd_lin_eq}
 \begin{aligned}
		\begin{cases}
  (\Delta_g+h^{(1)}/2)w^{jk}+P^{(j}v^{k)}+k^{(1)}(\nabla v^{j},\nabla v^{k})+\frac{1}{2}h^{(2)}v^{j}v^{k}= 0& \text{ in } \Sigma \\
  w^{jk}=0
			&\text{ on }\p \Sigma.
  		\end{cases}
	\end{aligned}
 \end{equation}

 \noindent\textbf{(3)} The third linearization $w^{jkl}$ 
 satisfies the equation
  \begin{equation}\label{3rd_lin_eq}
	\begin{aligned}
		\begin{cases}
			(\Delta_g+h^{(1)}/2)w^{jkl}+P^{(jl}v^{k)}+P^{(j}w^{kl)} \\
	\qquad	+k^{(2)}(\nabla v^{(j},\nabla v^{k})v^{l)}+k^{(1)}(\nabla v^{(j},\nabla w^{kl)}) \\
  \qquad\quad+\frac{1}{2}g(\nabla v^{(j},\nabla v^{k})v^{l)}h^{(1)} +\frac{1}{2}w^{(jk}v^{l)}h^{(2)}+\frac{1}{2}v^{j}v^{k}v^{l}h^{(3)}=0 
			& \text{ in } \Sigma. \\
			w^{jkl}=0
			&\text{ on }\p \Sigma,
		\end{cases}
	\end{aligned}
\end{equation}
\end{lemma}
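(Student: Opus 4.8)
The plan is to obtain the three interior equations by differentiating the minimal surface equation \eqref{eq:minimal_surf_sec2}, written in the factored form $0 = P^u u + f(u,\nabla u)$, with respect to the parameters $\eps_j$ and then evaluating at $\eps = 0$. The well-posedness statement of Proposition \ref{prop:local_well_posedness} guarantees that $\eps \mapsto u_{f_\eps}$ is a smooth $C^{2,\alpha}(\Sigma)$-valued map for $\eps$ near $0$, so the derivatives $u_j, u_{jk}, u_{jkl}$ exist and the differentiations below are legitimate; moreover, differentiating the boundary condition $u|_{\p\Sigma} = f_\eps = \sum_m \eps_m f_m$, which is \emph{linear} in $\eps$, immediately yields $v^j|_{\p\Sigma} = f_j$, $w^{jk}|_{\p\Sigma} = 0$, and $w^{jkl}|_{\p\Sigma} = 0$, giving the stated boundary data.

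For the interior equations I would proceed order by order. Differentiating $0 = P^u u + f$ once in $\eps_j$ gives $0 = P^u_j u + P^u u_j + \p_{\eps_j} f$; evaluating at $\eps = 0$ and using $u|_{\eps=0} = 0$ kills $P^u_j u$, so $0 = \Delta_g v^j + \p_{\eps_j}|_{\eps=0} f$, and a short computation of $\p_{\eps_j}|_{\eps=0} f$ using $h = \mathrm{Tr}(g^{-1}\p_s g)|_{s=0} = 0$ from \eqref{eq_h_equiv_0} shows this derivative equals $\frac{1}{2} h^{(1)} v^j$, which is \eqref{linear_eq}. The same scheme with two and then three derivatives produces \eqref{2nd_lin_eq} and \eqref{3rd_lin_eq}: at each order the terms $P^u_{j\cdots}u$ vanish at $u=0$, $P^u|_{u=0} = \Delta_g$, the remaining $\eps$-derivatives of $P^u$ at $\eps=0$ give the symmetrised terms $P^{(j}v^{k)}$, $P^{(jl}v^{k)}$, $P^{(j}w^{kl)}$, and the $\eps$-derivatives of $f$ contribute the $k^{(1)}, k^{(2)}, h^{(1)}, h^{(2)}, h^{(3)}$ terms. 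One point worth flagging is that the third $\eps$-derivative of $f$ contains a term $\frac{1}{2} w^{jkl} h^{(1)}$ (see \eqref{eq:third_deriv_of_f}); combining it with the leading term $P w^{jkl} = \Delta_g w^{jkl}$ is precisely what assembles the operator $\Delta_g + h^{(1)}/2$ acting on $w^{jkl}$, exactly as in the lower-order cases. Finally one collects the surviving terms into the three displayed equations.

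The only genuinely laborious part is the explicit evaluation of the $\eps$-derivatives of $f(u,\nabla u)$ and of the coefficients of $P^u$ at $u = 0$ — in particular the third-order identity \eqref{eq:third_deriv_of_f} — which requires repeated use of the chain rule in $s$ together with the structural facts $h = 0$ and $d^{(1)} = \frac{1}{2} d h = 0$ from \eqref{eq_h_equiv_0}--\eqref{eq:first_der_of_d_zero}, so that the many potential contributions from differentiating $d_u = |g_u|^{1/2}$ and the $(1+|\nabla u|^2_{g_u})^{\pm 1/2}$ factors either vanish or simplify. As the authors do, I would relegate this bookkeeping to Appendix \ref{appx:calculations} and, in the main text, simply assemble the surviving terms into the stated equations.
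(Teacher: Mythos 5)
Your proposal is correct and follows essentially the same route as the paper: differentiating the factored form $0=P^u u+f(u,\nabla u)$ in the $\eps_j$, using $P^u_{j\cdots}u|_{u=0}=0$, $P^u|_{u=0}=\Delta_g$, and the structural identities $h=0$, $d^{(1)}=0$ to evaluate the derivatives of $f$, with the third-order bookkeeping (including the $\tfrac12 w^{jkl}h^{(1)}$ term that completes the operator $\Delta_g+h^{(1)}/2$) deferred to the appendix exactly as the authors do. No gaps.
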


\subsection{Integral identities} \label{ss2.5}
Next we derive the corresponding integral identities for the second and third linearized equations.
		Let $v^m$ be solution to the first linearization
\[
 (\Delta_g+h^{(1)}/2)v^m=0
\]
with boundary value $f_m$.  Let us denote
 \[
  I_2=P^{(j}v^{k)}+k^{(1)}(\nabla v^{j},\nabla v^{k})+\frac{1}{2}h^{(2)}v^{j}v^{k}.
 \]
 Then, by \eqref{2nd_lin_eq} the second linearization $w^{jk}$ solves
 \[
  (\Delta_g+h^{(1)}/2)w^{jk}=-I_2.
 \]
Integration by parts yields
\begin{multline}\label{eq:2nd_lin_calc}
 \int_{\p \Sigma}f_m\p_\nu w^{jk}dS=\int_{\Sigma} v^m\Delta_{g} w^{jk}dV + \int_{\Sigma} \nabla v^m\cdot \nabla w^{jk}dV \\
 =-\frac{1}{2}\int_{\Sigma} v^mh^{(1)} w^{jk}dV-\int_\Sigma v^mI_2dV 
 + \int_{\p \Sigma}w^{jk}\p_\nu v^mdS- \int_{\Sigma}w^{jk} \Delta_{g} v^mdV \\
 =-\int_\Sigma v^m I_2dV. 
\end{multline}
In the last equality we used $v^m$ solves the first linearized equation, which canceled the integrals involving $w^{jk}$ over $\Sigma$. We also used that $w^{jk}|_{\p \Sigma}=0$.

Next we calculate  $-\int_\Sigma v^mI_2dV $. We have
\begin{equation*}
 P^j=-\frac{d}{d\eps_j}\Big|_{\eps=0}d_u^{-1}\nabla\cdot k_ud_u(1+\abs{\nabla u}^2_u)^{-1/2}\nabla =-d^{-1}\nabla\cdot k^{(1)}v^jd\nabla.
\end{equation*}
Here we used $d^{(1)}=0$. Using the $d^{-1}\nabla\cdot k d$ is the Riemannian divergence $\nabla^g\ccdot$ on $(\Sigma,g)$, we may write the operator $P^{j}$ as
\begin{equation}\label{eq:formula_for_Pj}
 P^j=-\nabla^g\cdot (v^jgk^{(1)}\nabla).
\end{equation}
%
Thus 
\begin{equation*}
 \int_\Sigma v^m P^jv^kdV
 =\int_\Sigma v^jk^{(1)}(\nabla v^m,\nabla v^k)dV - \int_{\p \Sigma} v^mv^j k^{(1)}(\nu,\nabla v^k)dS
\end{equation*}
and it follows that 
\begin{equation*}
 \int_\Sigma v^m P^{(j}v^{k)}dV
 =\int_\Sigma k^{(1)}(\nabla v^m,\nabla v^{(j})v^{k)}dV \\
 - \int_{\p \Sigma} v^m k^{(1)}(\nu,\nabla v^{(j})v^{k)}dS. 
\end{equation*}
By collecting the results of above calculations, we have proven:
\begin{lemma}[Integral identity for the second linearization]\label{Lem:Integral identity_2nd} Let $f_\eps$ be as in \eqref{f_epsilon}, and for $j,k,m\in \{1,2,3\}$ let $v^{j}$ and $w^{jk}$ be as in \eqref{eq:not_for_lins}.
	The integral identity for the second linearization is
	\begin{multline}\label{eq:second_integral_id}
	 \int_{\p \Sigma} f_m \s \p^2_{\eps_j \eps_k}\big|_{\epsilon=0} \Lambda (f_\epsilon) \, dS_g =\int_{\Sigma} v^m k^{(1)}(\nabla v^k,\nabla v^j\big)dV+\int_{\Sigma}v^k k^{(1)}(\nabla v^j,\nabla v^m\big)dV\\
 +\int_{\Sigma} v^j k^{(1)}(\nabla v^k,\nabla v^m\big)dV -\frac{1}{2}\int_{\Sigma} h^{(2)}v^jv^kv^mdV  \\
 - \int_{\p \Sigma} v^m k^{(1)}(\nu,\nabla v^{(j})v^{k)}dS,
	\end{multline}
	which holds for any $j,k,m\in \{1,2,3\}$.
\end{lemma}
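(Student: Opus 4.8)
The plan is to assemble the two integrations by parts already set up just above the statement, keeping careful track of which factors vanish on $\p\Sigma$. First I would unwind the DN map: differentiating the relation $\Lambda_{\overline g}(f_\eps)=\p_\nu u_{f_\eps}|_{\p\Sigma}$ twice in $\eps_j,\eps_k$ at $\eps=0$ and using \eqref{eq:not_for_lins} gives $\p^2_{\eps_j\eps_k}|_{\eps=0}\Lambda(f_\eps)=\p_\nu w^{jk}|_{\p\Sigma}$, so that the left side of \eqref{eq:second_integral_id} equals $\int_{\p\Sigma}f_m\,\p_\nu w^{jk}\,dS$, where $f_m=v^m|_{\p\Sigma}$.

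Next I would pair $\p_\nu w^{jk}$ against $v^m$ and integrate by parts twice. Writing $L:=\Delta_g+h^{(1)}/2$, the ingredients are: $v^m$ solves $Lv^m=0$ by \eqref{linear_eq}; $w^{jk}$ solves $Lw^{jk}=-I_2$ with $I_2=P^{(j}v^{k)}+k^{(1)}(\nabla v^j,\nabla v^k)+\frac12 h^{(2)}v^jv^k$ by \eqref{2nd_lin_eq}; and $w^{jk}|_{\p\Sigma}=0$. Moving $\Delta_g$ off $w^{jk}$ and then integrating by parts once more the resulting Dirichlet pairing $\int_\Sigma\nabla v^m\cdot\nabla w^{jk}\,dV$, the zeroth-order $h^{(1)}$ contributions cancel against the $\Sigma$-integrals containing $w^{jk}$ because $Lv^m=0$ and $w^{jk}|_{\p\Sigma}=0$; this is precisely \eqref{eq:2nd_lin_calc} and leaves $\int_{\p\Sigma}f_m\,\p_\nu w^{jk}\,dS=-\int_\Sigma v^m I_2\,dV$.

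Finally I would substitute $I_2$. The pairing of $v^m$ with $k^{(1)}(\nabla v^j,\nabla v^k)$ and with $\frac12 h^{(2)}v^jv^k$ is already of the stated shape; for $\int_\Sigma v^m P^{(j}v^{k)}\,dV$ I would use the divergence form $P^j=-\nabla^g\!\cdot(v^jgk^{(1)}\nabla\ccdot)$ from \eqref{eq:formula_for_Pj} — which is legitimate precisely because $d^{(1)}=0$ by \eqref{eq:first_der_of_d_zero} — and integrate by parts a third time. This produces the symmetrized gradient integral $\int_\Sigma k^{(1)}(\nabla v^m,\nabla v^{(j})v^{k)}\,dV$ together with the boundary term $-\int_{\p\Sigma}v^m k^{(1)}(\nu,\nabla v^{(j})v^{k)}\,dS$; this boundary term survives because $v^j,v^k,v^m$ need not vanish on $\p\Sigma$ (only $w^{jk}$ does). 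Expanding the symmetrization over $j,k$ and combining with the $v^m k^{(1)}(\nabla v^j,\nabla v^k)$ contribution gives the symmetric sum of the three gradient-pairing integrals in \eqref{eq:second_integral_id}, and collecting all the pieces completes the proof. The one genuinely delicate point, and the step I would check line by line, is the bookkeeping of signs and of which factor vanishes at each of the three integrations by parts, so that exactly one boundary term remains and the $h^{(1)}$ terms cancel cleanly.
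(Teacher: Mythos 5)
Your proposal is correct and follows essentially the same route as the paper: identify the left side with $\int_{\p\Sigma}f_m\,\p_\nu w^{jk}\,dS$, apply Green's identity twice using $Lv^m=0$, $Lw^{jk}=-I_2$ and $w^{jk}|_{\p\Sigma}=0$ to reduce to $-\int_\Sigma v^m I_2\,dV$ (this is exactly \eqref{eq:2nd_lin_calc}), and then integrate the $P^{(j}v^{k)}$ term by parts via the divergence form \eqref{eq:formula_for_Pj}, which is where the single surviving boundary term arises. Your flagged concern about sign bookkeeping in the three integrations by parts is well placed, but the structure of the argument is precisely the paper's.
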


Next we turn to deriving the integral identity for the third linearization, which in full generality will be pretty complicated. We first give the identity and then derive it. For this, we denote
\begin{equation}\label{eq:HRB}
 \begin{split}
 H&=-\int_{\Sigma}v^{(j}v^{k} k^{(2)}(\nabla v^{l)},\nabla v^m) dV +\int_{\Sigma}v^m g(\nabla (d^{-1}d^{(2)}v^{(j}v^k),\nabla v^{l)})dV \\
 &\qquad-\int_\Sigma v^m k^{(2)}(\nabla v^{(j},\nabla v^{k})v^{l)}dV-\frac{1}{2}\int_\Sigma v^mv^{j}v^{k}v^{l}h^{(3)}dV,\\
 R&=-\int_{\Sigma}w^{(jk}k^{(1)}(\nabla v^{l)},\nabla v^m)dV-\int_\Sigma k^{(1)}(\nabla v^m,\nabla v^{(j})w^{kl)}dV \\
 &\qquad-\int_{\Sigma}v^mk^{(1)}(\nabla v^{(j},\nabla w^{kl)})dV-\frac{1}{2}\int_\Sigma v^mg(\nabla v^{(j},\nabla v^{k})v^{l)}h^{(1)}dV  \\
 &\qquad \qquad -\frac{1}{2}\int_\Sigma v^mw^{(jk}v^{l)}h^{(2)}dV,   \\
 B&=\int_{\p \Sigma} v^mv^{(j}v^k g(\nu,k^{(2)}\nabla v^{l)})dS+\int_{\p \Sigma} v^mw^{(jk} g(\nu,k^{(1)}\nabla v^{l)})dS\\
&\qquad-\int_{\p \Sigma} v^mg(\nabla v^{(j},\nabla v^k)\p_\nu v^{l)}dS+\int_{\p \Sigma} v^mv^{(j} k^{(1)}(\nu,\nabla w^{kl)})dS.
\end{split}
\end{equation}

The integral identity for the third linearization then is:
\begin{lemma}[Integral identity for the third linearization]\label{Lem:Integral identity_3rd} Let $f_\eps$ be as in \eqref{f_epsilon}, and for $j,k,l,m\in \{1,\ldots,4\}$ let $v^{j}$, $w^{jk}$ and $w^{wjk}$ be as in \eqref{eq:not_for_lins}.

	The integral identity for the third linearization is
	\begin{multline}\label{eq:third_integral_id}
	 \int_{\p \Sigma} f_m \s \p^3_{\eps_j \eps_k\eps_l}\big|_{\epsilon=0} \Lambda (f_\epsilon) \, dS_g = \int_{\Sigma}g(\nabla v^{j},\nabla v^k)g(\nabla v^{l},\nabla v^m) dV  \\
	 +\int_{\Sigma}g(\nabla v^{j},\nabla v^l)g(\nabla v^{k},\nabla v^m) dV+\int_{\Sigma}g(\nabla v^{l},\nabla v^k)g(\nabla v^{j},\nabla v^m) dV \\
	 +H+R+B,
	\end{multline}
	which holds for any $j,k,m\in \{1,\ldots,4\}$. Here $H$, $R$ and $B$ are as in \eqref{eq:HRB}.
\end{lemma}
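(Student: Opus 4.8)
The plan is to obtain the integral identity \eqref{eq:third_integral_id} by the same mechanism that produced \eqref{eq:second_integral_id}: apply the third-order linearization equation \eqref{3rd_lin_eq} from Lemma \ref{lem:high_ord_lin}, pair it against a first linearized solution $v^m$, and integrate by parts, keeping careful track of every term. Concretely, I would start from \eqref{3rd_lin_eq}, which says
\[
 (\Delta_g+h^{(1)}/2)w^{jkl} = -\Big(P^{(jl}v^{k)}+P^{(j}w^{kl)}+k^{(2)}(\nabla v^{(j},\nabla v^{k})v^{l)}+k^{(1)}(\nabla v^{(j},\nabla w^{kl)})+\tfrac{1}{2}g(\nabla v^{(j},\nabla v^{k})v^{l)}h^{(1)}+\tfrac{1}{2}w^{(jk}v^{l)}h^{(2)}+\tfrac{1}{2}v^{j}v^{k}v^{l}h^{(3)}\Big),
\]
call the right-hand side $-I_3$. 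Exactly as in \eqref{eq:2nd_lin_calc}, integrating $v^m\,\Delta_g w^{jkl}$ by parts twice and using that $v^m$ solves the first linearized equation and $w^{jkl}|_{\p\Sigma}=0$ gives
\[
 \int_{\p\Sigma} f_m\,\p_\nu w^{jkl}\,dS = -\int_\Sigma v^m I_3\, dV,
\]
and the left-hand side is $\int_{\p\Sigma} f_m\,\p^3_{\eps_j\eps_k\eps_l}|_{\eps=0}\Lambda(f_\eps)\,dS_g$ since differentiating the DN map in $\eps_j,\eps_k,\eps_l$ and symmetrizing produces $\p_\nu w^{jkl}$ (up to the already-accounted-for symmetrization convention). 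So the whole content is to compute $-\int_\Sigma v^m I_3\,dV$ term by term and sort the pieces into the three ``leading'' quartic gradient terms plus $H$, $R$, $B$.

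The term-by-term bookkeeping splits naturally. First, the $P^{(j}w^{kl)}$ contribution: using the formula \eqref{eq:formula_for_Pj}, $P^{j}=-\nabla^g\cdot(v^j g k^{(1)}\nabla)$, integrating by parts moves the divergence onto $v^m$, producing a bulk term $\int_\Sigma v^{(j} k^{(1)}(\nabla v^{m},\nabla w^{kl)})\,dV$ and a boundary term $-\int_{\p\Sigma} v^m v^{(j} k^{(1)}(\nu,\nabla w^{kl)})\,dS$ — these are among the $R$ and $B$ terms. Second, the $P^{(jl}v^{k)}$ term: here $P^{jl}$ is the second $\eps$-derivative of $P^u$, which involves $k^{(2)}$, $d^{(2)}$, and (through the second derivative of the $(1+|\nabla u|^2)^{-1/2}$ factor, which at $u=0$ contributes $-g(\nabla v^j,\nabla v^k)$ times $-\nabla^g\cdot\nabla = \Delta_g$ acting appropriately) the products $g(\nabla v^{(j},\nabla v^{k})\,\Delta_g v^{l)}$-type expressions. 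This is the step that manufactures the three main quartic terms $\int_\Sigma g(\nabla v^j,\nabla v^k)g(\nabla v^l,\nabla v^m)\,dV$ etc.: after integrating by parts and using that $v^l$ solves $(\Delta_g + h^{(1)}/2)v^l = 0$, the Laplacian gets converted into a gradient pairing against $\nabla v^m$, and the leftover $h^{(1)}$ piece lands in $H$ or $R$. Computing $P^{jl}$ explicitly — which I would stage via a short auxiliary computation, perhaps in the Appendix, differentiating $P^u = -d_u^{-1}\nabla\cdot(k_u d_u(1+|\nabla u|^2_{g_u})^{-1/2}\nabla)$ twice in $\eps$ at $\eps=0$ using $d^{(1)}=0$ (from \eqref{eq:first_der_of_d_zero}) and $h=0$ (from \eqref{eq_h_equiv_0}) — is the fiddly heart of the argument. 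Third, the three algebraic terms $k^{(2)}(\nabla v^{(j},\nabla v^k)v^{l)}$, $k^{(1)}(\nabla v^{(j},\nabla w^{kl)})$, and the three $h^{(\cdot)}$ terms integrate against $v^m$ directly into $H$ and $R$ with no integration by parts, except that the $\tfrac12 g(\nabla v^{(j},\nabla v^k)v^{l)}h^{(1)}$ term and certain $d^{(2)}$-type terms require one integration by parts to match the stated form of $H$ and $B$ (this is the source of the $\int_\Sigma v^m g(\nabla(d^{-1}d^{(2)}v^{(j}v^k),\nabla v^{l)})\,dV$ term and the $\int_{\p\Sigma} v^m g(\nabla v^{(j},\nabla v^k)\p_\nu v^{l)}\,dS$ boundary term in $B$).

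The main obstacle I anticipate is precisely the explicit computation of the second linearized operator $P^{jl}$ and correctly tracking the symmetrization over $(j,k,l)$ so that every term lands in exactly the right one of the three disjoint buckets (leading terms, $H$, $R$, $B$) with the right sign and combinatorial factor — in particular making sure the contribution of $\Delta_g v^{l)}$ inside $P^{jl}v^{k)}$, after using the first linearized equation, really does reduce to $\int_\Sigma g(\nabla v^j,\nabla v^k)g(\nabla v^l,\nabla v^m)\,dV + (\text{two symmetrizations})$ plus a clean $h^{(1)}$ remainder, rather than leaving behind uncancelled second-derivative terms. A secondary bookkeeping point is confirming that all boundary contributions assemble into exactly the four integrals displayed in $B$; since $w^{jk}|_{\p\Sigma}=0$ and $w^{jkl}|_{\p\Sigma}=0$ but $\p_\nu w^{kl}$ and $\p_\nu v^l$ do not vanish, one must be careful which boundary terms survive. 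Once $P^{jl}$ is in hand, the rest is a matter of patient collection, and I would relegate the longest algebra to the Appendix (as the paper already does for \eqref{eq:third_deriv_of_f}), stating in the body only the organizing identity $\int_{\p\Sigma} f_m\,\p_\nu w^{jkl}\,dS = -\int_\Sigma v^m I_3\,dV$ and the final regrouping.
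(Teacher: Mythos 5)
Your proposal follows essentially the same route as the paper: set $I_3$ equal to the inhomogeneity in \eqref{3rd_lin_eq}, repeat the computation of \eqref{eq:2nd_lin_calc} to get $\int_{\p\Sigma} f_m\,\p_\nu w^{jkl}\,dS=-\int_\Sigma v^m I_3\,dV$, compute $P^{jk}$ explicitly (the paper does this in \eqref{eq:formula_for_Pjk} with the algebra in Appendix \ref{appx:calculations}, using $d^{(1)}=0$ and $h=0$ exactly as you anticipate), and sort the integrated-by-parts pieces into the leading quartic terms and $H$, $R$, $B$. The only cosmetic difference is that the paper extracts the quartic terms by keeping the relevant piece of $P^{jk}$ in divergence form, $+\nabla^g\cdot\bigl(g(\nabla v^j,\nabla v^k)\nabla\bigr)$, and integrating by parts once against $v^m$ (which also produces the $\p_\nu v^{l)}$ boundary term in $B$ directly), rather than passing through $\Delta_g v^{l)}$ and the first linearized equation as you suggest — your route works too, with the $h^{(1)}$ leftovers cancelling.
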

As we see, the integral identity in full generality is be pretty complicated. 
When we apply the integral identity in the proof of Theorem \ref{thm:main} two things will however happen: Firstly, the terms of $H$ will be of lower order when we use complex geometrical optics (CGOs) as the functions $v^j$. Secondly, the terms of $R$ will be recovered from the second linearization. Thus we will be able to neglect both the $H$ and $R$ terms in the proof. We will also be able to neglect the terms of $B$ as we assume boundary determination. We also note that the first three terms on the right hand side of \eqref{eq:third_integral_id} are not conformally invariant in conformal scalings. This fact will be used to recover a conformal factor in the proof.

The derivation of the integral identity is a straightforward but long calculation. A reader uninterested of this may jump directly to Section \ref{sec:proof_of_main_thm}. 
This time we denote
\begin{multline}\label{eq:I3}
 I_3=P^{(jl}v^{k)}+P^{(j}w^{kl)} +k^{(2)}(\nabla v^{(j},\nabla v^{k})v^{l)}+k^{(1)}(\nabla v^{(j},\nabla w^{kl)}) \\
  \qquad\quad+\frac{1}{2}g(\nabla v^{(j},\nabla v^{k})v^{l)}h^{(1)} +\frac{1}{2}w^{(jk}v^{l)}h^{(2)}+\frac{1}{2}v^{j}v^{k}v^{l}h^{(3)}
\end{multline}
so that $w^{jkl}$ solves
\[
 (\Delta_g+h^{(1)}/2)w^{jkl}=-I_3.
\]
By the same calculation as in \eqref{eq:2nd_lin_calc}, we obtain
\begin{equation}\label{eq:3rd_lin_calc}
 \int_{\p \Sigma}f_m\p_\nu w^{jkl}dS=-\int_\Sigma v^m I_3dV.
\end{equation}
We already calculated the formula for $P^j$ in \eqref{eq:formula_for_Pj}. The formula for $P^{jk}$ is 
\begin{multline}\label{eq:formula_for_Pjk}
 P^{jk}=
-k\nabla (d^{-1}d^{(2)}v^k v^l)\cdot \nabla-d^{-1}\nabla\cdot (k^{(2)}v^kv^ld \nabla)-d^{-1}\nabla\cdot (k^{(1)}w^{kl}d \nabla)
 \\
 +d^{-1}\nabla\cdot (kd g(\nabla v^k,\nabla v^l)\nabla)
\end{multline}
We have placed the calculation how the above formula is derived in Appendix \ref{appx:calculations}. 
Using again that $d^{-1}\nabla\cdot k d$ is the Riemannian divergence, we may write the operator $P^{jk}$ as
\begin{multline*}
  P^{jk}=-g(\nabla (d^{-1}d^{(2)}v^k v^l), \nabla \ccdot)
  -\nabla^g\cdot (v^jv^k gk^{(2)}\nabla) 
 -\nabla^g\cdot (w^{jk}gk^{(1)} \nabla) \\
  +\nabla^g\cdot (g(\nabla v^j,\nabla v^k)\nabla). 
\end{multline*}
 By integration by parts, we then have
\begin{multline}\label{eq:integral_P_jk}
 -\int_\Sigma v^mP^{(jk}v^{l)}=\int_{\Sigma}g(\nabla v^{(j},\nabla v^k)g(\nabla v^{l)},\nabla v^m) dV-\int_{\Sigma}v^{(j}v^{k} k^{(2)}(\nabla v^{l)},\nabla v^m) dV \\
  +\int_{\Sigma}v^m g(\nabla (d^{-1}d^{(2)}v^{(j}v^k),\nabla v^{l)})dV-\int_{\Sigma}w^{(jk}k^{(1)}(\nabla v^{l)},\nabla v^m)dV + \int_{\p \Sigma}B_1dS,
\end{multline}
where
\begin{multline}\label{eq:defs_for_B2}
B_1=\int_{\p \Sigma} v^mv^{(j}v^k g(\nu,k^{(2)}\nabla v^{l)})dS+\int_{\p \Sigma} v^mw^{(jk} g(\nu,k^{(1)}\nabla v^{l)})dS\\
-\int_{\p \Sigma} v^mg(\nabla v^{(j},\nabla v^k)\p_\nu v^{l)}dS.
\end{multline}
Here $\nu$ is the normal vector field on $\p \Sigma$ with respect to the metric $g$. Here also for example $g(\nu,k^{(1)}\nabla v^{l})=g_{ab}\nu^a(k^{(1)})^{bc}\p_cv^{l})$.

By \eqref{eq:formula_for_Pj} and integration by parts we have
\begin{equation}\label{eq:formula_for_Pj_2}
 -\int_\Sigma v^m P^{(j}w^{kl)}dV
 =-\int_\Sigma k^{(1)}(\nabla v^m,\nabla v^{(j})w^{kl)}dV \\
 + \int_{\p \Sigma} v^mv^{(j} k^{(1)}(\nu,\nabla w^{kl)})dS.
\end{equation}
Recall that
\begin{multline}\label{eq:I3_2}
   \int_\Sigma v^m I_3dV= \int_\Sigma v^mP^{(jk}v^{l)}+\int_\Sigma v^m P^{(j}w^{kl)} \\
 	+\int_\Sigma v^mk^{(2)}(\nabla v^{(j},\nabla v^{k})v^{l)}+\int_\Sigma v^mk^{(1)}(\nabla v^{(j},\nabla w^{kl)}) \\
   \qquad\quad+\frac{1}{2}\int_\Sigma v^mg(\nabla v^{(j},\nabla v^{k})v^{l)}h^{(1)} +\frac{1}{2}\int_\Sigma v^mw^{(jk}v^{l)}h^{(2)}+\frac{1}{2}\int_\Sigma v^mv^{j}v^{k}v^{l}h^{(3)}.
\end{multline}
With the notations for $H$, $R$ and $B$ in \eqref{eq:HRB}, and by \eqref{eq:integral_P_jk}, \eqref{eq:formula_for_Pj_2}  and  \eqref{eq:I3_2}, we then have 
\[
 -\int_\Sigma v^m I_3dV=\int_{\Sigma}g(\nabla v^{(j},\nabla v^k)g(\nabla v^{l)},\nabla v^m) dV+\int_\Sigma H dV+\int_\Sigma R dV+\int_{\p \Sigma} B dS.
\]
By \eqref{eq:3rd_lin_calc} we had 
\begin{equation*}
 \int_{\p \Sigma}f_m\p_\nu w^{jkl}dS=-\int_\Sigma v^m I_3dV.
\end{equation*}
We have obtained the integral identity for the third linearization \eqref{eq:third_integral_id}.

\section{Complex geometrics optics solutions with phases without critical points}\label{sec:CGOs}
In this section we construct complex geometrics optics solutions (CGOs). The construction is based on \cite{guillarmou2011identification}, but we also consider CGOs whose phase functions do not have critical points. Such CGOs have better decay estimates for the remainder terms in the small parameter $h>0$. Better decay will needed when we extract information from the integral identities corresponding to higher order linearizations of the minimal surface equation.

\subsection{Construction of the CGOs}

To begin, we assume that our Riemann surface $\Sigma$ is compactly contained in the open surface $M$ which in turn is compactly contained in the open surface $\widetilde M$ whose closure is a surface with boundary. 

For $q\in C^\infty_c(M)$, we recall the construction of \cite{guillarmou2011identification} of complex geometrics solutions to
\[
 (\Delta +q)u=0
\]
 on the Riemann surface $ M$. We keep the presentation brief and refer to \cite[Section 2]{guillarmou2011identification} for details. For a summary about the Riemannian differential calculus on Riemannian surfaces we refer to \cite{guillarmou2011calderon}. The complexified cotangent bundle $\C T^*\widetilde M$ has the splitting
\[
 \C T^*\widetilde M= T^*_{1,0}\widetilde M \oplus T^*_{0,1}\widetilde M
\]
determined by the eigenspaces of the Hodge star operator $\star$. 
In local  complex coordinate $z$ the space $T^*_{1,0}\widetilde M$ is spanned by $dz$ and $T^*_{0,1}\widetilde M$ is spanned by $d\ol z$.  The invariant definitions of $\p$ and $\op$ operators are given as
\[
 \op:=\pi_{0,1}d \text{ and } \p:=\pi_{1,0}d.
\]
By \cite[Proposition 2.1]{guillarmou2011identification} there is a right inverse $\op^{-1}$ for $\op$ in the sense that
\[
 \op\s \s  \op^{-1}\omega=\omega \text{ for all } \omega\in C_0^\infty(\widetilde M,T_{1,0}^*\widetilde M)
\]
such that $\op^{-1}$ is bounded from $L^p(T_{1,0}^*\widetilde M)$ to $W^{1,p}(\widetilde M)$ for any $p\in (1,\infty)$. We have analogous properties for 
\[
 \op^*=-i\star \p: W^{1,p}(T_{0,1}^*\widetilde M)\to L^p(\widetilde M),
\]
which is the Hermitean adjoint of $\op$. In complex coordinate $z$, the operator $\op^*$ is just $\p$.

We define 
\[
 \op_\psi^{-1}:=\mathcal{R}\op^{-1}e^{-2i\psi/h}\mathcal{E} \text{ and } \op_\psi^{*-1}:=\mathcal{R}\op^{*-1}e^{2i\psi/h}\mathcal{E},
\]
where $\mathcal{E} : W^{l,p}(M) \to W_c^{l,p}(\widetilde M)$ an extension operator for some $\widetilde M$ compactly containing $M$ and $\mathcal R$ is the restriction operator to $M$.
By \cite[Lemma 2.2 and Lemma 2.3]{guillarmou2011identification} we have for $p>2$ and $2\leq q\leq p$ the following estimates 
\begin{align}\label{eq:sobo_decay}
\begin{split}
 \norm{\overline \p_\psi^{-1}\omega}_{L^q(M)}&\leq C h^{1/q}\norm{\omega}_{W^{1,p}(M, T_{0,1}^*M)} \\
 \norm{\overline \p_\psi^{*-1}\omega}_{L^q(M)}&\leq C h^{1/q}\norm{\omega}_{W^{1,p}(M, T_{1,0}^*M)}.
 \end{split}
\end{align}
Moreover, there is $\eps>0$ such that 
\begin{align}\label{eq:sobo_decayL2}
\begin{split}
 \norm{\overline \p_\psi^{-1}\omega}_{L^2(M)}&\leq C h^{1/2+\eps}\norm{\omega}_{W^{1,p}(M, T_{0,1}^*M)} \\
 \norm{\overline \p_\psi^{*-1}\omega}_{L^2(M)}&\leq C h^{1/2+\eps}\norm{\omega}_{W^{1,p}(M, T_{1,0}^*M)}.
 \end{split}
\end{align}

Note that if $\psi$ has no critical points on $M$, we can obtain better estimates than \eqref{eq:sobo_decayL2} and \eqref{eq:sobo_decay}. Indeed, we have that for all $f\in C^\infty(M;T^*_{0,1}M)$,
\begin{eqnarray}\label{eq: no critical point expansion} \op_\psi^{-1} f = \mathcal{R}\op^{-1}e^{-2i\psi/h}\mathcal{E} f = \frac{ih}{2}\mathcal{R}\op^{-1}\left(\left( \bar\partial e^{-2i\psi/h}\right)\frac{\mathcal{E} f}{\bar\partial\psi}\right),
\end{eqnarray}
where for all $\omega\in \mathcal D'(M;T^*_{0,1}M)$, $\omega/\bar\partial\psi$ denotes the unique scalar function such that $\bar\partial \psi\s (\omega/\bar\partial\psi) = \omega$. In local conformal coordinates, $\bar\partial^{-1}$ has Schwartz kernel given by $(z-z')^{-1}$. Thus writing \eqref{eq: no critical point expansion} in local coordinates and integrating by parts yield
\begin{eqnarray}\label{eq: no critical point expansion II} \op_\psi^{-1} f =e^{-2i\psi/h} \frac{ih}{2} \frac{f}{\bar\partial\psi} + \frac{ih}{2}\mathcal{R}\op^{-1}\left( e^{-2i\psi/h}\bar\partial\left(\frac{\mathcal{E} f}{\bar\partial\psi}\right)\right).
\end{eqnarray}
Consequently, in the case when $\psi$ has no critical points Calder\'on-Zygmund gives the estimate
\begin{eqnarray}
\label{eq: no critical point expansion III}
\|\op_\psi^{-1} f\|_p \leq Ch \|f\|_{W^{1,p}}
\end{eqnarray}
for $p\in(1,\infty)$.

The complex geometrics optics solutions (CGOs) we use are of the same form as in \cite{guillarmou2011identification}, but our notation is slightly different. The CGOs with holomorphic phase are  of the form 
\begin{eqnarray}
\label{CGO v}
 v=e^{\Phi/h}(a+r_h),
\end{eqnarray}
where $\Phi=\phi+i\psi$ is holomorphic Morse function and $a$ is a holomorphic function defined on $\widetilde M$, cf. \cite[Proposition 3.1, Eq. 21]{guillarmou2011identification}. 
In particular, $v$ solves $(\Delta + q)v=0$ if and only if
\begin{align}\label{eq:equation_for_rh}
\begin{split}
r_h=-\overline{\p}_\psi^{-1}s_h,\  (1+\op_\psi^{*-1}q\overline\p_\psi^{-1})s_h=\op_\psi^{*-1}(qa).
 \end{split}
\end{align}
To obtain an explicit expression for $s_h$, we introduce
\begin{eqnarray}\label{def: Th}
 T_h:=-\op_\psi^{*-1}q\overline\p_\psi^{-1}.
\end{eqnarray}
and note that its formal transpose is given by 
\begin{eqnarray}
\label{adjoint of T}
T_h^t = e^{-2i\psi/h} \p^{*-1} qe^{2i\psi/h} \p^{-1}
\end{eqnarray}
(We note that $T_h$ is not exactly the operator $S_h$ in \cite{guillarmou2011identification}. The reason for this is that we use holomorphic amplitude $a$, whereas the construction in \cite{guillarmou2011identification} uses antiholomorphic amplitude.)
By the proof of \cite[Lemma 3.1]{guillarmou2011identification}, we have
\begin{eqnarray}
\label{Th norm estimate}
 \norm{T_h}_{L^r\to L^r}=O(h^{1/r}) \text{ and } \norm{T_h}_{L^2\to L^2}=O(h^{1/2-\eps}), 
\end{eqnarray}
for any $0<\eps<1/2$.

Thus we may derive an explicit expression for $s_h$ from \eqref{eq:equation_for_rh} via Neumann series as
\begin{eqnarray}
\label{eq: def of sh}
 s_h=-\sum_{j=0}^\infty T_h^j\op_\psi^{*-1}(qa). 
 \end{eqnarray}
Consequently
\begin{eqnarray}
\label{eq: def of rh}
 r_h=-\overline{\p}_\psi^{-1}s_h=-  \overline{\p}_\psi^{-1}\sum_{j=0}^\infty T_h^j\op_\psi^{*-1}(qa).
\end{eqnarray}
By the estimates in the proof of \cite[Lemma 3.2]{guillarmou2011identification} there is $\eps>0$ such that 
\begin{equation}\label{eq:rh_L2_estim}
 \norm{s_h}_{L^2}+\norm{r_h}_{L^2}=O(h^{1/2+\eps}),
\end{equation}
and also for $r>2$
\begin{equation}\label{eq:rh_Lr_estim}
 \norm{s_h}_{L^r}+\norm{r_h}_{L^r}=O(h^{1/r}).
\end{equation}
{\color{black} Note that we can improve on \eqref{eq:rh_Lr_estim} by {\color{black}H\"older's inequality}: for any $2<r<\infty$, choose $r'>r$ and $0<\theta<1$ such that $1/r=\theta/2+(1-\theta)/r'$. Then
\begin{equation}
||s_h||_{L^r}\leq ||s_h||_{L^2}^\theta ||s_h||_{L^{r'}}^{1-\theta}=O(h^{1/r+\theta\epsilon}),
\end{equation}
with an identical estimate holding for $r_h$. It follows that 
\begin{equation}\label{eq:rh_Lr_interpolation_estimate}
 \norm{s_h}_{L^r}+\norm{r_h}_{L^r}=O(h^{1/r+\eps_r}).
\end{equation}
for any $r\geq 2$.
}{\color{black}
Making use of the fact that $\partial\bar\partial^{-1}$ is a Calder\'on-Zygmund operator and apply the standard Calder\'on-Zygmund estimates, we also have that
\begin{equation}\label{eq:CZ_rh_norms}
||r_h||_{L^r},  ||\partial r_h||_{L^r}, ||\bar\partial r_h||_{L^r}=O(h^{1/r+\epsilon_r}),
\end{equation}
for any $r\in[2,\infty)$ and $\epsilon_r >0$ depending on $r$.
}
This completes the construction of the CGO $v$ in \eqref{CGO v}.

Next we construct another CGOs with antiholomorphic phase. By changing the sign of $\Phi$ and taking complex conjugate we also have solutions  to $(\Delta + q) \tilde v =0$ of the form
\[
\tilde v = e^{-\overline \Phi/h}(\overline a+\tilde r_h),
\]
where 
\begin{equation}\label{eq: form of tilderh}
 \tilde r_h=-\p_\psi^{-1}\sum_{j=0}^\infty \widetilde T_h^j(\p_{\psi}^{*-1} (q \bar a))
\end{equation}
with 

\[
 \p_\psi^{-1}:=\mathcal{R}\p^{-1}e^{-2i\psi/h}\mathcal{E} \text{ and } \p_\psi^{*-1}:=\mathcal{R}\p^{*-1}e^{2i\psi/h}\mathcal{E},
\]
and $\widetilde T_h =  -\p_{\psi}^{*-1}q\p_\psi^{-1}$. As expected, the remainder $\tilde r_h$ satisfies the same estimates as $r_h$.
Note that $\widetilde T_h$ is not the same as $\overline T_h$ since we also changed the sign of $\Phi$.

Observe that in the above construction if $\Phi$ has no critical points, we may apply the better estimate \eqref{eq: no critical point expansion III}  throughout the construction to get
\begin{eqnarray}
\label{eq: no critical point estimate}
\|r_h\|_p +\|s_h\|_p+ \|d r_h\|_p \leq Ch
\end{eqnarray}
for all $p\in (1,\infty)$.

\subsection{Choices for the solutions}\label{subsection:choices_of_solutions}
We choose the solution we are going to use in our inverse problem. Let $\Psi$ be a holomorphic function on $\Sigma$ without critical points, as constructed in the main Theorem of \cite{gunning1967immersion}. Let also $\Phi$ be a holomorphic Morse function on $\Sigma$ as constructed in \cite{guillarmou2010calderon}. Let $z_0\in \Sigma$ be a fixed critical point of $\Phi$. We may assume without loss of generality that $\Phi(z_0) = \Psi(z_0) = 0$. By multiplying $\Phi$ by a small enough positive constant, we may also assume without loss of generality that both $\Psi + \Phi$ and $-\Psi + \Phi$ have nonvanishing derivatives everywhere on $\Sigma$.

For later reference, we choose three CGO solutions to be used in the context of the second linearization as 
\begin{align}\label{eq:sols_for_second}
 \begin{split}
 v^1&=e^{\Theta_1/h}(a+r_h), \\
 v^2&=e^{\Theta_2/h}(a+r_h'), \\
 v^3&=e^{\Theta_3/h}(\overline a+\tilde r_h),
 \end{split}
\end{align}
where $\Theta_1 := \Psi + \Phi$, $\Theta_2 = -\Psi + \Phi$, and $\Theta_3 = -2\bar\Phi$ so that in local holomorphic coordinates centered at $z_0$ 
\begin{align*}
\begin{split}
\Theta_1&=z + O(z^2) \ \text{ and } \ \Theta_2=-z +O(z^2) \quad \text{are holomorphic}, \\
\Theta_3&=-\z^2+O(\z^3) \text{ is antiholomorphic}.
\end{split}
\end{align*}
Observe that $\tilde r_h$ satisfies the estimates given by \eqref{eq:CZ_rh_norms}, while $r_h$ and $r'_h$ satisfy the better estimates given by \eqref{eq: no critical point estimate}.

In both of the above cases, we take that the holomorphic function $a$ such that it has the expansion 
    \[
    a(z)=1+O(z^N)
    \]
    in local holomorphic coordinates centered at $z_0$, $N$ large, and that $a$ vanishes to high order at any other critical point.
This is possible by \cite[Lemma 2.2.4]{guillarmou2011calderon}.

We also choose four CGO solutions to be used in the context of the third linearization  as
\begin{align}\label{eq:sols_for_3rd}
    \begin{split}
    v^1&=e^{\Phi_1/h}(a+r_h),\quad \ \, v^3 = e^{\Phi_3/h}(a+ r'_h) \\
    v^2 &= e^{-\overline \Phi_2/h}(\overline a+\tilde r_h),\quad v^4= e^{-\overline \Phi_4/h}(\overline a+ \tilde r_h'),
    \end{split}
    \end{align}
where now $\Phi_1, \Phi_2= -\Psi + \Phi$ and $\Phi_3,\Phi_4 = \Psi + \Phi$.
Observe that as none of the holomorphic phase functions $\Phi_1,\ldots,\Phi_4$  contain critical points, the remainder terms in these solutions all satisfy the better estimates \eqref{eq: no critical point estimate}. 

\section{Asymptotic analysis of the integral identities}\label{Section_4}
In this section we analyse the integral identities corresponding to the second and third linearizations.
Let $z_0\in \Sigma$ and 
 \begin{align}\label{eq:3sols}
 \begin{split}
 v^1&=e^{\Theta_1/h}(a+r_h), \\
 v^2&=e^{\Theta_2/h}(a+r_h'), \\
v^3& = e^{\Theta_3/h}(\bar a+\tilde r_h)
 \end{split}
\end{align}
be the CGO solutions to the first linearized equation as described in Section \ref{subsection:choices_of_solutions}. Thus we have $\Theta_1 := \Psi + \Phi$, $\Theta_2 = -\Psi + \Phi$ and $\Theta_3 = -2\bar\Phi$, where $\Phi$ and $\Psi$ are holomorphic, $\Phi$ is Morse with a critical point at $z_0$ and $\Theta_1$ and $\Theta_2$ do not have critical points anywhere. In  local holomorphic coordinates centered at $z_0$
\begin{align*}
\begin{split}
\Theta_1=z + O(z^2) , \quad \Theta_2&=-z +O(z^2) \ \text{ and } \ \Theta_3=-\z^2+O(\z^3),
\end{split}
\end{align*}
and $a(z)=1+O(z^N)$ and $a$ vanishes to high order at other possible critical points of $\Phi$.
The correction term 
$\tilde r_h$ satisfies the estimates given by \eqref{eq:CZ_rh_norms} while $r_h$ and $r'_h$ satisfy the better estimates given by \eqref{eq: no critical point estimate}. We summarize the estimates for the correction terms by
\begin{eqnarray}
\label{eq: rh, rh', estimates}
\|r_h\|_r + \|dr_h\|_r + \|r_h'\|_r+ \|dr_h'\|_r \leq Ch
\end{eqnarray}
and
\begin{eqnarray}
\label{eq: tilderh estimates}
\|\tilde r_h\|_r+ \|d\tilde r_h\|_r \leq Ch^{1/r + \epsilon_r}
\end{eqnarray}
for all $r\in (1,\infty)$ and for some $\epsilon_r>0$.

Below, a $(2,0)$-tensor field $K=(K^{ab})$ is trace free (with respect to $g$) if $\tr(gK)=0$.  We will also consider the real linear extension of the tensor $K(\ccdot,\ccdot) : T^*M\times T^*M\to \mathbb R$ to a tensor on the complexified cotangent bundle $K(\ccdot, \ccdot):\mathbb CT^*M \times\mathbb CT^*M\to \C$ via 
 \[
K(V_1+iW_1, V_2+iW_2) = K(V_1, V_2) - K(W_1,W_2) + i K(V_1, W_2) + iK(W_1, V_2)  
 \]
for $V_1, V_2, W_1, W_2\in TM$. 
Direct computation in local holomorphic coordinates shows that
\begin{multline*}
 K(\nabla v,\nabla u\big)=(K^{11}+K^{22})(\p u \p v+\p u \p v)\\
 +(K^{11}-K^{22}+i(K^{12}+K^{21}))\p u \p v + (K^{11}-K^{22}-i(K^{12}+K^{21}))\op u \op v.
\end{multline*}
Note that $K^{11}+K^{22}=0$ if $\tr(gK)=0$, since in local holomorphic coordinates $g=c\s I_{2\times 2}$ for some function $c$.  
We use the notation
\begin{align}\label{eq:Kcomplex}
 \begin{split}
 K(\p u, \p v)&:=(K^{11}-K^{22}+i(K^{12}+K^{21}))\p u \p v \\
 K(\op u, \op v)&:=  (K^{11}-K^{22}-i(K^{12}+K^{21}))\op u \op v.
  \end{split}
\end{align}
We also denote the exterior derivative $dv$ of a function $v$  by $\nabla v$.

\begin{proposition}\label{prop:asymptotics_for_the_2nd_lin}
 Let $(\Sigma,g)$ be a Riemannian surface with boundary. 
 Assume that $K$ is a trace free $(2,0)$-tensor that vanishes to infinite order on the boundary. Let $v^1,v^2$ and $ v^3$ be as in \eqref{eq:3sols}. We have the expansion 
 \begin{multline}
 \label{eq: remainder estimates}
  \int_{\Sigma} v^1 K(\nabla v^2,\nabla v^3\big)dV+\int_{\Sigma}v^2 K(\nabla v^1,\nabla v^3\big)dV+\int_{\Sigma} v^3 K(\nabla v^1,\nabla v^2\big)dV= \\
    \frac{1}{h^2}\int_\Sigma e^{4i\psi/h}\s \overline a K\left( \partial \Theta_1a,  \partial \Theta_2 a\right) dV + o(1/h).
 \end{multline}

\end{proposition}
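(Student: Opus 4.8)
The plan is to substitute the CGO solutions \eqref{eq:3sols} into the left-hand side of \eqref{eq: remainder estimates}, expand each product $v^i\,K(\nabla v^j,\nabla v^k)$ using $v^i = e^{\Theta_i/h}(a_i + r_i)$ (with the appropriate amplitudes $a,a,\bar a$ and correction terms $r_h, r_h', \tilde r_h$), and track which terms contribute at order $h^{-2}$ and which are $o(1/h)$. First I would note that in each of the three integrals the combined exponential weight is $e^{(\Theta_1+\Theta_2+\Theta_3)/h}$, and since $\Theta_1 + \Theta_2 = 2\Phi$ (holomorphic) and $\Theta_3 = -2\bar\Phi$, we have $\Theta_1 + \Theta_2 + \Theta_3 = 2\Phi - 2\bar\Phi = 4i\,\mathrm{Im}(\Phi) = 4i\psi$. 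Thus every term carries the common oscillatory factor $e^{4i\psi/h}$, and $\psi$ has a nondegenerate critical point at $z_0$ (since $\Phi$ is Morse there); away from $z_0$ stationary phase gives rapid decay, so only a neighborhood of $z_0$ matters.

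Next I would identify the leading term. Each gradient $\nabla v^j$ produces $\frac1h e^{\Theta_j/h}(\partial\Theta_j\, a_j + \text{lower order})$ in its holomorphic part (plus the $\bar\partial$-part, which is $O(1)$ since $a_j$ is (anti)holomorphic and the phases are (anti)holomorphic), and the bilinear form $K(\nabla v^j,\nabla v^k)$ via \eqref{eq:Kcomplex} picks out the product of the holomorphic derivatives (the $K(\partial u,\partial v)$ piece) or antiholomorphic derivatives. Since $\Theta_1,\Theta_2$ are holomorphic and $\Theta_3$ antiholomorphic, the term $\int v^3 K(\nabla v^1,\nabla v^2)$ with both $v^1,v^2$ contributing $\partial$-derivatives yields the factor $\frac{1}{h^2} e^{4i\psi/h}\,\bar a\, K(\partial\Theta_1 a,\partial\Theta_2 a)$; the other two integrals have mismatched holomorphic/antiholomorphic derivative structure and one checks — using that $K$ is trace-free so $K^{11}+K^{22}=0$ and only the $K(\partial,\partial)$ and $K(\bar\partial,\bar\partial)$ pieces survive — that their genuinely $h^{-2}$ contributions either vanish or combine into lower-order terms after integration by parts moving a $\bar\partial$ off the antiholomorphic phase. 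This produces exactly the stated leading term $\frac{1}{h^2}\int_\Sigma e^{4i\psi/h}\,\bar a\, K(\partial\Theta_1 a,\partial\Theta_2 a)\,dV$.

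The main work — and the main obstacle — is showing that all remaining terms are $o(1/h)$. These fall into three types: (i) terms with one correction term and one amplitude, of the schematic form $h^{-2}\int e^{4i\psi/h} (\text{tensor})\, r \,\partial(\text{phase})\,(\cdots)$; (ii) terms with a $\bar\partial$ acting on a correction term, $h^{-1}\int e^{4i\psi/h}(\cdots)\bar\partial r$; and (iii) products of two correction terms. For (iii), crude Hölder estimates using \eqref{eq: rh, rh', estimates}–\eqref{eq: tilderh estimates} already give $o(1/h)$. For (i) and (ii) the point is that $r_h, r_h'$ satisfy the improved bounds $\|r_h\|_r + \|dr_h\|_r \leq Ch$ from \eqref{eq: no critical point estimate} (available precisely because $\Theta_1,\Theta_2$ have no critical points), so a factor $h^{-2}\cdot h = h^{-1}$ appears and one needs an additional gain of $h^\delta$. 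That extra gain comes from the oscillation $e^{4i\psi/h}$ together with the vanishing of $K$ (to infinite order on $\partial\Sigma$, so boundary terms from integration by parts vanish) and the vanishing of $a-1$ to order $N$ at $z_0$: integrating by parts in the oscillatory integral, each integration by parts against $e^{4i\psi/h}$ near the critical point costs a $\nabla\psi$ in the denominator but gains a power of $h$, and away from $z_0$ one has non-stationary phase. The term $\tilde r_h$ appears only multiplied by $e^{\Theta_3/h}$ whose phase is antiholomorphic Morse, but it always occurs paired so that the relevant derivative is $\partial$ of an antiholomorphic object (hence small) or inside an integral where the holomorphic oscillation from $\Theta_1,\Theta_2$ dominates. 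I would organize this as a sequence of lemmas estimating each generic term, using stationary phase / non-stationary phase in a fixed coordinate chart around $z_0$ glued with a cutoff, exactly as in the "nonlinear CGO calculus" the paper alludes to; the bookkeeping of which derivative hits which factor is the tedious but routine core of the argument.
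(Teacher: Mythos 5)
Your outline is right about the skeleton of the argument: the common oscillatory factor $e^{4i\psi/h}$, the fact that only $\int v^3 K(\nabla v^1,\nabla v^2)$ produces the leading term (because $\partial v^3$ and $\bar\partial v^1,\bar\partial v^2$ carry no $h^{-1}$ factor, so the first two integrals are $o(1/h)$ term by term — no integration by parts is needed there), and the essential role of the improved bounds \eqref{eq: no critical point estimate} for $r_h, r_h'$. But there is a genuine gap in how you dispose of the error terms that sit at order $h^{-2}$ and are \emph{linear} in a single correction term, namely
\[
\frac{1}{h^2}\int_\Sigma e^{4i\psi/h}\,\tilde r_h\, K(\partial\Theta_1 a,\partial\Theta_2 a)\,dV
\quad\text{and}\quad
\frac{1}{h^2}\int_\Sigma e^{4i\psi/h}\,\bar a\, K(\partial\Theta_1 r_h,\partial\Theta_2 a)\,dV .
\]
H\"older gives only $O(h^{-3/2+\eps})$ for the first (since $\|\tilde r_h\|_{L^2}=O(h^{1/2+\eps})$) and $O(h^{-1})$ for the second (since $\|r_h\|_{L^2}=O(h)$) — in both cases short of $o(1/h)$. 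Your proposed remedy, integrating by parts against $e^{4i\psi/h}$, does not work here: each integration by parts divides by $\partial\psi$, which vanishes at the critical point $z_0$ where the amplitude $K(\partial\Theta_1 a,\partial\Theta_2 a)$ is \emph{not} small (it is exactly what the leading term is designed to detect), and the amplitudes $\tilde r_h$, $r_h$ are $h$-dependent functions whose derivatives gain nothing, so the usual stationary/non-stationary phase calculus is not applicable to them.

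The missing idea is a duality (transposition) argument exploiting the explicit form of the correction terms. One writes $\tilde r_h=-\partial_{2\psi}^{-1}\sum_{j}\widetilde T_h^{\,j}\bigl(\partial_{2\psi}^{*-1}(q\bar a)\bigr)$ and $r_h=-\partial_{\psi_1}^{-1}s_h$ with $\psi_1=\mathrm{Im}\,\Theta_1$, moves the Cauchy operator $\partial^{-1}$ by transposition onto the smooth factor $e^{4i\psi/h}K(\partial\Theta_1 a,\partial\Theta_2 a)$, and then applies the $O_{L^2}(h^{1/2+\eps})$ bound \eqref{eq:sobo_decayL2} for oscillatory Cauchy transforms to \emph{both} resulting factors. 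This yields $h^{-2}\cdot h^{1/2+\eps}\cdot h^{1/2+\eps}=o(1/h)$ for the $\tilde r_h$ term and $h^{-2}\cdot h\cdot h^{1/2+\eps}=o(1/h)$ for the $r_h$ term. Without this step (or an equivalent one), the proof does not close, since these are precisely the terms for which the crude estimates fail by exactly the margin you identified.
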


Let us then present the other result of this section. Let $\Psi$, $\Phi$ and $a$ be as before with $\Phi$ having a critical point $z_0\in \Sigma$. 
Let $\Phi_1, \Phi_2= -\Psi + \Phi$ and $\Phi_3,\Phi_4 = \Psi + \Phi$ and construct CGO solutions to the linearized equation as in \eqref{eq:sols_for_3rd}:

\begin{align}\label{eq:sols_for_4th linearization}
    \begin{split}
    v^1&=e^{\Phi_1/h}(a+r_h),\quad \ \, v^3 = e^{\Phi_3/h}(a+ r'_h) \\
    v^2 &= e^{-\overline \Phi_2/h}(\overline a+\tilde r_h),\quad v^4= e^{-\overline \Phi_4/h}(\overline a+ \tilde r_h'),
    \end{split}
    \end{align}
    Here in local holomorphic coordinates centered at $z_0$
    \begin{align*}
\begin{split}
\Phi_1,\Phi_2=z + O(z^2) \  \text{ and } \ \Phi_3,\Phi_4&=-z +O(z^2).
\end{split}
\end{align*}
Observe that as none of the holomorphic phase functions contain critical points, the remainder term in these solutions all satisfy estimate \eqref{eq: rh, rh', estimates}.

%
    
\begin{proposition}\label{prop:asymptotics_for_the_3rd_lin}
 Let $(\Sigma,g)$ be a Riemannian surface with boundary.  Assume that $Q\in C^\infty(\Sigma)$ vanishes to infinite order on the boundary. Let $v^1,\ldots, v^4$ be as in \eqref{eq:sols_for_4th linearization}. We have the expansion 
  \begin{multline*}
\int_{\Sigma}Q\Big[g(\nabla v^1, \nabla v^2)g(\nabla v^3, \nabla  v^4)+g(\nabla v^1, \nabla v^3)g(\nabla v^2, \nabla  v^4) \\
    +g(\nabla v^2, \nabla v^3)g(\nabla v^1, \nabla  v^4)\Big]dV=Ch^{-3}Q(P)+o(h^{-3}),
 \end{multline*}
 where $C\neq 0$ is a constant.
\end{proposition}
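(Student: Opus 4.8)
The plan is to substitute the CGO solutions \eqref{eq:sols_for_4th linearization} into the three products of gradients and track which terms survive after applying stationary phase as $h\to 0$. First I would expand each factor $g(\nabla v^i, \nabla v^j)$ using the product rule $\nabla v^i = h^{-1} e^{\Phi_i/h}(a+r_h)\nabla \Phi_i + e^{\Phi_i/h}(\nabla a + \nabla r_h)$ (with the obvious changes for the antiholomorphic phases), so that each such factor is $h^{-2} e^{(\Phi_i+\Phi_j)/h}$ times a sum whose leading term is $g(\nabla \Phi_i, \nabla \Phi_j)\,a\bar a$ plus terms that are either lower order in $h^{-1}$ or carry a factor of $r_h$ or $dr_h$. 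Crucially, because $\Phi_i$ is holomorphic, in local holomorphic coordinates $g(\nabla \Phi_i, \nabla \Phi_j) = 0$ when $\Phi_i,\Phi_j$ are both holomorphic (or both antiholomorphic), since $g(\partial u, \partial v)$-type pairings vanish: the only nonvanishing leading contributions come from pairing a holomorphic phase with an antiholomorphic one, i.e. from $g(\nabla v^1, \nabla v^2)$, $g(\nabla v^1, \nabla v^4)$, $g(\nabla v^3, \nabla v^2)$, $g(\nabla v^3, \nabla v^4)$, each of which contributes a genuine $h^{-2}$ singularity. So each of the three double products is $h^{-4}$ times an oscillatory integral with a combined phase; the phase is $(\Phi_1 - \bar\Phi_2 + \Phi_3 - \bar\Phi_4)/h$, and by the choice $\Phi_1,\Phi_2 = -\Psi+\Phi$, $\Phi_3,\Phi_4 = \Psi+\Phi$, this combined phase equals $2i\,\mathrm{Im}(\Phi_1+\Phi_3)/h$ plus something; more precisely one arranges that the total phase function $\phi_{\text{tot}}$ has a nondegenerate critical point at $z_0$ with the expansion $z^2 - \bar z^2 + O(|z|^3)$ up to scaling, as flagged in the introduction.

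Next I would isolate the leading term: $h^{-4}$ times $\int_\Sigma Q\,e^{\phi_{\text{tot}}/h}\,(\text{product of three factors of the form } g(\nabla\Phi_i,\nabla\Phi_j)|a|^2) \, dV$ plus remainder integrals. Stationary phase in two real dimensions gives a factor of $h$ (from the $\int e^{(\text{quadratic})/h}$), so the leading term is of order $h^{-4}\cdot h^2 \cdot$ (value at $z_0$), where the extra $h^2$ comes from the fact that two of the three double-product factors in each monomial already contribute the generic $h^{-2}$, and... — let me recount. Each of the three summands in the integrand is a product of two factors $g(\nabla v^a,\nabla v^b)\,g(\nabla v^c,\nabla v^d)$; each nonvanishing factor is $\sim h^{-1} e^{(\cdots)/h}$ at leading order (one derivative hits the exponential, the other the amplitude, because the $h^{-2}$ term with both derivatives on exponentials vanishes by holomorphicity). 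Hence each summand is $h^{-2} e^{\phi_{\text{tot}}/h}(\cdots)$, and $\int e^{\phi_{\text{tot}}/h} \sim h$ gives total order $h^{-1}$... but the claim is $h^{-3}$. I need to reexamine: $g(\nabla v^1,\nabla v^2)$ — here $\nabla v^1 = h^{-1}e^{\Phi_1/h}(a\nabla\Phi_1 + h(\ldots))$ and similarly for $v^2$, so the product of the two leading $h^{-1}$ pieces is $h^{-2} e^{(\Phi_1-\bar\Phi_2)/h} g(a\nabla\Phi_1, \bar a \nabla\bar\Phi_2)$, and $g(\nabla\Phi_1,\nabla\bar\Phi_2)\neq 0$ in general. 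So each factor genuinely is $h^{-2}$, each summand is $h^{-4}$, and $\int e^{\phi_{\text{tot}}/h} \sim h$ yields $h^{-3}$, matching the claim; the apparent vanishing I worried about only kills the holomorphic–holomorphic pairings, which is exactly why only the four mixed factors survive and why the three summands, after the combinatorics of which pairs are mixed, all contribute.

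Then I would handle the error terms: every term containing at least one factor of $r_h$, $r'_h$, $\tilde r_h$, $dr_h$, $dr'_h$, or $d\tilde r_h$, or at least one ``lower-order-in-$h^{-1}$'' derivative term (where $\nabla$ hits the amplitude rather than the exponential), must be shown to be $o(h^{-3})$. This is where the improved estimates \eqref{eq: no critical point estimate} / \eqref{eq: rh, rh', estimates}, namely $\|r_h\|_p + \|dr_h\|_p \leq Ch$, are essential: a factor of $r_h$ or $dr_h$ costs a full power of $h$, so replacing one leading $h^{-2}$ factor's amplitude or derivative by a correction term drops the order from $h^{-4}$ to $h^{-3}$ inside the integral, and then the oscillatory integral over $\Sigma$ — bounded by Hölder using the $L^p$ bounds on the correction terms against the $L^{p'}$ norm of the smooth, exponentially-bounded remaining data (note $|e^{\phi_{\text{tot}}/h}| = e^{\mathrm{Re}(\phi_{\text{tot}})/h} \le 1$ near $z_0$ after arranging $\mathrm{Re}(\phi_{\text{tot}})\le 0$; globally one must also check the phase has no other stationary points and use non-stationary phase / integration by parts away from $z_0$) — contributes at most $O(h^{-3}\cdot h^{0^+})$... so I would need to be slightly more careful, perhaps extracting a half power from $\int e^{\phi_{\text{tot}}/h}$ decay or using that the amplitude of the error term vanishes to positive order at $z_0$ because $Q$ or $a-1$ does. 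The main obstacle is precisely this error analysis: because $r_h$ depends on $h$ in a complicated (non-power-series) way, one cannot simply Taylor-expand; the argument must combine (i) the $L^p$–Hölder bound using \eqref{eq: no critical point estimate}, (ii) control of the oscillatory factor $e^{\phi_{\text{tot}}/h}$ away from the single critical point $z_0$ via stationary/non-stationary phase, and (iii) the vanishing of $a-1$ and of $Q$'s non-leading Taylor coefficients (or of the lower-order amplitude terms) at $z_0$ to gain the needed extra fractional power of $h$. Once all error integrals are shown to be $o(h^{-3})$, the leading-order stationary phase computation at $z_0$ — using $a(z_0)=1$, the explicit second-order Taylor data of $\phi_{\text{tot}}$, and the combinatorial sum over the three monomials — produces $Ch^{-3}Q(z_0)$ with an explicit nonzero constant $C$, completing the proof.
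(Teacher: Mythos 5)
Your overall skeleton matches what the paper intends (it omits this proof, pointing to the proof of Proposition \ref{prop:asymptotics_for_the_2nd_lin} as the model): expand each $\nabla v^j$, observe that in conformal coordinates $g(dz,dz)=g(d\bar z,d\bar z)=0$ so only the mixed holomorphic/antiholomorphic pairings retain their $h^{-2}$ leading term, note that the total phase is $\Phi_1-\bar\Phi_2+\Phi_3-\bar\Phi_4=4i\psi$ with a nondegenerate critical point at $z_0$, and conclude $h^{-4}\cdot h=h^{-3}$ by stationary phase with $a(z_0)=1$. Two small corrections: the middle summand $g(\nabla v^1,\nabla v^3)\s g(\nabla v^2,\nabla v^4)$ does \emph{not} contribute at leading order (both of its factors are same-type pairings, so it is $O(h^{-2})$ before integration), only the first and third summands do; and to get $C\neq 0$ you should check at $z_0$ that these two surviving contributions do not cancel — they don't, since $d\Phi(z_0)=0$ forces $d\Phi_1(z_0)=-d\Psi(z_0)$, $d\Phi_3(z_0)=d\Psi(z_0)$, and both products evaluate to $+|\partial\Psi(z_0)|^4\s g^{-1}(dz,d\bar z)^2$.

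The genuine gap is exactly where you flag it: the terms carrying one remainder factor at full phase-derivative order, i.e.
\[
h^{-4}\int_\Sigma e^{4i\psi/h}\,Q\,(\text{smooth})\,r\,dV,\qquad r\in\{r_h,r_h',\tilde r_h,\tilde r_h'\},
\]
are only $O(h^{-3})$ under the H\"older bound $\|r\|_p\leq Ch$ of \eqref{eq: no critical point estimate}, which is not $o(h^{-3})$, and your proposal stops at ``I would need to be slightly more careful.'' The missing mechanism — the one the paper uses in the displayed steps of the proof of Proposition \ref{prop:asymptotics_for_the_2nd_lin} — is to unwind the definition $r=-\bar\partial_{\psi_j}^{-1}s$ (respectively $-\partial_{\psi_j}^{-1}s$) and transfer the Cauchy operator onto the oscillatory factor by duality, so that the integral becomes $\int e^{-2i\psi_j/h}\,s\cdot \partial^{-1}\bigl(e^{4i\psi/h}(\cdots)\bigr)dV$; then $\|s\|_{L^2}=O(h)$ (no critical points of $\Phi_j$) together with the estimate $\|\partial^{-1}(e^{4i\psi/h}F)\|_{L^2}=O(h^{1/2+\eps})$ from \eqref{eq:sobo_decayL2} yields $O(h^{3/2+\eps})=o(h)$, hence $o(h^{-3})$ after multiplying by $h^{-4}$. (Equivalently one can insert the explicit expansion \eqref{eq: no critical point expansion II} of $r$ and run stationary phase on the modified total phase.) Without this step — or some substitute such as exhibiting extra vanishing of the accompanying amplitude at $z_0$ — the asymptotic expansion is not established, so as written the proof is incomplete at its one essential estimate.
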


Next we prove Proposition \ref{prop:asymptotics_for_the_2nd_lin}. We will omit the proof of Proposition \ref{prop:asymptotics_for_the_3rd_lin} as the method used is only a minor modification of the one used in the proof of Proposition \ref{prop:asymptotics_for_the_2nd_lin}. 
We however note that since all the CGOs involved in Proposition \ref{prop:asymptotics_for_the_3rd_lin} have remainder terms decaying according to the better estimates \eqref{eq: rh, rh', estimates}, the proof is easier than that of Proposition \ref{prop:asymptotics_for_the_2nd_lin}.


We start the proof of Proposition \ref{prop:asymptotics_for_the_2nd_lin} by noting that by the fact that $K$ is a trace-free $(2,0)$-tensor field and our choice of extension to $\mathbb CT^*M\times \mathbb CT^*M$, we have that 
\begin{eqnarray}
\label{eq: no cross terms}
K(\nabla u, \nabla v) = K(\partial u ,\partial v) + K(\overline\partial u, \overline\partial v).
\end{eqnarray}

\begin{lemma}
\label{lem: first and second term} With $v^1$, $v^2$ and $v^3$ chosen as in \eqref{eq:3sols} and $K(\ccdot, \ccdot)$  trace-free, 
we have that
\[
\int_\Sigma v^1K(\nabla v^2, \nabla v^3)dV = o(1/h), \quad \int_\Sigma v^2K(\nabla v^1, \nabla v^3)dV = o(1/h).
\]
\end{lemma}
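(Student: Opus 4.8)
\textbf{Proof plan for Lemma \ref{lem: first and second term}.}

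The plan is to estimate each of the two integrals by substituting the explicit CGO forms and tracking the powers of $h$ that arise from differentiating the exponentials and from the decay estimates of the correction terms. Consider the first integral $\int_\Sigma v^1 K(\nabla v^2,\nabla v^3)\,dV$. Writing $v^1 = e^{\Theta_1/h}(a+r_h)$, $v^2 = e^{\Theta_2/h}(a+r_h')$, $v^3 = e^{\Theta_3/h}(\bar a + \tilde r_h)$, the product of the three exponentials is $e^{(\Theta_1+\Theta_2+\Theta_3)/h}$, and since $\Theta_1 = \Psi+\Phi$, $\Theta_2 = -\Psi+\Phi$, $\Theta_3 = -2\bar\Phi$, the combined phase is $(2\Phi - 2\bar\Phi)/h = 4i\,\mathrm{Im}(\Phi)/h = 4i\psi/h$, which is purely oscillatory. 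So each integrand is of the form $e^{4i\psi/h}$ times an amplitude, and the worst-case size comes from the factor $h^{-2}$ produced when the gradient $\nabla$ hits both exponential factors in $K(\nabla v^2,\nabla v^3)$ (each $\nabla$ on $e^{\Theta/h}$ gives a factor $h^{-1}$).

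The key point is that the leading $h^{-2}$ term here, namely $h^{-2}\int_\Sigma e^{4i\psi/h}\, a\, K(\partial\Theta_2\, a,\,\partial\Theta_3\,\bar a)\,dV$ — wait, but $\Theta_3$ is \emph{antiholomorphic}, so $\partial\Theta_3 = 0$. This is the crucial observation: in $K(\nabla v^2,\nabla v^3)$ we have by \eqref{eq: no cross terms} that $K(\nabla v^2,\nabla v^3) = K(\partial v^2,\partial v^3) + K(\bar\partial v^2, \bar\partial v^3)$. For the $\partial\partial$ piece, $\partial v^3 = e^{\Theta_3/h}(\bar\partial_{\!z}\text{-type terms})$; precisely $\partial(e^{\Theta_3/h}(\bar a + \tilde r_h)) = e^{\Theta_3/h}\partial(\bar a + \tilde r_h)$ since $\Theta_3$ is antiholomorphic, so there is \emph{no} $h^{-1}$ gain from $v^3$ in the holomorphic derivative. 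Similarly, for the $\bar\partial\bar\partial$ piece, $\bar\partial v^2 = e^{\Theta_2/h}\bar\partial(a+r_h')$ carries no $h^{-1}$ since $\Theta_2$ is holomorphic. Hence in \emph{every} term of $K(\nabla v^2,\nabla v^3)$ at least one of the two derivative factors fails to produce an $h^{-1}$, so the naive size is at most $h^{-1}$ times oscillatory integrals of amplitudes that are $O(1)$ in $L^\infty$ (or, when a correction term is differentiated, $O(h)$ or $O(h^{1/r+\epsilon_r})$ in $L^r$ by \eqref{eq: rh, rh', estimates}, \eqref{eq: tilderh estimates}).

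The remaining work is to show the surviving $h^{-1}$-weighted oscillatory integral is actually $o(1/h)$, i.e.\ that $\int_\Sigma e^{4i\psi/h} b\, dV = o(1)$ for the relevant amplitudes $b$. Here I would invoke stationary phase / non-stationary phase: away from the critical point $z_0$ of $\Phi$ (equivalently of $\psi$) the phase $4\psi/h$ is non-stationary and repeated integration by parts gives $O(h^\infty)$; near $z_0$, $\psi$ has a nondegenerate critical point (Morse), so stationary phase gives an $O(h)$ contribution from the $h^{-1}$-weighted term — hence $o(1/h)$ overall — once one checks the amplitude is smooth and that the pieces involving the correction terms are handled by Hölder: e.g.\ $h^{-1}\int e^{4i\psi/h} K(\partial a, \partial \tilde r_h)\,dV$ is bounded by $h^{-1}\|\partial a\|_{L^{r'}}\|\partial \tilde r_h\|_{L^r} = O(h^{-1}\cdot h^{1/r+\epsilon_r}) = o(1/h)$ for $r$ close to $1$. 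The second integral $\int_\Sigma v^2 K(\nabla v^1, \nabla v^3)\,dV$ is handled identically, with the roles of $\Theta_1$ and $\Theta_2$ interchanged. \textbf{The main obstacle} is the bookkeeping: one must enumerate all terms arising from expanding $K(\nabla v^2,\nabla v^3) = K(\partial v^2,\partial v^3) + K(\bar\partial v^2,\bar\partial v^3)$ with $v^2 = e^{\Theta_2/h}(a+r_h')$, $v^3 = e^{\Theta_3/h}(\bar a+\tilde r_h)$, verify for each one that it is genuinely $o(1/h)$ using the appropriate combination of the holomorphy/antiholomorphy of the phases, the vanishing of $a$ to high order at the critical points (needed so that amplitudes with $\partial\Theta$ factors remain smooth, as $\partial\Theta_j$ may vanish or the relevant ratios be controlled), and the decay estimates \eqref{eq: rh, rh', estimates}–\eqref{eq: tilderh estimates}; the genuinely delicate case is the single term where one $h^{-1}$ survives and one needs stationary phase to beat it down to $o(1/h)$.
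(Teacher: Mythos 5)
Your proposal is correct and follows essentially the same route as the paper: split $K(\nabla v^2,\nabla v^3)$ into $K(\partial v^2,\partial v^3)+K(\bar\partial v^2,\bar\partial v^3)$ using trace-freeness, observe that $\partial v^3=e^{\Theta_3/h}\,\partial\tilde r_h$ and $\bar\partial v^2=e^{\Theta_2/h}\,\bar\partial r_h'$ carry no $h^{-1}$ factor because $\Theta_3,\bar a$ are antiholomorphic and $\Theta_2,a$ are holomorphic, and then finish with the $L^r$ decay of the differentiated remainders. The only remark is that the ``delicate case'' you flag does not actually occur: every surviving $h^{-1}$-weighted term automatically contains $\partial\tilde r_h$ or $\bar\partial r_h'$ as a factor, so H\"older together with \eqref{eq: tilderh estimates} (resp.\ \eqref{eq: rh, rh', estimates}) already yields $o(1/h)$, and stationary phase is only an optional alternative, exactly as in the paper's proof.
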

We prove Lemma \ref{lem: first and second term} later. Assuming the lemma, we only need to consider the term 
\[
\int_\Sigma v^3K(\nabla v^1, \nabla v^2)dV 
\]
 in \eqref{eq: remainder estimates}. To this end, using \eqref{eq: no cross terms} we get
\begin{eqnarray}
\label{eq: main term of prop 4.1}
\int_\Sigma v^3K(\nabla v^1, \nabla v^2)dV &= &\int_\Sigma v^3K(\partial v^1, \partial v^2)dV + \int_\Sigma v^3K(\overline\partial v^1, \overline\partial v^2)dV\\\nonumber
&=& \int_\Sigma v^3K(\partial v^1, \partial v^2)dV  + O(1).
\end{eqnarray}
The last estimate comes from direct calculation and the estimates on $\tilde r_h$,  $r_h$, and $r'_h$ given by \eqref{eq:CZ_rh_norms} and \eqref{eq: no critical point estimate}. Substituting the expression 
\[
 \partial v^1 =e^{\Theta_1/h}\left( \frac{1}{h} \partial \Theta_1(a+r_h) + \partial(a+r_h)\right)
\]
 and the similar expression for $\partial v^2$ into \eqref{eq: main term of prop 4.1}, we see that 
\begin{multline}
\label{eq: main term of prop 4.1 II}
\int_\Sigma v^3K(\nabla v^1, \nabla v^2)dV \\
= \int_\Sigma e^{4i\psi/h}(\overline a+ \tilde r_h) K\left( \frac{ \partial \Theta_1}{h}(a+r_h) + \partial(a+r_h),  \frac{\partial \Theta_2}{h} (a+r_h') + \partial(a+r_h')\right) dV + O(1) \\
= \frac{1}{h^2}\int_\Sigma e^{4i\psi/h}(\overline a+ \tilde r_h) K\left( \partial \Theta_1(a+r_h),  \partial \Theta_2 (a+r_h')\right) dV\\
+ \frac{1}{h} \int_\Sigma e^{4i\psi/h}\overline a\left(K\left(\partial\Theta_1(a+r_h), (a+r'_h)\right)+ K\left((a+r_h), \partial\Theta_2(a+r'_h)\right)\right)dV + O(1).
\end{multline}
Using either stationary phase or the estimates we have for $r'_h$, $r_h$, and $\tilde r_h$ given in \eqref{eq: rh, rh', estimates} and \eqref{eq: tilderh estimates}, we  conclude that the second to last term of \eqref{eq: main term of prop 4.1 II} is $o(1/h)$. So we have that
\begin{multline}\label{eq: main term of prop 4.1 III}
\int_\Sigma v^3K(\nabla v^1, \nabla v^2)dV \\
= \frac{1}{h^2}\int_\Sigma e^{4i\psi/h}(\overline a+ \tilde r_h) K\left( \partial \Theta_1(a+r_h),  \partial \Theta_2 (a+r_h')\right) dV+ o(1/h).
\end{multline}
Expanding the right side of \eqref{eq: main term of prop 4.1 III} we get that
\begin{equation}\label{eq: main term of prop 4.1 IVa}
\begin{split}
\int_\Sigma v^3K(\nabla v^1, \nabla v^2)dV 
&= \frac{1}{h^2}\int_\Sigma e^{4i\psi/h}\overline a K\left( \partial \Theta_1(a+r_h),  \partial \Theta_2 (a+r_h')\right) dV\\
&+  \frac{1}{h^2}\int_\Sigma e^{4i\psi/h}\tilde r_h K\left( \partial \Theta_1a,  \partial \Theta_2 a\right) dV\\
&+  \frac{1}{h^2}\int_\Sigma e^{4i\psi/h}\tilde r_h K\left( \partial \Theta_1(a+r_h),  \partial \Theta_2 r_h'\right) dV\\
&+ \frac{1}{h^2}\int_\Sigma e^{4i\psi/h}\tilde r_h K\left( \partial \Theta_1r_h,  \partial \Theta_2 a\right) dV+ o(1/h).\\
\end{split}
\end{equation}
The third and fourth terms of \eqref{eq: main term of prop 4.1 IVa} are $o(1/h)$ by the estimates \eqref{eq: rh, rh', estimates} on $r_h$, $r_h'$  and \eqref{eq: tilderh estimates} on $\tilde r_h$ . So we get 
\begin{eqnarray}
\label{eq: main term of prop 4.1 IV}
\int_\Sigma v^3K(\nabla v^1, \nabla v^2)dV 
&=& \frac{1}{h^2}\int_\Sigma e^{4i\psi/h}\overline a K\left( \partial \Theta_1(a+r_h),  \partial \Theta_2 (a+r_h')\right) dV\\\nonumber
&+&  \frac{1}{h^2}\int_\Sigma e^{4i\psi/h}\tilde r_h K\left( \partial \Theta_1a,  \partial \Theta_2 a\right) dV+ o(1/h).
\end{eqnarray}
Observe that 
\[
\tilde r_h = -\p_{2\psi}^{-1}\sum_{j=0}^\infty \widetilde T_h^j(\p_{2\psi}^{*-1} (q \overline a)) 
\]
is given by \eqref{eq: form of tilderh} with $2\psi$ in place of $\psi$. Thus we may apply the adjoint of $\partial^{-1}$ in the definition of $\tilde r_h$ in the second to last term of \eqref{eq: main term of prop 4.1 IV}   to get
\begin{multline}\label{eq: main term of prop 4.1 V}
\int_\Sigma v^3K(\nabla v^1, \nabla v^2)dV 
= \frac{1}{h^2}\int_\Sigma e^{4i\psi/h}\overline a K\left( \partial \Theta_1(a+r_h),  \partial \Theta_2 (a+r_h')\right) dV\\
+  \frac{1}{h^2}\int_\Sigma e^{4i\psi/h}\left(\sum_{j=0}^\infty \widetilde T_h^j(\p_{2\psi}^{*-1} (q \overline a))\right) \partial^{-1}\left(e^{4i\psi/h} K\left( \partial \Theta_1a,  \partial \Theta_2 a\right) \right)dV+ o(1/h).
\end{multline}
Observe that since both $\sum_{j=0}^\infty \widetilde T_h^j(\p_{2\psi}^{*-1} (q \overline a))$ and $\partial^{-1}\left(e^{4i\psi/h} K\left( \partial \Theta_1a,  \partial \Theta_2 a\right) \right)$ are both $O_{L^2}(h^{1/2+\epsilon})$ by \eqref{eq:sobo_decayL2}, we have that the second to last term of \eqref{eq: main term of prop 4.1 V} is $o(1/h)$. Therefore,
\begin{equation}\label{eq: main term of prop 4.1 VI}
\int_\Sigma v^3K(\nabla v^1, \nabla v^2)dV = \frac{1}{h^2}\int_\Sigma e^{4i\psi/h}\overline a K\left( \partial \Theta_1(a+r_h),  \partial \Theta_2 (a+r_h')\right)dV+o(1/h).
\end{equation}
Expanding  \eqref{eq: main term of prop 4.1 VI} yields
\begin{equation}\label{eq: main term of prop 4.1 VII}
\begin{split}
\int_\Sigma v^3K(\nabla v^1, \nabla v^2)dV 
&= \frac{1}{h^2}\int_\Sigma e^{4i\psi/h}\overline a K\left( \partial \Theta_1a,  \partial \Theta_2 a\right) dV\\
&+ \frac{1}{h^2}\int_\Sigma e^{4i\psi/h}\overline a K\left( \partial \Theta_1a,  \partial \Theta_2 r_h'\right) dV\\
& +  \frac{1}{h^2}\int_\Sigma e^{4i\psi/h}\overline a K\left( \partial \Theta_1r_h,  \partial \Theta_2a\right) dV\\
&+  \frac{1}{h^2}\int_\Sigma e^{4i\psi/h}\overline a K\left( \partial \Theta_1r'_h,  \partial \Theta_2 r_h'\right)dV+ o(1/h).
\end{split}
\end{equation}
The last term is $o(1/h)$ by simply using the $L^2$ estimates of $r_h$ and $r_h'$ given by \eqref{eq: no critical point estimate}. The third term can be dealt with by observing that $r_h = -\partial_{\psi_1}^{-1} s_h$ where $\psi_1 = {\rm Im}(\Theta_1)$. Note that since $\Theta_1$ is constructed to have no critical points, both $r_h$ and $s_h$ satisfy \eqref{eq: no critical point estimate}. So writing the third term on the right hand side of \eqref{eq: main term of prop 4.1 VII} as 
\begin{multline*}
\frac{1}{h^2}\int_\Sigma e^{4i\psi/h}\overline a K\left( \partial \Theta_1r_h,  \partial \Theta_2a\right) dV \\
=  \frac{1}{h^2}\int_\Sigma e^{2i\psi_1/h} s_h \partial^{-1}\left(e^{4i\psi/h}\overline a K\left( \partial \Theta_1,  \partial \Theta_2a\right) \right)dV = o(1/h).
\end{multline*}
The second term on the right hand side  of \eqref{eq: main term of prop 4.1 VII} can be estimated exactly the same way to be $o(1/h)$. Therefore we have that 
$$\int_\Sigma v^3K(\nabla v^1, \nabla v^2)dV 
= \frac{1}{h^2}\int_\Sigma e^{4i\psi/h}\overline a K\left( \partial \Theta_1a,  \partial \Theta_2 a\right) dV + o(1/h).$$
Combining this with Lemma \ref{lem: first and second term}, we have proven Proposition \ref{prop:asymptotics_for_the_2nd_lin}.\qed\\
It remains to prove Lemma \ref{lem: first and second term}, which we do now:

\begin{proof}[Proof of Lemma \ref{lem: first and second term}]  We only prove the first asymptotic of the lemma since the proof for the second one is exactly identical. We begin by using the fact that $K(\ccdot, \ccdot)$ is traceless, in the sense that $\tr(gK)=0$, 
to conclude that
\begin{eqnarray}\label{eq: first term I}
 \int_\Sigma v^1K(\nabla v^2, \nabla v^3)dV &=& \int_{\Sigma} v^1 K(\partial v^2, \partial v^3)dV + \int_{\Sigma} v^1 K(\overline\partial v^2, \overline\partial v^3)dV\\\nonumber
&=& h^{-1}\int_{\Sigma} e^{4i\psi/h} (a+r_h) K\left(\partial\Theta_2 (a+r'_h), \partial \tilde r_h\right) dV\\\nonumber
&+& \int_{\Sigma} e^{4i\psi/h} (a+r_h) K\left( \partial(a+r'_h), \partial \tilde r_h\right) dV\\\nonumber
&+&h^{-1}\int_{\Sigma} e^{4i\psi/h} (a+r_h) K\left( \overline\partial r'_h, \overline\partial\Theta_2(\overline a+ \tilde r_h) \right) dV\\\nonumber
&+& \int_{\Sigma}e^{4i\psi/h} (a+r_h) K\left( \overline\partial r_h', \overline\partial (\overline a + \tilde r_h)\right) dV.
 \end{eqnarray}
 Observe that either by stationary phase or by estimates \eqref{eq: rh, rh', estimates} and \eqref{eq: tilderh estimates}, every term on the right side of \eqref{eq: first term I} is $o(1/h)$.  This concludes the proof.
\end{proof}

\section{Proof of Theorem \ref{thm:main}}\label{sec:proof_of_main_thm} 
We prove Theorem \ref{thm:main}.  For this let us assume that $(\Sigma_1,g_1)$ and $(\Sigma_2,g_2)$ are $2$-dimensional Riemannian manifolds with mutual boundary $\p\Sigma$, that there are $1$-parameter families of Riemannian metrics $g_a(\ccdot,s)$ on $\Sigma_\beta$, that $0$ is a solution to minimal surface equation \eqref{eq:minimal_surface_general} on both $\Sigma_\beta$ and and that $\Lambda_{g_1}=\Lambda_{g_2}$. Here and below the index $\beta=1,2$ refers to quantities on the manifold $\Sigma_\beta$. 
The proof will be achieved by considering the first three linearizations of the minimal surface equation. 
\subsection{Step 1: First linearization}

By Lemma \ref{lem:high_ord_lin}, the first linearization of the minimal surface equation is  
\begin{equation}\label{eq:first_lin_proof}
	\begin{aligned}
		\begin{cases}
			(\Delta_{g_\beta}+h_{\beta}^{(1)}/2)v_\beta^{j}=0 
			& \text{ in } \Sigma_\beta,
			\\
			v_\beta^{j}=f_j
			&\text{ on }\p \Sigma.
		\end{cases}
	\end{aligned}
\end{equation}
 The index $j=1,\ldots,4$, refers to the boundary value $f_j\in C^\infty(\p \Sigma)$, which we assume to be the same for both $\beta=1,2$.  As remarked in Section \ref{Section 2}, the DN map of the minimal surface equation 
has   Frech\'et derivatives of all orders. 
 It thus follows from $\Lambda_{g_1}=\Lambda_{g_2}$, that the DN maps of the first linearizations \eqref{eq:first_lin_proof} also agree. 
%
%
 Consequently,  by Theorem \ref{thm:unique2d} there is a conformal mapping
\[
 F:\Sigma_1\to\Sigma_2, \quad F^*g_2=cg_1,
\]
which also satisfies $F|_{\p \Sigma}=\text{Id}$, and 
\[
 h_{1}^{(1)}=c F^*h_{2}^{(1)}
\]
The conformal factor $c$ satisfies $c|_{\p \Sigma}=1$. Here $F^*$ denotes the pullback by $F$. 
Especially $F$ preserves Cauchy data of functions.  
We mention that this is the only part of the proof where we use that $\Sigma_1$ and $\Sigma_2$ are diffeomorphic to $\Omega$ by boundary fixing maps.

We use $F$ to transform our analysis to $(\Sigma_1,g_1)$ as follows. We simplify our notation by setting
\[
 v^{j}:=v^{j}_1, \quad \widetilde v^{j}:=v_2^{j}\circ F
\]
and 
\[
  w^{jk}:=w^{jk}_1, \quad \widetilde w^{jk}_{2}=w_2^{jk}\circ F \ \text{ and } \ w^{jkl}:=w^{jkl}_1, \quad \widetilde w^{jkl}_{2}=w_2^{jkl}\circ F.
\]
and 
\[
 h^{(1)}:=h_{1}^{(1)} \quad \widetilde h^{(1)}:=h_{2}^{(1)}\circ F.
\]
We use similar notation for other quantities throughout out the proof. We also note that
 \[
  d\widetilde V_2:= F^*dV_2=c^{\dim(\Sigma_1)/2}dV_1=cdV_1.
 \]
We will also denote $dV:=dV_1$ and $\Sigma=\Sigma_1$.

Since $F$ is the identity on the boundary and
\begin{multline}\label{eq:morphism_of_sols}
 (\Delta_{g_1}+h_1^{(1)}/2)\widetilde v^{j}=\Delta_{c^{-1}\s F^*g_2}\widetilde v^{j}+(h_1^{(1)}/2)\widetilde v^{j}=c\s \Delta_{F^*g_2}v_2^{j}\circ F+(c F^*h_{2}^{(1)}/2)v_2^{j}\circ F \\
 =c\s F^*((\Delta_{g_2}+h_2^{(1)}/2)v_2)=0,
\end{multline}
we see that both $v^{j}_1$ and $v_2^{j}\circ F$ satisfy the same equation $(\Delta_{g_1}+h_1^{(1)}/2)v=0$ on $\Sigma_1$ and have the same boundary value $f_j$. Here we used that the Laplace-Beltrami operator in dimension $2$ is conformally invariant. By uniqueness of solutions, we thus have
\[
 v^{j}=\widetilde v^{j}.
\]

We record the following lemma:
\begin{lemma}\label{lem:F_id_infty}
 Let $(\Sigma_1,g_1)$ and $(\Sigma_2,g_2)$ be compact Riemannian surfaces with a mutual boundary $\p \Sigma$. Let also $L_\beta=\Delta_{g_\beta}+q_\beta$, $\beta=1,2$, be such that the corresponding DN maps of $L_1$ and $L_2$ are defined and satisfy $\Lambda_{L_1}=\Lambda_{L_2}$. Assume that $F:\Sigma_1\to \Sigma_2$ is a morphism of solutions in the sense that
\[
 F^*U_1=U_2, 
\]
for all $U_\beta$, $\beta=1,2$, that solve
\[
 L_\beta U_\beta=0, \quad U_\beta|_{\p \Sigma} =f
\]
for some $f\in C^\infty(\p \Sigma)$. Assume also that the formal Taylor series of $g_\beta$ and $q_\beta$ agree in $g_\beta$-boundary normal coordinates (that is, values and the normal derivatives in the boundary normal coordinates agree in all orders). 

Then the coordinate representation of $F$ in boundary normal coordinates is the identity mapping to infinite order on the boundary: $F(x^1,\ldots, x^n)=\text{Id} + O(x_n^\infty)$.
\end{lemma}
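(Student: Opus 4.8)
\textbf{Proof proposal for Lemma \ref{lem:F_id_infty}.}

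The plan is to work in boundary normal coordinates on both surfaces and to feed carefully chosen solutions of $L_\beta U_\beta = 0$ through the morphism property $F^*U_1 = U_2$, extracting the Taylor coefficients of $F$ at $\p\Sigma$ order by order. Fix a boundary point $p\in\p\Sigma$, and let $(x^1,\dots,x^{n-1},x^n)=(x',x^n)$ be $g_1$-boundary normal coordinates near $p$ with $x^n$ the distance to $\p\Sigma$, and similarly $(y',y^n)$ the $g_2$-boundary normal coordinates. Since $F|_{\p\Sigma}=\mathrm{Id}$, in these coordinates $F(x',0)=(x',0)$, so $F(x',x^n)=(x'+O(x^n),\,\alpha(x')x^n+O((x^n)^2))$ for some function $\alpha$; a first observation is that $F$ preserves the boundary and its normal, hence $\alpha>0$, and in fact since $F$ sends the $g_1$-normal geodesic at $p$ to a curve hitting $\p\Sigma$ orthogonally at $p$, and $F$ matches Cauchy data, one gets $\alpha\equiv 1$ to leading order. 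The strategy is then inductive: assuming $F(x)=x+O((x^n)^{k})$ in these coordinates, we show the order-$k$ term also vanishes.

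The key mechanism is the following. By the standard boundary determination argument — which is exactly the hypothesis packaged into ``the formal Taylor series of $g_\beta$ and $q_\beta$ agree in boundary normal coordinates'' — and by the hypothesis $\Lambda_{L_1}=\Lambda_{L_2}$, one can construct, for any prescribed jet at $p$, a solution $U_1$ of $L_1U_1=0$ whose Taylor expansion at $p$ (in $g_1$-boundary normal coordinates) is prescribed to arbitrarily high but finite order, with $U_2=F^*U_1$ solving $L_2U_2=0$ and having Cauchy data at $\p\Sigma$ agreeing with that of $U_1$. Since the metrics and potentials have the same formal Taylor series at $\p\Sigma$, the two operators $L_1$ and $L_2$ have the same coordinate symbol to infinite order along $\p\Sigma$; hence the full Taylor jet of a solution at $p$ is determined by its Cauchy data at $p$ via the same recursion for both. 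Therefore $U_2$ and $U_1$, viewed in their respective boundary normal coordinates, have the same Taylor expansion at $p$ to all orders. Now write out $U_2 = U_1\circ F^{-1}$ in coordinates: comparing the Taylor expansions of both sides at $p$, and using the inductive hypothesis $F=\mathrm{Id}+O((x^n)^k)$, the mismatch at order $k$ forces a linear relation of the form $\nabla U_1(p)\cdot(\text{order-}k\text{ coefficient of }F)=0$. Choosing $U_1$ with $\nabla U_1(p)\neq 0$ (possible since we may prescribe the $1$-jet freely, and solutions with nonvanishing gradient exist by Lemma 2.2.4 of \cite{guillarmou2011calderon} or directly by boundary-data construction) kills the tangential part; choosing $U_1$ with prescribed second-order behaviour handles the normal component. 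Iterating gives $F=\mathrm{Id}+O((x^n)^\infty)$.

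More concretely, I would phrase the induction in terms of the components of $F$. Writing $F=(F^1,\dots,F^n)$, suppose inductively $F^a(x)=x^a+O((x^n)^{k})$ for $a=1,\dots,n$ and some $k\geq 1$. Let $c^a(x')$ denote the coefficient of $(x^n)^k$ in $F^a$. Pick a solution $U_1$ of $L_1U_1=0$ with, say, $U_1(x)=x^b + O(|x|^2)$ near $p$ for a fixed $b$; then $U_2=U_1\circ F^{-1}$ must equal the solution of $L_2U_2=0$ with the same boundary Cauchy data, which (by the agreement of formal symbols) has Taylor expansion $y^b+O(|y|^2)$ in $y$-coordinates. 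Expanding $U_1\circ F^{-1}$ and matching the $(x^n)^k$-coefficient of the degree-one-in-$x'$ part yields $c^b(x')=0$. Doing this for each $b=1,\dots,n-1$ handles the tangential components; for $b=n$ one instead matches against a solution quadratic in the normal direction, again using that $L_1,L_2$ have the same formal normal recursion so that the second normal Taylor coefficient of a solution is a fixed expression in its Cauchy data. This completes the inductive step, so all $c^a$ vanish and $F(x',x^n)=(x',x^n)+O((x^n)^\infty)$.

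The main obstacle I anticipate is making precise the claim that one can realize \emph{any} finite Taylor jet at a boundary point by an actual solution of $L_\beta U_\beta=0$ whose image under $F$ is again a genuine solution — i.e. combining the interior solvability (a local Cauchy–Kovalevskaya or elliptic construction near $p$) with the global consistency coming from $\Lambda_{L_1}=\Lambda_{L_2}$, and checking that the error terms in ``approximate solution with prescribed jet'' do not contaminate the finitely many Taylor coefficients we are comparing. This is exactly the kind of ``standard-like boundary determination argument'' alluded to in the paper; the cleanest route is probably to cite the boundary determination results for $\Delta_g+q$ (e.g. the jet recovery in \cite{guillarmou2011calderon} and the references therein) to know that $\Lambda_{L_1}=\Lambda_{L_2}$ already forces agreement of the formal symbols along $\p\Sigma$ — which is assumed here anyway — and then to run the jet-matching argument purely formally on Taylor series, where convergence is a non-issue.
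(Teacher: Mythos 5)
Your argument is correct and is essentially the paper's proof: both work in boundary normal coordinates, use $\Lambda_{L_1}=\Lambda_{L_2}$ together with the agreement of the formal Taylor series of $(g_\beta,q_\beta)$ to conclude that $U_1$ and $U_2$ have identical boundary jets (via the same normal recursion $-\partial_{x_n}^2+P_\beta$), and then extract the normal Taylor coefficients of $F$ inductively from the composition relation by varying $\nabla U_1(p)$. The one step you leave soft — realizing enough gradients $\nabla U_1(p)$ at a fixed boundary point, in particular one with nonvanishing normal component — is handled in the paper by Runge approximation (citing \cite{lassas2016calder}), which makes the achievable gradients dense in $\mathbb{R}^n$ and kills the tangential and normal parts of the order-$k$ coefficient in one stroke, without your detour through second-order data.
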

We have placed the proof of the lemma in Appendix \ref{appx:proof_of_lemma}. We remark for possible future references that the lemma generalizes to higher dimensions and for more general second order elliptic operators.

Recall that $h^{(1)}=\p_u|_{u=0} \text{Tr}(g_u^{-1}\p_sg_u)$. Thus, by the assumptions of Theorem \ref{thm:main} that we are proving, the formal Taylor series of $h_1^{(1)}$ and $h_2^{(1)}$ agree in associated boundary normal coordinates. The same holds for $g_1$ and $g_2$. By \eqref{eq:morphism_of_sols}, the mapping $F$ is morphism of solutions. Consequently, by the above lemma 
\begin{equation}\label{eq:F_id_to_infinite}
F=\text{Id} + O(x_n^\infty) \text{ in boundary normal coordinates on } \{x_n=0\}.
\end{equation}
Especially we have $F_*\p_{\nu_1}=\p_{\nu_2}$ and the functions 
$v_1$ and $v_2$ agree to infinite order in associated boundary normal coordinates.


%
%
%

\subsection{Step 2: Second linearization} Next we will first show by using the second linearization that
\[
 k_1^{(1)}=c\s \tilde k_2^{(1)},
\]
where
\[
		 \tilde k_2^{(1)}=F^*k_2^{(1)}.
		\]
Here $F$ is the conformal mapping of the last section. We note that $k_\beta^{(1)}$, $\beta =1,2$, are symmetric $2$-tensor fields on $\Sigma_\beta$. Then we will also show that $h_1^{(2)}=F^*h_2^{(2)}$ and $w_1^{(jk)}=F^*w_2^{(2)}$.

By Lemma \ref{Lem:Integral identity_2nd}, the associated integral identity   of the second linearization is
\begin{multline}\label{second_integral_id_proof}
	 \int_{\p \Sigma} f_m \s \p^2_{\eps_j \eps_k}\big|_{\epsilon=0} \Lambda_{g_\beta} (f_\epsilon) \, dS_\beta =\int_{\Sigma_\beta} v^m k_\beta^{(1)}(\nabla v_\beta^k,\nabla v_\beta^j\big)dV_\beta+\int_{\Sigma_\beta}v_\beta^k k_\beta^{(1)}(\nabla v_\beta^j,\nabla v_\beta^m\big)dV_\beta\\
 +\int_{\Sigma_\beta} v_\beta^j k_\beta^{(1)}(\nabla v_\beta^k,\nabla v_\beta^m\big)dV_\beta -\frac{1}{2}\int_{\Sigma_\beta} h_\beta^{(2)}v_\beta^jv_\beta^kv_\beta^mdV_\beta  \\
 - \int_{\p \Sigma} v_\beta^m k_\beta^{(1)}(\nu_\beta,\nabla v_\beta^{(j})v_\beta^{k)}dS_\beta,
 \end{multline}
 where $\beta=1,2$ refers to quantities on $\Sigma_\beta$ and $j,k,m\in \{1,\ldots,4\}$.    
 We change variables in the terms on the right-hand side of \eqref{second_integral_id_proof}  with $\beta = 2$ by using the conformal mapping $F$. 
		Using also $ v^{j}=v_2^{j}\circ F$,
		we have
		\begin{multline*}
		 \int_{\Sigma_2} v_2^m k_2^{(1)}(\nabla v_2^k,\nabla v_2^j\big)dV_2=\int_{\Sigma_1} F^*\left(v_2^m k_2^{(1)}(\nabla v_2^k,\nabla v_2^j)\right)F^*dV_2 \\
		 =\int_{\Sigma_1} c v^m \tilde k^{(1)}(\nabla v^k,\nabla v^j)dV.
		\end{multline*}
		Here the factor $c$ is due to the volume form. We also used the fact that $k_2^{(1)}$ is a tensor on $\Sigma_2$ so that $F^*$ acts on it by the usual coordinate transformation rules.  We have similarly for other terms in the \eqref{second_integral_id_proof}. Consequently, using the assumptions $k_1^{(1)}|_{\p \Sigma}=k_2^{(1)}|_{\p \Sigma}$ and $\Lambda_{g_1} (f_\epsilon) =\Lambda_{g_2} (f_\epsilon)$, the fact that $c|_{\p \Sigma}=1$,  and subtracting the identities \eqref{second_integral_id_proof} on $\Sigma=\Sigma_1$ and $\Sigma_2$ give 
\begin{multline}\label{eq:2nd_ord_recovery_proof}
 0=\int_{\Sigma} v^1 (k^{(1)}_1-c\tilde k^{(1)}_2)(\nabla v^2,\nabla v^3\big)dV+\int_{\Sigma}v^k (k^{(1)}_1-c\tilde k^{(1)}_2)(\nabla v^1,\nabla v^3\big)dV_1\\
 +\int_{\Sigma} v^3 (k^{(1)}_1-c\tilde k^{(1)}_2)(\nabla v^1,\nabla v^2\big)dV -\frac{1}{2}\int_{\Sigma} (h^{(2)}_1-c\tilde h^{(2)}_2)v^1v^2v^3dV.
\end{multline}
Here we also chose $(j,k,l)=(1,2,3)$. We note that since $F^*g_2=c g_1$ and $g_1=g_2$ and $F$ is the identity to infinite order on $\p \Sigma$ in associated boundary normal coordinates by \eqref{eq:F_id_to_infinite}, we have that 
\begin{align}\label{eq:c_is_1_to_infinite}
\begin{split}
 c&=1 \text{ to infinite order on } \p\Sigma \\
 k^{(1)}_1&=c\tilde k^{(1)}_2 \text{ to infinite order on } \p\Sigma.
 \end{split}
\end{align}

We define a $(2,0)$-tensor
\[
 K:=k^{(1)}_1-c\tilde k^{(1)}_2,
\]
on $\Sigma$. The tensor $K$ vanishes to infinite order at $\partial \Sigma$ by \eqref{eq:c_is_1_to_infinite}.
We note also that $K$ is trace free with respect to $g=g_1$. Indeed, since we are dealing with minimal surfaces $(\Sigma_\beta,g_\beta)$, $\beta=1,2$, which have the property that the mean curvature vanishes we first obtain
\[
 0=h_\beta=\tr(g_\beta^{-1}\p_sg_\beta)=-\tr(g_\beta k^{(1)}_\beta)
\]
on $\Sigma_\beta$,  see \eqref{eq_h_equiv_0}. Consequently, since $k_2^{(2)}$ is a tensor, $F^*g_2=cg_1$ and the trace of the $(1,1)\s $-tensor $g_2k_2^{(1)}$ is invariant, we have
\begin{multline}
\tr(g_1K)= \tr(g_1(k^{(1)}_1-c\tilde k^{(1)}_2)) =\tr(g^{-1}\p_sg_1)-\tr(c^{-1}(F^*g_2)cF^* k_2^{(1)}) \\
= -\tr(g_2k_2^{(1)})|_F=0.
\end{multline}
%
%
%

Let $z_0\in \Sigma$ be fixed and let us choose the solutions as in Section \ref{subsection:choices_of_solutions}
 \begin{equation}\label{eq:sols_for_second_proof}
 \begin{split}
v^1&=e^{\Theta_1/h}(a+r_h), \\
 v^2&=e^{\Theta_2/h}(a+r_h'), \\
 v^3&=e^{\Theta_3/h}(\overline a+\tilde r_h),
 \end{split}
\end{equation}
where $\Theta_1 := \Psi + \Phi$, $\Theta_2 = -\Psi + \Phi$, and $\Theta_3 = -2\bar\Phi$ so that in local holomorphic coordinates centered at $z_0$ 
\begin{align*}
\begin{split}
\Theta_1&=z + O(z^2) , \quad \Theta_2=-z +O(z^2), \quad \text{is holomorphic} \\
\Theta_3&=-\z^2+O(\s\z^3) \text{ is antiholomorphic}.
\end{split}
\end{align*}
Observe that $\tilde r_h$ satisfies the estimates given by \eqref{eq:CZ_rh_norms} while $r_h$ and $r'_h$ satisfy the better estimates given by \eqref{eq: no critical point estimate}.

The holomorphic amplitude $a$ is $1$ to high order at $z_0$ and $0$ to high order at other possible critical points of the phase functions $\Theta_j$. 
With these solutions, we note that on the right hand side of \eqref{eq:2nd_ord_recovery_proof} 
the last term is of size $O(h)$ by stationary phase and the $L^p$ estimates for $r_h$, $r_h'$ and $\tilde r_h$, see \eqref{eq:rh_L2_estim}--\eqref{eq:rh_Lr_estim}. Therefore \eqref{eq:2nd_ord_recovery_proof} becomes
\begin{multline}\label{eq:2nd_ord_recovery_proof II}
 0=\int_{\Sigma} v^1 K(\nabla v^2,\nabla v^3\big)dV+\int_{\Sigma}v^2 K(\nabla v^1,\nabla v^3\big)dV_1
 +\int_{\Sigma} v^3K(\nabla v^1,\nabla v^2\big)dV  + O(h).
\end{multline}

Since $K$ is symmetric and trace free, we may apply Proposition \ref{prop:asymptotics_for_the_2nd_lin} to \eqref{eq:2nd_ord_recovery_proof II} we see that
\[
0 =   \frac{1}{h^2}\int_\Sigma e^{4i\psi/h}\bar a K\left( \partial \Theta_1a,  \partial \Theta_2a\right) dV + o(1/h).
\]
Recall from \eqref{eq:Kcomplex} that for general functions $u$ and $v$ we have
\[
K(\p u, \p v):=(K^{11}-K^{22}+i(K^{12}+K^{21}))\p u \p v. 
\]
We apply stationary phase to this expression using the facts that $a(z_0) = 1$ and $a$ vanishes to high order at all other critical points. We get that, in local holomorphic coordinates,
\[
0=K(\p z, \p z)(z_0)=K^{11}(z_0) - K^{22}(z_0) + i(K^{12}(z_0) + K^{21}(z_0)).
\]
Since $K(\ccdot, \ccdot)$ is a symmetric $(2,0)$-tensor with real valued entries, and whose trace vanishes when taken with respect to $g$, we have that $K(z_0)= 0$. It was shown in \cite[Proposition 2.3.1]{guillarmou2011calderon} that critical points of holomorphic Morse functions form a dense subset of $\Sigma$.
Repeating the argument for a dense set of points $z_0$ of $\Sigma=\Sigma_1$ shows that
\begin{equation}\label{eq_k_determined}
 k_1^{(1)}= c\tilde k_2^{(1)} \text{ on } \Sigma
\end{equation}
as claimed.

Next we show that $h_1^{(2)}=c\tilde h_2^{(2)}$  on  $\Sigma_1$. 
By \eqref{eq_k_determined}, the identity \eqref{eq:2nd_ord_recovery_proof} now reads
\[
  0=\int_{\Sigma} (h^{(2)}_1-c\tilde h^{(2)}_2)v^1v^2v^3dV.
\]
We choose $v_1$, $v_2$ and $v_3$ as before in \eqref{eq:sols_for_second_proof}. 
Arguing similarly as before, we obtain by stationary phase and the $L^p$ estimates for $r_h$, $r_h'$ and $\tilde r_h$, see \eqref{eq:rh_L2_estim}--\eqref{eq:rh_Lr_estim}, that
 $h^{(2)}_1(z_0)=c(z_0)\tilde h^{(2)}_2(z_0)$. 
Repeating the argument for all points of $\Sigma$ yields
\[
 h^{(2)}_1= c\tilde h^{(2)}_2 \text{ on } \Sigma.
\]

Recall that the second linearizations $w_\beta^{jk}$, $\beta =1,2$,  
 satisfy 
 \begin{equation*}
 \begin{aligned}
		\begin{cases}
  (\Delta_{g_\beta}+h_\beta^{(1)}/2)w_\beta^{jk}+P_\beta^{(j}v_\beta^{k)}+k_\beta^{(1)}(\nabla v_\beta^{j},\nabla v_\beta^{k})+\frac{1}{2}h_\beta^{(2)}v_\beta^{j}v_\beta^{k}= 0& \text{ in } \Sigma \\
  w_\beta^{jk}=0
			&\text{ on }\p \Sigma,
  		\end{cases}
	\end{aligned}
 \end{equation*}
 where $P_\beta^j=-\nabla^{g_\beta}\cdot_\beta (v_\beta^jg_\beta k_\beta^{(1)}\nabla)$. Since we have recover all the coefficients of the above equation up to a conformal scaling factor, we see by uniqueness of solutions to elliptic equations that $w_1^{jk}=F^*w_2^{jk}=:\tilde w_2^{jk}$. All in all, we have shown by using the second linearization that
 \begin{align*}
  k_1^{(1)}=c\tilde k_2^{(1)} , \quad   h^{(2)}_1&=c\tilde h^{(2)}_2, \quad w_1^{jk}=\tilde w_2^{jk},
 \end{align*}
 for all $j,k=1,\ldots,4$.

\subsection{Step 3: Third linearization}
To complete the proof, we are left to show that the conformal factor $c=1$ on $\Sigma_1$. For this, we use the third linearization. By Lemma \ref{Lem:Integral identity_3rd}, the corresponding integral identity is 
	\begin{multline}\label{eq:third_integral_id_proof}
	 \int_{\p \Sigma} f_m \s \p^3_{\eps_j \eps_k\eps_l}\big|_{\epsilon=0} \Lambda (f_\epsilon) \, dS_g = \int_{\Sigma}g(\nabla v^{j},\nabla v^k)g(\nabla v^{l},\nabla v^m) dV  \\
	 +\int_{\Sigma}g(\nabla v^{j},\nabla v^l)g(\nabla v^{k},\nabla v^m) dV+\int_{\Sigma}g(\nabla v^{l},\nabla v^k)g(\nabla v^{j},\nabla v^m) dV \\
	 +H+R+B,
	\end{multline}
	where
	\begin{equation*}
 \begin{split}
 H&=-\int_{\Sigma}v^{(j}v^{k} k^{(2)}(\nabla v^{l)},\nabla v^m) dV +\int_{\Sigma}v^m g(\nabla (d^{-1}d^{(2)}v^{(j}v^k),\nabla v^{l)})dV \\
 &\qquad-\int_\Sigma v^m k^{(2)}(\nabla v^{(j},\nabla v^{k})v^{l)}dV-\frac{1}{2}\int_\Sigma v^mv^{j}v^{k}v^{l}h^{(3)}dV,\\
 R&=-\int_{\Sigma}w^{(jk}k^{(1)}(\nabla v^{l)},\nabla v^m)dV-\int_\Sigma k^{(1)}(\nabla v^m,\nabla v^{(j})w^{kl)}dV \\
 &\qquad-\int_{\Sigma}v^mk^{(1)}(\nabla v^{(j},\nabla w^{kl)})dV-\frac{1}{2}\int_\Sigma v^mg(\nabla v^{(j},\nabla v^{k})v^{l)}h^{(1)}dV  \\
 &\qquad \qquad -\frac{1}{2}\int_\Sigma v^mw^{(jk}v^{l)}h^{(2)}dV,   \\
 B&=\int_{\p \Sigma} v^mv^{(j}v^k g(\nu,k^{(2)}\nabla v^{l)})dS+\int_{\p \Sigma} v^mw^{(jk} g(\nu,k^{(1)}\nabla v^{l)})dS\\
&\qquad-\int_{\p \Sigma} v^mg(\nabla v^{(j},\nabla v^k)\p_\nu v^{l)}dS+\int_{\p \Sigma} v^mv^{(j} k^{(1)}(\nu,\nabla w^{kl)})dS.
\end{split}
\end{equation*}
The identity holds for any $j,k,m\in \{1,\ldots,4\}$ and for quantities on both $\Sigma=\Sigma_1$ and $\Sigma=\Sigma_2$. 

As already remarked after Lemma \ref{Lem:Integral identity_3rd} we will be able to disregard all the terms $H$, $R$ and $B$. Let us explain how. First of all, the terms of $B$ are all boundary terms, which we assume to be known. We have also shown that $v_1^{j}=\tilde v_2^{j}$ and $w_1^{jk}=\tilde w_2^{jk}$ on $\Sigma_1$. Especially, $v_1^{j}= v_2^{j}$ and $w_1^{jk}=w_2^{jk}$ to infinite order on the boundary by \eqref{eq:F_id_to_infinite}. Thus, when we subtract the integral identity \eqref{eq:third_integral_id_proof} on $\Sigma_1$ from that on $\Sigma_2$, the boundary terms $B$ will cancel. 

Next, we note that we have recovered all the quantities in $R$, up to a possible conformal factor. Let us see how the conformal factor behaves when we make change of variables in integration. Let us consider the simplest term
\[
 \int_{\Sigma_2} v_2^mw_2^{(jk}v_2^{l)}h_2^{(2)}dV_2
\]
in $R$ on $\Sigma_2$. This term equals
\[
 \int_{\Sigma_1} F^*(v_2^mw_2^{(jk}v_2^{l)}h_2^{(2)})F^*dV_2=\int_{\Sigma_1} v^mw^{(jk}v^{l)}c^{-1}h_1^{(2)}cdV_1=\int_{\Sigma_1} v^mw^{(jk}v^{l)}h_1^{(2)}dV_1.
\]
Thus the conformal factor $c$ cancels out when we make a change of variables. The other terms of $R$ behave the same way. Consequently, when we subtract the terms of $R$ on $\Sigma_1$ from those on $\Sigma_2$, they will cancel out.

Finally we note that while we have not recovered the terms of $H$, those terms have the property that they contain at most $2$ derivatives on $v^j$ and do not  contain any $w^{jk}$. 
We will later argue based on this property that the terms of $H$ are negligible as $h\to 0$. 

Let us then consider the first three terms on the right hand side of \eqref{eq:third_integral_id_proof}. These will be the leading order terms when we fix the solutions $v^j$ as CGOs. By making a change of variable in integration, we have 
\begin{multline*}
 \int_{\Sigma_2}g_2(\nabla v_2^{j},\nabla v_2^k)g_2(\nabla v_2^{l},\nabla v_2^m) dV_2=\int_{\Sigma_1}F^*\left(g_2(\nabla v_2^{j},\nabla v_2^k)g_2(\nabla v_2^{l},\nabla v_2^m)\right) F^*dV_2   \\
 =\int_{\Sigma_1}F^*g_2(\nabla v^{j},\nabla v^k)F^*g_2(\nabla v^{l},\nabla v^m)cdV_1=\int_{\Sigma_1}c^{-1}g_1(\nabla v^{j},\nabla v^k)g_1(\nabla v^{l},\nabla v^m)dV_1.
\end{multline*}
Here we used that $F^*g_2=cg_1$ when both sides are acting to vector fields (and so, $F^*g_2(\omega_1,\omega_2)=c^{-1}g_1(F^*\omega_1,F^*\omega_2)$ for 1-forms $\omega_1$ and $\omega_2$ as 
above)
and that $v^j=F^*v_2^j$. Thus, by what we have argued, subtracting the identity \eqref{eq:third_integral_id_proof} on the $\Sigma_1$ from that on $\Sigma_2$ and using $\Lambda_{g_1} (f_\epsilon) =\Lambda_{g_2} (f_\epsilon)$ we have
\begin{multline}\label{eq:withH1H2}
H_1-H_2=\int_{\Sigma_1}(1-c^{-1}) \Big[g_1(\nabla v^1\cdot \nabla v^2)g_1(\nabla v^3\cdot \nabla  v^4)+g_1(\nabla v^1\cdot \nabla v^3)g_1(\nabla v^2\cdot \nabla  v^4) \\
 +g_1(\nabla v^1\cdot \nabla v^4)g_1(\nabla v^2\cdot \nabla  v^3)\Big]dV_1.
 \end{multline}
 Here we also chose $(j,k,l,m)=(1,2,3,4)$. 
 
 Let $z_0\in \Sigma$. We choose the four solutions of the equation \eqref{linear_eq} of the form \eqref{eq:sols_for_4th linearization},
 \[
    v^1 =e^{\Phi_1/h}(a+r_h),\quad \ \, v^3 = e^{\Phi_3/h}(a+ r'_h), \quad v^2  = e^{-\overline \Phi_2/h}(\overline a+\tilde r_h),\quad v^4= e^{-\overline \Phi_4/h}(\overline a+ \tilde r_h'),
   \]
 where $z_0\in \Sigma$ is a critical point of $\Phi$. Recall that the correction terms of these solutions satisfy the better estimates \eqref{eq: no critical point estimate}.
%
The holomorphic function $a$ is chosen such that it has the expansion $a(z)=1+O(z^N)$ near the critical point $z_0$ and vanishes to high order at all other possible critical points of $\Phi$. As $g_1=g_2$ to infinite order on $\p \Sigma$ and $F^*g_2=g_1$, we have that $c^{-1}$ is $1$ to infinite order on the boundary. Consequently, by Proposition \ref{prop:asymptotics_for_the_3rd_lin}, we have
\begin{multline}\label{eq:expansion_for_3rd_lin_proof}
\int_{\Sigma}(1-c^{-1})\Big[g(\nabla v_1\cdot \nabla v_2)g(\nabla v_3\cdot \nabla  v_4)+g(\nabla v_1\cdot \nabla v_3)g(\nabla v_2\cdot \nabla  v_4) \\
 +g(\nabla v_2\cdot \nabla v_3)g(\nabla v_1\cdot \nabla  v_4)\Big]dV=h^{-1}(1-c^{-1}(z_0))+o(h^{-1}).
 \end{multline}

Let us complete the proof by arguing that the terms $H_1$ and $H_2$ are negligible. Indeed, as they contain only $v^j$ and up to $2$ derivatives of them, and everything else in these terms is independent of $h$, these terms are of size $O(h^{-2})$ by stationary phase and the estimates for  the correction terms.  
Combining this fact with \eqref{eq:withH1H2} and \eqref{eq:expansion_for_3rd_lin_proof} shows that
\[
 h^{-3}(1-c^{-1}(z_0))+o(h^{-3})=0
\]
as $h\to 0$. Thus $c(z_0)=1$. Repeating this argument for a dense set of points $z_0\in \Sigma_1$ shows that
\[
 F^*g_2=g_1 \text{ on } \Sigma_1.
\]
Finally noting that the scalar second fundamental form reads $\eta_{ab}(x)=\p_s|_{s=0}g_{ab}(x,s)$ in Fermi-coordinates (see e.g. \cite{lassas2016calder} for the formula), knowing $k^{(1)}$ and $g$ determines $\eta$. Thus also $F^*\eta_2=\eta_1$. This concludes the proof of Theorem \ref{thm:main}.

\appendix

\addtocontents{toc}{\protect\setcounter{tocdepth}{1}}
\section{Calculations and boundary determination for isometries}

\subsection{Calculations}\label{appx:calculations}
We record computations used in Section \ref{Section 2}. For the below calculations, we note that
\[
 \frac{d^2}{d\eps_j\eps_k}\Big|_{\eps=0}(1+\abs{\nabla u}^2_u)^{-1/2}=-g(\nabla v_j,\nabla v_k).
\]

The calculation behind the equation \eqref{eq:third_deriv_of_f} is the following
\begin{multline*}
 \frac{d^3}{d\eps_{j}d\eps_kd\eps_l}\Big|_{\eps=0}f(u,\nabla u) \\
 =\frac{1}{2}\frac{1}{(1+\abs{\nabla u}^2_{u})^{1/2}}|_{\eps=0}\left(\p_{\eps_{jkl}}|_{\eps=0}k_u^{(1)}(\nabla u,\nabla u)\right)+\frac{1}{2}\p_{\eps_{jkl}}|_{\eps=0}\left((1+\abs{\nabla u}^2_{u})^{1/2}h_u\right) \\
 =\frac{1}{2}\p_{\eps_{jk}}|_{\eps=0}\left[\left(k_u^{(2)}(\nabla u,\nabla u)u_l+2k_u^{(1)}(\nabla u_l,\nabla u)\right)\right] \\
 +\frac{1}{2}\p_{\eps_{jk}}|_{\eps=0}\left[(1+\abs{\nabla u}^2_{u})^{-1/2}g_u(\nabla u_l,\nabla u)h_u+(1+\abs{\nabla u}^2_{u})^{1/2}h_u^{(1)}u_l \right] \\
 =\frac{1}{2}\p_{\eps_{j}}|_{\eps=0}\Big[2k_u^{(2)}(\nabla u_k,\nabla u)u_l+2k_u^{(2)}(\nabla u_l,\nabla u)u_k+2k_u^{(1)}(\nabla u_{kl},\nabla u)+2k_u^{(1)}(\nabla u_{l},\nabla u_k)\Big] \\
 +\frac{1}{2}\p_{\eps_{j}}|_{\eps=0}\Big[g_u(\nabla u_l,\nabla u_k)h_u+g_u(\nabla u_l,\nabla u)h_u^{(1)}u_k \\ 
 \qquad\qquad\qquad\qquad\qquad\qquad\qquad\qquad\qquad\qquad +g_u(\nabla u_k,\nabla u)h_u^{(1)}u_l+h_u^{(2)}u_lu_k+h_u^{(1)}u_{kl} \Big] \\
 =k^{(2)}(\nabla v^{(j},\nabla v^k)v^{l)}+k^{(1)}(\nabla w^{(jk},\nabla v^{l)})+\frac{1}{2}g(\nabla v^{(j},\nabla v^k)v^{l)}h^{(1)} \\
 +\frac{1}{2}w^{(jk}v^{l)}h^{(2)}+\frac{1}{2}v^{j}v^{k}v^{l}h^{(3)}+\frac{1}{2}w^{jkl}h^{(1)}.
\end{multline*}
In the above calculation $u|_{\eps=0}\equiv 0$ and $h=0$ was used several times.

The calculation behind the formula for \eqref{eq:formula_for_Pjk} is the following
\begin{multline*}
 P^{jk}
 =-\frac{d^2}{d\eps_jd\eps_k}\Big|_{\eps=0}d_u^{-1}\nabla\cdot k_ud_u(1+\abs{\nabla u}^2_u)^{-1/2}\nabla  \\
 =\frac{d}{d\eps_l}\Big|_{\eps=0}\Big[d^{-2}_u d^{(1)}_u u_k\nabla\cdot k_ud_u (1+\abs{\nabla u}^2_u)^{-1/2} \nabla-d^{-1}_u\nabla\cdot k_u^{(1)}u_kd_u (1+\abs{\nabla u}^2_u)^{-1/2} \nabla\\
 -d^{-1}_u\nabla\cdot k_u d_u^{(1)}u_k (1+\abs{\nabla u}^2_u)^{-1/2} \nabla -d^{-1}_u\nabla\cdot k_u d_u \frac{d}{d\eps_k}(1+\abs{\nabla u}^2_u)^{-1/2} \nabla\Big]  \\
 =d^{-2} d^{(2)} v^k v^l\nabla\cdot kd \nabla-d^{-1}\nabla\cdot k^{(2)}v^kv^ld \nabla-d^{-1}\nabla\cdot k^{(1)}w^{kl}d \nabla-d^{-1}\nabla\cdot k d^{(2)}v^kv^l
 \nabla  \\
 +d^{-1}\nabla\cdot kd g(\nabla v^k,\nabla v^l)\nabla \\
 =-k\nabla (d^{-1}d^{(2)}v^k v^l)\cdot \nabla-d^{-1}\nabla\cdot (k^{(2)}v^kv^ld \nabla) \\
 -d^{-1}\nabla\cdot (k^{(1)}w^{kl}d \nabla)
 +d^{-1}\nabla\cdot (kd g(\nabla v^k,\nabla v^l)\nabla).
\end{multline*}

\subsection{Boundary determination for the isometry}\label{appx:proof_of_lemma} 
\begin{proof}[Proof of Lemma \ref{lem:F_id_infty}]
The proof is similar to that of \cite[Lemma 3.4]{lassas2016calder}. 
 Let $p\in \p M$, let us fix coordinates on a neighborhood $\Gamma$ of $p$ in $\p \Sigma$ and let us consider the associated boundary normal coordinates on $\Sigma_1$ and $\Sigma_2$.  
Let $U_\beta$, $\beta=1,2$, be solutions to $L_\beta U_\beta=0$ with $U_\beta|_{\p \Sigma}=f$.
 Below we denote by $F$, $U_1$ and $U_2$ their coordinate representation in the boundary normal coordinates. 
 
 Since $F$ the preserves boundary data of solutions $U_1$ and $U_2$ by assumption, we have
 \[
  F|_{\p M}=\text{Id}_{\p M}.
 \]
 Since $\Lambda_{L_1}=\Lambda_{L_2}$, we have
 \[
  \p_{x_n}U_1=\p_{x_n}U_2 \text{ on } \{x_n=0\}.
 \]
 It follows that $\nabla U_1=\nabla U_2$ on $\{x_n=0\}$. Here $\nabla$ is the gradient in $\R^n$. Combining this with the assumptions that $F$ is a morphism of solutions, we then have
 \[
  \nabla U_1=\nabla U_2=\nabla(U_1\circ F)=DF^T\nabla U_1|_{\{x_n = 0\}}
 \]
 for all solutions $U_1$ to $L_1U_1=0$ in $\Sigma_1$.
 By Runge approximation (see e.g. \cite[Proposition B.1]{lassas2016calder}) and by using linearity of $L_1$, it is possible to choose $\nabla U_1(p)$ over a dense subset of $\mathbb R^n$. Thus 
 \[
  DF(p)=I_{n\times n}.
 \]
 Repeating the argument for all $p\in \Gamma$ shows that $DF|_{x_n=0}(p)=I_{n\times n}$ on $\{x_n=0\}$.
 
 Following the proof of \cite[Lemma 3.4]{lassas2016calder}, we may write, for $\beta =1,2$
 \[
  \Delta_{g_\beta}+q_\beta=-\p_{x_n}^2+P_\beta,
 \]
 where $P_\beta$ is a partial differential operator containing only first order derivatives in $x_n$ and $x'$ derivatives up to second order and whose coefficients are polynomial expressions of $g_\beta$ and $q_\beta$. Since
 \[
  0=(\Delta_{g_\beta}+q_\beta)U_\beta=-\p_{x_n}^2U_\beta+P_\beta U_\beta,
 \]
 we have by the condition $\nabla U_1=\nabla U_2$ on $\{x_n=0\}$ and the assumption that the formal Taylor series of $g_\beta$ and $q_\beta$ agree to infinite order at $\{x_n=0\}$ in boundary normal coordinates that
 \begin{equation}\label{eq:op_decomp}
  \p_{x_n}^2U_1=P_1 U_1=P_2 U_2=\p_{x_n}^2U_2.
 \end{equation}
 Consequently, we have $\nabla^2 U_1=\nabla^2 U_2$ on $\{x_n=0\}$. 
Then, by using also $DF|_{x_n=0}=I_{n\times n}$ we have
\begin{multline*}
 \p_{x_n}^2U_1=\p_{x_n}^2(U\circ F)= \p_{ab}^2U_2|_F\p_{x_n}F^a\p_nF^b+\p_aU_2|_F\p_{x_n}^2F^a \\
 = \p_{ab}^2U_1\p_{x_n}F^a\p_nF^b+\p_aU_2\p_{x_n}^2F^a=\p_{x_n}^2U_1+\p_aU_1\p_{x_n}^2F^a.
\end{multline*}
We have employed the Einstein summation convention in the above equation.
Thus
\[
 \p_aU_1\p_{x_n}^2F^a=0 \text{ on } \{x_n=0\}.
\]
As we may choose $\nabla U_1(p)$ freely in a dense subset of $\mathbb R^n$, we have
 \[
  \p_{x_n}^2F^a(p)=0 \text{ on } \{x_n=0\},
 \]
 for all $a=1,\ldots,n$. Repeating the argument for all $p\in \Gamma$ shows that $\p_{x_n}^2F^a=0$ on $\{x_n=0\}$.
 
The proof is completed by differentiating \eqref{eq:op_decomp} in $x_n$ and by using an induction argument similar to that in \cite[Lemma 3.4]{lassas2016calder}.
 
%
%
%
%

\end{proof}

%
%
%

\bibliography{ref, ref_minimal_surface, ref_intro}
\bibliographystyle{abbrv}

\end{document}